\documentclass[a4paper,twoside]{article}

\usepackage{amssymb,amsmath,amsthm,dsfont,amsfonts,color,latexsym}

\numberwithin{equation}{section}
\theoremstyle{plain}
\newtheorem{thm}{Theorem}[section]
\newtheorem{theorem}{Theorem}[section]      
\newtheorem{lemma}[theorem]{Lemma}          
\newtheorem{proposition}[theorem]{Proposition}

\theoremstyle{definition}

\theoremstyle{definition}
\newtheorem{remark}[theorem]{Remark}

\usepackage{hyperref}
\usepackage{mathrsfs} 

\definecolor{blue}{rgb}{0.00,0.00,1.00}
\definecolor{red}{rgb}{1.00,0.00,0.00}

\renewcommand{\baselinestretch}{1.2}
\begin{document}


\title{Global Stability and Energy Growth in the Sheared Vlasov--Poisson--Boltzmann System}
\author{
  Zhida Chang$^1$,\, Mingying Zhong$^{1,2}$\\[2mm]
  {\small\it $^1$School of Mathematics, Guangxi University, China.}\\
  {\small\it E-mail: czdzhidachang@163.com}\\
  {\small\it $^2$Center for Applied Mathematical of Guangxi (Guangxi University), Guangxi University, China.}\\
  {\small\it E-mail: zhongmingying@gxu.edu.cn}\\[5mm]
}
\date{ }

\pagestyle{myheadings}
\markboth{Sheared Vlasov--Poisson--Boltzmann System}{Z.-D.~Chang, M.-Y.~Zhong}

\maketitle

\thispagestyle{empty}

\begin{abstract}\noindent{We study the Vlasov--Poisson--Boltzmann system on the three-dimensional torus under uniform shear flow. For Maxwell molecules with Grad's angular cutoff and sufficiently small shear rate, we consider perturbations around the spatially homogeneous self-similar profile of the sheared Boltzmann equation. In self-similar variables, we prove the global stability of this profile, including global existence and uniqueness, the exponential decay of nonzero-order spatial derivatives in weighted $L^\infty$ spaces, and the uniform boundedness of the renormalized perturbation. The analysis combines Caflisch's decomposition, Guo's $L^\infty$--$L^2$ estimates, macro--micro analysis, and a spectral study of the zero-frequency mode. We further derive a closed zero-frequency system for a renormalized total energy and suitable second-order moments, which yields a precise large-time description of the shear-induced energy growth.}

\medskip
{\bf Key words}: Vlasov--Poisson--Boltzmann system; uniform shear flow; Maxwell molecules; self-similar profile; global stability; shear-induced energy growth.

\medskip
{\bf 2020 Mathematics Subject Classification}. 35Q20, 35Q83, 35B35, 35B40.
\end{abstract}


\tableofcontents  
\section{Introduction}
\setcounter{equation}{0}

In this paper, we study the dynamics of a dilute gas of a single species of charged particles, such as electrons, driven by a self-consistent electric field. The corresponding one-species Vlasov--Poisson--Boltzmann (VPB) system on the three-dimensional torus is given by
\begin{equation}\label{VPB-0}
\left\{
\begin{aligned}
& \partial_t F + v\cdot\nabla_x F + \nabla_x\phi\cdot\nabla_v F = Q(F,F),\\
& \Delta_x \phi = \int_{\mathbb R^3} F(t,x,v)\,dv - 1,\quad \int_{\mathbb T^3}\phi\,dx=0.
\end{aligned}
\right.
\end{equation}
Here $(t,x,v)\in \mathbb R_+\times\mathbb T^3\times\mathbb R^3$, with
$\mathbb T^3=(\mathbb R/\mathbb Z)^3$, $F=F(t,x,v)\ge 0$ is the distribution function of the charged particles, and $\phi=\phi(t,x)$ is the electric potential. We have normalized the constant background charge density as $\bar\rho=1$.

The Boltzmann collision operator is defined by
\begin{equation}\label{Q-def}
Q(F_1,F_2)(v):=Q_+(F_1,F_2)(v)-Q_-(F_1,F_2)(v),
\end{equation}
where the gain and loss parts are given by
\begin{equation}\label{Q-def-dist}
\left\{
\begin{aligned}
Q_+(F_1,F_2)(v)
&=
\int_{\mathbb R^3}\int_{\mathbb S^2}
B(|v-v_*|,\cos\theta)F_1(v')F_2(v_*')
\,d\omega dv_*,\\
Q_-(F_1,F_2)(v)
&=
\int_{\mathbb R^3}\int_{\mathbb S^2}
B(|v-v_*|,\cos\theta)F_1(v)F_2(v_*)
\,d\omega dv_*.
\end{aligned}
\right.
\end{equation}
Here $\omega\in\mathbb S^2$ denotes the collision direction, and the scattering angle $\theta$ is defined by
$$
\cos\theta=\frac{v-v_*}{|v-v_*|}\cdot\omega.
$$
The post-collisional velocities $v'$ and $v_*'$ are given by
\begin{equation}\label{post-pre-v}
v'=v-\big[(v-v_*)\cdot\omega\big]\omega,\quad
v_*'=v_*+\big[(v-v_*)\cdot\omega\big]\omega.
\end{equation}
In particular, momentum and kinetic energy are conserved
\[
v+v_*=v'+v_*',\quad
|v|^2+|v_*|^2=|v'|^2+|v_*'|^2.
\]

Throughout this paper, we consider the Maxwell molecule model
\begin{equation}\label{B-Maxwell}
B(|v-v_*|,\cos\theta)=B_0(\cos\theta),
\end{equation}
and assume that $B_0$ satisfies Grad's angular cutoff condition
\begin{equation}\label{Grad-cutoff}
0\le B_0(\cos\theta)\le C|\cos\theta|.
\end{equation}

In many physical situations, the gas evolves under the influence of a
nonstationary background flow. Motivated by the homoenergetic formulation of
uniform shear flow (USF) developed in \cite{JamesNotaVelazquez19b}, in which
the simple-shear deformation gives rise to the velocity-space operator
$-\alpha v_2\partial_{v_1}$, we consider the following one-species USF--VPB
system on the three-dimensional torus:
\begin{equation}
\label{VPB-3D-Cauchy}
\left\{
\begin{aligned}
&\partial_tF+v\cdot\nabla_xF
+\nabla_x\phi\cdot\nabla_vF
-\alpha v_2\partial_{v_1}F
=Q(F,F),\\
&\Delta_x\phi
=\int_{\mathbb R^3}F(t,x,v)\,dv-1,
\quad
\int_{\mathbb T^3}\phi\,dx=0,\\
&F(0,x,v)=F_0(x,v).
\end{aligned}
\right.
\end{equation}
Here $\alpha>0$ denotes the shear rate.

In the spatially homogeneous case, the charge-neutrality condition and the
normalization of $\phi$ imply $\phi=0$. Consequently,
\eqref{VPB-3D-Cauchy} reduces to the homogeneous USF Boltzmann equation
\begin{equation}
\label{USF-BE}
\partial_tF-\alpha v_2\partial_{v_1}F=Q(F,F).
\end{equation}
Let $F(t,v)$ be a solution to \eqref{USF-BE}, and assume that it decays sufficiently fast with respect to $v$. Multiplying \eqref{USF-BE} by $1$, $v_i$, and $|v|^2$, respectively, and then integrating in $v$, we obtain the moment equations
\begin{equation}\label{energy-usf}
\left\{
\begin{aligned}
&\frac{d}{dt}\int_{\mathbb R^3} F\, dv = 0,\\
&\frac{d}{dt}\int_{\mathbb R^3} v_1F\, dv + \alpha\int_{\mathbb R^3} v_2F\, dv = 0,\\
&\frac{d}{dt}\int_{\mathbb R^3} v_iF\, dv = 0,\quad i=2,3,\\
&\frac{d}{ dt}\int_{\mathbb R^3} |v|^2F\, dv + 2\alpha\int_{\mathbb R^3} v_1v_2F\, dv = 0.
\end{aligned}
\right.
\end{equation}
It follows from \eqref{energy-usf} that, without loss of generality, we may assume that the mass and momentum satisfy
\begin{equation}\label{mass-momentum-normal}
\int_{\mathbb R^3} F(t,v)\, dv = 1,\quad
\int_{\mathbb R^3} v_iF(t,v)\, dv = 0,\quad i=1,2,3,\quad \forall\, t\ge 0.
\end{equation}
The last identity in \eqref{energy-usf} shows that the shear term introduces a viscous heating mechanism through the stress moment $\displaystyle\int v_1v_2F\,{\rm d}v$, so that the energy scale of the system changes in time. Consequently, the large-time behavior of the solution cannot simply be understood as convergence to a fixed Maxwellian. Precisely because of this nonequilibrium effect continuously driven by shear, the introduction of a self-similar scaling is a natural way to characterize the long-time dynamics.

In the Maxwell molecule case, we consider exponentially self-similar solutions (see, for example, \cite[Chapter~2]{GarzoSantos03} and \cite[Section~5.1]{JamesNotaVelazquez19a})
\begin{equation}\label{self-similar-F}
F(t,v)=e^{-3\beta t}G\Big(\frac{v}{e^{\beta t}}\Big),
\end{equation}
where $\beta>0$ is a constant and $G=G(v)$ is a time-independent profile. Substituting \eqref{self-similar-F} into \eqref{USF-BE}, we obtain the profile equation
\begin{equation}\label{self-similar-G}
-\beta\nabla_v\cdot(vG)-\alpha v_2\partial_{v_1}G
=Q(G,G).
\end{equation}
Moreover, $G$ satisfies the normalization conditions
\begin{equation}\label{G-normal}
\int_{\mathbb R^3}G(v)\,dv=1,\quad
\int_{\mathbb R^3}vG(v)\,dv=0,\quad
\int_{\mathbb R^3}|v|^2G(v)\,dv=3.
\end{equation}
Under the constraint \eqref{G-normal}, the scaling rate $\beta$ can be equivalently determined by
\begin{equation}\label{beta-def}
\beta
=-\alpha\frac{\int_{\mathbb R^3}v_1v_2G(v)\,dv}{\int_{\mathbb R^3}|v|^2G(v)\,dv}
=-\frac{\alpha}{3}\int_{\mathbb R^3}v_1v_2G(v)\, dv.
\end{equation}

When the shear rate $\alpha$ is sufficiently small, \cite{DuanLiu21} shows that there exists a unique smooth profile $G$ solving \eqref{self-similar-G}--\eqref{G-normal}. More precisely, $G$ admits the first-order expansion
\begin{equation}\label{def-G}
G(v)=\mu(v)-\frac{\alpha}{2b_0}v_1v_2\mu(v)+O(\alpha^2),
\end{equation}
and therefore
\begin{equation}\label{beta}
\beta=O(\alpha^2),
\end{equation}
where $\mu(v)$ denotes the global Maxwellian with unit density, zero mean velocity, and unit temperature
\[
\mu(v)=(2\pi)^{-\frac32}e^{-\frac{|v|^2}{2}},
\]
and the constant $b_0>0$ is defined by
\begin{equation}\label{def-b0}
b_0 = 3\pi \int_{-1}^{1} B_0(z) z^2(1-z^2)\, dz >0.
\end{equation}

The study of homoenergetic flows at the level of moment equations goes
back to the pioneering works of Galkin and Truesdell
\cite{Galkin58,Truesdell56}, with a systematic account given in
\cite{TruesdellMuncaster80}. Cercignani later proved the existence of
homoenergetic affine flows and studied granular shear flows
\cite{Cercignani89,Cercignani01,Cercignani02}, while Bobylev, Caraffini,
and Spiga investigated the corresponding group-invariant solutions
\cite{BobylevCaraffiniSpiga96}. Monte Carlo simulations of USF were
carried out in \cite{MontaneroSantosGarzo96}, while anisotropic algebraic
high-velocity tails were derived for a two-dimensional grazing-collision
model in \cite{Acedo02}; a systematic treatment of kinetic shear flows
and nonlinear transport is given in \cite{GarzoSantos03}.

Self-similar solutions of spatially homogeneous Maxwell-type Boltzmann
equations were studied by Bobylev and Cercignani
\cite{BobylevCercignani02}. In the homoenergetic setting, James, Nota,
and Vel\'azquez constructed self-similar profiles and analyzed their
velocity distributions and entropy properties
\cite{JamesNotaVelazquez19a}. They later carried out formal large-time
analyses in the collision-dominated and hyperbolic-dominated regimes
\cite{JamesNotaVelazquez19b,JamesNotaVelazquez20}. Based on Bobylev's
Fourier formulation for Maxwell molecules \cite{Bobylev75}, Bobylev,
Nota, and Vel\'azquez proved convergence toward self-similar profiles
for a modified Maxwell--Boltzmann equation with a sufficiently small
deformation matrix \cite{BobylevNotaVelazquez20}. For non-cutoff Maxwell
molecules, Kepka established the existence, uniqueness, stability, and
regularity of the corresponding self-similar profiles \cite{Kepka23}.

For the Boltzmann equation under USF, Duan and Liu proved the existence,
uniqueness, regularity, nonnegativity, and exponential stability of
self-similar profiles for Maxwell molecules at small shear rates
\cite{DuanLiu21}. They later established global existence for the
spatially homogeneous Cauchy problem with cutoff hard potentials and
justified a uniform-in-time asymptotic expansion under the homoenergetic
scaling \cite{DuanLiu25}. The diffusive limit of three-dimensional kinetic
Couette flow was studied in \cite{DuanLiuStrainYang24}, while the global
stability of the spatially inhomogeneous sheared Boltzmann equation on the
torus was established in \cite{DuanLiuShen25}.

The VPB system has been extensively studied in the absence of shear.
Global perturbative theories near Maxwellians were established in the
periodic and whole-space settings
\cite{Guo02,YangYuZhao06,YangZhao06}. For the one-species system with a
nonconstant background density, Duan and Yang constructed a nontrivial
stationary solution and proved its nonlinear stability
\cite{DuanYang10}, while optimal time-decay rates in the whole space were
obtained in \cite{DuanStrain11}. For the bipolar VPB system, spectral
properties and optimal decay rates were established in
\cite{LiYangZhong16}, and the stability of viscous shock profiles and
rarefaction waves was proved in \cite{LiWangYangZhong18}. The effect of
the Poisson coupling on the spectral structure and optimal decay of the
linearized VPB system was further analyzed in \cite{LiYangZhong21}.

Despite these developments, a rigorous stability theory for the VPB
system under uniform shear remains largely open. This is the problem
addressed in the present work.
\bigskip

\noindent{\bf Notation.}
Throughout this paper, $m=(m_1,m_2,m_3)$ and $n=(n_1,n_2,n_3)$ denote multi-indices in $\mathbb N^3$, and we set
\[
\partial_x^m
=\partial_{x_1}^{m_1}\partial_{x_2}^{m_2}\partial_{x_3}^{m_3},
\quad
\partial_v^n
=\partial_{v_1}^{n_1}\partial_{v_2}^{n_2}\partial_{v_3}^{n_3},
\quad
\partial_n^m:=\partial_x^m\partial_v^n.
\]
For any $h=h(x)$ on $\mathbb T^3$, we decompose $h$ into its zero-frequency and nonzero-frequency modes by setting
\[
P_0^x h(t):=\int_{\mathbb T^3}h(t,x)\,dx,\quad
P_{\neq0}^x h(t):=h(t)-P_0^x h(t).
\]

We denote by $\|\cdot\|_{L^\infty_{x,v}}$, $\|\cdot\|_{L^2_{x,v}}$,
$\|\cdot\|_{L^\infty_x}$, $\|\cdot\|_{L^2_x}$,
$\|\cdot\|_{L^\infty_v}$, and $\|\cdot\|_{L^2_v}$ the usual norms on
$L^\infty(\mathbb T_x^3\times\mathbb R_v^3)$,
$L^2(\mathbb T_x^3\times\mathbb R_v^3)$,
$L^\infty(\mathbb T_x^3)$, $L^2(\mathbb T_x^3)$,
$L^\infty(\mathbb R_v^3)$, and $L^2(\mathbb R_v^3)$, respectively. For inner products, the subscripts indicate the variables of integration:
\[
\langle f,g\rangle_{x,v}
=
\int_{\mathbb T^3}\int_{\mathbb R^3}
f(x,v)g(x,v)\,dv\,dx,
\]
\[
\langle f,g\rangle_x
=
\int_{\mathbb T^3}f(x)g(x)\,dx,
\quad
\langle f,g\rangle_v
=
\int_{\mathbb R^3}f(v)g(v)\,dv.
\]
For a matrix $A=(a_{ij})\in\mathbb R^{3\times3}$, we define $\|A\|_{\max}=\max_{1\le i,j\le3}|a_{ij}|$. For any $l\ge0$, define a weighted function as $$w_l(v):=(1+|v|^2)^l.$$

Throughout this paper, $C>0$ denotes a generic constant which may vary from line to line, and $\varepsilon>0$ denotes a sufficiently small constant whose value may be chosen differently at different occurrences. Constants with subscripts, such as $C_l$, may depend on the indicated parameters and may increase as these parameters increase. For multi-indices $m'\le m$ and $n'\le n$, we denote $C_{m,m'}=\prod_{i=1}^3\binom{m_i}{m_i'}$ and $C_{n,n'}=\prod_{i=1}^3\binom{n_i}{n_i'}$.
\bigskip

We now present our main results.

\begin{thm}\label{thm1.1}
Let $N\ge1$, let $G(v)$ be the self-similar profile, and let $\beta$ be given by \eqref{beta-def}. There exists a large constant $l_0>0$ such that for any $l\ge l_0$, we choose $\alpha_0=\alpha_0(N,l)>0$ sufficiently small. Then there exist constants $\varepsilon_0>0$, $\lambda>0$, and $C>0$, all independent of $\alpha_0$, such that for any $0<\alpha\le\alpha_0$, if the initial data $F_0(x,v)\ge0$ satisfies
\[
\sum_{|m|+|n|\le N}
\Big\|w_l\partial_n^m\big[F_0(x,v)-G(v)\big]\Big\|_{L^\infty_{x,v}}
\le\varepsilon_0,
\]
and
\begin{equation}\label{initial-con}
\int_{\mathbb T^3}\int_{\mathbb R^3}\big[F_0(x,v)-G(v)\big]\,dv\,dx=0,
\quad
\int_{\mathbb T^3}\int_{\mathbb R^3}v\big[F_0(x,v)-G(v)\big]\,dv\,dx=0,
\end{equation}
then the USF--VPB system \eqref{VPB-3D-Cauchy} admits a unique global solution $F(t,x,v)\ge0$ satisfying
\begin{equation}\label{thm1.1-est-1}
\Big\|w_l\big[e^{3\beta t}F(t,x,e^{\beta t}v)-G(v)\big]\Big\|_{L^\infty_{x,v}}
\le
C\sum_{|m|\le1}
\Big\|w_l\partial_x^m\big[F_0(x,v)-G(v)\big]\Big\|_{L^\infty_{x,v}},
\end{equation}
\begin{equation}\label{thm1.1-est-2}
\sum_{0<|n|\le N}
\Big\|w_l\partial_v^n\big[e^{3\beta t}F(t,x,e^{\beta t}v)-G(v)\big]\Big\|_{L^\infty_{x,v}}
\le
C\sum_{\substack{|m|+|n|\le N\\ |m|\le1}}
\Big\|w_l\partial_n^m\big[F_0(x,v)-G(v)\big]\Big\|_{L^\infty_{x,v}},
\end{equation}
and
\begin{equation}\label{thm1.1-est-3}
\sum_{\substack{|m|+|n|\le N\\ |m|>0}}
\Big\|w_l\partial_n^m\big[e^{3\beta t}F(t,x,e^{\beta t}v)\big]\Big\|_{L^\infty_{x,v}}
\le
Ce^{-[\lambda-(N-1)\beta]t}
\sum_{\substack{|m|+|n|\le N\\ |m|>0}}
\Big\|w_l\partial_n^mF_0(x,v)\Big\|_{L^\infty_{x,v}}.
\end{equation}
In particular, the self-consistent electric field satisfies
\begin{equation}\label{thm1.1-est-4}
\sum_{|m|\le N}
\Big\|\partial_x^m\nabla_x\phi(t)\Big\|_{L^\infty_x}
\le
Ce^{-\lambda t}
\sum_{0<|m|\le N}
\Big\|w_l\partial_x^m F_0(x,v)\Big\|_{L^\infty_{x,v}}.
\end{equation}
\end{thm}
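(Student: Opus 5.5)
The proof will be carried out in the self-similar variables
\[
g(t,x,v):=e^{3\beta t}F(t,x,e^{\beta t}v)-G(v).
\]
Using \eqref{self-similar-G}, the perturbation $g$ satisfies
\[
\partial_t g+e^{\beta t}v\cdot\nabla_x g-\beta\nabla_v\cdot(vg)-\alpha v_2\partial_{v_1}g+e^{-2\beta t}\nabla_x\phi\cdot\nabla_v(G+g)=Q(G,g)+Q(g,G)+Q(g,g),
\]
and the Poisson equation becomes $\Delta_x\phi=\int g\,dv$, since $\int G\,dv=1$. The time-dependent transport coefficient $e^{\beta t}$ grows only slowly because $\beta=O(\alpha^2)$ by \eqref{beta}. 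We work with the linearized operator $L_G h:=Q(G,h)+Q(h,G)+\beta\nabla_v\cdot(vh)+\alpha v_2\partial_{v_1}h$. By \eqref{def-G}, $L_G$ is an $O(\alpha)$ perturbation of the standard linearized Maxwell collision operator $L$ around $\mu$.

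Following Caflisch, we split $g=g_1+g_2$. The part $g_1$ carries the loss term $-\nu g_1$ (constant $\nu$ for Maxwell molecules) together with the compactly supported-in-large-velocity part of the gain operator, so it decays exponentially in $w_l$-weighted $L^\infty$. This is where $l\ge l_0$ is needed: it makes the Grad-cutoff gain term small relative to $\nu$, and it makes the shear and dilation terms $\alpha v_2\partial_{v_1}$, $\beta v\cdot\nabla_v$ absorbable once $\alpha\le\alpha_0(N,l)$. The part $g_2$ lies in the Gaussian-weighted space $L^2(\mu^{-1/2})$. For $g_2$ we use Guo's $L^\infty$--$L^2$ argument, iterating Duhamel twice along characteristics, which now bend because of $e^{\beta t}$ and the shear. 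This reduces the weighted $L^\infty$ bound to an $L^2$ bound.

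The $L^2$ bound comes from macro--micro analysis, done separately on the two frequency pieces.

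\emph{Nonzero modes $P^x_{\neq0}$.} Coercivity of $L$ on $(I-P)g$ controls the micro part. The macro part $(a,b,c)$ is controlled through the fluid-type equations, where the Poisson term gives the extra dissipation $\|\nabla_x\phi\|^2$. On the torus we have Poincaré for nonzero frequencies, so we obtain exponential decay at a rate $\lambda$ independent of $\alpha$. The $O(\alpha)$ shear corrections are absorbed.

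\emph{Spatial derivatives.} These only see nonzero modes, so each $\partial^m_x$ with $|m|>0$ decays like $e^{-\lambda t}$. The factor $e^{(N-1)\beta t}$ in \eqref{thm1.1-est-3} arises because commuting $\partial_v$ with $e^{\beta t}v\cdot\nabla_x$ produces $e^{\beta t}\partial_x$ terms. Each such commutation loses at most $e^{\beta t}$, giving a triangular system in $|n|$ that we close inductively.

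\emph{Electric field.} Estimate \eqref{thm1.1-est-4} then follows from elliptic regularity and $\int g\,dv=\rho-1$, whose $x$-derivatives decay.

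\emph{Zero mode.} The zero-frequency mode $P^x_0 g$ solves a spatially homogeneous linearized sheared Boltzmann equation, forced quadratically by $P_0^x(\nabla_x\phi\cdot\nabla_v g)$ and $P_0^x Q(g,g)$, and these forcing terms decay because they contain nonzero-mode factors. Condition \eqref{initial-con} is preserved: the mass of $g$ is conserved, and the momentum equations (as in \eqref{energy-usf}, noting $\int v_2 g=0$ implies $\int v_1 g$ stays $0$) keep the momentum zero. A spectral analysis of $L_G$ restricted to the homogeneous space is then needed. Its kernel and neutral direction come from the scaling/energy mode of the self-similar family, which is a simple eigenvalue near $0$; everything else has a uniform spectral gap. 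This gives uniform boundedness of $P_0^x g$, not decay, which is consistent with \eqref{thm1.1-est-1} having no decay factor. The energy component may shift but stays bounded by the data. Velocity derivatives of $g$ are treated in the same way, giving \eqref{thm1.1-est-2}.

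To close the argument, we set up an a priori norm combining the sup of $\|w_l\partial^n g\|$ with $e^{[\lambda-(N-1)\beta]t}$-weighted spatial-derivative norms. Smallness of $\varepsilon_0$ handles the nonlinear terms. Local existence, uniqueness and nonnegativity follow by standard iteration, and continuity then gives the global result.

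The main obstacle is the zero mode. The neutral energy eigenmode must be shown not to be excited at a growing rate by the quadratic forcing, which means proving that its projection is integrable in time. A second difficulty is the $L^\infty$--$L^2$ step with sheared, exponentially stretched characteristics, where the change of variables $v\mapsto X(s)$ must remain nondegenerate uniformly in time.
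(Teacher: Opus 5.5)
\textbf{Gap in the zero-mode argument.} Your overall architecture matches the paper: the self-similar scaling, the Caflisch splitting with the large-velocity part of $\mathcal K$ in the $g_1$ equation, Guo's double-Duhamel $L^\infty$--$L^2$ step along the sheared characteristics, and the triangular hierarchy in $|n|$ that loses $e^{\beta t}$ per commutator (the paper's weights $\lambda_n^m=\lambda-|n|\beta$ for $|m|>0$). The genuine gap is the zero mode. You assert that the forcing of $P_0^xg$ decays because it contains nonzero-mode factors. But
\[
P_0^xQ(g,g)=Q(P_0^xg,P_0^xg)+P_0^xQ(P_{\neq0}^xg,P_{\neq0}^xg),
\]
and the first piece has no nonzero-mode factor and does not decay. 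The zero mode only stays bounded, and its energy component generically converges to a nonzero limit (cf.\ Theorem~\ref{thm1.2}). Along an exactly neutral direction, a non-integrable forcing of size $\varepsilon_0^2$ can produce a secular drift of order $\varepsilon_0^2t$. So you do not obtain the uniform bound on the temperature mode $P_0^xc$. That bound is exactly what supplies $\sup_s\|g_2(s)\|_{L^2_{x,v}}$ to the zero-order $L^\infty$ estimate, via Lemma~\ref{lem-zero-g2-L2}. You name this as the main obstacle, but you give no mechanism for it.

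The missing idea is the Maxwell-molecule moment closure. By Lemma~\ref{lem-Qsym-moment}, $\langle Q(\tilde f,\tilde f),v_iv_j\rangle_v$ is a combination of $\langle\tilde f,v_iv_j\rangle_v\langle\tilde f,1\rangle_v$ and $\langle\tilde f,v_i\rangle_v\langle\tilde f,v_j\rangle_v$. Likewise, $\langle Q(\tilde f,G)+Q(G,\tilde f),v_iv_j\rangle_v$ involves only second moments of $\tilde f$ and $\langle\tilde f,1\rangle_v$. Since $P_0^xa=P_0^x\mathbf b=0$ is propagated, applying $P_0^x$ annihilates every purely zero-mode quadratic contribution. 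This gives the closed linear system \eqref{zero-ODE-matrix} for $U=(\mathcal E_\alpha,P_0^xd_{12},P_0^xd_{22})$, where the field energy is built into $\mathcal E_\alpha$. Every entry of $R$ there carries a nonzero-mode factor, so $|R(s)|\le C\eta e^{-\lambda s}$ is integrable. The explicit spectrum $\{0,\,2b_0+3\beta\pm i\omega_\alpha\}$ of $A_\alpha$ then gives $\sup_t|U(t)|\le C|U(0)|+C\int_0^\infty|R(s)|\,ds$.

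\textbf{The spectral gap for $L_G$ is asserted, not proved.} Relatedly, you assert rather than prove a ``spectral analysis of $L_G$ on the homogeneous space with a uniform gap off the energy mode.'' This is not an $O(\alpha)$ perturbation of $L$. After conjugation by $\mu^{\frac12}$, the dilation and shear generate the unbounded multipliers $\frac{\beta}{2}|v|^2$ and the sign-indefinite $\frac{\alpha}{2}v_1v_2$, and the $O(\alpha^2)$ term $\frac{\beta}{2}|v|^2$ cannot dominate the latter. Moreover, $G$ is controlled only in $w_lL^\infty$, with $\alpha_0$ depending on $l$, so no Gaussian-weighted $L^2$ gap theory is available.

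The paper needs no such spectral theory. It places the growing multipliers, the $G_1$-dependent terms and all nonlinear terms in the $g_1$ equation, which is damped by $\nu_0$ in $w_lL^\infty$. The $g_2$ equation then has linear part $L$ plus antisymmetric or $O(\beta)$ terms. Hence zero-order $L^2$ control of $g_2$ reduces, by coercivity of $L$, to control of $\|c\|_{L^2_x}$, and only the $3\times3$ matrix $A_\alpha$ has to be analyzed. (A minor slip: the field term in the scaled equation carries $e^{-\beta t}$, not $e^{-2\beta t}$.)
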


\begin{thm}\label{thm1.2}
Under the conditions of Theorem~\ref{thm1.1}, define the renormalized total energy by
\[
\mathcal E_\alpha(t)
:=
e^{-2\beta t}\bigg(
\int_{\mathbb T^3}\int_{\mathbb R^3}|v|^2F(t,x,v)\,dv\,dx
+\|\nabla_x\phi(t)\|_{L_x^2}^2
\bigg)
-\int_{\mathbb R^3}|v|^2G(v)\,dv.
\]
Then there exist a limiting energy state $\mathcal E_\alpha(\infty)\in\mathbb R$ and a constant
\[
0<\tilde\lambda<\min\{2b_0+3\beta,\lambda\}
\]
such that
\[
|\mathcal E_\alpha(t)-\mathcal E_\alpha(\infty)|
\le
Ce^{-\tilde\lambda t},
\quad t\ge0.
\]
Equivalently,
\begin{equation}\label{energy-gro}
\int_{\mathbb T^3}\int_{\mathbb R^3}|v|^2F(t,x,v)\,dvdx
+\|\nabla_x\phi(t)\|_{L_x^2}^2
=
e^{2\beta t}\Big(
\int_{\mathbb R^3}|v|^2G(v)\,dv+\mathcal E_\alpha(\infty)
\Big)
+O(e^{(2\beta-\tilde\lambda)t}).
\end{equation}
Moreover, the limiting energy state admits the refined representation
\begin{equation}\label{Einf-refined}
\mathcal E_\alpha(\infty)
=
\frac{b_0+\beta}{b_0+3\beta}
\int_{\mathbb T^3}\int_{\mathbb R^3}
\big[F_0(x,v)-G(v)\big]
\bigg[
|v|^2
-\frac{\alpha}{b_0+\beta}v_1v_2
+\frac{\alpha^2}{2(b_0+\beta)^2}v_2^2
\bigg]\,dvdx
+O(\mathfrak I_0^2),
\end{equation}
where
\[
\mathfrak I_0:=
\sum_{|m|+|n|\le N}
\big\|w_l\partial_n^m\big[F_0(x,v)-G(v)\big]\big\|_{L^\infty_{x,v}}.
\]
\end{thm}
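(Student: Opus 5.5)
The plan is to work in the self-similar variables of Theorem~\ref{thm1.1}: set $g(t,x,v):=e^{3\beta t}F(t,x,e^{\beta t}v)-G(v)$ and reduce the question to the zero-frequency mode $P_0^x g$. A change of variables gives
\[
e^{-2\beta t}\int\!\!\int|v|^2F\,dv\,dx=\int\!\!\int|v|^2\big(G+g\big)\,dv\,dx,
\]
so that
\[
\mathcal E_\alpha(t)=\int_{\mathbb R^3}|v|^2P_0^x g\,dv+e^{-2\beta t}\|\nabla_x\phi\|_{L^2_x}^2 .
\]
By \eqref{thm1.1-est-4}, the field term is $O(e^{-(\lambda+2\beta)t}\mathfrak I_0^2)$. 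It therefore suffices to study the second-order moments of the zero mode, $M_{ij}(t):=\int v_iv_jP_0^x g\,dv$. Integrating the rescaled equation for $g$ over $\mathbb T^3$ removes the transport term. The field term becomes $-\langle\nabla_x\phi\cdot\nabla_v g\rangle_x$ (up to a factor $e^{-2\beta t}$ coming from the scaling), and this is quadratic and exponentially decaying by Theorem~\ref{thm1.1}.

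For Maxwell molecules, second moments of $Q(f,h)$ are closed: the linearized collision operator around $G$ maps second moments to second moments. Mass and momentum are preserved by \eqref{initial-con} and \eqref{energy-usf}. I would then multiply by $v_iv_j$ to get a closed linear ODE
\[
\dot M=\mathcal A_\alpha M+R(t),\qquad |R(t)|\le C\mathfrak I_0^2e^{-\lambda t},
\]
where $\mathcal A_\alpha$ is a $6\times6$ matrix. It contains $-\alpha(e_1\otimes e_2+e_2\otimes e_1)$-type shear couplings, the drift $2\beta M$ coming from $-\beta\nabla_v\cdot(vg)$, and the Maxwell relaxation $-2b_0$ (up to the paper's normalisation) on the traceless part. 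The quadratic collision term $Q(g,g)$ contributes products of zero-mode moments and nonzero-mode moments. The nonzero-mode products decay at rate $2\lambda$ by \eqref{thm1.1-est-3}. The zero-mode products enter as a contraction of the trace against the traceless part and must be handled as part of the linear structure.

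Next I would analyse the spectrum of $\mathcal A_\alpha$, which I expect to be the main step. The matrix has one eigenvalue exactly $0$: this is the direction of energy rescaling, and it reflects that the perturbation can shift the temperature of the self-similar profile. Every other eigenvalue has real part at most $-(2b_0+3\beta)+O(\alpha^2)$, or at least is strictly negative. Let $\ell_\alpha$ be the left null vector. To first orders in $\alpha$ it is $\ell_\alpha\cdot M=\int[|v|^2-\tfrac{\alpha}{b_0+\beta}v_1v_2+\tfrac{\alpha^2}{2(b_0+\beta)^2}v_2^2]P_0^xg\,dv$. Its normalisation against the energy observable produces the factor $\frac{b_0+\beta}{b_0+3\beta}$. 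I would verify this by solving $\ell\mathcal A_\alpha=0$ on the $(v_1^2,v_2^2,v_1v_2,|v|^2)$ block explicitly.

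With the spectrum in hand, the projection $\ell_\alpha\cdot M$ satisfies $\frac{d}{dt}(\ell_\alpha\cdot M)=\ell_\alpha\cdot R$, and $\ell_\alpha\cdot R$ is integrable. This gives the limit $\mathcal E_\alpha(\infty)$, with an error of order $\int_t^\infty|R|$. The stable part of $M$ decays at any rate $\tilde\lambda<\min\{2b_0+3\beta,\lambda\}$. The limit equals $\ell_\alpha\cdot M(0)$ up to normalisation, plus $\int_0^\infty\ell_\alpha\cdot R=O(\mathfrak I_0^2)$, which yields \eqref{Einf-refined}.

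The hard points are twofold. First, one has to confirm that the zero-mode nonlinearity is genuinely quadratic and decays; here I would use the bound \eqref{thm1.1-est-1}, which makes it small but not decaying. Such terms should be absorbed as an $O(\mathfrak I_0)$ perturbation of $\mathcal A_\alpha$ that preserves the simple zero eigenvalue, because energy rescaling is an exact symmetry. Second, one has to compute $\ell_\alpha$ precisely to order $\alpha^2$. Finally, \eqref{energy-gro} follows by multiplying by $e^{2\beta t}$.
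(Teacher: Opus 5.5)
Your architecture is the paper's: a closed zero-frequency ODE for the energy and the second moments, a simple zero eigenvalue coming from the profile equation, the pair $2b_0+3\beta\pm i\omega_\alpha$, and projection onto the kernel via Duhamel. Your weight $|v|^2-\frac{\alpha}{b_0+\beta}v_1v_2+\frac{\alpha^2}{2(b_0+\beta)^2}v_2^2$ is in fact the exact left null vector, not only a leading-order one. Normalising it by its pairing with the right null vector $(\langle G,|v|^2\rangle_v,\langle G,v_1v_2\rangle_v,\langle G,v_2^2\rangle_v)$ gives exactly $\frac{b_0+\beta}{b_0+3\beta}$ once you use $\alpha^2b_0=6\beta(b_0+\beta)^2$; this is the first row of the paper's $\Pi_0$ written in the basis $(|v|^2,v_1v_2,v_2^2)$. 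Putting the field energy into the remainder instead of into $\mathcal E_\alpha$ is harmless.

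The genuine gap is the nonlinear term. You assert that $P_0^x$ of the second moments of $Q(g,g)$ contains zero-mode$\times$zero-mode products (a ``trace against traceless'' contraction) that are only bounded, via \eqref{thm1.1-est-1}. You then propose to absorb them into an $O(\mathfrak I_0)$ perturbation of $\mathcal A_\alpha$ whose zero eigenvalue survives by the scaling symmetry. As a proof step this fails, for three reasons. First, such a perturbation is time- and solution-dependent, and the symmetry heuristic gives no quantitative control of its kernel or left null vector for all $t$; an $O(\mathfrak I_0)$ shift of the zero eigenvalue would destroy convergence of $\mathcal E_\alpha$ altogether. Second, your limit formula with error $O(\mathfrak I_0^2)$ needs $\int_0^\infty|R|\,ds=O(\mathfrak I_0^2)$, which is false for any source that is $O(\mathfrak I_0^2)$ but not decaying. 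Third, it contradicts the bound $|R(t)|\le C\mathfrak I_0^2e^{-\lambda t}$ you state at the outset.

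The missing observation, which the paper uses, is that these terms are absent. For Maxwell molecules, Lemma~\ref{lem-Qsym-moment} gives, pointwise in $x$ and with $\tilde f$ your $g$, $\langle Q(\tilde f,\tilde f),v_1v_2\rangle_v=-2b_0(a\,d_{12}-b_1b_2)$ and $\langle Q(\tilde f,\tilde f),v_2^2-\frac13|v|^2\rangle_v=-2b_0(a\,d_{22}+b_2^2-\frac13|\mathbf b|^2)$. So a second moment is only ever multiplied by the density $a$, never by the energy, and otherwise only momenta occur. Since $P_0^xa=0$ and $P_0^x\mathbf b=0$ by \eqref{int-ab}, you get $P_0^x(a\,d_{12})=P_0^x(a\,P_{\neq0}^xd_{12})$ and $P_0^x(b_1b_2)=P_0^x(P_{\neq0}^xb_1\,P_{\neq0}^xb_2)$. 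Every term is therefore a product of two nonzero-frequency factors, each decaying exponentially by Poincar\'e's inequality and Theorem~\ref{thm1.1}.

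The same facts remove the linear coupling $\langle\tilde f,1\rangle_v\langle G,v_iv_j\rangle_v$. Likewise, $P_0^x\nabla_x\phi=0$ removes the linear field term $\nabla_x\phi\cdot\nabla_vG$ and makes $P_0^x(\nabla_x\phi\cdot\mathbf b)$ and $P_0^x(b_i\partial_{x_j}\phi)$ quadratic in nonzero modes. Thus the zero-frequency system is exactly linear with $|R(t)|\le C\mathfrak I_0^2e^{-\lambda t}$ (cf.\ \eqref{A12-source}--\eqref{poisson-source}), nothing has to be absorbed into the matrix, and with this replacement the rest of your argument goes through.

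A minor point: in your $6\times6$ system the modes $M_{13}$, $M_{23}$ and $M_{33}-\frac13\operatorname{tr}M$ relax at rate $2b_0+2\beta$, not $2b_0+3\beta$. So the full stable part of $M$ does not decay at every rate below $2b_0+3\beta$. The energy lives in the closed block $(\operatorname{tr}M,M_{12},M_{22})$, which is why the paper works with a $3\times3$ system; for the mere existence of an admissible $\tilde\lambda$ this is harmless.
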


\begin{remark}
We record the following remarks on Theorems~\ref{thm1.1} and \ref{thm1.2}.

\noindent{\rm (i)}
Theorem~\ref{thm1.1} gives the global stability of the self-similar profile $G$ in a velocity-weighted $L^\infty$ framework. More precisely, the renormalized perturbation and its pure velocity derivatives remain uniformly bounded for all time, while all components involving at least one spatial derivative decay exponentially. Since $P_{\neq0}^xG=0$, Poincar\'e inequality on $\mathbb T^3$ implies
\[
\sum_{0\le |n|\le N-1}
\Big\|w_l\partial_v^nP_{\neq0}^x\big[e^{3\beta t}F(t,x,e^{\beta t}v)\big]\Big\|_{L^\infty_{x,v}}
\le
Ce^{-[\lambda-(N-1)\beta]t}
\sum_{\substack{|m|+|n|\le N\\ |m|>0}}
\Big\|w_l\partial_x^m\partial_v^nF_0(x,v)\Big\|_{L^\infty_{x,v}}.
\]
Thus the nonzero-frequency modes decay exponentially in the self-similar variables. Moreover, by \eqref{beta},
we can choose $\alpha_0>0$ sufficiently small to  ensure that $\lambda-(N-1)\beta>0$.

\noindent{\rm (ii)}
Theorem~\ref{thm1.2} identifies the zero-frequency energy dynamics. The renormalized total energy $\mathcal E_\alpha(t)$, which contains both the kinetic energy and the electric field energy, converges exponentially to a limiting state $\mathcal E_\alpha(\infty)$. Equivalently, the total energy grows at the precise rate $e^{2\beta t}$, with prefactor
\[
\int_{\mathbb R^3}|v|^2G(v)\,dv+\mathcal E_\alpha(\infty).
\]
The refined representation \eqref{Einf-refined} further shows that, up to the quadratic remainder $O(\mathfrak I_0^2)$, the limiting energy state is determined by a shear-corrected second-order moment of the initial perturbation. This is consistent with the moment identity \eqref{energy-usf}: the evolution of the energy moment is driven by the stress moment $\displaystyle \int v_1v_2F\,dv$. Consequently, the limiting energy state $\mathcal E_\alpha(\infty)$ is not determined by the initial energy moment alone. Using the expansion above, the shear-corrected weight in \eqref{Einf-refined} satisfies
\[
\frac{b_0+\beta}{b_0+3\beta}
\Big[
|v|^2
-\frac{\alpha}{b_0+\beta}v_1v_2
+\frac{\alpha^2}{2(b_0+\beta)^2}v_2^2
\Big]
=
\Big(1-\frac{\alpha^2}{3b_0^2}\Big)|v|^2
-\frac{\alpha}{b_0}v_1v_2
+\frac{\alpha^2}{2b_0^2}v_2^2
+O(\alpha^3)|v|^2.
\]
Thus the $v_1v_2$-moment gives the leading $O(\alpha)$ shear correction, while the $v_2^2$-moment and the correction to the $|v|^2$-weight give $O(\alpha^2)$ contributions.

\noindent{\rm (iii)}
To the best of our knowledge, Theorems~\ref{thm1.1} and \ref{thm1.2} provide the first global-in-time stability and large-time asymptotic results for the VPB system under uniform shear flow around a nonequilibrium self-similar profile in a velocity-weighted $L^\infty$ framework. If the Poisson coupling is removed, the system formally reduces to the inhomogeneous sheared Boltzmann equation, for which related global stability and asymptotic results were recently obtained in \cite{DuanLiuShen25}. When $\alpha=0$, the shear vanishes, $\beta=0$, and $G$ reduces to the global Maxwellian equilibrium, so that the model becomes the classical VPB equation near equilibrium. In this limiting case, the exponential growth factor in Theorem~\ref{thm1.2} becomes trivial, and \eqref{Einf-refined} reduces, at the leading perturbative order, to the initial kinetic energy perturbation. Since the initial field energy is of quadratic order and is included in the remainder, this is consistent with the kinetic-field energy structure of the classical VPB dynamics.
\end{remark}

We now turn to the main ideas of the proof. The analytical framework of this
paper builds on the self-similar theory for the USF--Boltzmann equation
developed in \cite{DuanLiu21,JamesNotaVelazquez19a}. It is also inspired by a
zero-frequency mode analysis method in \cite{DuanLiuShen25}. USF generates
shear-induced heating, and hence the velocity scale of the solution changes in
time. We use the self-similar scaling \eqref{scaled-f}, under which the leading
background becomes the stationary nonequilibrium profile $G$, defined by
\eqref{self-similar-G}--\eqref{G-normal}. In this way, the VPB system is
reformulated as a perturbation problem around $G$, as shown in
\eqref{tildeg-eq}. To handle the high-velocity growth caused by the
self-similar scaling and the shear term, we use Caflisch's decomposition
\[
\mu^{\frac12}\tilde g=g_1+\mu^{\frac12}g_2,
\]
following \cite{Caflisch80a,DuanLiu21}. Since Caflisch's decomposition
weakens the direct $L^2$ control, we further combine it with Guo's
$L^\infty$--$L^2$ method \cite{Guo10}. A key ingredient in this step is the characteristic change-of-variables estimate in Lemma~\ref{lem-cov-3d}. The weighted $L^\infty$ estimates are then closed with the help of the refined $L^2$ estimates developed in Section~\ref{sec4}.

We next explain the main idea underlying the weighted $L^\infty$ estimates. The exponential decay of nonzero-order spatial derivatives is a principal objective of this paper and is also essential for closing these estimates, since it compensates for the growth in time of the transport factor. The main difficulty comes from the time-dependent transport terms
\[
e^{\beta t}v\cdot\nabla_x g_1,
\quad
e^{\beta t}v\cdot\nabla_x g_2.
\]
Although these terms are incorporated into the characteristics, velocity derivatives do not commute with $v\cdot\nabla_x$. After applying mixed derivatives to the equations, this produces commutator source terms which enter the Duhamel representations \eqref{I1-decomp}--\eqref{I2-decomp} in the form
\[
e^{\beta s}w_l\partial_{n-e_i}^{m+e_i}g_1,
\quad
e^{\beta s}w_l\partial_{n-e_i}^{m+e_i}g_2,
\]
where one velocity derivative is converted into one spatial derivative. This motivates the introduction of the key layered time weights $e^{\lambda_n^mt}$ in the $L^\infty$ closure, where $\lambda_n^m$ is defined in \eqref{def-lambda-mn} by
\[
\lambda_n^m=
\begin{cases}
0,& |m|=0,\\
\lambda-|n|\beta,& |m|>0.
\end{cases}
\]
For $|m|>0$ and $n_i>0$, the time weights are exactly matched with the derivative shift generated by the commutator, since $\lambda_{n-e_i}^{m+e_i}=\lambda_n^m+\beta$. Thus, the additional factor $e^{\beta s}$ is absorbed by the time weight associated with the mixed derivative $\partial_{n-e_i}^{m+e_i}g_j$, $j=1,2$, which is precisely the mechanism underlying the estimates \eqref{weighted-I61}--\eqref{weighted-I42}. The case $|m|=0$ is left unweighted in accordance with the $L^2$ theory below: the zero-order macroscopic temperature mode is only uniformly controlled in time, whereas exponential decay is available only in the nonzero-order spatial derivative hierarchy. When $|m|=0$, the commutator produces a nonzero-order spatial derivative, so the resulting term is controlled in the $|m|>0$ hierarchy. Since $\beta=O(\alpha^2)$, taking the shear rate sufficiently small keeps $\lambda_n^m\ge0$ for all $|m|+|n|\le N$. This layered choice of time weights plays a key role in closing the weighted $L^\infty$ estimates for mixed $x$-$v$ derivatives, and the decay-rate loss $(N-1)\beta$ in \eqref{thm1.1-est-3} of Theorem~\ref{thm1.1} reflects this mechanism.

In the $L^2$ analysis, the main difficulty arises from the temperature mode
$c$, defined later in \eqref{def-abc}. Under the USF background, the
shear-induced heating destroys the usual energy conservation, so that one
cannot naturally impose $P_0^x c=0$. Although the $c$-equation
\eqref{eq-macro-abc} contains the damping term $2\beta c$, this damping is
too weak because $\beta=O(\alpha^2)$. In particular, it does not yield
sufficiently strong zero-order dissipation or exponential decay for $c$.
Consequently, the zero-order estimates must be treated separately from the
decay estimates for nonzero-order spatial derivatives. Otherwise, zero-order
macroscopic or microscopic quantities without the required decay would enter
the nonzero-order spatial derivative estimates and prevent their closure.

A key step in deriving the nonzero-order spatial derivative estimates is to test the
$c$-equation in the macroscopic system \eqref{eq-macro-abc} against
$P_{\neq0}^x\partial_x^m c$. This is needed only in the borderline case
 $m=0$, in order to remove the zero-frequency mode. When $|m| \ge1$, one has
\[
P_{\neq0}^x\partial_x^m c=\partial_x^m c .
\]
The projection also preserves the heat dissipation:
\[
-\big\langle \Delta_x\partial_x^m c,
P_{\neq0}^x\partial_x^m c\big\rangle_x
=
\|\nabla_x\partial_x^m c\|_{L_x^2}^2 .
\]
In the case $m=0$, the resulting lower-order quantities are replaced by their
nonzero-frequency modes, so Poincar\'e inequality gives
\[
\|P_{\neq0}^x d_{12}\|_{L_x^2}
\le
C\|\nabla_x d_{12}\|_{L_x^2}
\le
C\|\nabla_x\mathbf P_1\tilde g\|_{L^2_{x,v}},
\quad
\|P_{\neq0}^x c\|_{L_x^2}
\le
C\|\nabla_x c\|_{L_x^2}.
\]
We refer to \eqref{eq-Pneq0-c}--\eqref{c-step} and \eqref{c-dis} for the implementation of this idea.

More precisely, the principal obstruction in the zero-order estimates is the
zero-frequency temperature mode $P_0^x c$. Inspired by \cite{DuanLiuShen25}, we introduce a renormalized total energy
\[
\mathcal E_\alpha(t)=\sqrt6\,P_0^x c(t)+e^{-2\beta t}\|\nabla_x\phi(t)\|_{L_x^2}^2,
\]
and couple it with suitable second-order moments. This gives a three-component zero-frequency ODE system
\[
\frac{d}{dt}U(t)+A_\alpha U(t)=R(t),
\quad
U(t)=
\begin{pmatrix}
\mathcal E_\alpha(t)\\
P_0^x d_{12}(t)\\
P_0^x d_{22}(t)
\end{pmatrix},
\]
with $A_\alpha$ and $R(t)$ specified in \eqref{def-U-Aa}--\eqref{def-Rt}. A spectral analysis of this system yields uniform-in-time control of
$P_0^x c$, which in turn provides the required zero-order control of $c$;
see Lemma~\ref{lem-Uc-L2}. This supplies the missing zero-order estimate
needed to close the weighted $L^\infty$ argument. A further analysis of the
same ODE system yields Theorem~\ref{thm1.2}, which characterizes the
shear-induced growth of the total energy through \eqref{energy-gro} and
provides the refined expansion of the limiting renormalized energy in
\eqref{Einf-refined}.

The rest of the paper is organized as follows. In Section~\ref{sec2}, we
derive the perturbation equations in the self-similar and Caflisch-decomposed
variables, introduce the relevant operators, and collect several basic
estimates. In Section~\ref{Sec3}, we establish the weighted $L^\infty$
estimates under a suitable time-weighted a priori assumption and identify the
$L^2$ controls required for their closure. In Section~\ref{sec4}, we derive
the $L^2$ energy estimates needed to close the a priori argument. The
nonzero-order spatial derivative estimates and the zero-order estimates are treated
separately, with the latter relying on the spectral analysis of the
zero-frequency system. In Section~\ref{Sec5}, we combine the local
well-posedness theory with the a priori estimates established in the previous
sections to prove global existence, uniqueness, nonnegativity, and large-time
behavior, including the shear-induced energy growth. Appendices~\ref{app-B} and~\ref{app-A} contain the proof of the local existence result and the proof of a moment identity
for $L$, respectively.

\section{Preliminaries}\label{sec2}

We begin with the self-similar scaling associated with the homogeneous USF profile. This scaling is chosen to match the time-dependent velocity scale generated by the shear-induced heating. More precisely, we define
\begin{equation}\label{scaled-f}
f(t,x,v)=e^{3\beta t}F(t,x,e^{\beta t}v),
\quad
F(t,x,v)=e^{-3\beta t}f(t,x,e^{-\beta t}v).
\end{equation}
Substituting \eqref{scaled-f} into \eqref{VPB-3D-Cauchy}, we obtain the scaled USF--VPB system
\begin{equation}\label{scaled-USF-VPB}
\left\{
\begin{aligned}
&\partial_t f + e^{\beta t}v\cdot\nabla_x f
- \beta\nabla_v\cdot(vf)
- \alpha v_2\partial_{v_1}f
+ e^{-\beta t}\nabla_x\phi\cdot\nabla_v f
= Q(f,f),\\
& \Delta_x\phi
= \int_{\mathbb R^3}f(t,x,v)\,dv - 1,\quad\int_{\mathbb T^3}\phi\,dx=0,\\
&f(0,x,v)=F_0(x,v).
\end{aligned}
\right.
\end{equation}
Define the perturbation function $\tilde f$ by
\begin{equation}\label{def-tildef}
f(t,x,v)=G(v)+\tilde f(t,x,v),
\end{equation}
where $G$ is the homogeneous self-similar profile satisfying \eqref{self-similar-G}--\eqref{G-normal}. Since $\displaystyle \int_{\mathbb R^3}G(v)\,dv=1$, it follows that $\tilde f$ satisfies
\begin{equation}\label{perturbed-USF-VPB}
\left\{
\begin{aligned}
&\partial_t\tilde f
+ e^{\beta t}v\cdot\nabla_x\tilde f
- \beta\nabla_v\cdot(v\tilde f)
- \alpha v_2\partial_{v_1}\tilde f
+ e^{-\beta t}\nabla_x\phi\cdot\nabla_v\tilde f\\
& \quad
= Q(G,\tilde f)+Q(\tilde f,G)+Q(\tilde f,\tilde f)
- e^{-\beta t}\nabla_x\phi\cdot\nabla_v G,\\
& \Delta_x\phi
= \int_{\mathbb R^3}\tilde f(t,x,v)\,dv,\quad\int_{\mathbb T^3}\phi\,dx=0,\\
& \tilde f(0,x,v)=F_0(x,v)-G(v).
\end{aligned}
\right.
\end{equation}

According to \eqref{def-G}, we write
\begin{equation}\label{def-G1}
G(v):=\mu(v)+\alpha\mu^{\frac12}(v)G_1(v),\quad
\mu^{\frac12}(v)G_1(v)\sim -\frac{1}{2b_0}v_1v_2\mu(v).
\end{equation}
Furthermore, letting $\tilde f=\mu^{\frac12}\tilde g$ and substituting it into \eqref{perturbed-USF-VPB}, we obtain
\begin{equation}\label{tildeg-eq}
\left\{
\begin{aligned}
& \partial_t\tilde g
+ e^{\beta t}v\cdot\nabla_x\tilde g
- \beta\nabla_v\cdot(v\tilde g)
+ \frac{\beta}{2}|v|^2\tilde g
- \alpha v_2\partial_{v_1}\tilde g
+ \frac{\alpha}{2}v_1v_2\tilde g \\
& \quad+ e^{-\beta t}\nabla_x\phi\cdot\nabla_v\tilde g
- \frac12 e^{-\beta t}(v\cdot\nabla_x\phi)\tilde g
+ L\tilde g \\
& \quad
= \alpha\Gamma(G_1,\tilde g)
+ \alpha\Gamma(\tilde g,G_1)
+ \Gamma(\tilde g,\tilde g)
+ e^{-\beta t}\mu^{\frac12}v\cdot\nabla_x\phi
- \alpha e^{-\beta t}\nabla_x\phi\cdot\big(\nabla_v G_1-\frac{v}{2}G_1\big), \\
& \Delta_x\phi=\int_{\mathbb R^3}\mu^{\frac12}(v)\tilde g(t,x,v)\, dv,\quad\int_{\mathbb T^3}\phi\,dx=0, \\
& \tilde g(0,x,v)=\mu^{-\frac12}(v)\big[F_0(x,v)-G(v)\big].
\end{aligned}
\right.
\end{equation}
Here the bilinear operator $\Gamma$ and the linearized collision operator $L$ are defined by
\[
\left\{
\begin{aligned}
&\Gamma(f,g)(v)=
\mu^{-\frac12}(v)
Q\big(\mu^{\frac12}f,\mu^{\frac12}g\big)(v),\\
&Lf(v)=
-\mu^{-\frac12}(v)
\Big(
Q\big(\mu,\mu^{\frac12}f\big)(v)
+
Q\big(\mu^{\frac12}f,\mu\big)(v)
\Big)
=
\nu_0 f(v)-Kf(v),\\
&\nu_0=
\int_{\mathbb R^3}\int_{\mathbb S^2}
B_0(\cos\theta)\mu(v_*)\,d\omega\,dv_*,\\
&Kf(v)=
\mu^{-\frac12}(v)
\Big(
Q\big(\mu,\mu^{\frac12}f\big)(v)
+
Q_+\big(\mu^{\frac12}f,\mu\big)(v)
\Big).
\end{aligned}
\right.
\]

To control the rapidly growing high-velocity terms
$\displaystyle \frac{\beta}{2}|v|^2\tilde g+\frac{\alpha}{2}v_1v_2\tilde g$
in \eqref{tildeg-eq}, we employ Caflisch's decomposition
\begin{equation}\label{Caflisch-decomp}
\mu^{\frac12}\tilde g=g_1+\mu^{\frac12}g_2,
\end{equation}
and substitute it into \eqref{tildeg-eq}. We then take $[g_1,g_2,\phi]$ to
satisfy the following coupled system:
\begin{equation}\label{g1-eq}
\left\{
\begin{aligned}
& \partial_t g_1
+e^{\beta t}v\cdot\nabla_x g_1
-\beta\nabla_v\cdot(vg_1)
-\alpha v_2\partial_{v_1}g_1
+e^{-\beta t}\nabla_x\phi\cdot\nabla_v g_1
+\nu_0 g_1\\
& \quad
= \chi_{M} \mathcal K g_1
-\frac{\beta}{2}|v|^2\mu^{\frac12}g_2
-\frac{\alpha}{2}\mu^{\frac12}v_1v_2 g_2
+\frac{e^{-\beta t}}{2}(v\cdot\nabla_x\phi)\mu^{\frac12}g_2 \\
& \quad\quad
+\widetilde H(g_1,g_2)
-\alpha e^{-\beta t}\nabla_x\phi\cdot\nabla_v(\mu^{\frac12}G_1), \\
& g_1(0,x,v)=F_0(x,v)-G(v)=\tilde f_0(x,v),
\end{aligned}
\right.
\end{equation}
\begin{equation}\label{g2-eq}
\left\{
\begin{aligned}
& \partial_t g_2
+e^{\beta t}v\cdot\nabla_x g_2
-\beta\nabla_v\cdot(vg_2)
-\alpha v_2\partial_{v_1}g_2
+e^{-\beta t}\nabla_x\phi\cdot\nabla_v g_2
+Lg_2 \\
& \quad
=\mu^{-\frac12}(1-\chi_M)\mathcal K g_1
+e^{-\beta t}\mu^{\frac12}v\cdot\nabla_x\phi, \\
& g_2(0,x,v)=0,
\end{aligned}
\right.
\end{equation}
and
\begin{equation}\label{Poisson-g1g2}
\Delta_x\phi(t,x)
=\int_{\mathbb R^3}\big(g_1+\mu^{\frac12}g_2\big)(t,x,v)\,dv,
\quad
\int_{\mathbb T^3}\phi\,dx=0.
\end{equation}
Here $\mathcal K$ is defined by
\[
\mathcal K f(v)
:=
\mu^{\frac12}(v)K\big(\mu^{-\frac12}f\big)(v).
\]
The nonlinear term $\widetilde H(g_1,g_2)$ is given by
\begin{align}
\widetilde H(g_1,g_2)
&:=Q(g_1,g_1)
+Q(g_1,\mu^{\frac12}g_2)
+Q(\mu^{\frac12}g_2,g_1)
+Q(\mu^{\frac12}g_2,\mu^{\frac12}g_2) \nonumber\\
&\quad+\alpha Q(\mu^{\frac12}G_1,g_1)
+\alpha Q(\mu^{\frac12}G_1,\mu^{\frac12}g_2)
+\alpha Q(g_1,\mu^{\frac12}G_1)
+\alpha Q(\mu^{\frac12}g_2,\mu^{\frac12}G_1).
\label{H-tilde-def}
\end{align}
The smooth cutoff function $\chi_M=\chi_M(v)$ is defined by
\[
\chi_M(v)=
\left\{
\begin{aligned}
&1,\quad |v|\ge M+1,\\
&0,\quad |v|\le M,
\end{aligned}
\right.
\]
where $M=M(l)\gg1$ will be chosen later according to \eqref{weighted-I11}.

For later use, we also introduce the projection associated with $L$. The null
space of the operator $L$, denoted by $\ker L$, is spanned by the orthonormal basis $\{\chi_j, \ 0\le j\le4\}$ given by
\begin{equation}\label{def-chi}
\chi_0=\mu^{\frac12},\quad
\chi_j=v_j\mu^{\frac12}\ \ (1\le j\le3),\quad
\chi_4=\frac{(|v|^2-3)\mu^{\frac12}}{\sqrt6}.
\end{equation}
Let $\mathbf P_0$ be the projection operator from $L^2(\mathbb R_v^3)$ onto
$\ker L$, namely
\[
\mathbf P_0 h=\sum_{j=0}^4\langle h,\chi_j\rangle_v\chi_j,
\quad
\mathbf P_1=I-\mathbf P_0.
\]

We next collect several estimates for the collision operators which will be used repeatedly in the sequel.

\begin{lemma}[{\cite[Lemmas 3.2 and 3.3]{Guo06}}]\label{lem-est-L}
In the case of Maxwell molecules, the linearized collision operator $L$ is
nonnegative and has a spectral gap on the microscopic space. More precisely,
there exists a constant $\delta_0>0$ such that, for each fixed $x$,
\begin{equation}\label{coercivity-L}
\langle Lh,h\rangle_v
=
\langle L\mathbf P_1 h, \mathbf P_1 h\rangle_v
\ge \delta_0\|\mathbf P_1 h\|^2_{L^2_v}.
\end{equation}
Moreover, for any nonzero-order multi-index $n$ and any $l\ge0$, we have
\begin{equation}\label{wl-dv-coercivity-L}
\big\langle w_l^{2}\partial_v^{n}L h,\partial_v^{n}h\big\rangle_v
\ge
\delta_0\big\|w_l\partial_v^{n} h\big\|^2_{L^2_v}
-C\|h\|^2_{L^2_v}.
\end{equation}
\end{lemma}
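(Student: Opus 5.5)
The first inequality \eqref{coercivity-L} rests on the Hilbert-space structure of $L=\nu_0-K$ on $L^2_v$. The first step is to check that $L$ is self-adjoint and nonnegative. Using the pre/post-collisional change of variables $(v,v_*)\leftrightarrow(v',v_*')$ (unit Jacobian, $B_0$ invariant) and the symmetry $v\leftrightarrow v_*$, one obtains the standard identity
\[
\langle Lh,h\rangle_v=\frac14\int\!\!\int B_0(\cos\theta)\,\mu\mu_*\Big[\tfrac{h'}{\mu'^{1/2}}\!+\!\tfrac{h_*'}{\mu_*'^{1/2}}\!-\!\tfrac{h}{\mu^{1/2}}\!-\!\tfrac{h_*}{\mu_*^{1/2}}\Big]^2 d\omega\,dv_*\,dv\ \ge0 .
\]
Its zero set is the set of collision invariants, which gives $\ker L=\mathrm{span}\{\chi_j\}$. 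Self-adjointness then yields $\langle Lh,h\rangle_v=\langle L\mathbf P_1h,\mathbf P_1h\rangle_v$.

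For the spectral gap, the plan is the Weyl-type argument. Under Grad's cutoff \eqref{Grad-cutoff} with Maxwell molecules, $\nu_0$ is a positive constant, and $K$ is compact on $L^2_v$ by Grad's decomposition of $K$ into Hilbert–Schmidt kernels. Hence the essential spectrum of $L$ is $\{\nu_0\}$, and $\sigma(L)\cap[0,\nu_0)$ consists of isolated eigenvalues of finite multiplicity. Since $0$ is an isolated eigenvalue with eigenspace $\ker L$, we may take $\delta_0=\min\{\nu_0/2,\ \text{first nonzero eigenvalue}\}>0$. For Maxwell molecules this gap is explicit through the Wang Chang–Uhlenbeck eigenvalues, but abstract positivity suffices.

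For \eqref{wl-dv-coercivity-L}, we differentiate $L h=\nu_0h-Kh$. Since $\nu_0$ is constant, $\partial_v^n(\nu_0h)=\nu_0\partial_v^nh$, which gives the main term $\nu_0\|w_l\partial_v^nh\|^2_{L^2_v}$. The remaining task is to bound $\langle w_l^2\partial_v^nKh,\partial_v^nh\rangle_v$. To do this, write $K$ in the $\sigma$-representation, shifting $v_*\mapsto v+u$, so that derivatives fall either on $h$ or on the smooth Maxwellian factors. This expresses $\partial_v^nKh$ as a sum of operators of the same type as $K$, with polynomially weighted Gaussian kernels, acting on $\partial_v^{n'}h$ for $n'\le n$.

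The main obstacle is that the gain term $Q_+(\mu^{1/2}h,\mu)$ piece carries a factor that is not small on the highest-derivative term. I would split with a velocity cutoff: for $|v|\ge R$, the weighted kernel bounds give a contribution $\le\eta\|w_l\partial_v^nh\|^2$, using the Gaussian decay in the kernel that beats $w_l$. For $|v|\le R$, the weight is bounded, and we integrate by parts in $v$ to move all derivatives off $h$, or interpolate $\|\partial_v^{n'}h\|_{L^2(|v|\le R)}\le\eta\|\partial_v^nh\|+C_\eta\|h\|$. Taking $\eta$ small and $\delta_0\le\nu_0/2$ absorbs everything except $C\|h\|^2_{L^2_v}$. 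These are exactly Lemmas 3.2–3.3 of \cite{Guo06}, which I would cite for the kernel bounds.
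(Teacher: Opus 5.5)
\textbf{Verdict: your proof of \eqref{coercivity-L} is fine, but your proof of \eqref{wl-dv-coercivity-L} has a genuine gap at the absorption step.}

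Your argument for \eqref{coercivity-L} is the standard one behind the citation, and the paper gives no proof of its own. It consists of the symmetrized dissipation identity, $\ker L$ spanned by the collision invariants, self-adjointness with $L\mathbf P_0=0$, and then $\nu_0$ constant, $K$ compact and Weyl's theorem.

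The gap in \eqref{wl-dv-coercivity-L} is your claim that on $\{|v|\ge R\}$ the kernel bounds give at most $\eta\|w_l\partial_v^nh\|_{L^2_v}^2$. Write $\partial_v^nKh=\sum_{n'\le n}C_{n,n'}K_{n-n'}\partial_v^{n'}h$, where $K_{n-n'}$ is the operator in which $n-n'$ derivatives fall on the Maxwellian factors. The large-velocity smallness comes from \eqref{est-Kq-C2}. It is not Gaussian decay in $|v|$, which the gain part of the kernel does not have. The weight passes to $v_*$ only because $|v_*|\approx|v|$ on the effective support. What this gives is
\[
\Big|\big\langle w_l^2K_{n-n'}\partial_v^{n'}h,\partial_v^nh\big\rangle_{L^2(\{|v|\ge R\})}\Big|
\le\frac{C}{R}\,\|w_l\partial_v^{n'}h\|_{L^2_v}\,\|w_l\partial_v^nh\|_{L^2_v},\quad n'\le n .
\]
For $l>0$, these weighted lower-order norms cannot be absorbed by $\|w_l\partial_v^nh\|_{L^2_v}$ and the unweighted $\|h\|_{L^2_v}$. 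Take $h(v)=\psi((v-v_0)/\rho)$ with $\rho$ fixed and $|v_0|\to\infty$. Then $\|w_l\partial_v^{n'}h\|_{L^2_v}/\|w_l\partial_v^nh\|_{L^2_v}$ tends to $\rho^{|n|-|n'|}\|\partial_v^{n'}\psi\|_{L^2_v}/\|\partial_v^n\psi\|_{L^2_v}$, while $\|h\|_{L^2_v}/\|w_l\partial_v^nh\|_{L^2_v}\to0$. So no bound $\|w_l\partial_v^{n'}h\|\le\eta\|w_l\partial_v^nh\|+C_\eta\|h\|$ with small $\eta$ exists.

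Your bounded-velocity interpolation has a similar flaw even without weights when $n$ is a single mixed multi-index. The quantity $\|\partial_v^{n'}h\|_{L^2(\{|v|\le R\})}$ is controlled by the full gradient of order $|n|$, not by $\partial_v^nh$ alone. For example, take $n=e_1+e_2$, $n'=e_2$, and $h$ oscillating in $v_2$ but nearly constant in $v_1$ over a long range.

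So the step ``taking $\eta$ small absorbs everything except $C\|h\|^2_{L^2_v}$'' is not justified. Carried out correctly, your argument only gives the Guo-type inequality
\[
\big\langle w_l^2\partial_v^nLh,\partial_v^nh\big\rangle_v
\ge\delta_0\|w_l\partial_v^nh\|_{L^2_v}^2
-\eta\sum_{|n'|\le|n|}\|w_l\partial_v^{n'}h\|_{L^2_v}^2
-C_\eta\|h\|_{L^2_v}^2 ,
\]
which is what the kernel method naturally produces. Removing the lower-order terms needs an extra mechanism your sketch does not contain. The usual one is to sum over all $|n'|\le|n|$ with hierarchy coefficients and absorb inductively. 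That yields a summed estimate, not the single-index statement \eqref{wl-dv-coercivity-L}.
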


\begin{lemma}\label{lem-Gamma}
In the case of Maxwell molecules, for any multi-index $n$ and any $l\ge0$, there exists a constant $C>0$ such that
\begin{equation}\label{eq-Gamma-L2}
\|w_l\partial_v^n\Gamma(F_1,F_2)\|_{L^2_v}
\le
C\sum_{n'\le n}
\|w_l\partial_v^{n'}F_1\|_{L^2_v}
\|w_l\partial_v^{n-n'}F_2\|_{L^2_v},
\end{equation}
and
\begin{equation}\label{eq-Gamma-Linf}
\|w_l\partial_v^n\Gamma(F_1,F_2)\|_{L^\infty_v}
\le
C\sum_{n'\le n}
\|w_l\partial_v^{n'}F_1\|_{L^\infty_v}
\|w_l\partial_v^{n-n'}F_2\|_{L^\infty_v}.
\end{equation}
\end{lemma}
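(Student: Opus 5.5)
The plan is to work directly from the Maxwell-molecule structure of $Q$. Since $B=B_0(\cos\theta)$ depends only on the angle, we can bound the gain and loss parts separately. Write
\[
\Gamma(F_1,F_2)=\Gamma_+(F_1,F_2)-\Gamma_-(F_1,F_2),
\qquad
\Gamma_-(F_1,F_2)=F_1(v)\int_{\mathbb R^3}\int_{\mathbb S^2}B_0\,\mu^{\frac12}(v_*)F_2(v_*)\,d\omega dv_*,
\]
and
\[
\Gamma_+(F_1,F_2)=\int\!\!\int B_0\,\mu^{\frac12}(v_*)F_1(v')F_2(v_*')\,d\omega dv_*,
\]
where we used $\mu^{-\frac12}(v)\mu^{\frac12}(v')\mu^{\frac12}(v_*')=\mu^{\frac12}(v_*)$ by energy conservation.

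The first step is to commute velocity derivatives with $\Gamma$. We use the translation $u=v_*-v$: then $v'=v+(u\cdot\omega)\omega$, $v_*'=v+u-(u\cdot\omega)\omega$, and $v_*=v+u$. In these variables every argument is $v$ plus something independent of $v$, so the Leibniz rule gives
\[
\partial_v^n\Gamma_\pm(F_1,F_2)=\sum_{n_1+n_2+n_3=n}C\,\Gamma_\pm^{(n_3)}\big(\partial_v^{n_1}F_1,\partial_v^{n_2}F_2\big),
\]
where $\Gamma^{(n_3)}_\pm$ denotes the same form with $\mu^{\frac12}$ replaced by $\partial^{n_3}\mu^{\frac12}$. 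Since $|\partial^{n_3}\mu^{\frac12}(v_*)|\le C_n\mu^{\frac14}(v_*)$, each such term is a Maxwell-type collision form with a Gaussian weight in $v_*$. It therefore suffices to prove the two estimates for $n=0$ with the weight $\mu^{\frac14}$ in place of $\mu^{\frac12}$; the final sum over $n'\le n$ then absorbs the pairs $(n_1,n_2)$.

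The second step is to distribute the weight $w_l$. On each collision we have $1+|v|^2\le 1+|v'|^2+|v_*'|^2$, hence
\[
w_l(v)\le C_l\big(w_l(v')+w_l(v_*')\big)\le C_l\,w_l(v')\,w_l(v_*'),
\]
since $w_l\ge1$. For the loss term we simply use $w_l(v)|F_1(v)|$ together with
\[
\int\mu^{\frac14}|F_2|\,dv_*\le C\|w_lF_2\|_{L^\infty_v}
\quad\text{or}\quad
\int\mu^{\frac14}|F_2|\,dv_*\le C\|F_2\|_{L^2_v}.
\]
For the gain term in $L^\infty$, the weight bound gives the pointwise estimate
\[
w_l|\Gamma_+|\le C\|w_lF_1\|_{L^\infty}\|w_lF_2\|_{L^\infty}\int\!\!\int B_0\,\mu^{\frac14}(v_*)\,d\omega dv_*,
\]
and the last integral is finite because $B_0$ is bounded by \eqref{Grad-cutoff}. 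This proves \eqref{eq-Gamma-Linf}.

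For the $L^2$ bound \eqref{eq-Gamma-L2} on the gain term, we apply Cauchy--Schwarz in $(\omega,v_*)$:
\[
|w_l\Gamma_+|^2\le C\int\!\!\int B_0\,\mu^{\frac14}(v_*)\,d\omega dv_*\cdot\int\!\!\int B_0\,\mu^{\frac14}(v_*)\,|w_lF_1(v')|^2|w_lF_2(v_*')|^2\,d\omega dv_*.
\]
We then integrate in $v$ and use the measure-preserving change of variables $(v,v_*)\mapsto(v',v_*')$ for fixed $\omega$, whose Jacobian is one. The remaining obstacle is that the factor $\mu^{\frac14}(v_*)$ becomes $\mu^{\frac14}$ of a combination of $v',v_*'$, which we simply bound by $1$. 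This leaves
\[
\|w_lF_1\|_{L^2_v}^2\,\|w_lF_2\|_{L^2_v}^2\int_{\mathbb S^2}B_0\,d\omega,
\]
which is finite by the cutoff assumption.

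This Cauchy--Schwarz and pre/post-collisional exchange is the main technical point. It works only because the kernel does not depend on $|v-v_*|$; otherwise one would need the usual Carleman-type representation or the framework of Guo \cite{Guo06}, whose argument we would cite as an alternative for this step.
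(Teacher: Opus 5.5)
Your estimates for the undifferentiated forms are sound. For Maxwell molecules they are a self-contained substitute for the paper's citation of Guo's lemmas: the splitting $w_l(v)\le w_l(v')w_l(v_*')$, the pointwise bound on the gain term, and the Cauchy--Schwarz step followed by the unit-Jacobian change $(v,v_*)\mapsto(v',v_*')$ all work because $B_0$ is bounded and depends only on the angle.

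The gap is your claim that the sum over $n'\le n$ absorbs the pairs $(n_1,n_2)$. Whenever $n_3\neq0$, your Leibniz formula produces products $\|w_l\partial_v^{n_1}F_1\|\,\|w_l\partial_v^{n_2}F_2\|$ with $n_1+n_2<n$. For $n=e_1$ this is the term $\|w_lF_1\|\,\|w_lF_2\|$ coming from $\partial_{v_1}\mu^{\frac12}(v_*)$. Such products do not appear among the terms $\|w_l\partial_v^{n'}F_1\|\,\|w_l\partial_v^{n-n'}F_2\|$ on the right-hand side of either estimate. Nothing converts one into the other: on $\mathbb R^3_v$ there is no Poincar\'e-type bound of $\|w_lF\|$ by $\|w_l\partial_{v_1}F\|$, as you see by spreading $F$ out in $v_1$.

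A sharper argument cannot repair this, because the lower-order terms are really present. The loss part does commute exactly with $\partial_v$, since its $v_*$-integral is a constant. The gain part does not. Take $l=0$, $n=e_1$, $F_1\equiv1$ and $F_2=\cos(kv_2)$; then the right-hand side of \eqref{eq-Gamma-Linf} vanishes. On the other hand, write $v_{*2}'=v_2+u_2-(u\cdot\omega)\omega_2$ with $u=v_*-v$ and expand in $k$. This shows that $\partial_{v_1}^2\Gamma(F_1,F_2)(0)$ is a nonzero multiple of $b_0k^2$ up to $O(k^4)$, so $\partial_{v_1}\Gamma(F_1,F_2)\not\equiv0$ for small $k\neq0$. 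The mechanism is that the gain term feeds $v_1$ into $v_{*2}'$ through the factor $\omega_1\omega_2$.

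What your argument actually proves is the estimate with the sum taken over all $n_1+n_2\le n$. That is the form in which Guo-type bilinear bounds are normally stated. You should state and prove that version rather than claim the exact-order one.
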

\begin{proof}
The proof of \eqref{eq-Gamma-L2} follows the same argument as in \cite[Lemma 2.3]{Guo02}, while the proof of \eqref{eq-Gamma-Linf} is analogous to \cite[Lemma 5]{Guo10}. For brevity, we omit the details.
\end{proof}

\begin{lemma}[{\cite[Proposition 3.1]{Arkeryd87}}]\label{lem-Q}
In the case of Maxwell molecules, if $\displaystyle l>\frac32$, then for any multi-index $n$, there exists a constant $C>0$ such that
\begin{equation}\label{eq-Q-Linf}
\|w_l\partial_v^n Q(F_1,F_2)\|_{L^\infty_v}
\le
C\sum_{n'\le n}
\|w_l\partial_v^{n'}F_1\|_{L^\infty_v}
\|w_l\partial_v^{n-n'}F_2\|_{L^\infty_v}.
\end{equation}
\end{lemma}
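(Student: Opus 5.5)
The statement is Lemma~\ref{lem-Q}, the weighted $L^\infty$ bound for $\partial_v^n Q(F_1,F_2)$ with Maxwell molecules and Grad's cutoff.

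\textbf{Step 1: reduce to $n=0$.} For Maxwell molecules the kernel $B_0(\cos\theta)$ does not depend on $|v-v_*|$. Changing variables $v_*=v-u$ gives $v'=v-(u\cdot\omega)\omega$ and $v_*'=v-u+(u\cdot\omega)\omega$. Both are translates of $v$ by vectors depending only on $(u,\omega)$, so derivatives commute with $Q$:
\[
\partial_v^n Q_\pm(F_1,F_2)=\sum_{n'\le n}C_{n,n'}\,Q_\pm\big(\partial_v^{n'}F_1,\partial_v^{n-n'}F_2\big).
\]
It therefore suffices to prove
\[
\|w_lQ_\pm(F_1,F_2)\|_{L^\infty_v}\le C\|w_lF_1\|_{L^\infty_v}\|w_lF_2\|_{L^\infty_v},\qquad l>3/2.
\]

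\textbf{Step 2: the loss term.} Since $B_0\le C|\cos\theta|\le C$,
\[
|w_l(v)Q_-(F_1,F_2)(v)|\le C\,w_l(v)|F_1(v)|\int_{\mathbb R^3}|F_2(v_*)|\,dv_*\le C\|w_lF_1\|_{L^\infty}\|w_lF_2\|_{L^\infty}.
\]
The last inequality uses $\int w_l^{-1}\,dv_*<\infty$, which holds because $2l>3$.

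\textbf{Step 3: the gain term.} Set $A_i=\|w_lF_i\|_{L^\infty}$, so that $|F_1(v')F_2(v_*')|\le A_1A_2\,w_l(v')^{-1}w_l(v_*')^{-1}$.

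Energy conservation gives $|v'|^2+|v_*'|^2\ge|v|^2$, so $\max(|v'|,|v_*'|)\ge|v|/\sqrt2$. This yields
\[
w_l(v)\,w_l(v')^{-1}w_l(v_*')^{-1}\le 2^l\big[w_l(v')^{-1}+w_l(v_*')^{-1}\big]\cdot\tfrac{1}{\min\text{-part}}.
\]
More precisely, on the region $|v'|\ge|v|/\sqrt2$,
\[
w_l(v)\,w_l(v')^{-1}w_l(v_*')^{-1}\le 2^l\,w_l(v_*')^{-1},
\]
and symmetrically on the complementary region with the roles of $v'$ and $v_*'$ exchanged. It then remains to show
\[
\sup_v\int_{\mathbb R^3}\int_{\mathbb S^2}B_0\,w_l(v_*')^{-1}\,d\omega\,dv_*<\infty,
\]
together with the symmetric bound for $w_l(v')^{-1}$.

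For fixed $\omega$, the map $v_*\mapsto v_*'=v_*+[(v-v_*)\cdot\omega]\omega$ is affine with Jacobian $|1-1|$ in the $\omega$-direction. The $\omega$-component of $v_*'$ equals $v\cdot\omega$, independent of $v_*$, so the map is degenerate. To handle this, I will use the standard Carleman-type representation: write $v_*=v+z$ and split $z=z_\parallel\omega+z_\perp$. Then
\[
v_*'=v+z_\perp,\qquad v'=v+z_\parallel\omega .
\]
Hence the integral of $w_l(v_*')^{-1}$ over $z\in\mathbb R^3$ is infinite along $z_\parallel$ for fixed $\omega$. This is the expected main obstacle: the pointwise $L^\infty$ bound needs the $\omega$-integration and the factor $|\cos\theta|$ to supply integrability.

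The change of variables $(z,\omega)\mapsto$ (the pair $z_\parallel\omega$, $z_\perp$) has Jacobian of order $|z_\parallel|^{-2}$, with $|\cos\theta|=|z_\parallel|/|z|$. Using this, the region integral with $w_l(v+z_\perp)^{-1}$ becomes
\[
\int_{\mathbb R^3}dy\int_{E_y}\frac{B_0}{|z_\parallel|^{2}}\,w_l(v+y)^{-1}\cdots,
\]
where $E_y$ is the plane orthogonal to $y$. On this plane the restriction $|v'|\le|v|/\sqrt2$ localizes $z_\parallel\omega$ to a bounded set (of size at most $C|v|$). The cutoff bound $B_0\le C|z_\parallel|/|z|$ then makes the plane integral finite.

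I will try to bound the resulting plane integral by $C(1+|v|)$ and compensate this growth by keeping a small amount of weight in reserve: use $w_l(v_*')^{-1}$ with $|v_*'|\gtrsim|v|$ on that region. This is exactly Arkeryd's argument, which I would follow and cite to close the estimate.
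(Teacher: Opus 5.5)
Your Steps 1 and 2 are fine. For Maxwell molecules $Q_\pm$ commute with translations in $v$, so the Leibniz rule reduces everything to $n=0$. The loss term only needs $\int_{\mathbb R^3}w_l^{-1}\,dv_*<\infty$, i.e.\ $l>3/2$. Note that the paper gives no proof of this lemma; it only quotes Arkeryd, so a citation is all that is needed to match it.

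The genuine gap is in Step 3. The reduction to $\sup_v\int\!\!\int B_0\,w_l(v_*')^{-1}\,d\omega\,dv_*<\infty$, and to its mirror with $w_l(v')^{-1}$, is false. Use the Carleman variables $x=v'-v\in\mathbb R^3$ and $y=v_*'-v\in x^\perp$, in which
\[
\int_{\mathbb R^3}\int_{\mathbb S^2}\Phi\,d\omega\,dv_*=2\int_{\mathbb R^3}\frac{dx}{|x|^2}\int_{x^\perp}\Phi\,dE(y),
\qquad |\cos\theta|=\frac{|x|}{\sqrt{|x|^2+|y|^2}}.
\]
Take the admissible kernel $B_0=|\cos\theta|$. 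Discarding $w_l(v+x)^{-1}$ leaves a divergent radial integral $\int^\infty dr$. Discarding $w_l(v+y)^{-1}$ leaves $\int_{x^\perp}|x|(|x|^2+|y|^2)^{-1/2}\,dE(y)=\infty$ for every $x$. So splitting according to whether $|v'|$ or $|v_*'|$ exceeds $|v|/\sqrt2$, and then throwing one weight away completely, fails in \emph{both} regions; the factor $|\cos\theta|$ does not rescue it.

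Your repair does not close this, for three reasons.
\begin{itemize}
\item The localization $|v+x|\le|v|/\sqrt2$ holds only in the region where you keep $w_l(v')^{-1}$, not in the one where you keep $w_l(v_*')^{-1}$.
\item In the ordering $\int dy\int_{E_y}$ the density is $2/(|x||y|)$, not $|x|^{-2}$.
\item The decisive estimate is handed off to ``Arkeryd's argument'' without being carried out.
\end{itemize}
Reserving part of the discarded weight can in fact be made to work, but that computation is exactly what is missing.

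The cleanest fix is not to split at all and to keep both weights. With $B_0\le C$ and $|F_i|\le A_iw_l^{-1}$,
\[
w_l(v)\,|Q_+(F_1,F_2)(v)|\le 2CA_1A_2\,w_l(v)\int_{\mathbb R^3}\frac{dx}{|x|^2\,w_l(v+x)}\int_{x^\perp}\frac{dE(y)}{w_l(v+y)}.
\]
On $x^\perp$ one has $|v+y|^2=(v\cdot\hat x)^2+|v_\perp+y|^2$, so the plane integral equals exactly $\frac{\pi}{l-1}\big(1+(v\cdot\hat x)^2\big)^{1-l}$. Pass to polar coordinates $x=r\sigma$, $t=\hat v\cdot\sigma$, where $\frac{dx}{|x|^2}=dr\,d\sigma$. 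Then $\int_0^\infty\big(1+(r+|v|t)^2+|v|^2(1-t^2)\big)^{-l}dr\lesssim\big(1+|v|^2(1-t^2)\big)^{\frac12-l}$, and it remains to check
\[
\int_{-1}^{1}\big(1+|v|^2t^2\big)^{1-l}\big(1+|v|^2(1-t^2)\big)^{\frac12-l}\,dt\lesssim(1+|v|^2)^{-l}.
\]
Split at $|t|=1/\sqrt2$.
\begin{itemize}
\item On $|t|\le1/\sqrt2$, the second factor is $\lesssim(1+|v|^2)^{\frac12-l}$. Also $\int(1+|v|^2t^2)^{1-l}dt\lesssim\min\{1,|v|^{-1}\}$, since $2(l-1)>1$.
\item On $|t|\ge1/\sqrt2$, the first factor is $\lesssim(1+|v|^2)^{1-l}$. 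With $s=1-|t|$, one has $\int_0^1(1+|v|^2s)^{\frac12-l}ds\lesssim\min\{1,|v|^{-2}\}$, since $l-\frac12>1$.
\end{itemize}
Together with your Steps 1 and 2 this is a complete proof. It also shows that only $B_0\in L^\infty(\mathbb S^2)$ is needed, not the factor $|\cos\theta|$.
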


\begin{lemma}[{\cite[Lemmas 2.1 and 2.2]{DuanYangZhao12}}]\label{lem-K}
In the case of Maxwell molecules, let $K_q(v,v_*)$ be defined by
\begin{equation}\label{def-Kq}
K_q(v,v_*)
:=
\big(|v-v_*|+|v-v_*|^{-1}\big)
\exp\!\bigg[
-\frac{q}{8}|v-v_*|^2
-\frac{q}{8}\frac{(|v|^2-|v_*|^2)^2}{|v-v_*|^2}
\bigg],
\quad q\in(0,1).
\end{equation}
Then the following estimates hold.

\begin{enumerate}
\item[\rm(i)] Let $n$ be any multi-index. For any $q\in(0,1)$, there exists a constant $C_{|n|,q}>0$ such that, for all $v\in\mathbb R^3$,
\begin{equation}\label{est-Kq-C1}
\big|\partial_v^n(Kh)(v)\big|
\le
C_{|n|,q}
\int_{\mathbb R^3}K_q(v,v_*)
\sum_{|n'|\le |n|}
\big|\partial_v^{n'}h(v_*)\big|\,dv_* .
\end{equation}

\item[\rm(ii)] For any $\varepsilon>0$, there exists a constant $C_{|n|,\varepsilon}>0$ such that
\begin{equation}\label{est-K-L2-C}
\big\|\partial_v^n(Kh)\big\|_{L^2_v}
\le
\varepsilon\sum_{|n'|=|n|}\big\|\partial_v^{n'}h\big\|_{L^2_v}
+
C_{|n|,\varepsilon}\|h\|_{L^2_v}.
\end{equation}

\item[\rm(iii)] There exists a constant $C_{q,l}>0$ such that, for any $v\in\mathbb R^3$,
\begin{equation}\label{est-Kq-C2}
\int_{\mathbb R^3}K_q(v,v_*)\frac{w_l(v)}{w_l(v_*)}\,dv_*
\le
\frac{C_{q,l}}{1+|v|}.
\end{equation}
\end{enumerate}
\end{lemma}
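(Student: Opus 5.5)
We would obtain all three statements from Grad's splitting $K=K_2-K_1$ with explicit kernels, $Kh(v)=\int_{\mathbb R^3}k(v,v_*)h(v_*)\,dv_*$ and $k=k_2-k_1$, together with pointwise bounds on $k$ and its derivatives.

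\textbf{Kernel bounds.} For the loss part and Maxwell molecules, $k_1(v,v_*)=C\mu^{\frac12}(v)\mu^{\frac12}(v_*)$. This is bounded by $e^{-\frac{q}{8}|v-v_*|^2-\frac{q}{8}\frac{(|v|^2-|v_*|^2)^2}{|v-v_*|^2}}$ for every $q<1$. To see this, note that $\frac{(|v|^2-|v_*|^2)^2}{|v-v_*|^2}\le |v+v_*|^2$ and $|v-v_*|^2+|v+v_*|^2=2(|v|^2+|v_*|^2)$. For the gain part we use Carleman's representation, switching to the $\omega$-representation of the post-collisional velocities and using the cutoff bound \eqref{Grad-cutoff}. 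This gives the classical estimate
\[
|k_2(v,v_*)|\le C|v-v_*|^{-1}\exp\Big[-\tfrac18|v-v_*|^2-\tfrac18\tfrac{(|v|^2-|v_*|^2)^2}{|v-v_*|^2}\Big],
\]
so $|k|\le CK_1$, and in particular $|k|\le CK_q$ for every $q<1$.

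\textbf{Step (i).} Substitute $v_*=v+u$, so that $Kh(v)=\int k(v,v+u)h(v+u)\,du$. Differentiating with Leibniz's rule, each $\partial_v$ falls either on $h(v+u)$ or on $v\mapsto k(v,v+u)$. In the variables $(v,u)$ the exponent of the kernel is $-\frac18|u|^2-\frac18\frac{(2v\cdot u+|u|^2)^2}{|u|^2}$. Its $v$-derivatives produce factors polynomial in $|u|$ and $\frac{|2v\cdot u+|u|^2|}{|u|}$, multiplied by the same Gaussian. These factors are absorbed by lowering the exponent from $1$ to any $q<1$, at the cost of a constant $C_{|n|,q}$. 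Derivatives of the prefactor $|u|^{-1}$ do not occur, because it depends only on $u$. This is the step we expect to be the main obstacle. The difficulty is that $k_2$ is only known through Carleman's formula, so one must check that its derivatives really have this form; we would first verify it on the explicit Maxwell-molecule formula. The extra factor $|v-v_*|$ in $K_q$ accommodates the polynomial growth.

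\textbf{Step (ii).} We argue from the same representation and split $\partial_v^n(Kh)$ into the top-order part $\int k\,\partial^{n}h$ and the lower-order parts. For the lower orders, Young's inequality (the kernel $K_q$ is integrable uniformly in $v$ in each variable) combined with the interpolation $\|\partial_v^{n'}h\|_{L^2}\le\varepsilon\|\partial_v^n h\|_{L^2}+C_\varepsilon\|h\|_{L^2}$ gives the bound. For the top order we split $k=k\mathbf 1_{A}+k\mathbf 1_{A^c}$, where $A=\{|u|\le\eta\}\cup\{|u|\ge\eta^{-1}\}\cup\{|v|\ge\eta^{-1}\}$.
\begin{itemize}
\item On $A$ the Schur norm of $k$ is small: this uses integrability of $|u|^{-1}$ near $0$, Gaussian decay in $u$, and the decay $(1+|v|)^{-1}$ from (iii). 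This part contributes $\varepsilon\|\partial^n h\|_{L^2}$.
\item On $A^c$ the truncated kernel is smooth with compact support, so we integrate by parts in $v_*$, moving all derivatives onto the kernel. This contributes $C_\varepsilon\|h\|_{L^2}$.
\end{itemize}

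\textbf{Step (iii).} We use $w_l(v)/w_l(v_*)\le C(1+|v-v_*|^2)^l$, and absorb this factor and $|v-v_*|$ into the Gaussian $e^{-\frac{q}{16}|u|^2}$. It remains to bound $\int (|u|^{-1}+1)e^{-\frac q{16}|u|^2-\frac q8\frac{(2v\cdot u+|u|^2)^2}{|u|^2}}du$. For $|v|\le1$ this is trivially bounded. For $|v|\ge1$ we write $u=u_\parallel\frac{v}{|v|}+u_\perp$. The second exponential restricts $|u_\parallel|$, where $u_\parallel=u\cdot\frac{v}{|v|}$, to a window of width $O\big(|u|/|v|\big)$ around $-\frac{|u|^2}{2|v|}$. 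Integrating first in $u_\parallel$ therefore gains the factor $|v|^{-1}$, and the remaining $u_\perp$-integral is bounded. This is the standard computation of Guo, and it yields $\frac{C_{q,l}}{1+|v|}$.
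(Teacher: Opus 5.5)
Your plan is the standard machinery behind the lemmas the paper only cites from Duan--Yang--Zhao: Grad's splitting $K=K_2-K_1$, the Carleman bound $|k_2(v,v_*)|\le C|v-v_*|^{-1}\exp[-\frac18|v-v_*|^2-\frac18\frac{(|v|^2-|v_*|^2)^2}{|v-v_*|^2}]$ (using $B_0\le C|\cos\theta|$), Schur's test with interpolation for (ii), and Guo's computation for (iii). Your (iii) is fine. In polar coordinates about $v/|v|$ with $t=\cos\phi$, the second exponential becomes $e^{-\frac q8(r+2|v|t)^2}$, whose $t$-integral is $O(|v|^{-1})$; this makes your ``window'' exact. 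The small-kernel and lower-order parts of (ii) are also fine.

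\textbf{Gap in (i).} You differentiate $v\mapsto k(v,v+u)$ as if it were $|u|^{-1}\exp[-\frac18|u|^2-\frac18\frac{(2v\cdot u+|u|^2)^2}{|u|^2}]$. For Maxwell molecules with a general cutoff $B_0$ this is only an \emph{upper bound} for $k_2$, and there is no closed formula to ``verify on.'' An upper bound cannot be differentiated.

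The missing idea is to differentiate before the kernel is reduced. With $v_*=v+u$, $u_\parallel=(u\cdot\omega)\omega$ and $u_\perp=u-u_\parallel$,
\[
K_2h(v)=\int_{\mathbb R^3}\int_{\mathbb S^2}B_0(\cos\theta)\,\mu^{\frac12}(v+u)\Big[\mu^{\frac12}(v+u_\perp)\,h(v+u_\parallel)+\mu^{\frac12}(v+u_\parallel)\,h(v+u_\perp)\Big]\,d\omega\,du .
\]
Neither $B_0(\cos\theta)$ nor the Jacobian of Carleman's change of variables $(u,\omega)\mapsto(u_\parallel,u_\perp)$ involves $v$. So $\partial_v^n$ at fixed $(u,\omega)$ falls only on the two Maxwellians and on $h$. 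Use $|\partial_v^\alpha[\mu^{\frac12}(v+a)\mu^{\frac12}(v+b)]|\le C_{\alpha,q}\,\mu^{\frac q2}(v+a)\mu^{\frac q2}(v+b)$ and rerun Grad's completion of the square with $\mu^{q/2}$ in place of $\mu^{1/2}$. This gives a kernel bounded by $C_q|v-v_*|^{-1}\exp[-\frac q8|v-v_*|^2-\frac q8\frac{(|v|^2-|v_*|^2)^2}{|v-v_*|^2}]\le C_qK_q$, acting on $|\partial_v^{n'}h|$. That is (i); the loss from $1$ to $q$ pays for the polynomial factors produced by differentiating the Gaussians.

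\textbf{Two repairs in the top-order part of (ii).}
\begin{itemize}
\item Replace $\mathbf 1_{A^c}$ by a smooth cutoff. Otherwise integrating by parts in $v_*$ produces boundary terms.
\item You need $k(v,\cdot)\in C^{|n|}$, with bounded derivatives on compact sets away from the diagonal. This is not automatic when $B_0$ is merely bounded and measurable, but it holds. In Carleman's formula for $k_2(v,v+z)$, the transverse variable $w$ ranges over $z^\perp$; substitute $w=|z|s\,e$ with $e$ on the unit circle of $z^\perp$. Then $B_0$ is evaluated only at $\cos\theta=\pm(1+s^2)^{-1/2}$, independently of $z$. All $(v,z)$-dependence sits in smooth Gaussian factors that decay in $s$ once $|z|$ is bounded below. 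The other half of $k_2$ is handled the same way with the roles of the two orthogonal variables exchanged.
\end{itemize}
With these additions your argument proves the lemma.
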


\begin{lemma}[{\cite[Proposition 2.1]{DuanLiu21}}]\label{lem-DL-K}
In the case of Maxwell molecules, for any integer $k\ge0$, there exists a constant $C=C(k)>0$ such that, for any sufficiently large weight index $l>0$, one can choose a cutoff radius
\[
M=M(l)\quad(\text{for instance }M=l^2)
\]
so that the following smallness estimate in the high-velocity region holds:
\begin{equation}
\sup_{|v|\ge M}w_l(v)\big|\nabla_v^k(\mathcal K h)(v)\big|
\le
\frac{C}{l}
\sum_{k'=0}^{k}
\big\|w_l\nabla_v^{k'}h\big\|_{L^\infty_v}.
\end{equation}
\end{lemma}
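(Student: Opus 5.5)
The statement to prove is Lemma~\ref{lem-DL-K}: the high-velocity smallness of $\mathcal K$ in weighted $L^\infty$.

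\emph{Step 1: representation of $\mathcal K$.} First I write $\mathcal K h=\mu^{\frac12}K(\mu^{-\frac12}h)$ explicitly. By the definition of $K$,
\[
\mathcal K h=Q(\mu,h)+Q_+(h,\mu).
\]
The loss part $Q_-(\mu,h)=\mu\int\!\!\int B_0\,h(v_*)\,d\omega dv_*$ carries the factor $\mu(v)$. Hence
\[
\sup_{|v|\ge M}w_l\,|\nabla_v^k Q_-(\mu,h)|\le C_k\sup_{|v|\ge M}w_l\,(1+|v|)^k\mu(v)\,\|h\|_{L^1_v},
\]
which is exponentially small in $M$. Using $\|h\|_{L^1_v}\le C\|w_lh\|_{L^\infty_v}$ for $l>3/2$, this term is bounded by $C/l$ once $M=l^2$ (indeed $(1+l^4)^l e^{-l^4/2}\ll 1/l$).

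\emph{Step 2: derivatives of the gain terms.} For Maxwell molecules the gain operator is translation-covariant. Writing $Q_+(F_1,F_2)(v)=\int\!\!\int B_0\,F_1(v-u\cdot\omega\,\omega)\,F_2(v+u-u\cdot\omega\,\omega)\,d\omega\,du$ with $u=v_*-v$, the $v$-derivatives fall onto the arguments. By Leibniz, $\nabla^k_v Q_+(\mu,h)$ and $\nabla^k_v Q_+(h,\mu)$ are sums of $Q_+(\nabla^{k-k'}\mu,\nabla^{k'}h)$ and $Q_+(\nabla^{k'}h,\nabla^{k-k'}\mu)$. Each $\nabla^j\mu$ is bounded by $C_j\mu_{q}$ with $\mu_q:=e^{-q|v|^2/2}$ for some $q<1$. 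It therefore suffices to prove, for $H=\nabla^{k'}h$,
\[
\sup_{|v|\ge M}w_l(v)\big(Q_+(\mu_q,|H|)+Q_+(|H|,\mu_q)\big)(v)\le \frac{C}{l}\,\|w_lH\|_{L^\infty_v}.
\]

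\emph{Step 3 (main obstacle): the weighted gain estimate.} I bound $|H(z)|\le \|w_lH\|_\infty\, w_l(z)^{-1}$ and reduce to estimating
\[
w_l(v)\int\!\!\int B_0\,\mu_q(v_*')\,w_l(v')^{-1}\,d\omega dv_*
\]
together with its symmetric counterpart. Energy conservation gives $|v|^2\le|v'|^2+|v_*'|^2$. Split according to whether $|v_*'|^2\ge\eta|v|^2$ or not.

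In the region $|v_*'|^2\ge\eta|v|^2$, the Gaussian yields $e^{-q\eta|v|^2/4}$, which dominates $w_l(v)$ for $|v|\ge M=l^2$ and makes this piece negligible.

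In the complementary region, $|v'|^2\ge(1-\eta)|v|^2$. Here I need the genuinely small factor, and I would follow the classical Povzner/Arkeryd mechanism used in \cite{DuanLiu21}. Since $v'=v-(u\cdot\omega)\omega$, averaging over the angle with the cutoff kernel $B_0\le C|\cos\theta|$ gives
\[
\int_{\mathbb S^2}B_0\Big(\frac{1+|v|^2}{1+|v'|^2}\Big)^{l}d\omega .
\]
Because $|v'|^2=|v|^2\sin^2+\dots$ in terms of the angle, this average behaves like $\int_0^1 C z\,(\text{angle factor})^{-l}dz$, which is $O(1/l)$ for the part where $v'$ is not too large relative to $v$. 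The Gaussian in $v_*$ localizes $|v_*|\lesssim |v|^{1/2}$, which controls the remaining parts of the region.

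Concretely, I would change variables to $v_*'$ and $v'$ via the $\omega$-representation, use $\int\mu_q(v_*')\,dv_*<\infty$, and show the angular integral of $w_l(v)/w_l(v')$ restricted near grazing is at most $C/l$. This angular computation is the step that gives the explicit decay $1/l$, and it is where I expect the main difficulty.

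Summing over $k'\le k$ gives the claim with a constant $C$ depending only on $k$.
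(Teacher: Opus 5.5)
The paper gives no proof of this lemma; it is quoted from Duan and Liu, so your argument has to stand on its own. Your Steps 1 and 2 are fine. You correctly identify $\mathcal K h=Q(\mu,h)+Q_+(h,\mu)$. The loss part $\mu(v)\|B_0\|_{L^1(\mathbb S^2)}\int h\,dv_*$ is super-exponentially small on $|v|\ge l^2$. Translation covariance and positivity of $Q_+$ reduce everything to $k=0$ with $\mu$ replaced by $\mu_q$. (With $u=v_*-v$ one has $v'=v+(u\cdot\omega)\omega$; your sign is a harmless slip.)

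The gap is Step 3, which is the entire content of the lemma. The $1/l$ bound is never derived, and the sketch rests on a wrong picture of the collision geometry.

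(i) There is no Gaussian in $v_*$. In $Q_+(|H|,\mu_q)$ it sits on $v_*'$, and for fixed $(v,\omega)$ the point $v_*'=v_*+((v-v_*)\cdot\omega)\omega$ is unchanged under $v_*\mapsto v_*+t\omega$. Hence $\int_{\mathbb R^3}\mu_q(v_*')\,dv_*=\infty$. Integrability in that direction comes only from $w_l(v')^{-1}$, so the factorization you propose is not available.

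(ii) Since the Gaussian is on $v_*'$, one has $|v'|^2=|v|^2+|v_*|^2-|v_*'|^2\ge|v|^2-O(1)$ on the relevant set, so $w_l(v)/w_l(v')\le 1+o(1)$ there. The relation $|v'|^2\approx|v|^2\sin^2\theta$ you invoke is the $v_*\approx0$ picture, in which this ratio would be huge, not small.

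(iii) A split at fixed $\eta$ only yields $w_l(v)/w_l(v')\le(1-\eta)^{-l}$ on the complementary region, which is exponentially large in $l$. The factor $1/l$ has to come from the measure of the set where $|v'|\approx|v|$. The $v_*$ contributing there have size $|v|/\sqrt l$, not $|v|^{1/2}$.

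\medskip
Here is the missing computation. Write $v-v_*=s\omega+y$ with $y\perp\omega$, so that $dv_*=ds\,dy$, $v'=v-s\omega$, $v_*'=v-y$, and
\[
\frac{w_l(v)}{w_l(v')}=\Big(1+\frac{(s-v\cdot\omega)^2-(v\cdot\omega)^2}{1+|v|^2}\Big)^{-l}.
\]
The factor $\mu_q(v-y)$ contains $e^{-q(v\cdot\omega)^2/2}$, which does three things:
\begin{itemize}
\item it confines $\omega$ to the band $|\hat v\cdot\omega|\lesssim|v|^{-1}$, of measure $O(|v|^{-1})$;
\item it absorbs $(1-(v\cdot\omega)^2/(1+|v|^2))^{-l}\le e^{2l(v\cdot\omega)^2/|v|^2}$, because $l\ll|v|^2$;
\item it forces $|y|\approx|v|$, so the cutoff gives $B_0\le C|s|/|y|\lesssim|s|/|v|$.
\end{itemize}
The $s$-integral is then
\[
\frac{1}{|v|}\int_{\mathbb R}|s|\Big(1+\frac{(s-v\cdot\omega)^2}{1+|v|^2}\Big)^{-l}ds\lesssim\frac{|v|}{l}+\frac{|v\cdot\omega|}{\sqrt l},
\]
and integrating over the band gives $C/l$. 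The regions $|v\cdot\omega|\gtrsim|v|$ or $|y|\le|v|/2$ carry $e^{-c|v|^2}$ and are negligible against $w_l(v)$ when $|v|\ge l^2$.

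The term $Q_+(\mu_q,|H|)$ is not literally symmetric to this one. You can handle it in either of two ways.
\begin{itemize}
\item Use the substitution $\theta\mapsto\frac\pi2-\theta$, which exchanges $v'$ and $v_*'$ and produces a kernel still bounded by $C|\cos\theta|$.
\item Compute directly. Here $\mu_q(v-s\omega)$ confines $\omega$ to caps of measure $O(|v|^{-2})$ around $\pm\hat v$. With $P_{\omega^\perp}$ the orthogonal projection onto $\omega^\perp$, one has $\int_{\omega^\perp}\big(1+(v\cdot\omega)^2+|P_{\omega^\perp}v-y|^2\big)^{-l}dy=\frac{\pi}{l-1}\big(1+(v\cdot\omega)^2\big)^{1-l}$. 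After multiplying by $w_l(v)$ and integrating over the caps, this again gives $C/l$.
\end{itemize}
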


\begin{lemma}\label{lem-Lmoments}
For Maxwell molecules, the linearized operator $L$ satisfies, for any $1\le i,j\le3$,
\begin{equation}\label{L-moments}
\left\{
\begin{aligned}
L (v_iv_j\mu^{\frac12})
&=
2b_0\Big(v_iv_j-\frac{\delta_{ij}}{3}|v|^2\Big)\mu^{\frac12},\\
L (|v|^2v_i\mu^{\frac12})
&=
\frac43b_0(|v|^2-5)v_i\mu^{\frac12}.
\end{aligned}
\right.
\end{equation}
\end{lemma}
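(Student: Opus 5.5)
The plan is a direct computation using the fact that, for Maxwell molecules, $L$ maps $p\,\mu^{\frac12}$ to $q\,\mu^{\frac12}$ with $q$ a polynomial of no higher degree than $p$. For $f=p(v)\mu^{\frac12}$ we have $\mu^{\frac12}f=p\mu$. Since $\mu(v')\mu(v_*')=\mu(v)\mu(v_*)$, the definition of $L$ gives
\[
Lf=-\mu^{\frac12}(v)\int_{\mathbb R^3}\int_{\mathbb S^2}B_0(\cos\theta)\mu(v_*)\big[p(v')+p(v_*')-p(v)-p(v_*)\big]\,d\omega\,dv_*.
\]
Grad's cutoff \eqref{Grad-cutoff} and the polynomial growth of $p$ make all integrals finite. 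Throughout I write $u=v-v_*$, $\hat u=u/|u|$, $z=\hat u\cdot\omega=\cos\theta$.

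\emph{Quadratic case $p=v_iv_j$.} From \eqref{post-pre-v},
\[
v'_iv'_j+v'_{*i}v'_{*j}-v_iv_j-v_{*i}v_{*j}=-(u\cdot\omega)(\omega_iu_j+\omega_ju_i)+2(u\cdot\omega)^2\omega_i\omega_j .
\]
I would parametrize $\omega=z\hat u+\sqrt{1-z^2}\,e$ with $e\perp\hat u$ and average over $e$. By rotational symmetry, the $\omega$-integral is a quadratic form $|u|^2\big(A\,\hat u_i\hat u_j+B\,\delta_{ij}\big)$. Energy conservation (take the trace) forces its trace to vanish, so it equals $A(u_iu_j-\tfrac13\delta_{ij}|u|^2)$. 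Collecting the trace-free parts gives the integrand factor $-2z^2+(3z^4-z^2)=-3z^2(1-z^2)$, up to the normalizations from $2\pi\,dz$ and the $e$-average. Matching the constant against \eqref{def-b0} should yield $A=-2b_0$. Finally I integrate against $\mu(v_*)\,dv_*$. Using $\int v_*\mu=0$ and $\int v_{*i}v_{*j}\mu=\delta_{ij}$, the $v_*$-dependent parts cancel in the trace-free combination:
\[
\int\mu(v_*)\big(u_iu_j-\tfrac13\delta_{ij}|u|^2\big)\,dv_*=v_iv_j-\tfrac13\delta_{ij}|v|^2 .
\]
This gives the first identity in \eqref{L-moments}.

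\emph{Cubic case $p=|v|^2v_i$.} Rather than expanding fully, I would argue structurally. First, the same expansion shows that the $\omega$- and $v_*$-integrals of the collision difference yield a polynomial in $v$ of degree at most $3$. Second, $L$ commutes with rotations and $|v|^2v_i$ transforms as a vector, so the result must have the form $(A'|v|^2+B')v_i\mu^{\frac12}$. Third, $Lf$ lies in $(\ker L)^\perp$, so it is orthogonal to $v_i\mu^{\frac12}$. Since $\int|v|^2v_i^2\mu=5$ and $\int v_i^2\mu=1$, this gives $5A'+B'=0$, hence $B'=-5A'$. It then remains to find $A'$ from the top-degree part, i.e. the cubic terms in $v$ after the $v_*$-integration. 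The cubic part of the collision difference depends only on $u$ and reduces, as above, to $\int B_0\,[\ldots]\,d\omega$ applied to $|u|^2u_i$. Its angular average is $|u|^2u_i$ times an integral of $z$-polynomials. Identifying that integral with \eqref{def-b0} should give $A'=\tfrac43b_0$.

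The main obstacle is this last constant. The cubic expansion of $|v'|^2v'_i+|v_*'|^2v'_{*i}$ produces several cross terms. The $z$-integral must come out exactly as $\tfrac43\cdot 3\pi\int_{-1}^1 B_0 z^2(1-z^2)\,dz$, and this requires checking that terms such as $\int B_0 z^2\,dz$ cancel, as energy conservation forces in the quadratic case. I would verify this with the known Maxwell-molecule eigenvalues: $2b_0$ for the traceless second-order Burnett function, and $\tfrac23$ of it for the heat-flux function. This is consistent with the ratio $\tfrac43b_0/2b_0=\tfrac23$.
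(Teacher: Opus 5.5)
Your argument is correct, and for the cubic identity it takes a genuinely different route from the paper.

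\textbf{How the paper does it.} The paper cites the first identity from Duan--Liu. For the second it uses the centre-of-mass variables $V=\frac{v+v_*}{2}$, $u=v-v_*$. Since $V'=V$ and $|u'|=|u|$, the cubic collision difference collapses to $2\sum_j V_j$ times the quadratic one. Hence $T_i=2\sum_jV_jT_{ij}=-2b_0\big((V\cdot u)u_i-\frac13|u|^2V_i\big)$ by the $T_{ij}$ formula of James--Nota--Vel\'azquez. Both coefficients then come out of an explicit Gaussian-moment integration in $v_*$.

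\textbf{How your route differs.} You prove the quadratic identity directly, which amounts to re-deriving that $T_{ij}$ formula; your factor $2\pi(-3z^2+3z^4)$ does give $A=-2b_0$. For the cubic identity you use rotation and parity equivariance together with self-adjointness of $L$ ($Lf\perp v_i\mu^{1/2}$, so $B'=-5A'$). This reduces everything to a single constant read off from the top-degree term. The paper's route is shorter once $T_{ij}$ is available, and it obtains the $-5$ by computation. Yours avoids the $v_*$-moment bookkeeping and explains structurally why the answer must have the form $(|v|^2-5)v_i$.

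\textbf{Two points to tighten.}

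First, the top-degree extraction. The $\omega$-integral of the collision difference is not a function of $u$ alone, since it is linear in $V$. It is, however, a homogeneous cubic polynomial in $(v,v_*)$, because every $\omega$ enters through $(u\cdot\omega)\omega$. So after integrating against $\mu(v_*)$, the $|v|^3$-part comes only from the $v_*$-free monomials, i.e.\ from setting $v_*=0$, $u=v$.

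Second, you left $A'$ as ``should''. Appealing to the known heat-flux eigenvalue (Prandtl number $\frac23$) is circular, since that eigenvalue is exactly the claim. The computation is short. At $v_*=0$,
\[
|v'|^2v_i'+|v_*'|^2v_{*i}'-|u|^2u_i=-(u\cdot\omega)^2u_i-|u|^2(u\cdot\omega)\omega_i+2(u\cdot\omega)^3\omega_i .
\]
The azimuthal average $\int_0^{2\pi}\omega_i\,d\varphi=2\pi z\hat u_i$ turns this into $2\pi|u|^2u_i(-2z^2+2z^4)=-4\pi z^2(1-z^2)|u|^2u_i$. No stray $\int B_0z^2\,dz$ term survives. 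Integrating against $B_0$ gives $-\frac43b_0|u|^2u_i$, hence $A'=\frac43b_0$ and $B'=-\frac{20}{3}b_0$, which is the second identity.
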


\begin{proof}
The first identity in \eqref{L-moments} is exactly \cite[Lemma 6.6]{DuanLiu21}. For brevity, we postpone the proof of the second identity to Appendix~\ref{app-A}.
\end{proof}

We also recall the existence and expansion of the self-similar profile $G$.

\begin{lemma}[{\cite[Theorem 1.1]{DuanLiu21}}]\label{lem-DL-G}
In the case of Maxwell molecules, let $b_0>0$ be the constant defined by
\eqref{def-b0}. Then there exists a constant $l_0>0$ such that for any
$l\ge l_0$, there exists $\alpha_0=\alpha_0(l)>0$ with the following
property: for all $\alpha\in(0,\alpha_0)$, the steady Boltzmann equation
\eqref{self-similar-G} admits a unique smooth solution
$G=G(v)\in C^\infty(\mathbb R^3)$ satisfying
\[
\int_{\mathbb R^3}[1,v,|v|^2]G(v)\,dv=[1,0,3].
\]
Moreover, for any integer $k\ge0$, there exists a constant
$C_{k,l}>0$, independent of $\alpha$, such that
\begin{equation}\label{DL-G}
\Big\|w_l(v)\nabla_v^k
\Big[
G-\Big(1-\frac{\alpha}{2b_0}v_1v_2\Big)\mu
\Big]\Big\|_{L^\infty_v}
\le C_{k,l}\alpha^2.
\end{equation}
In particular, if $G_1$ is defined by \eqref{def-G1}, then
\begin{equation}\label{G1-bound}
\Big\|
w_l\nabla_v^k\big(\mu^{\frac12}G_1\big)
\Big\|_{L^\infty_v}
\le C_{k,l}.
\end{equation}
\end{lemma}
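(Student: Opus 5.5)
The statement to prove is the last one above, Lemma~\ref{lem-DL-G}. The first part (existence, uniqueness, smoothness, and the expansion \eqref{DL-G}) is exactly \cite[Theorem 1.1]{DuanLiu21}, so I will simply invoke it. The only new point is the consequence \eqref{G1-bound}, and I plan to derive it directly from \eqref{DL-G}.

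By \eqref{def-G1}, $\alpha\mu^{\frac12}G_1=G-\mu$. I write
\[
\mu^{\frac12}G_1=-\frac{1}{2b_0}v_1v_2\mu+\frac{1}{\alpha}\Big[G-\Big(1-\frac{\alpha}{2b_0}v_1v_2\Big)\mu\Big].
\]
The first term is a polynomial times a Gaussian. Hence $\|w_l\nabla_v^k(v_1v_2\mu)\|_{L^\infty_v}\le C_{k,l}$, with a constant independent of $\alpha$. For the second term I apply $w_l\nabla_v^k$ and use \eqref{DL-G}, which gives the bound $\alpha^{-1}C_{k,l}\alpha^2=C_{k,l}\alpha\le C_{k,l}\alpha_0$. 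Adding the two bounds yields \eqref{G1-bound} with a constant independent of $\alpha\in(0,\alpha_0)$.

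Nothing here is genuinely difficult. The only care needed is that the constant in \eqref{DL-G} is uniform in $\alpha$, which Duan--Liu provide, so the division by $\alpha$ does no harm. It is also worth checking that the normalization $\int[1,v,|v|^2]G\,dv=[1,0,3]$ agrees with \eqref{G-normal}, so that $G$ is the same profile used throughout the paper.
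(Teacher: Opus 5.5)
Your proposal is correct and matches the paper, which gives no separate argument: it cites \cite[Theorem 1.1]{DuanLiu21} for existence, uniqueness and \eqref{DL-G}, and treats \eqref{G1-bound} as an immediate consequence. Your decomposition $\mu^{\frac12}G_1=-\frac{1}{2b_0}v_1v_2\mu+\alpha^{-1}\big[G-\big(1-\frac{\alpha}{2b_0}v_1v_2\big)\mu\big]$, combined with the $\alpha$-uniform $O(\alpha^2)$ bound in \eqref{DL-G}, is exactly the implicit step behind the paper's ``in particular.''
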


We shall also use the following standard elliptic estimate on the torus.

\begin{lemma}[Elliptic estimate on $\mathbb T^3$, {\cite[Lemma~3.2]{ModenaSattig20}}]\label{lem-elliptic-T3}
Let $1<p<\infty$ and let $k\ge0$ be an integer. Suppose that
$\phi$ is periodic on $\mathbb T^3$, satisfies
\[
\int_{\mathbb T^3}\phi(x)\,dx=0,\quad
\rho:=\Delta_x\phi\in W^{k,p}(\mathbb T^3).
\]
Then, there exists a constant $C_{k,p}>0$, depending only on $k$ and
$p$, such that
\begin{equation}\label{elliptic-Wkp}
\|\phi\|_{W^{k+2,p}(\mathbb T^3)}
\le
C_{k,p}\|\rho\|_{W^{k,p}(\mathbb T^3)}.
\end{equation}
\end{lemma}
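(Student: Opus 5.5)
The plan is to reduce the lemma to the $L^p$-boundedness of the second-order Riesz transforms on $\mathbb T^3$, and then to recover the lower-order and higher-order derivatives by the Poincar\'e inequality and by commuting $\partial_x$ with $\Delta_x$.

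\emph{Step 1: Fourier representation.} Since $\rho=\Delta_x\phi$ is a divergence on the torus, we have $\int_{\mathbb T^3}\rho\,dx=0$. Write
\[
\phi=\sum_{\xi\in 2\pi\mathbb Z^3\setminus\{0\}}\hat\phi(\xi)e^{i\xi\cdot x}.
\]
The zero mode vanishes because $\int_{\mathbb T^3}\phi\,dx=0$. Then $\hat\phi(\xi)=-\hat\rho(\xi)/|\xi|^2$, and hence
\[
\partial_{x_i}\partial_{x_j}\phi=\mathcal R_i\mathcal R_j\rho,\qquad \widehat{\mathcal R_i\mathcal R_j\rho}(\xi)=-\frac{\xi_i\xi_j}{|\xi|^2}\hat\rho(\xi),
\]
where the multiplier is set to $0$ at $\xi=0$. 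This choice is harmless because $\hat\rho(0)=0$.

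\emph{Step 2: $L^p$ bound for the multiplier (main obstacle).} I expect this step to be the only non-routine point. The symbol $m(\xi)=\xi_i\xi_j/|\xi|^2$ is homogeneous of degree zero and smooth away from the origin. The Euclidean operator with this symbol is bounded on $L^p(\mathbb R^3)$ for $1<p<\infty$ by Calder\'on--Zygmund theory.

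I would invoke the de Leeuw/Stein--Weiss transference theorem. It states that if $m$ is a bounded $L^p(\mathbb R^3)$ multiplier that is continuous at every lattice point $\xi\in2\pi\mathbb Z^3$, then its restriction to the lattice is an $L^p(\mathbb T^3)$ multiplier with no larger norm. The function $m$ is discontinuous only at $\xi=0$, so I would first apply the theorem to the regularized symbol $m(\xi)(1-\eta(\xi))$. Here $\eta$ is a smooth bump equal to $1$ near the origin and supported in $|\xi|<\pi$. The regularized symbol is still a Mikhlin multiplier, and on the lattice it agrees with our periodic symbol.

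As an alternative, one can apply the Marcinkiewicz multiplier theorem for Fourier series directly, since $m$ has bounded dyadic variation on $\mathbb Z^3$. Either route gives
\[
\|\nabla_x^2\phi\|_{L^p}\le C_p\|\rho\|_{L^p}.
\]

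\emph{Step 3: lower-order terms.} Both $\phi$ and every $\partial_{x_i}\phi$ have zero mean on $\mathbb T^3$. The $L^p$ Poincar\'e--Wirtinger inequality on the compact torus then gives
\[
\|\phi\|_{L^p}\le C\|\nabla_x\phi\|_{L^p}\le C\|\nabla_x^2\phi\|_{L^p}.
\]
This yields $\|\phi\|_{W^{2,p}}\le C_p\|\rho\|_{L^p}$, which is the case $k=0$.

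\emph{Step 4: higher $k$.} For a multi-index $a$ with $|a|\le k$, the function $\partial_x^a\phi$ is periodic and has zero mean. Moreover, $\Delta_x\partial_x^a\phi=\partial_x^a\rho$. Applying Steps 1--3 to $\partial_x^a\phi$ gives
\[
\|\partial_x^a\phi\|_{W^{2,p}}\le C_p\|\partial_x^a\rho\|_{L^p}.
\]
Summing over $|a|\le k$ gives \eqref{elliptic-Wkp} with $C_{k,p}$ depending only on $k$ and $p$.
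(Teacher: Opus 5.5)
The paper does not prove this lemma; it quotes it from \cite[Lemma~3.2]{ModenaSattig20} and uses it as a black box, so there is no internal argument to compare against. Your proposal is a correct, self-contained proof of the quoted statement along the standard Calder\'on--Zygmund route. Three features make it work:
\begin{itemize}
\item The zero-mean condition removes the zero mode, so $\hat\phi(\xi)=-\hat\rho(\xi)/|\xi|^2$ is well defined.
\item Your regularized symbol $m(1-\eta)$ with $\operatorname{supp}\eta\subset\{|\xi|<\pi\}$ is exactly what makes de Leeuw's transference applicable. It is a Mikhlin symbol, it is continuous at every point of $2\pi\mathbb Z^3$, and it coincides there with the periodic symbol.
\item The passage to general $k$ is valid. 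Every $\partial_x^\gamma$ with $|\gamma|\le k+2$ factors as $\partial_x^\beta\partial_x^a$ with $|a|\le k$ and $|\beta|\le 2$, and $\partial_x^a\phi$ again has zero mean.
\end{itemize}

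There are two small points to fix.
\begin{itemize}
\item Sign slip: with $\hat\phi=-\hat\rho/|\xi|^2$ one gets $\widehat{\partial_{x_i}\partial_{x_j}\phi}(\xi)=+\frac{\xi_i\xi_j}{|\xi|^2}\hat\rho(\xi)$. With the symbol you wrote, the identity should read $\partial_{x_i}\partial_{x_j}\phi=-\mathcal R_i\mathcal R_j\rho$. This does not affect the norm bound.
\item Step 3 tacitly uses $\phi\in W^{1,p}$. This is automatic in the paper's smooth setting. Otherwise you can get it from the lattice multipliers $\xi_j/|\xi|^2$ and $1/|\xi|^2$, which are admissible by the same transference argument.
\end{itemize}

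What your version adds over the bare citation is transparency: it shows that $1<p<\infty$ enters only through the multiplier theorem. It also makes visible that for $p=2$, the case used in the $L^2$ estimates of Section~\ref{sec4}, the lemma follows immediately from Parseval. Genuine $L^p$ multiplier theory is needed only for the $p=4$ applications, namely Lemma~\ref{lemma-poisson-3D} and the estimate of $\mathcal I_3^{(2)}$.
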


\section{\texorpdfstring{$L^\infty$}{L-infinity} Estimates}\label{Sec3}

In this section, we establish the $L^\infty$ estimates. We first define the Banach space
\[
\widetilde{\mathcal Y}_T
:=
\Big\{
[\mathcal{G}_1,\mathcal{G}_2]\in L^\infty(0,T;W^{N,\infty}_{x,v})\, \Big| \,
\|[\mathcal{G}_1,\mathcal{G}_2]\|_{\widetilde{\mathcal Y}_T}<\infty,\
\big\langle \mathcal G_1,[1,v_i]\big\rangle_{x,v}+\big\langle \mathcal G_2,[1,v_i]\mu^{\frac 12}\big\rangle_{x,v}=0,\ i=1,2,3
\Big\},
\]
where
\[
\big\|[\mathcal G_1,\mathcal G_2]\big\|_{\widetilde{\mathcal Y}_T}
:=
\sum_{|m|+|n|\le N}\sup_{0\le t\le T}
\left(
\big\|w_l\partial_n^m \mathcal G_1(t)\big\|_{L^\infty_{x,v}}
+
\big\|w_l\partial_n^m \mathcal G_2(t)\big\|_{L^\infty_{x,v}}
\right).
\]
Let $T\in(0,\infty]$ be the maximal existence time of the local solution constructed in Lemma \ref{lem-local-existence}. By the standard continuation principle, it is enough to establish a uniform bound on $\|[g_1,g_2]\|_{\widetilde{\mathcal Y}_T}$ in order to extend the solution globally in time. To this end, we introduce time weights, which will also lead to time-decay estimates. We impose the following a priori assumption: for some constant $0<\eta\ll1$,
\begin{equation}\label{assa}
\sum _{|m|+|n|\le N}\sup_{0\le t\le T}
e^{\lambda_n^m t}
\Big(
\|w_l\partial_n^m g_1(t)\|_{L^\infty_{x,v}}
+
\|w_l\partial_n^m g_2(t)\|_{L^\infty_{x,v}}
\Big)
\le \eta.
\end{equation}
Here
\begin{equation}\label{def-lambda-mn}
\lambda_n^m=
\left\{
\begin{aligned}
&0,\quad\quad |m|=0,\\
&\lambda-|n|\beta,\quad |m|>0,
\end{aligned}
\right.
\end{equation}
where
\begin{equation}\label{choice-lambda}
0<\lambda<\frac12\min\{\nu_0,\tilde\lambda_0\},
\end{equation}
and $\tilde\lambda_0>0$ is a constant specified in
\eqref{EN1g2-absorb}. Since $\beta=O(\alpha^2)$, we take
$\alpha>0$ sufficiently small so that
\[
\lambda_n^m\ge0,\quad |m|+|n|\le N.
\]

We now derive the weighted differentiated system. For any multi-indices $m,n\in\mathbb N^3$ satisfying $|m|+|n|\le N$, set
\begin{equation}\label{eq-def-h12mn}
h_{1}^{m,n}:=w_l\partial_n^m g_1,
\quad
h_{2}^{m,n}:=w_l\partial_n^m g_2.
\end{equation}
Applying $w_l\partial_n^m$ to \eqref{g1-eq} and \eqref{g2-eq}, we obtain the coupled system for $[h_{1}^{m,n},h_{2}^{m,n}]$:
\begin{equation}\label{hmn-g1}
\left\{
\begin{aligned}
& \partial_t h_{1}^{m,n}
+e^{\beta t}v\cdot\nabla_x h_{1}^{m,n}
-\beta v\cdot\nabla_v h_{1}^{m,n}
-\alpha v_2\partial_{v_1}h_{1}^{m,n}
+e^{-\beta t}\nabla_x\phi\cdot\nabla_v h_{1}^{m,n}
+\nu_0 h_{1}^{m,n}
\\
& \quad
-\bigg[
\beta (|n|+3)
-2l\beta\frac{|v|^2}{1+|v|^2}
-2l\alpha\frac{v_1v_2}{1+|v|^2}
+2l e^{-\beta t}\frac{v\cdot\nabla_x\phi}{1+|v|^2}
\bigg]h_{1}^{m,n}
=\mathcal R_{1}^{m,n},
\\
& h_{1}^{m,n}(0,x,v)=w_l\partial_x^m\partial_v^n\tilde f_0(x,v),
\end{aligned}
\right.
\end{equation}
and
\begin{equation}\label{hmn-g2}
\left\{
\begin{aligned}
& \partial_t h_{2}^{m,n}
+e^{\beta t}v\cdot\nabla_x h_{2}^{m,n}
-\beta v\cdot\nabla_v h_{2}^{m,n}
-\alpha v_2\partial_{v_1}h_{2}^{m,n}
+e^{-\beta t}\nabla_x\phi\cdot\nabla_v h_{2}^{m,n}
+\nu_0 h_{2}^{m,n}
\\
& \quad
-\bigg[
\beta (|n|+3)
-2l\beta\frac{|v|^2}{1+|v|^2}
-2l\alpha\frac{v_1v_2}{1+|v|^2}
+2l e^{-\beta t}\frac{v\cdot\nabla_x\phi}{1+|v|^2}
\bigg]h_{2}^{m,n}
=\mathcal R_{2}^{m,n},
\\
& h_{2}^{m,n}(0,x,v)=0,
\end{aligned}
\right.
\end{equation}
where $\mathcal R_{1}^{m,n}$ and $\mathcal R_{2}^{m,n}$ are given by
\begin{equation}\label{def-Rmn12}
\left\{
\begin{aligned}
\mathcal R_{1}^{m,n}
:={}&
w_l\partial_n^m(\chi_M\mathcal K g_1)
-\frac{\beta}{2}w_l\partial_n^m(|v|^2\mu^{\frac12}g_2)
-\frac{\alpha}{2}w_l\partial_n^m(\mu^{\frac12}v_1v_2 g_2)
\\
&+
\frac{e^{-\beta t}}{2}
\sum_{0\le m'\le m} C_{m,m'}
(\partial_x^{m'}\nabla_x\phi)\cdot
w_l\partial_n^{m-m'}(\mu^{\frac12}v g_2)
+w_l\partial_n^m\widetilde H(g_1,g_2)
\\
&-
\alpha e^{-\beta t}w_l\partial_n^m\big[\nabla_x\phi\cdot\nabla_v(\mu^{\frac12}G_1)\big]
-e^{\beta t}\sum_{i=1}^{3}\mathbf{1}_{\{n_i>0\}}n_i
w_l\partial_{n-e_i}^{m+e_i}g_1
\\
&-
\mathbf{1}_{\{n_2>0\}}\alpha n_2
w_l\partial_{n-e_2+e_1}^{m}g_1
-e^{-\beta t}
\sum_{0<m'\le m} C_{m,m'}
(\partial_x^{m'}\nabla_x\phi)\cdot
w_l\nabla_v\partial_n^{m-m'}g_1,
\\[0.4em]
\mathcal R_{2}^{m,n}
:={}&
w_l\partial_n^m(Kg_2)
+w_l\partial_n^m\big[\mu^{-\frac12}(1-\chi_M)\mathcal K g_1\big]
+e^{-\beta t}w_l\partial_n^m(\mu^{\frac12}v\cdot\nabla_x\phi)
\\
&-
e^{\beta t}\sum_{i=1}^{3}\mathbf{1}_{\{n_i>0\}}n_i
w_l\partial_{n-e_i}^{m+e_i}g_2
-\mathbf{1}_{\{n_2>0\}}\alpha n_2
w_l\partial_{n-e_2+e_1}^{m}g_2
\\
&-
e^{-\beta t}
\sum_{0<m'\le m} C_{m,m'}
(\partial_x^{m'}\nabla_x\phi)\cdot
w_l\nabla_v\partial_n^{m-m'}g_2.
\end{aligned}
\right.
\end{equation}

Accordingly, we define the characteristic curves $[s,X(s;t,x,v),V(s;t,x,v)]$ associated with \eqref{hmn-g1}--\eqref{hmn-g2} by
\begin{equation}\label{char-h}
\left\{
\begin{aligned}
&\frac{d}{ds}X(s;t,x,v)=e^{\beta s}V(s;t,x,v),\\
&\frac{d}{ds}V(s;t,x,v)
=
e^{-\beta s}\nabla_x\phi\big(s,X(s;t,x,v)\big)
-\alpha V_2(s;t,x,v)e_1
-\beta V(s;t,x,v),\\
&X(t;t,x,v)=x,\quad V(t;t,x,v)=v.
\end{aligned}
\right.
\end{equation}
The equivalent integral formulation is
\begin{equation}\label{char-h-int}
\left\{
\begin{aligned}
X(s;t,x,v)
&=
x-\int_s^t e^{\beta \tau}V(\tau;t,x,v)\,d\tau,
\\
V_1(s;t,x,v)
&=
e^{\beta(t-s)}v_1
+\alpha e^{\beta(t-s)}(t-s)v_2
-e^{-\beta s}\int_s^t \partial_{x_1}\phi(\tau,X(\tau;t,x,v))\,d\tau
\\
&\quad
-\alpha e^{-\beta s}\int_s^t (\tau-s)\,
\partial_{x_2}\phi(\tau,X(\tau;t,x,v))\,d\tau,
\\
V_2(s;t,x,v)
&=
e^{\beta(t-s)}v_2
-e^{-\beta s}\int_s^t \partial_{x_2}\phi(\tau,X(\tau;t,x,v))\,d\tau,
\\
V_3(s;t,x,v)
&=
e^{\beta(t-s)}v_3
-e^{-\beta s}\int_s^t \partial_{x_3}\phi(\tau,X(\tau;t,x,v))\,d\tau.
\end{aligned}
\right.
\end{equation}

\begin{lemma}\label{lemma-poisson-3D}
Let $\phi=\phi(t,x)$ satisfy \eqref{Poisson-g1g2}. If $l\ge 3$, then for any multi-index $m\in\mathbb N^3$, we have
\begin{equation}\label{poisson-est-3D}
\|\partial_x^m\nabla_x\phi(t)\|_{L_x^\infty}
\le
Cl^{-\frac32}
\Big(
\|w_l\partial_x^m g_1\|_{L_{x,v}^\infty}
+\|w_l\partial_x^m g_2\|_{L_{x,v}^\infty}
\Big),
\end{equation}
where the constant $C>0$ is independent of $l$.
\end{lemma}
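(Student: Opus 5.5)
Since $\partial_x^m$ commutes with the Poisson equation \eqref{Poisson-g1g2}, the function $\partial_x^m\phi$ solves $\Delta_x\partial_x^m\phi=\rho_m:=\int_{\mathbb R^3}\partial_x^m(g_1+\mu^{\frac12}g_2)\,dv$. It has zero mean on $\mathbb T^3$; for $m\neq0$ this is automatic, and for $m=0$ it is the normalization. The plan has two parts: first bound $\|\rho_m\|_{L^\infty_x}$ by the weighted $L^\infty_{x,v}$ norms with an explicit $l^{-3/2}$ factor, and then transfer this bound to $\nabla_x\partial_x^m\phi$ through an elliptic estimate whose constant does not depend on $l$.

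\textbf{Bounding the density.} For each fixed $x$,
\[
|\rho_m(x)|\le \Big(\int_{\mathbb R^3}w_l^{-1}(v)\,dv\Big)\|w_l\partial_x^m g_1\|_{L^\infty_{x,v}}+\Big(\int_{\mathbb R^3}w_l^{-1}\mu^{\frac12}\,dv\Big)\|w_l\partial_x^m g_2\|_{L^\infty_{x,v}} .
\]
We compute the weight integral in polar coordinates. With $r^2=s$,
\[
\int_{\mathbb R^3}(1+|v|^2)^{-l}\,dv=4\pi\int_0^\infty r^2(1+r^2)^{-l}\,dr=2\pi\int_0^\infty s^{1/2}(1+s)^{-l}\,ds=2\pi B\big(\tfrac32,l-\tfrac32\big)=2\pi\frac{\Gamma(3/2)\Gamma(l-3/2)}{\Gamma(l)}.
\]
Stirling's formula, or the elementary bound $\Gamma(l-3/2)/\Gamma(l)\le C(l-3/2)^{-3/2}$, then gives $\le Cl^{-3/2}$ for $l\ge3$. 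A direct alternative is $(1+s)^{-l}\le e^{-ls/2}$ for $s\le1$, combined with an estimate of the tail. The second integral is bounded by the same quantity, because $\mu^{\frac12}\le1$. Hence
\[
\|\rho_m\|_{L^\infty_x}\le Cl^{-3/2}\big(\|w_l\partial_x^m g_1\|_{L^\infty_{x,v}}+\|w_l\partial_x^m g_2\|_{L^\infty_{x,v}}\big).
\]

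\textbf{From density to field.} We apply Lemma~\ref{lem-elliptic-T3} with $k=0$ and some fixed $p\in(3,\infty)$, say $p=4$, to $\partial_x^m\phi$:
\[
\|\partial_x^m\phi\|_{W^{2,p}(\mathbb T^3)}\le C_{0,p}\|\rho_m\|_{L^p(\mathbb T^3)}\le C_{0,p}\|\rho_m\|_{L^\infty_x},
\]
where the last step uses $|\mathbb T^3|=1$. By the Sobolev embedding $W^{1,p}(\mathbb T^3)\hookrightarrow L^\infty(\mathbb T^3)$ for $p>3$,
\[
\|\nabla_x\partial_x^m\phi\|_{L^\infty_x}\le C\|\partial_x^m\phi\|_{W^{2,p}}.
\]
Both constants depend only on $p$, not on $l$. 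Combining this with the density bound yields \eqref{poisson-est-3D}.

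The only point requiring care is that the constant be \emph{uniform in $l$}: the entire $l$-dependence must sit in the explicit factor $\int w_l^{-1}\,dv\sim l^{-3/2}$. This is why we pass through $L^\infty_x\subset L^p_x$ with one fixed $p$, instead of using higher Sobolev norms of $\rho_m$, which would cost additional derivatives. Nothing else is delicate.
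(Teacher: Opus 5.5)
Your proposal is correct and follows essentially the same route as the paper. The paper also bounds the density pointwise by $\int_{\mathbb R^3} w_l^{-1}\,dv=\pi^{3/2}\Gamma(l-\tfrac32)/\Gamma(l)\le Cl^{-3/2}$, and then combines the elliptic estimate with $p=4$ and the embedding $W^{1,4}(\mathbb T^3)\hookrightarrow L^\infty(\mathbb T^3)$, whose constants are independent of $l$.
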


\begin{proof}
Since $\displaystyle\int_{\mathbb T^3}\phi\,dx=0$, the Sobolev embedding
$W^{1,4}(\mathbb T^3)\hookrightarrow L^\infty(\mathbb T^3)$
(see \cite[(2.1) and Subsection~2.4]{BenyiOh13}), together with
\eqref{elliptic-Wkp} in Lemma~\ref{lem-elliptic-T3}, yields
\begin{equation}\label{Sobolev}
\|\partial_x^m\nabla_x\phi(t)\|_{L_x^\infty}
\le
C\|\partial_x^m\nabla_x\phi(t)\|_{W_x^{1,4}}
\le
C\|\partial_x^m\Delta_x\phi(t)\|_{L_x^4}.
\end{equation}
On the other hand,
\[
\|\partial_x^m\Delta_x\phi(t)\|_{L_x^4}
\le
\left(
\|w_l\partial_x^m g_1\|_{L_{x,v}^\infty}
+\|w_l\partial_x^m g_2\|_{L_{x,v}^\infty}
\right)
\int_{\mathbb R^3}w_{-l}\,dv.
\]
Moreover,
\[
\int_{\mathbb R^3}w_{-l}\,dv
=
\int_{\mathbb R^3}(1+|v|^2)^{-l}\,dv
=
4\pi\int_0^\infty r^2(1+r^2)^{-l}\,dr
=
\pi^{\frac32}\frac{\Gamma(l-\frac32)}{\Gamma(l)}
\le
Cl^{-\frac32},
\quad l\ge 3.
\]
Substituting the above estimate back yields \eqref{poisson-est-3D}. This completes the proof.
\end{proof}

\begin{lemma}\label{lem-char-3D}
Let $[g_1,g_2]\in\widetilde{\mathcal Y}_T$, and let $\phi=\phi(t,x)$ satisfy \eqref{Poisson-g1g2}. Assume that $\nabla_x\phi$ is continuous in $t\in[0,T]$. Then, for any given $t\in[0,T]$ and any $(x,v)\in\mathbb T^3\times\mathbb R^3$, the characteristic system \eqref{char-h} admits a unique solution $(X(s),V(s))=(X,V)(s;t,x,v)$ on $[0,t]$. Moreover, $(X(s),V(s))$ satisfies the integral formulation \eqref{char-h-int}.
\end{lemma}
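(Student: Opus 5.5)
The plan is to treat \eqref{char-h} as a nonautonomous ODE in $(X,V)\in\mathbb T^3\times\mathbb R^3$ and apply the Cauchy--Lipschitz (Picard--Lindel\"of) theorem backward in time from $s=t$. The vector field is
\[
\mathcal F(s,X,V)=\Big(e^{\beta s}V,\; e^{-\beta s}\nabla_x\phi(s,X)-\alpha V_2e_1-\beta V\Big).
\]
To use it, we need continuity of $\mathcal F$ in $s$ and a global Lipschitz bound in $(X,V)$, uniform on $[0,t]$.

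The first step is the regularity of $\nabla_x\phi$. Since $[g_1,g_2]\in\widetilde{\mathcal Y}_T$ with $N\ge1$, the quantities $w_l\partial_x^m g_j$ for $|m|\le1$ are bounded in $L^\infty_{x,v}$ uniformly in $t\in[0,T]$. Lemma~\ref{lemma-poisson-3D} (with $l\ge3$) then gives
\[
\sup_{0\le s\le T}\big(\|\nabla_x\phi(s)\|_{L^\infty_x}+\|\nabla_x^2\phi(s)\|_{L^\infty_x}\big)\le C\|[g_1,g_2]\|_{\widetilde{\mathcal Y}_T}<\infty.
\]
Hence $X\mapsto\nabla_x\phi(s,X)$ is globally Lipschitz on $\mathbb T^3$ with a constant $L_\phi$ independent of $s$. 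Combined with the assumed continuity of $\nabla_x\phi$ in $t$, this makes $\mathcal F$ jointly continuous, since a uniform Lipschitz bound in $x$ plus continuity in $t$ gives joint continuity.

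The linear part $(e^{\beta s}V,\,-\alpha V_2e_1-\beta V)$ is globally Lipschitz with constant $\le C(e^{\beta t}+\alpha+\beta)$ on $[0,t]$. Therefore $\mathcal F$ is globally Lipschitz in $(X,V)$ on $[0,t]$, with constant $\le C(e^{\beta t}+\alpha+\beta+L_\phi)$. The global Picard theorem then yields a unique $C^1$ solution on all of $[0,t]$. Since the field has linear growth and a global Lipschitz constant, there is no blow-up, which we can also confirm with a Gronwall bound on $|V(s)|$. Lifting to $\mathbb R^3$ for $X$ and projecting back handles periodicity.

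The second step is to derive \eqref{char-h-int}. For $X$, integrate $\dot X=e^{\beta\tau}V$ from $s$ to $t$. For $V_2$ and $V_3$, observe that
\[
\frac{d}{d\tau}\big(e^{\beta\tau}V_i\big)=\partial_{x_i}\phi(\tau,X(\tau)),
\]
and integrate from $s$ to $t$. This gives
\[
V_i(s)=e^{\beta(t-s)}v_i-e^{-\beta s}\int_s^t\partial_{x_i}\phi\,d\tau .
\]
For $V_1$, note that
\[
\frac{d}{d\tau}\big(e^{\beta\tau}V_1\big)=\partial_{x_1}\phi-\alpha e^{\beta\tau}V_2 .
\]
Integrating from $s$ to $t$ and substituting the formula for $e^{\beta\tau}V_2(\tau)$ produces the term
\[
\alpha\int_s^t\Big(e^{\beta t}v_2-\int_\tau^t\partial_{x_2}\phi\,d\sigma\Big)d\tau .
\]
Applying Fubini, $\int_s^t\int_\tau^t d\sigma\,d\tau=\int_s^t(\sigma-s)\,d\sigma$, turns this into $\alpha e^{\beta t}(t-s)v_2-\alpha\int_s^t(\tau-s)\partial_{x_2}\phi\,d\tau$. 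Multiplying through by $e^{-\beta s}$ gives exactly the stated formula for $V_1$.

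The only genuinely delicate step is the Lipschitz bound on $\nabla_x\phi$ uniform in time. This is where Lemma~\ref{lemma-poisson-3D} with $|m|=1$ is used, and it requires $N\ge1$ in the definition of $\widetilde{\mathcal Y}_T$. Everything else is routine ODE theory and explicit integration.
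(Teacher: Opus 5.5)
Your proposal is correct and follows essentially the same route as the paper: a uniform bound on $\nabla_x\phi$ and $\nabla_x^2\phi$ from Lemma~\ref{lemma-poisson-3D}, hence a globally Lipschitz vector field $\mathcal F$, then Picard--Lindel\"of on $[0,t]$. You go further than the paper in two places, both correct: you verify \eqref{char-h-int} explicitly (including the Fubini step that produces the $(\tau-s)\partial_{x_2}\phi$ term in $V_1$), where the paper only says ``integrating'', and you justify joint continuity of $\mathcal F$ from the Lipschitz bound in $x$ together with continuity in $t$.
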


\begin{proof}
Set
\[
\mathcal F(s,X,V)
:=
\left(e^{\beta s}V,\ e^{-\beta s}\nabla_x\phi(s,X)-\alpha V_2e_1-\beta V\right).
\]

By Lemma~\ref{lemma-poisson-3D} and $[g_1,g_2]\in\widetilde{\mathcal Y}_T$, we have
\[
\sup_{0\le s\le T}
\left(
\|\nabla_x\phi(s)\|_{L_x^\infty}
+
\|\nabla_x^2\phi(s)\|_{L_x^\infty}
\right)
<\infty.
\]
Hence $\nabla_x\phi$ is uniformly bounded and uniformly Lipschitz in $x$ on
$[0,T]\times\mathbb T^3$. Indeed, for any $X_1,X_2\in\mathbb T^3$,
\[
\big|
\nabla_x\phi(s,X_1)-\nabla_x\phi(s,X_2)
\big|
\le
\|\nabla_x^2\phi(s)\|_{L_x^\infty}|X_1-X_2|.
\]
Therefore, for $Y_i=(X_i,V_i)$, $i=1,2$, we have
\[
\begin{aligned}
|\mathcal F(s,Y_1)-\mathcal F(s,Y_2)|
&\le
e^{|\beta|T}|V_1-V_2|
+
e^{|\beta|T}
\big|
\nabla_x\phi(s,X_1)-\nabla_x\phi(s,X_2)
\big|
\\
&\quad
+
|\alpha| |(V_1)_2-(V_2)_2|
+
|\beta| |V_1-V_2|
\\
&\le
C_T
(|X_1-X_2|+|V_1-V_2|).
\end{aligned}
\]
Thus $\mathcal F$ is globally Lipschitz in $(X,V)$, uniformly for $s\in[0,T]$. Moreover, by the assumed continuity of $\nabla_x\phi$ in $t$, the vector field $\mathcal F$ is continuous in $s$. Hence, by the Picard--Lindel\"of theorem, for any $t\in[0,T]$ and any terminal point $(x,v)\in\mathbb T^3\times\mathbb R^3$, the characteristic system \eqref{char-h} admits a unique solution $(X,V)(s;t,x,v)$ on $[0,t]$. Integrating \eqref{char-h} from $s$ to $t$ gives \eqref{char-h-int}. This completes the proof.
\end{proof}

Applying the Duhamel formula to \eqref{hmn-g1}--\eqref{hmn-g2}, we obtain
\begin{equation}\label{duhamel-h12}
\left\{
\begin{aligned}
h_1^{m,n}(t,x,v)
&=
e^{-\int_0^t \mathcal A_{l,\alpha}(\tau,X(\tau),V(\tau))\,d\tau}
w_l\partial_n^m\tilde f_0(X(0;t,x,v),V(0;t,x,v))
\\
&\quad
+\int_0^t
e^{-\int_s^t \mathcal A_{l,\alpha}(\tau,X(\tau),V(\tau))\,d\tau}
\mathcal R_1^{m,n}(s,X(s;t,x,v),V(s;t,x,v))\,ds,
\\[1ex]
h_2^{m,n}(t,x,v)
&=
\int_0^t
e^{-\int_s^t \mathcal A_{l,\alpha}(\tau,X(\tau),V(\tau))\,d\tau}
\mathcal R_2^{m,n}(s,X(s;t,x,v),V(s;t,x,v))\,ds.
\end{aligned}
\right.
\end{equation}
where
\begin{equation}\label{calA-def}
\mathcal A_{l,\alpha}(t,x,v)
:=
\nu_0-(3+|n|)\beta
+2l\beta\frac{|v|^2}{1+|v|^2}
+2l\alpha\frac{v_1v_2}{1+|v|^2}
-2l e^{-\beta t}\frac{v\cdot\nabla_x\phi(t,x)}{1+|v|^2}.
\end{equation}
For convenience, we write
\begin{equation}
\Lambda(s,t):=\int_s^t \mathcal A_{l,\alpha}(\tau,X(\tau),V(\tau))\,d\tau.
\end{equation}
It follows from \eqref{duhamel-h12} that
\[
h_1^{m,n}(t,x,v):=\sum_{j=0}^{8}\mathcal I_j^{(1)},
\quad
h_2^{m,n}(t,x,v):=\sum_{j=1}^{6}\mathcal I_j^{(2)},
\]
where
\begin{equation}\label{I1-decomp}
\left\{
\begin{aligned}
\mathcal I_0^{(1)}
={}&
e^{-\Lambda(0,t)}
\big(w_l\partial_n^m\tilde f_0\big)(X(0),V(0)),
\\
\mathcal I_1^{(1)}
={}&
\int_0^t e^{-\Lambda(s,t)}
\big[w_l\partial_n^m(\chi_M\mathcal K g_1)\big](s,X(s),V(s))\,ds,
\\
\mathcal I_2^{(1)}
={}&
-\int_0^t e^{-\Lambda(s,t)}
\Big[
\frac{\beta}{2}w_l\partial_n^m(|v|^2\mu^{\frac{1}{2}}g_2)
+
\frac{\alpha}{2}w_l\partial_n^m(\mu^{\frac{1}{2}}v_1v_2 g_2)
\Big](s,X(s),V(s))\,ds,
\\
\mathcal I_3^{(1)}
={}&
\frac{1}{2}
\sum_{0\le m'\le m} C_{m,m'}
\int_0^t e^{-\Lambda(s,t)}e^{-\beta s}
\Big[
\partial_x^{m'}\nabla_x\phi(s,X(s))
\cdot
\big(w_l\partial_n^{m-m'}(\mu^{\frac{1}{2}}v g_2)\big)(s,X(s),V(s))
\Big]\,ds,
\\
\mathcal I_4^{(1)}
={}&
\int_0^t e^{-\Lambda(s,t)}
\big(w_l\partial_n^m\widetilde H(g_1,g_2)\big)(s,X(s),V(s))\,ds,
\\
\mathcal I_5^{(1)}
={}&
-\alpha
\int_0^t e^{-\Lambda(s,t)}e^{-\beta s}
\big[w_l\partial_n^m\big(\nabla_x\phi\cdot\nabla_v(\mu^{\frac{1}{2}}G_1)\big)\big](s,X(s),V(s))\,ds,
\\
\mathcal I_6^{(1)}
={}&
-\sum_{i=1}^{3}\mathbf{1}_{\{n_i>0\}}n_i
\int_0^t e^{-\Lambda(s,t)}e^{\beta s}
\big(w_l\partial_{n-e_i}^{m+e_i}g_1\big)(s,X(s),V(s))\,ds,
\\
\mathcal I_7^{(1)}
={}&
-\mathbf{1}_{\{n_2>0\}}\alpha n_2
\int_0^t e^{-\Lambda(s,t)}
\big(w_l\partial_{n-e_2+e_1}^{m}g_1\big)(s,X(s),V(s))\,ds,
\\
\mathcal I_8^{(1)}
={}&
-\sum_{0<m'\le m} C_{m,m'}
\int_0^t e^{-\Lambda(s,t)}e^{-\beta s}
\Big[
\partial_x^{m'}\nabla_x\phi(s,X(s))
\cdot
(w_l\nabla_v\partial_n^{m-m'}g_1)(s,X(s),V(s))
\Big]\,ds.
\end{aligned}
\right.
\end{equation}
and
\begin{equation}\label{I2-decomp}
\left\{
\begin{aligned}
\mathcal I_1^{(2)}
={}&
\int_0^t e^{-\Lambda(s,t)}
\big[w_l\partial_n^m(Kg_2)\big](s,X(s),V(s))\,ds,
\\
\mathcal I_2^{(2)}
={}&
\int_0^t e^{-\Lambda(s,t)}
\big[w_l\partial_n^m\big(\mu^{-\frac{1}{2}}(1-\chi_M)\mathcal K g_1\big)\big](s,X(s),V(s))\,ds,
\\
\mathcal I_3^{(2)}
={}&
\int_0^t e^{-\Lambda(s,t)}e^{-\beta s}
\big[w_l\partial_n^m(\mu^{\frac{1}{2}}v\cdot\nabla_x\phi)\big](s,X(s),V(s))\,ds,
\\
\mathcal I_4^{(2)}
={}&
-\sum_{i=1}^{3}\mathbf{1}_{\{n_i>0\}}n_i
\int_0^t e^{-\Lambda(s,t)}e^{\beta s}
\big(w_l\partial_{n-e_i}^{m+e_i}g_2\big)(s,X(s),V(s))\,ds,
\\
\mathcal I_5^{(2)}
={}&
-\mathbf{1}_{\{n_2>0\}}\alpha n_2
\int_0^t e^{-\Lambda(s,t)}
\big(w_l\partial_{n-e_2+e_1}^{m}g_2\big)(s,X(s),V(s))\,ds,
\\
\mathcal I_6^{(2)}
={}&
-\sum_{0<m'\le m} C_{m,m'}
\int_0^t e^{-\Lambda(s,t)}e^{-\beta s}
\Big[
\partial_x^{m'}\nabla_x\phi(s,X(s))
\cdot
(w_l\nabla_v\partial_n^{m-m'}g_2)(s,X(s),V(s))
\Big]\,ds.
\end{aligned}
\right.
\end{equation}

For notational convenience, define
\begin{equation}\label{eq-def-mfh12mn}
\mathfrak H_1^{m,n}(t)
=
\sup_{0\le s\le t}e^{\lambda_n^m s}
\|h_1^{m,n}(s)\|_{L^\infty_{x,v}},
\quad
\mathfrak H_2^{m,n}(t)
=
\sup_{0\le s\le t}e^{\lambda_n^m s}
\|h_2^{m,n}(s)\|_{L^\infty_{x,v}}.
\end{equation}
The estimates of the terms in the above decompositions are summarized in the
following lemma.

\begin{lemma}\label{lem-Linfty-sum-h1h2}
Let $[g_1,g_2]$ be the solution to
\eqref{g1-eq}--\eqref{g2-eq} satisfying the a priori assumption
\eqref{assa}. For any sufficiently small $\kappa>0$ and any fixed
sufficiently large $l$, the following estimates hold.
\begin{enumerate}
\item[\rm(i)] The case $|m|>0:$
\allowdisplaybreaks\begin{align}
\sum_{\substack{|m|+|n|\le N\\ |m|>0}}
\kappa^{|n|}\mathfrak H_1^{m,n}(t)
&\le
C
\sum_{\substack{|m|+|n|\le N\\ |m|>0}}
\kappa^{|n|}
\big\|w_l\partial_n^m \tilde f_0\big\|_{L^\infty_{x,v}}
+
C(\eta+\alpha)
\sum_{\substack{|m|+|n|\le N\\ |m|>0}}
\kappa^{|n|}
\mathfrak H_2^{m,n}(t),
\label{est-positive-sumh1}
\\
\sum_{\substack{|m|+|n|\le N\\ |m|>0}}
\kappa^{|n|}\mathfrak H_2^{m,n}(t)
&\le
C
\sum_{\substack{|m|+|n|\le N\\ |m|>0}}
\kappa^{|n|}\mathfrak H_1^{m,n}(t)
+
C
\sum_{\substack{|m|+|n|\le N\\ |m|>0}}
\kappa^{|n|}
\sup_{0\le s\le t}
e^{\lambda_n^m s}
\big\|\partial_n^m g_2(s)\big\|_{L^2_{x,v}}.
\label{est-positive-sumh2}
\end{align}
\text{In particular, we also have}
\allowdisplaybreaks\begin{align}
\sum_{0<|m|\le N}
\mathfrak H_1^{m,0}(t)
&\le
C
\sum_{0<|m|\le N}
\big\|w_l\partial_x^m \tilde f_0\big\|_{L^\infty_{x,v}}
+
C(\eta+\alpha)
\sum_{0<|m|\le N}
\mathfrak H_2^{m,0}(t),
\label{est-positive-n0-sumh1}
\\
\sum_{0<|m|\le N}
\mathfrak H_2^{m,0}(t)
&\le
C
\sum_{0<|m|\le N}
\mathfrak H_1^{m,0}(t)
+
C
\sum_{0<|m|\le N}
\sup_{0\le s\le t}
e^{\lambda s}
\big\|\partial_x^m g_2(s)\big\|_{L^2_{x,v}}.
\label{est-positive-n0-sumh2}
\end{align}
\item[\rm(ii)] The case $m=n=0:$
\begin{align}
\mathfrak H_1^{0,0}(t)
&\le
C\|w_l\tilde f_0\|_{L^\infty_{x,v}}
+
C(\eta+\alpha)
\mathfrak H_2^{0,0}(t),
\label{est-zero-sumh1}
\\
\mathfrak H_2^{0,0}(t)
&\le
C\mathfrak H_1^{0,0}(t)
+
C\sup_{0\le s\le t}
\|g_2(s)\|_{L^2_{x,v}}.
\label{est-zero-sumh2}
\end{align}
\item[\rm(iii)] The case $m=0,\ 0<|n|\le N:$
\allowdisplaybreaks\begin{align}
\sum_{0<|n|\le N}
\kappa^{|n|}\mathfrak H_1^{0,n}(t)
&\le
C
\sum_{|n|\le N}
\kappa^{|n|}
\big\|w_l\partial_v^n\tilde f_0\big\|_{L^\infty_{x,v}}
+
C(\eta+\alpha)
\sum_{|n|\le N}
\kappa^{|n|}
\mathfrak H_2^{0,n}(t)
+
C\kappa
\sum_{\substack{|m|+|n|\le N\\ |m|=1}}
\kappa^{|n|}
\mathfrak H_1^{m,n}(t),
\label{est-v-sumh1}
\\
\sum_{0<|n|\le N}
\kappa^{|n|}\mathfrak H_2^{0,n}(t)
&\le
C
\sum_{|n|\le N}
\kappa^{|n|}
\mathfrak H_1^{0,n}(t)
+
C\kappa
\sum_{\substack{|m|+|n|\le N\\ |m|=1}}
\kappa^{|n|}
\mathfrak H_2^{m,n}(t)
+
C
\sum_{|n|\le N}
\kappa^{|n|}
\sup_{0\le s\le t}
\big\|\partial_v^n g_2(s)\big\|_{L^2_{x,v}}.
\label{est-v-sumh2}
\end{align}
\end{enumerate}
\end{lemma}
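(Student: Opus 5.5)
The plan is to estimate each term in the Duhamel decompositions \eqref{I1-decomp}--\eqref{I2-decomp} pointwise along the characteristics of Lemma~\ref{lem-char-3D}, multiply by $e^{\lambda_n^m t}$, and then sum over multi-indices with the weights $\kappa^{|n|}$.

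\textbf{Step 1: lower bound on the exponent.} Under \eqref{assa}, Lemma~\ref{lemma-poisson-3D} gives $\|\nabla_x\phi\|_{L^\infty_x}\le C\eta$. Moreover $|v_1v_2|\le\frac12(1+|v|^2)$. Hence
\[
\mathcal A_{l,\alpha}\ge \nu_0-(3+N)\beta-l\alpha-2lC\eta\ge \tfrac34\nu_0
\]
once $\alpha\le\alpha_0(N,l)$ and $\eta$ are small. This yields $e^{-\Lambda(s,t)}\le e^{-\frac34\nu_0(t-s)}$. Since $\lambda_n^m\le\lambda<\nu_0/2$ by \eqref{choice-lambda}, every convolution $\int_0^t e^{-\frac34\nu_0(t-s)}e^{-\lambda_n^m s}\,ds$ is bounded by $Ce^{-\lambda_n^m t}$. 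Consequently $\mathcal I_0^{(1)}$ contributes $C\|w_l\partial_n^m\tilde f_0\|_{L^\infty}$.

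\textbf{Step 2: terms in the $g_1$ equation.} For $\mathcal I_1^{(1)}$ we use Lemma~\ref{lem-DL-K} to get the factor $C/l$. Choosing $l$ large, this term is absorbed into $\mathfrak H_1^{m,n}$; lower-order $v$-derivatives produced by Leibniz on $\chi_M$ are handled in the same way. The term $\mathcal I_2^{(1)}$ carries the factor $(\beta+\alpha)$, and the Gaussian absorbs the polynomial weight, giving $C\alpha\,\mathfrak H_2^{m,n'}$ with $n'\le n$. The terms $\mathcal I_3^{(1)}$, $\mathcal I_5^{(1)}$, $\mathcal I_8^{(1)}$ involve $\partial_x^{m'}\nabla_x\phi$, which by Lemma~\ref{lemma-poisson-3D} is $O(\eta)$ with decay $e^{-\lambda s}$ whenever $|m'|>0$. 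The nonlinear term $\mathcal I_4^{(1)}$ is treated with Lemma~\ref{lem-Q} and \eqref{G1-bound}, giving $C(\eta+\alpha)\times$ the relevant norms; here we need that a product of two time weights dominates $e^{\lambda_n^m s}$, which holds because at most one factor is of order zero in $x$ and $\lambda_{n'}^{m'}\ge\lambda_n^m$ for $n'\le n$. For $\mathcal I_7^{(1)}$ the index shifts from $n$ to $n-e_2+e_1$, which has the same $|n|$ and hence the same weight. It carries the factor $\alpha$ and is absorbed using smallness of $\alpha$.

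\textbf{Step 3: the commutator term, which is the main obstacle.} For $|m|>0$ the identity $\lambda^{m+e_i}_{n-e_i}=\lambda_n^m+\beta$ gives
\[
e^{\lambda_n^m t}\int_0^t e^{-\frac34\nu_0(t-s)}e^{\beta s}e^{-\lambda^{m+e_i}_{n-e_i}s}\,ds\,\mathfrak H_1^{m+e_i,n-e_i}\le C\,\mathfrak H_1^{m+e_i,n-e_i}.
\]
The constant here is not small, so we use the $\kappa^{|n|}$ weighting. The term appears as $\kappa^{|n|}\mathfrak H_1^{m+e_i,n-e_i}=\kappa\cdot\kappa^{|n-e_i|}\mathfrak H_1^{m+e_i,n-e_i}$, so for $\kappa$ small it is absorbed into the left side of the sum. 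In case (iii) ($m=0$), the weight $\lambda_n^0=0$ while $\lambda_{n-e_i}^{e_i}=\lambda-(|n|-1)\beta>0$, so the factor $e^{\beta s}$ is again integrable. This produces the term $C\kappa\sum_{|m|=1}\kappa^{|n|}\mathfrak H^{m,n}$ in \eqref{est-v-sumh1}--\eqref{est-v-sumh2}. Case (ii) has no commutator, and the cases with $n=0$ reduce to the same argument without $\kappa$.

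\textbf{Step 4: the $g_2$ estimates.} The terms $\mathcal I_2^{(2)}$ (compactly supported in $v$, bounded by $C\mathfrak H_1$) and $\mathcal I_3^{(2)}$ (bounded by $\|\nabla_x\phi\|$, hence by $C(\mathfrak H_1+\mathfrak H_2)$ with small constant) are straightforward, and $\mathcal I_4^{(2)}$--$\mathcal I_6^{(2)}$ are handled as in Steps 2--3. The essential term is $\mathcal I_1^{(2)}$. We apply Guo's $L^\infty$--$L^2$ iteration: substitute Duhamel a second time into $K$ using the kernel bounds \eqref{est-Kq-C1} and \eqref{est-Kq-C2}, and split into the large-velocity, small-time-gap and bounded regions. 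In the bounded region we change variables $v'\mapsto X(s')$ via Lemma~\ref{lem-cov-3d}, whose Jacobian is nondegenerate since $\alpha,\beta,\eta$ are small. This converts the $L^\infty$ norm to $\sup e^{\lambda_n^m s}\|\partial_n^m g_2\|_{L^2_{x,v}}$ and yields \eqref{est-positive-sumh2}, \eqref{est-zero-sumh2} and \eqref{est-v-sumh2}.
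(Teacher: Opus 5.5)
Your architecture is the paper's: Duhamel along the characteristics, the layered weights with $\lambda^{m+e_i}_{n-e_i}=\lambda_n^m+\beta$, the $\kappa^{|n|}$ hierarchy to absorb $\mathcal I_6^{(1)}$ and $\mathcal I_4^{(2)}$, and the double-Duhamel $L^\infty$--$L^2$ argument with Lemma~\ref{lem-cov-3d} for $\mathcal I_1^{(2)}$. The gap is in Step~4, where you call $\mathcal I_3^{(2)}$ straightforward and bound it by $\mathfrak H_1+\mathfrak H_2$ ``with small constant''. That bound is not available, for three reasons:
\begin{itemize}
\item The term is linear in $\phi$ and carries neither $\eta$ nor $\alpha$.
\item The weight $w_l$ lands on the Gaussian factor. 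The constant $C_l:=\sup_v w_l|\partial_v^n(\mu^{1/2}v)|$ grows faster than any power of $l$, roughly like $(4l)^l e^{-l}$, so it swamps the gain $l^{-3/2}$ from Lemma~\ref{lemma-poisson-3D}.
\item Hence Lemma~\ref{lemma-poisson-3D} only gives $e^{\lambda_n^m t}|\mathcal I_3^{(2)}|\le C_l(\mathfrak H_1^{m,0}+\mathfrak H_2^{m,0})$ with $C_l\gg1$.
\end{itemize}
For $|n|>0$ this is harmless: the coefficient becomes $\kappa^{|n|}C_l$, which is small if $\kappa$ is chosen after $l$. For $n=0$ it is fatal. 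This covers \eqref{est-positive-n0-sumh2}, the $n=0$ part of \eqref{est-positive-sumh2}, and \eqref{est-zero-sumh2}. There the term stands against $\mathfrak H_2^{m,0}$, which has coefficient $1$ on the left, and cannot be absorbed.

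Compare $\mathcal I_5^{(1)}$, which is also linear in $\phi$. There the factor $\alpha$, chosen after $l$, makes $C_l\alpha$ small. You should therefore keep that term as $C_l\alpha(\mathfrak H_1^{m,0}+\mathfrak H_2^{m,0})$. Bounding $\phi$ by $O(\eta)$, as your Step~2 suggests, would leave an inhomogeneous term that is not of the lemma's form.

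The missing idea is to send the $g_2$-part of the field into the $L^2$ norm.
\begin{itemize}
\item By $W^{1,4}(\mathbb T^3)\hookrightarrow L^\infty$ and elliptic regularity, $\|\partial_x^m\nabla_x\phi\|_{L^\infty_x}\le C\|\partial_x^m\Delta_x\phi\|_{L^4_x}\le C\|h_1^{m,0}\|_{L^\infty_{x,v}}+C\|\partial_x^m g_2\|_{L^4_xL^2_v}$.
\item Interpolation in $x$ gives $\|\partial_x^m g_2\|_{L^4_xL^2_v}\le\varepsilon_1\|h_2^{m,0}\|_{L^\infty_{x,v}}+C_{\varepsilon_1}\|\partial_x^m g_2\|_{L^2_{x,v}}$.
\item Choose $\varepsilon_1$ after $l$, so that $C_l\varepsilon_1$ is small. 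The $\mathfrak H_2^{m,0}$ contribution then becomes absorbable.
\item The price is a term $C_{l,\varepsilon_1}\sup_{s\le t}e^{\lambda_0^m s}\|\partial_x^m g_2(s)\|_{L^2_{x,v}}$.
\end{itemize}
So the $L^2$ terms on the right of \eqref{est-positive-sumh2} and \eqref{est-zero-sumh2} come from $\mathcal I_3^{(2)}$ as well as from $\mathcal I_1^{(2)}$. This $L^2$ term must also be carried in the source you collect before the second Duhamel iteration. With this repair the rest of your argument goes through as in the paper.
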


\begin{proof}

\noindent{\textbf{\underline{Step 1. Weighted $L^\infty$ Estimates.}}}
By taking $\eta,\alpha>0$ sufficiently small and using Lemma \ref{lemma-poisson-3D}, together with the a priori assumption \eqref{assa}, we have
\[
\begin{aligned}
\mathcal A_{l,\alpha}(t,x,v)
&\ge
\nu_0-(3+N)\beta-l\alpha
-l e^{-\beta t}\|\nabla_x\phi(t)\|_{L^\infty_x}
\\
&\ge
\nu_0-(3+N)\beta-l\alpha
-C l^{-\frac{1}{2}}\eta
\\
&\ge \frac{\nu_0}{2},
\end{aligned}
\]
which leads to
\[
\Lambda(s,t)\geq\frac{\nu_0}{2}(t-s),
\quad
0\leq s\leq t\leq T.
\]
By the choice of $\lambda$ in \eqref{choice-lambda} and the smallness
condition on $\alpha$ imposed after \eqref{def-lambda-mn}, we have
\[
0\leq\lambda_n^m<\frac{\nu_0}{2},
\quad
|m|+|n|\leq N.
\]
Moreover, since $|n'|\le |n|$ implies $\lambda_{n'}^m\ge \lambda_n^m$, it follows that
\begin{equation}\label{easy-weight-bound}
\int_0^t e^{\lambda_n^m t-\Lambda(s,t)-\lambda_{n'}^m s}\,ds
\le
\int_0^t e^{-(\frac{\nu_0}{2}-\lambda_n^m)(t-s)}\,ds
\le C,
\quad \forall\ |n'|\le |n|.
\end{equation}

For the initial term, it follows immediately from the definition \eqref{I1-decomp} that
\begin{equation}\label{weighted-I01}
e^{\lambda_n^m t}|\mathcal I_0^{(1)}|
\le \|w_l\partial_n^m\tilde f_0\|_{L^\infty_{x,v}}.
\end{equation}

For $\mathcal I_1^{(1)}$, invoking Lemma~\ref{lem-DL-K}, we set
$M=M(l)=l^2$ with $l$ sufficiently large, which yields
\allowdisplaybreaks\begin{align}
e^{\lambda_n^m t}|\mathcal I_1^{(1)}|
&\le \int_0^t e^{\lambda_n^m t-\Lambda(s,t)}
\|w_l\partial_n^m(\chi_M\mathcal K g_1)(s)\|_{L^\infty_{x,v}}\,ds
\notag\\
&\le \frac{C}{l}\sum_{n'\le n}\int_0^t
e^{-(\frac{\nu_0}{2}-\lambda_n^m)(t-s)}
e^{\lambda_{n'}^m s}\|w_l\partial_{n'}^m g_1(s)\|_{L^\infty_{x,v}}\,ds
\notag\\
&\le \frac{C}{l}\sum_{n'\le n}\mathfrak H_1^{m,n'}(t).
\label{weighted-I11}
\end{align}

For $\mathcal I_2^{(1)}$, a direct estimate gives
\allowdisplaybreaks\begin{align}
e^{\lambda_n^m t}|\mathcal I_2^{(1)}|
&\le C(\beta+\alpha)\sum_{n'\le n}\int_0^t
e^{-(\frac{\nu_0}{2}-\lambda_n^m)(t-s)}
e^{\lambda_{n'}^m s}\|w_l\partial_{n'}^m g_2(s)\|_{L^\infty_{x,v}}\,ds
\notag\\
&\le C\alpha\sum_{n'\le n}\mathfrak H_2^{m,n'}(t).
\label{weighted-I21}
\end{align}

For $\mathcal I_5^{(1)}$, by Lemma~\ref{lem-DL-G} and Lemma~\ref{lemma-poisson-3D}, we obtain
\[
e^{-\beta s}
\|w_l\partial_n^m(\nabla_x\phi\cdot\nabla_v(\mu^{\frac{1}{2}}G_1))(s)\|_{L^\infty_{x,v}}
\le C_l\Big(
\|w_l\partial_x^m g_1(s)\|_{L^\infty_{x,v}}
+\|w_l\partial_x^m g_2(s)\|_{L^\infty_{x,v}}
\Big).
\]
Hence
\allowdisplaybreaks\begin{align}
e^{\lambda_n^m t}|\mathcal I_5^{(1)}|
&\le C_l\alpha\int_0^t
e^{-(\frac{\nu_0}{2}-\lambda_n^m)(t-s)}
\Big(
e^{\lambda_0^m s}\|w_l\partial_x^m g_1(s)\|_{L^\infty_{x,v}}+
e^{\lambda_0^m s}\|w_l\partial_x^m g_2(s)\|_{L^\infty_{x,v}}
\Big)\,ds
\notag\\
&\le C_l\alpha
\Big(
\mathfrak H_1^{m,0}(t)+
\mathfrak H_2^{m,0}(t)
\Big).
\label{weighted-I51}
\end{align}

For $\mathcal I_7^{(1)}$ and $\mathcal I_5^{(2)}$, if $n_2\ne0$, then $|n+e_1-e_2|=|n|$. Hence,
\allowdisplaybreaks\begin{align}
e^{\lambda_n^m t}|\mathcal I_7^{(1)}|
&\le \mathbf{1}_{\{n_2>0\}}C\alpha\int_0^t
e^{-(\frac{\nu_0}{2}-\lambda_n^m)(t-s)}
e^{\lambda_n^m s}\|w_l\partial_{n+e_1-e_2}^m g_1(s)\|_{L^\infty_{x,v}}\,ds
\notag\\
&\le \mathbf{1}_{\{n_2>0\}}C\alpha\mathfrak H_1^{m,n+e_1-e_2}(t),
\label{weighted-I71}\\
e^{\lambda_n^m t}|\mathcal I_5^{(2)}|
&\le \mathbf{1}_{\{n_2>0\}}C\alpha\int_0^t
e^{-(\frac{\nu_0}{2}-\lambda_n^m)(t-s)}
e^{\lambda_n^m s}\|w_l\partial_{n+e_1-e_2}^m g_2(s)\|_{L^\infty_{x,v}}\,ds
\notag\\
&\le \mathbf{1}_{\{n_2>0\}}C\alpha\mathfrak H_2^{m,n+e_1-e_2}(t).
\label{weighted-I52}
\end{align}

For $\mathcal I_2^{(2)}$, it follows from \eqref{easy-weight-bound} and the boundedness of $(1-\chi_M)\mathcal K$ that
\allowdisplaybreaks\begin{equation}\label{weighted-I22}
e^{\lambda_n^m t}|\mathcal I_2^{(2)}|
\le \int_0^t e^{\lambda_n^m t-\Lambda(s,t)}
\|w_l\partial_n^m[\mu^{-\frac{1}{2}}(1-\chi_M)\mathcal K g_1](s)\|_{L^\infty_{x,v}}\,ds
\le C\sum_{n'\le n}\mathfrak H_1^{m,n'}(t).
\end{equation}

For $\mathcal I_6^{(1)}$ and $\mathcal I_4^{(2)}$, since the corresponding terms vanish when $n_i=0$, one has
\allowdisplaybreaks\begin{align}
e^{\lambda_n^m t}|\mathcal I_6^{(1)}|
&\le
C\sum_{i=1}^3\mathbf{1}_{\{n_i>0\}}n_i
\mathfrak H_1^{m+e_i,n-e_i}(t),
\label{weighted-I61}\\
e^{\lambda_n^m t}|\mathcal I_4^{(2)}|
&\le
C\sum_{i=1}^3\mathbf{1}_{\{n_i>0\}}n_i
\mathfrak H_2^{m+e_i,n-e_i}(t).
\label{weighted-I42}
\end{align}

For $\mathcal I_{3}^{(1)}$, 
we distinguish the cases $|m|=0$ and $|m|\ge 1$.
First, by \eqref{poisson-est-3D} in Lemma~\ref{lemma-poisson-3D},
\allowdisplaybreaks\begin{align}
|\mathcal I_3^{(1)}|
&\le
C\sum_{m'\le m}\sum_{n'\le n}
\int_0^t e^{-\Lambda(s,t)}e^{-\beta s}
\Big(
\|w_l\partial_x^{m'}g_1(s)\|_{L^\infty_{x,v}}
+
\|w_l\partial_x^{m'}g_2(s)\|_{L^\infty_{x,v}}
\Big)
\notag\\
&\quad\quad\quad\quad\times
\|w_l\partial_{n-n'}^{m-m'}g_2(s)\|_{L^\infty_{x,v}}\,ds.
\label{I31-pre}
\end{align}
\noindent
\textbf{Case 1: $|m|=0$.}
In this case $m'=0$ only. Since $\lambda_n^0=0$, from \eqref{I31-pre} and using the a priori assumption \eqref{assa} on the second factor
\[
\sup_{0\le s\le t}\|w_l\partial_v^{n-n'}g_2(s)\|_{L^\infty_{x,v}}\le \eta,
\]
we obtain
\begin{equation}\label{weighted-I31-m0}
|\mathcal I_3^{(1)}|
\le
C\eta
\Big(
\mathfrak H_1^{0,0}(t)
+
\mathfrak H_2^{0,0}(t)
\Big).
\end{equation}

\noindent
\textbf{Case 2: $|m|> 0$.}
For the term $m'=0$, we use the a priori assumption \eqref{assa} on the zero-th spatial factor in \eqref{I31-pre}; for the terms $0<m'\le m$, we preserve the positive spatial derivative factor and apply \eqref{assa} to the other factor. Thus
\begin{equation}\label{weighted-I31-mge1}
e^{\lambda_n^m t}|\mathcal I_3^{(1)}|
\le
C\eta
\sum_{n'\le n}
\mathfrak H_2^{m,n'}(t)
+
C\eta
\sum_{0<m'\le m}
\Big(
\mathfrak H^{m',0}_1(t)
+
\mathfrak H^{m',0}_2(t)
\Big).
\end{equation}

By the same argument, one immediately obtains the following bounds for $\mathcal I_8^{(1)}$ and $\mathcal I_6^{(2)}$,
\begin{equation}\label{weighted-I81}
e^{\lambda_n^m t}\big(|\mathcal I_8^{(1)}|+|\mathcal I_6^{(2)}|\big)
\le
C\eta
\sum_{0<m'\le m}
\Big(
\mathfrak H^{m',0}_1(t)
+
\mathfrak H^{m',0}_2(t)
\Big).
\end{equation}

For $\mathcal I_4^{(1)}$, it follows from \eqref{eq-Q-Linf} that, for any functions $F_1$ and $F_2$,
\begin{equation}\label{est-Q-F12}
\|w_l\partial_n^m Q(F_1,F_2)\|_{L^\infty_{x,v}}
\le
C\sum_{m'\le m}\sum_{n'\le n}
\|w_l\partial_{n'}^{m'}F_1\|_{L^\infty_{x,v}}
\|w_l\partial_{n-n'}^{m-m'}F_2\|_{L^\infty_{x,v}}.
\end{equation}
Using the a priori assumption~\eqref{assa}, for
$F_1,F_2\in\{g_1,\mu^{\frac{1}{2}}g_2,\alpha\mu^{\frac{1}{2}}G_1\}$, we have
\begin{equation}\label{est-Q-F12-1}
\|w_l\partial_{n'}^{m'}F_1\|_{L^\infty_{x,v}}
+
\|w_l\partial_{n-n'}^{m-m'}F_2\|_{L^\infty_{x,v}}
\le C\eta+C_l\alpha.
\end{equation}
\textbf{Case 1: $|m|=0$.}
We directly obtain
\begin{equation}\label{est-eI410}
|\mathcal I_4^{(1)}|
\le
(C\eta+C_l\alpha)\sum_{n'\le n}
\Big(
\mathfrak H_1^{0,n'}(t)
+\mathfrak H_2^{0,n'}(t)
\Big).
\end{equation}
\textbf{Case 2: $|m|>0$.} We apply the bound \eqref{est-Q-F12-1} to the lower spatial order factor in each product term of \eqref{est-Q-F12}, and keep the factor with positive spatial derivatives. Hence
\begin{equation}\label{weighted-eI41}
e^{\lambda_n^m t}|\mathcal I_4^{(1)}|
\le
(C\eta+C_l\alpha)\sum_{0< m'\le m}\sum_{n'\le n}
\Big(
\mathfrak H_1^{m',n'}(t)
+\mathfrak H_2^{m',n'}(t)
\Big),
\quad |m|>0.
\end{equation}

For $\mathcal I_3^{(2)}$. For each fixed $n$, since $\mu$ is Gaussian and $w_l$ is polynomial, we have
\begin{equation}\label{I32-1}
e^{\lambda_n^m t}|\mathcal I_3^{(2)}|
\le
C_l\int_0^t
e^{-(\frac{\nu_0}{2}-\lambda_n^m)(t-s)}
e^{(\lambda_n^m-\beta)s}
\|\partial_x^m\nabla_x\phi(s)\|_{L_x^\infty}\,ds.
\end{equation}
Next, by the same argument as in \eqref{Sobolev},
\[
\begin{aligned}
\|\partial_x^m\nabla_x\phi\|_{L_x^\infty}
&\leq
C\|\partial_x^m\Delta_x\phi\|_{L_x^4}
\\
&\leq
C\Big\|\int_{\mathbb R^3}\partial_x^m g_1\,dv\Big\|_{L_x^4}
+
C\Big\|\int_{\mathbb R^3}
\mu^{\frac12}\partial_x^m g_2\,dv\Big\|_{L_x^4}
\\
&\leq
C\|w_l\partial_x^m g_1\|_{L^\infty_{x,v}}
+
C\|\partial_x^m g_2\|_{L_x^4L_v^2}.
\end{aligned}
\]
Using the interpolation inequality in the $x$-variable, for any
$\varepsilon_1>0$, we have
\[
\|\partial_x^m g_2\|_{L_x^4L_v^2}
\le
\|\partial_x^m g_2\|_{L_{x,v}^2}^{\frac12}
\|\partial_x^m g_2\|_{L_x^\infty L_v^2}^{\frac12}
\le
\varepsilon_1\|w_l\partial_x^m g_2\|_{L^\infty_{x,v}}
+
C_{\varepsilon_1}\|\partial_x^m g_2\|_{L_{x,v}^2}.
\]
Combining this with the preceding estimates, we obtain
\begin{equation}\label{phi-Linf}
\|\partial_x^m\nabla_x\phi\|_{L_x^\infty}
\le
C_l\|h_1^{m,0}\|_{L^\infty_{x,v}}
+\varepsilon_1\|h_2^{m,0}\|_{L^\infty_{x,v}}
+C_{l,\varepsilon_1}\|\partial_x^m g_2\|_{L_{x,v}^2}.
\end{equation}
Substituting \eqref{phi-Linf} into \eqref{I32-1}, we deduce
\begin{equation}\label{weihghted-eI32}
e^{\lambda_n^m t}|\mathcal I_3^{(2)}|
\le
C_l\Big(
\mathfrak H_1^{m,0}(t)
+\varepsilon_1\mathfrak H_2^{m,0}(t)
\Big)
+C_{l,\varepsilon_1}\sup_{0\le s\le t}
e^{\lambda_0^m s}\|\partial_x^m g_2(s)\|_{L^2_{x,v}}.
\end{equation}

\noindent{\textbf{\underline{Step 2. $L^\infty$-$L^2$ Estimates for $\mathcal I_1^{(2)}$.}}}
For any $0<\varepsilon<t$, we first decompose $\mathcal I_1^{(2)}$ in time as
\[
\mathcal I_1^{(2)}:=\mathcal I_{1,1}^{(2)}+\mathcal I_{1,2}^{(2)},
\]
where
\[
\left\{
\begin{aligned}
\mathcal I_{1,1}^{(2)}
&=
\int_0^{t-\varepsilon} e^{-\Lambda(s,t)}
\big[w_l\partial_n^m(Kg_2)\big](s,X(s),V(s))\,ds,\\
\mathcal I_{1,2}^{(2)}
&=
\int_{t-\varepsilon}^t e^{-\Lambda(s,t)}
\big[w_l\partial_n^m(Kg_2)\big](s,X(s),V(s))\,ds.
\end{aligned}
\right.
\]
It is easy to obtain
\begin{equation}\label{I122}
e^{\lambda_n^m t}|\mathcal I_{1,2}^{(2)}|
\le C\varepsilon
\sum_{|n'|\le |n|}
\mathfrak H_2^{m,n'}(t).
\end{equation}

Next, we further decompose $\mathcal I_{1,1}^{(2)}$ according to the velocity regions as
\begin{equation}\label{I12-v}
\mathcal I_{1,1}^{(2)}
:=
\mathcal I_{1,1,1}^{(2)}
+\mathcal I_{1,1,2}^{(2)}
+\mathcal I_{1,1,3}^{(2)}
+\mathcal I_{1,1,4}^{(2)},
\end{equation}
where
\begin{equation}\label{I12-vsplit}
\left\{
\begin{aligned}
\mathcal I_{1,1,1}^{(2)}
&=
\int_0^{t-\varepsilon} e^{-\Lambda(s,t)}
\mathbf 1_{\{|V(s)|\ge M_0\}}
\big[w_l\partial_n^m(Kg_2)\big](s,X(s),V(s))\,ds,\\
\mathcal I_{1,1,2}^{(2)}
&=
\int_0^{t-\varepsilon} e^{-\Lambda(s,t)}
\mathbf 1_{\{|V(s)|\le M_0\}}
\big[w_l\partial_n^m\big(K(\mathbf 1_{\{|v_*|\ge 2M_0\}}g_2)\big)\big](s,X(s),V(s))\,ds,\\
\mathcal I_{1,1,3}^{(2)}
&=
\int_0^{t-\varepsilon} e^{-\Lambda(s,t)}
\mathbf 1_{\{|V(s)|\le M_0\}}
\big[w_l\partial_n^m\big(K(\mathbf 1_{\Omega_1(s)}g_2)\big)\big](s,X(s),V(s))\,ds,\\
\mathcal I_{1,1,4}^{(2)}
&=
\int_0^{t-\varepsilon} e^{-\Lambda(s,t)}
\mathbf 1_{\{|V(s)|\le M_0\}}
\big[w_l\partial_n^m\big(K(\mathbf 1_{\Omega_2(s)}g_2)\big)\big](s,X(s),V(s))\,ds,
\end{aligned}
\right.
\end{equation}
with
\[
\left\{
\begin{aligned}
\Omega_1(s)&=\Big\{v_*\in\mathbb R^3:\ |v_*|\le 2M_0,\ |V(s)-v_*|<\frac{1}{M_0}\Big\},\\
\Omega_2(s)&=\Big\{v_*\in\mathbb R^3:\ |v_*|\le 2M_0,\ |V(s)-v_*|\ge \frac{1}{M_0}\Big\}.
\end{aligned}
\right.
\]

For $\mathcal I_{1,1,1}^{(2)}$, since $|V(s)|\ge M_0$, by Lemma \ref{lem-K}, namely \eqref{est-Kq-C1} and \eqref{est-Kq-C2}, we have
\allowdisplaybreaks\begin{align}
e^{\lambda_n^m t}|\mathcal I_{1,1,1}^{(2)}|
&\le
\int_0^{t-\varepsilon}
e^{-(\frac{\nu_0}{2}-\lambda_n^m)(t-s)}
\mathbf 1_{\{|V(s)|\ge M_0\}}
\left(
C\int_{\mathbb R^3}
K_q(V(s),v_*)
\frac{w_l(V(s))}{w_l(v_*)}\,dv_*
\right)\,ds
\notag\\
&\quad\times
\sum_{|n'|\le |n|}
\sup_{0\le \tau\le t}
e^{\lambda_{n'}^m\tau}
\|w_l\partial_{n'}^m g_2(\tau)\|_{L_{x,v}^\infty}
\notag\\
&\le
\int_0^{t-\varepsilon}
e^{-(\frac{\nu_0}{2}-\lambda_n^m)(t-s)}
\mathbf 1_{\{|V(s)|\ge M_0\}}
\frac{C_l}{1+|V(s)|}\,ds
\sum_{|n'|\le |n|}
\sup_{0\le \tau\le t}
e^{\lambda_{n'}^m\tau}
\|w_l\partial_{n'}^m g_2(\tau)\|_{L_{x,v}^\infty}
\notag\\
&\le
\frac{C_l}{M_0}
\sum_{|n'|\le |n|}
\mathfrak H_2^{m,n'}(t),
\label{I111}
\end{align}
where $K_q$ is defined by \eqref{def-Kq}.

For $\mathcal I_{1,1,2}^{(2)}$, note that
$
|V(s)-v_*|\ge |v_*|-|V(s)|\ge M_0
$
in this case. We apply \eqref{def-Kq}--\eqref{est-Kq-C1} in Lemma \ref{lem-K} to obtain
\allowdisplaybreaks\begin{align}
e^{\lambda_n^m t}|\mathcal I_{1,1,2}^{(2)}|
&\le
\int_0^{t-\varepsilon}
e^{-(\frac{\nu_0}{2}-\lambda_n^m)(t-s)}
\mathbf 1_{\{|V(s)|\le M_0\}}
\left(
C\int_{\mathbb R^3}
K_q(V(s),v_*)
\frac{w_l(V(s))}{w_l(v_*)}
1_{\{|v_*|\ge 2M_0\}}\,dv_*
\right)\,ds
\notag\\
&\quad\times
\sum_{|n'|\le |n|}
\sup_{0\le \tau\le t}
e^{\lambda_{n'}^m\tau}
\|w_l\partial_{n'}^m g_2(\tau)\|_{L_{x,v}^\infty}
\notag\\
&\le
Ce^{-\frac{q}{16} M_0^2}
\sum_{|n'|\le |n|}
\mathfrak H_2^{m,n'}(t), \quad q\in(0,1).
\label{I1122}
\end{align}

For $\mathcal I_{1,1,3}^{(2)}$, using \eqref{est-Kq-C1} in Lemma \ref{lem-K} again, we have
\allowdisplaybreaks\begin{align}
e^{\lambda_n^m t}|\mathcal I_{1,1,3}^{(2)}|
&\le
\int_0^{t-\varepsilon}
e^{-(\frac{\nu_0}{2}-\lambda_n^m)(t-s)}
\mathbf 1_{\{|V(s)|\le M_0\}}
\left(
C\int_{\mathbb R^3}
K_q(V(s),v_*)
\frac{w_l(V(s))}{w_l(v_*)}
1_{\Omega_1(s)}\,dv_*
\right)\,ds
\notag\\
&\quad\times
\sum_{|n'|\le |n|}
\sup_{0\le \tau\le t}
e^{\lambda_{n'}^m\tau}
\|w_l\partial_{n'}^m g_2(\tau)\|_{L_{x,v}^\infty}.
\label{I1132-1}
\end{align}
On $\Omega_1(s)$, since
$\big||V(s)|-|v_*|\big|\leq|V(s)-v_*|<M_0^{-1}$, we have
\[
\frac{w_l(V(s))}{w_l(v_*)}
=
\left(
\frac{1+|V(s)|^2}{1+|v_*|^2}
\right)^l
\leq
C_l,\quad l>0.
\]
Therefore, by the definition of $K_q$ in \eqref{def-Kq},
\[
\begin{aligned}
\int_{\mathbb R^3}
K_q(V(s),v_*)
\frac{w_l(V(s))}{w_l(v_*)}
\mathbf 1_{\Omega_1(s)}\,dv_*
&\le
C_l\int_{|V(s)-v_*|<\frac{1}{M_0}}
\big(|V(s)-v_*|+|V(s)-v_*|^{-1}\big)\,dv_*
\\
&=
C_l\int_{|z|<\frac{1}{M_0}}
(|z|+|z|^{-1})\,dz
\le
\frac{C_l}{M_0^2}.
\end{aligned}
\]
Substituting the above estimate into \eqref{I1132-1}, we obtain
\begin{equation}\label{I1132}
e^{\lambda_n^m t}|\mathcal I_{1,1,3}^{(2)}|
\le
\frac{C_l}{M_0^2}
\sum_{|n'|\le |n|}
\mathfrak H_2^{m,n'}(t).
\end{equation}

Combining the estimates for
$e^{\lambda_n^m t}\sum_{i=2}^6 |\mathcal I_i^{(2)}|$,
$e^{\lambda_n^m t}|\mathcal I_{1,2}^{(2)}|$, and
$e^{\lambda_n^m t}\sum_{i=1}^3 |\mathcal I_{1,1,i}^{(2)}|$, we obtain, for any fixed multi-indices $m,n\in\mathbb N^3$ satisfying $|m|+|n|\le N$,
\begin{equation}\label{eh2-R3}
e^{\lambda_n^m t}|h_2^{m,n}(t,x,v)|
\le
\mathcal R_3^{m,n}(t)
+
e^{\lambda_n^m t}\big|\mathcal I_{1,1,4}^{(2)}\big|,
\end{equation}
where
\allowdisplaybreaks\begin{align}
\mathcal R_3^{m,n}(t):={}&
\mathbf 1_{\{|m|=0\}}C_l
\sum_{|n'|\le |n|}
\Big(
\mathfrak H_1^{0,n'}(t)+
\varepsilon_1\mathfrak H_2^{0,n'}(t)
\Big)
\notag\\
&
+\mathbf 1_{\{|m|>0\}}C_l
\sum_{0<m'\le m}\sum_{|n'|\le |n|}
\Big[
\mathfrak H_1^{m',n'}(t)+
\big(\frac{1}{M_0}+\varepsilon_1+\eta+\alpha\big)
\mathfrak H_2^{m',n'}(t)
\Big]
\notag\\
&
+C
\sum_{i=1}^3\mathbf 1_{\{n_i>0\}}n_i
\mathfrak H_2^{m+e_i,n-e_i}(t)
+C_{l,\varepsilon_1}
\sup_{0\le s\le t}
e^{\lambda_0^m s}
\|\partial_x^m g_2(s)\|_{L^2_{x,v}}.
\label{R3mn-def}
\end{align}

Similarly, by Lemma~\ref{lem-K}, we have
\begin{align}
e^{\lambda_n^m t}|\mathcal I_{1,1,4}^{(2)}|
&\le
C\sum_{|n'|\le |n|}
\int_0^{t-\varepsilon}
e^{-(\frac{\nu_0}{2}-\lambda_n^m)(t-s)}
\mathbf 1_{\{|V(s)|\le M_0\}}
\int_{\mathbb R^3}
K_q\big(V(s),v_*\big)
\frac{w_l(V(s))}{w_l(v_*)}
1_{\Omega_2(s)}
\notag\\
&\quad\times
\big|e^{\lambda_{n'}^m s}h_2^{m,n'}\big(s,X(s),v_*\big)\big|
\,dv_*ds.
\label{I114-step1}
\end{align}

For any given $s\in[0,t-\varepsilon]$ and $v_*\in\mathbb R^3$, along the backward characteristic curve starting from $[s,X(s),v_*]$, we denote by $[\tilde s,\widetilde X(\tilde s),\widetilde V(\tilde s)]$ the corresponding position and velocity at time $\tilde s\in[0,s]$. Combining \eqref{eh2-R3} with \eqref{I114-step1} and applying \eqref{est-Kq-C2}, we obtain
\begin{align}
e^{\lambda_n^m t}|h_2^{m,n}|
&\le
\mathcal R_3^{m,n}(t)
+
C\sum_{|n'|\le |n|}
\int_0^{t-\varepsilon}
e^{-(\frac{\nu_0}{2}-\lambda_n^m)(t-s)}
\mathbf 1_{\{|V(s)|\le M_0\}}
\int_{\mathbb R^3}
K_q\big(V(s),v_*\big)
\notag\\
&\qquad\qquad\quad\quad\times
\frac{w_l(V(s))}{w_l(v_*)}
\mathbf 1_{\Omega_2(s)}
\mathcal R_3^{m,n'}(s)
\,dv_*ds
\notag\\
&\quad{}+
C\sum_{|n'|\le |n|}
\int_0^{t-\varepsilon}
e^{-(\frac{\nu_0}{2}-\lambda_n^m)(t-s)}
\mathbf 1_{\{|V(s)|\le M_0\}}
\int_{\mathbb R^3}
K_q\big(V(s),v_*\big)
\frac{w_l(V(s))}{w_l(v_*)}
\mathbf 1_{\Omega_2(s)}
\notag\\
&\quad\quad\times
\sum_{|n''|\le |n'|}
\int_0^{s-\varepsilon}
e^{-(\frac{\nu_0}{2}-\lambda_{n'}^m)(s-\tilde s)}
\mathbf 1_{\{|\widetilde V(\tilde s)|\le M_0\}}
\int_{\mathbb R^3}
K_q\big(\widetilde V(\tilde s),\widetilde v_*\big)
\frac{w_l(\widetilde V(\tilde s))}{w_l(\widetilde v_*)}
\mathbf 1_{\widetilde\Omega_2(\tilde s)}
\notag\\
&\quad\quad\times
\big|e^{\lambda_{n''}^m\tilde s}
h_2^{m,n''}\big(\tilde s,\widetilde X(\tilde s),\widetilde v_*\big)\big|
\,d\widetilde v_*d\tilde sdv_*ds
\notag\\
&\le
C_l\sum_{|n'|\le |n|}\mathcal R_3^{m,n'}(t)
+
C\sum_{|n'|\le |n|}
\int_0^{t-\varepsilon}
e^{-(\frac{\nu_0}{2}-\lambda_n^m)(t-s)}
\mathbf 1_{\{|V(s)|\le M_0\}}
\int_{\mathbb R^3}
K_q\big(V(s),v_*\big)
\frac{w_l(V(s))}{w_l(v_*)}
\mathbf 1_{\Omega_2(s)}
\notag\\
&\quad\quad\times
\sum_{|n''|\le |n'|}
\int_0^{s-\varepsilon}
e^{-(\frac{\nu_0}{2}-\lambda_{n'}^m)(s-\tilde s)}
\mathbf 1_{\{|\widetilde V(\tilde s)|\le M_0\}}
\int_{\mathbb R^3}
K_q\big(\widetilde V(\tilde s),\widetilde v_*\big)
\frac{w_l(\widetilde V(\tilde s))}{w_l(\widetilde v_*)}
\mathbf 1_{\widetilde\Omega_2(\tilde s)}
\notag\\
&\quad\quad\times
\big|e^{\lambda_{n''}^m\tilde s}
h_2^{m,n''}\big(\tilde s,\widetilde X(\tilde s),\widetilde v_*\big)\big|
\,d\widetilde v_*d\tilde sdv_*ds ,
\label{eh2-step3}
\end{align}
where
\[
\widetilde\Omega_2(\tilde s)=\Big\{\widetilde v_*\in\mathbb R^3:
|\widetilde v_*|\le 2M_0,\
|\widetilde V(\tilde s)-\widetilde v_*|\ge \frac{1}{M_0} \Big\}.
\]

On the other hand, recalling \eqref{char-h}--\eqref{char-h-int}, we obtain
\begin{equation}\label{wideX}
\widetilde X(\tilde s)=X(s)-T(s,\tilde s)v_*+\mathcal R_\phi(\tilde s),
\end{equation}
where $X(s)$ is given by \eqref{char-h-int},
\[
T(s,\tilde s)
:=
e^{\beta s}
\begin{pmatrix}
s-\tilde s & \frac{\alpha}{2}(s-\tilde s)^2 & 0\\
0 & s-\tilde s & 0\\
0 & 0 & s-\tilde s
\end{pmatrix},
\]
and
\[
\mathcal R_\phi(\tilde s)
:=
\begin{pmatrix}
\int_{\tilde s}^s (\tau-\tilde s)\partial_{x_1}\phi(\tau,\widetilde X(\tau))\,d\tau
+\frac{\alpha}{2}\int_{\tilde s}^s (\tau-\tilde s)^2\partial_{x_2}\phi(\tau,\widetilde X(\tau))\,d\tau
\\[0.25cm]
\int_{\tilde s}^s (\tau-\tilde s)\partial_{x_2}\phi(\tau,\widetilde X(\tau))\,d\tau
\\[0.25cm]
\int_{\tilde s}^s (\tau-\tilde s)\partial_{x_3}\phi(\tau,\widetilde X(\tau))\,d\tau
\end{pmatrix}.
\]

\begin{lemma}\label{lem-cov-3d}
For any fixed $s\in[0,t-\varepsilon]$ and $\tilde s\in[0,s-\varepsilon]$, the change of variables
\[
y=\widetilde X(\tilde s;s,X(s),v_*)
\]
is well-defined on its image, and
\begin{equation}\label{jac-det-final}
\frac12e^{3\beta s}(s-\tilde s)^3
\le
\left|\det\frac{\partial \widetilde X(\tilde s)}{\partial v_*}\right|
\le
\frac32e^{3\beta s}(s-\tilde s)^3.
\end{equation}
Consequently,
\begin{equation}\label{cov-jac}
dv_*
=
\left|
\det \frac{\partial \widetilde X(\tilde s)}{\partial v_*}
\right|^{-1}dy
\le
C e^{-3\beta s}(s-\tilde s)^{-3}\,dy.
\end{equation}
\end{lemma}

\begin{proof}
Let
\[
J(s_1):=\frac{\partial \widetilde X(s_1)}{\partial v_*}\in\mathbb R^{3\times3},
\quad 0\le s_1<s.
\]
Differentiating \eqref{wideX} with respect to $v_*$, we obtain
\[
J(s_1)=-T(s,s_1)+T_\phi(s_1),
\]
where
\[
T_\phi(s_1)
=
\begin{pmatrix}
\int_{s_1}^s(\tau-s_1)\nabla_x\partial_{x_1}\phi(\tau,\widetilde X(\tau))J(\tau)\,d\tau
+
\frac{\alpha}{2}\int_{s_1}^s(\tau-s_1)^2
\nabla_x\partial_{x_2}\phi(\tau,\widetilde X(\tau))J(\tau)\,d\tau
\\[0.25cm]
\int_{s_1}^s(\tau-s_1)\nabla_x\partial_{x_2}\phi(\tau,\widetilde X(\tau))J(\tau)\,d\tau
\\[0.25cm]
\int_{s_1}^s(\tau-s_1)\nabla_x\partial_{x_3}\phi(\tau,\widetilde X(\tau))J(\tau)\,d\tau
\end{pmatrix}.
\]
We first claim that
\begin{equation}\label{jac-det-relative}
\frac12|\det T(s,\tilde s)|
\le
\left|\det\frac{\partial\widetilde X(\tilde s)}{\partial v_*}\right|
\le
\frac32|\det T(s,\tilde s)|.
\end{equation}
Since
\[
\det T(s,\tilde s)=e^{3\beta s}(s-\tilde s)^3,
\]
\eqref{jac-det-relative} immediately implies \eqref{jac-det-final}.

It remains to prove \eqref{jac-det-relative}. To this end, define
\[
\begin{aligned}
Z(s_1)
&:=-J(s_1)T(s,s_1)^{-1}  \\
&=I_3-T_\phi(s_1)T(s,s_1)^{-1}
=:I_3+\mathcal T_\phi[Z](s_1),
\quad 0\le s_1<s.
\end{aligned}
\]
More explicitly, using $J(\tau)=-Z(\tau)T(s,\tau)$, the rows of
$\mathcal T_\phi[Z](s_1)$ are given by
\[
\left\{
\begin{aligned}
\big(\mathcal T_\phi[Z](s_1)\big)_1
={}&
\int_{s_1}^s(\tau-s_1)
\nabla_x\partial_{x_1}\phi(\tau,\widetilde X(\tau))
Z(\tau)T(s,\tau)T(s,s_1)^{-1}\,d\tau
\\
&+
\frac{\alpha}{2}\int_{s_1}^s(\tau-s_1)^2
\nabla_x\partial_{x_2}\phi(\tau,\widetilde X(\tau))
Z(\tau)T(s,\tau)T(s,s_1)^{-1}\,d\tau,
\\
\big(\mathcal T_\phi[Z](s_1)\big)_i
={}&
\int_{s_1}^s(\tau-s_1)
\nabla_x\partial_{x_i}\phi(\tau,\widetilde X(\tau))
Z(\tau)T(s,\tau)T(s,s_1)^{-1}\,d\tau,
\quad i=2,3.
\end{aligned}
\right.
\]
A direct computation gives
\[
T(s,\tau)T(s,s_1)^{-1}
=
\frac{s-\tau}{s-s_1}
\begin{pmatrix}
1 & -\frac{\alpha}{2}(\tau-s_1) & 0\\
0 & 1 & 0\\
0 & 0 & 1
\end{pmatrix},
\]
and hence
\begin{equation}\label{est-normalized-T}
\big\|T(s,\tau)T(s,s_1)^{-1}\big\|_{\max}
\le
C\frac{s-\tau}{s-s_1}\big[1+\alpha(\tau-s_1)\big].
\end{equation}
By Lemma~\ref{lemma-poisson-3D} and the a priori assumption \eqref{assa}, we have
\[
\|\nabla_x^2\phi(\tau)\|_{L_x^\infty}\le C\eta l^{-\frac32}e^{-\lambda\tau}.
\]
Therefore, using \eqref{est-normalized-T}, for $0\le s_1<s$,
\allowdisplaybreaks\begin{align}
\|\mathcal T_\phi[Z](s_1)\|_{\max}
&\le
C\eta l^{-\frac32}
\sup_{s_1\le\sigma<s}\|Z(\sigma)\|_{\max}
\int_{s_1}^s
\frac{(\tau-s_1)(s-\tau)}{s-s_1}
e^{-\lambda\tau}
\big[1+\alpha(\tau-s_1)\big]\,d\tau
\notag\\
&\le
C\eta l^{-\frac32}e^{-\lambda s_1}
\sup_{s_1\le\sigma<s}\|Z(\sigma)\|_{\max}
\notag\\
&\le
C\eta l^{-\frac32}
\sup_{0\le\sigma<s}\|Z(\sigma)\|_{\max}.
\label{est-Tphi-Z}
\end{align}
Taking the supremum over $0\le s_1<s$
in \eqref{est-Tphi-Z} yields
\[
\sup_{0\le s_1<s}\|Z(s_1)\|_{\max}
\le
1+
C\eta l^{-\frac32}
\sup_{0\le s_1<s}\|Z(s_1)\|_{\max}.
\]
Choosing $l\gg1$ and then $\eta>0$ sufficiently small so that
$C\eta l^{-\frac32}\le \frac14$, we get
\begin{equation}\label{est-Z-uniform}
\sup_{0\le s_1<s}\|Z(s_1)\|_{\max}\le2.
\end{equation}
Substituting \eqref{est-Z-uniform} into \eqref{est-Tphi-Z}, we obtain
\[
\|Z(s_1)-I_3\|_{\max}
=
\|\mathcal T_\phi[Z](s_1)\|_{\max}
\le
C\eta l^{-\frac32},
\quad 0\le s_1<s.
\]
Taking $l$ larger and $\eta$ smaller if necessary, the continuity of the
determinant at $I_3$ gives
\[
\frac12\le |\det Z(\tilde s)|\le \frac32,
\quad 0\le\tilde s\le s-\varepsilon,
\]
this implies \eqref{jac-det-relative}.

In particular,
\[
\det\frac{\partial\widetilde X(\tilde s)}{\partial v_*}\neq0.
\]
Moreover, since $Z(\tilde s)$ is uniformly close to $I_3$ and
$T(s,\tilde s)$ is invertible, the mean value formula implies that
the map
\[
v_*\mapsto \widetilde X(\tilde s;s,X(s),v_*)
\]
is injective on the relevant velocity support. Hence the standard
change-of-variables formula applies on its image, and
\eqref{jac-det-final} yields \eqref{cov-jac}. This completes the proof of Lemma~\ref{lem-cov-3d}.
\end{proof}

Substituting \eqref{jac-det-final} in Lemma~\ref{lem-cov-3d} into
\eqref{eh2-step3}, we further obtain
\allowdisplaybreaks\begin{align}
e^{\lambda_n^m t}|h_2^{m,n}|
&\le
C_l\sum_{|n'|\le |n|}\mathcal R_3^{m,n'}(t)
+
C_{l,M_0}
\sum_{|n'|\le |n|}
\sum_{|n''|\le |n'|}
\int_0^{t-\varepsilon}
e^{-(\frac{\nu_0}{2}-\lambda_n^m)(t-s)}
\int_0^{s-\varepsilon}
e^{-(\frac{\nu_0}{2}-\lambda_{n'}^m)(s-\tilde s)}
\notag\\
&\quad\times
\int_{|v_*|\le 2M_0}
\int_{|\widetilde v_*|\le 2M_0}
\big|e^{\lambda_{n''}^m\tilde s}
h_2^{m,n''}(\tilde s,\widetilde X(\tilde s),\widetilde v_*)\big|
\,d\widetilde v_*dv_*d\tilde sds
\notag\\
&\le
C_l\sum_{|n'|\le |n|}\mathcal R_3^{m,n'}(t)
+
C_{l,M_0}
\sum_{|n'|\le |n|}
\sum_{|n''|\le |n'|}
\int_0^{t-\varepsilon}
e^{-(\frac{\nu_0}{2}-\lambda_n^m)(t-s)}
\int_0^{s-\varepsilon}
e^{-(\frac{\nu_0}{2}-\lambda_{n'}^m)(s-\tilde s)}
\notag\\
&\quad\times
e^{-3\beta s}(s-\tilde s)^{-3}
\int_{|\widetilde v_*|\le 2M_0}
\int_{E_{s,\tilde s}}
\big|e^{\lambda_{n''}^m\tilde s}
\partial_{n''}^m g_2(\tilde s,y,\widetilde v_*)\big|
\,dyd\widetilde v_*d\tilde sds
\notag\\
&\le
C_l\sum_{|n'|\le |n|}\mathcal R_3^{m,n'}(t)
+
C_{l,M_0}
\sum_{|n'|\le |n|}
\sum_{|n''|\le |n'|}
\int_0^{t-\varepsilon}
e^{-(\frac{\nu_0}{2}-\lambda_n^m)(t-s)}
\int_0^{s-\varepsilon}
e^{-(\frac{\nu_0}{2}-\lambda_{n'}^m)(s-\tilde s)}
\notag\\
&\quad\times
e^{-3\beta s}(s-\tilde s)^{-3}
\Big[
1
+C_{M_0}e^{\beta s}(s-\tilde s)
+C_{M_0}e^{\beta s}(s-\tilde s)^2
+C_{M_0}(s-\tilde s)^3
\Big]
\notag\\
&\quad\times
\Big[
1+C_{M_0}e^{\beta s}(s-\tilde s)
+C_{M_0}(s-\tilde s)^2
\Big]^2
\big\|e^{\lambda_{n''}^m\tilde s}
\partial_{n''}^m g_2(\tilde s)\big\|_{L^2_{x,v}}
\,d\tilde sds
\notag\\
&\le
C_l\sum_{|n'|\le |n|}\mathcal R_3^{m,n'}(t)
+
C_{l,M_0}
\sum_{|n'|\le |n|}
\sum_{|n''|\le |n'|}
\sup_{0\le \tau\le t}
e^{\lambda_{n''}^m\tau}
\big\|\partial_{n''}^m g_2(\tau)\big\|_{L^2_{x,v}}
\notag\\
&\le
C_l\sum_{|n'|\le |n|}\mathcal R_3^{m,n'}(t)
+
C_{l,M_0}
\sum_{|n'|\le |n|}
\sup_{0\le s\le t}
e^{\lambda_{n'}^m s}
\big\|\partial_{n'}^m g_2(s)\big\|_{L^2_{x,v}}.
\label{est-eh2mn-Linf-final}
\end{align}
Here
\[
E_{s,\tilde s}
:=
\Big\{
\widetilde X(\tilde s;s,X(s),v_*):
|v_*|\le2M_0
\Big\}
\subset\mathbb R^3
\]
denotes the image of the change of variables in Lemma~\ref{lem-cov-3d}. We also used the bound
\[
\mathbf 1_{\{|V(s)|\le M_0\}}
K_q\big(V(s),v_*\big)
\frac{w_l(V(s))}{w_l(v_*)}
\mathbf 1_{\Omega_2(s)}
\mathbf 1_{\{|\widetilde V(\tilde s)|\le M_0\}}
K_q\big(\widetilde V(\tilde s),\widetilde v_*\big)
\frac{w_l(\widetilde V(\tilde s))}{w_l(\widetilde v_*)}
\mathbf 1_{\widetilde\Omega_2(\tilde s)}
\le
C_{l,M_0}.
\]
\noindent{\textbf{\underline{Step 3. Summation with Hierarchy Coefficients.}}}
Let $0<\kappa\ll1$. Substituting \eqref{R3mn-def} into \eqref{est-eh2mn-Linf-final}, reducing repeated sums over the same index set to a single sum up to a constant, and multiplying by $\kappa^{|n|}$, we obtain, for any fixed multi-indices $m,n\in\mathbb N^3$ satisfying $|m|+|n|\le N$,
\allowdisplaybreaks\begin{align}
\kappa^{|n|}e^{\lambda_n^m t}|h_2^{m,n}|
&\le
C_{l,M_0,\varepsilon_1}\kappa^{|n|}
\sum_{|n'|\le |n|}
\sup_{0\le s\le t}
e^{\lambda_{n'}^m s}
\big\|\partial_{n'}^m g_2(s)\big\|_{L^2_{x,v}}
+
C\kappa^{|n|}
\sum_{i=1}^3\sum_{|n'|\le |n|}
\mathbf 1_{\{n'_i>0\}}n'_i
\mathfrak H_2^{m+e_i,n'-e_i}(t)
\notag\\
&\quad{}+
\mathbf 1_{\{|m|=0\}}C_l\kappa^{|n|}
\sum_{|n'|\le |n|}
\Big[
\mathfrak H_1^{0,n'}(t)
+
\big(\frac{1}{M_0}+\varepsilon_1+\eta+\alpha\big)
\mathfrak H_2^{0,n'}(t)
\Big]
\notag\\
&
\quad{}+
\mathbf 1_{\{|m|>0\}}C_l\kappa^{|n|}
\sum_{0<m'\le m}\sum_{|n'|\le |n|}
\Big[
\mathfrak H_1^{m',n'}(t)
+
\big(\frac{1}{M_0}+\varepsilon_1+\eta+\alpha\big)
\mathfrak H_2^{m',n'}(t)
\Big].
\label{eh2-final}
\end{align}
Similarly, combining the estimates for $ e^{\lambda_n^m t}\sum_{j=0}^8 |\mathcal I_j^{(1)}|$ and multiplying by $\kappa^{|n|}$, we obtain
\allowdisplaybreaks\begin{align}
\kappa^{|n|}e^{\lambda_n^m t}|h_1^{m,n}|
&\le
\kappa^{|n|}
\|w_l\partial_n^m \tilde f_0\|_{L^\infty_{x,v}}
+
\frac{C}{l}\kappa^{|n|}
\sum_{|n'|\le |n|}
\mathfrak H_1^{m,n'}(t)
+
C\kappa^{|n|}
\sum_{i=1}^3\mathbf 1_{\{n_i>0\}}n_i
\mathfrak H_1^{m+e_i,n-e_i}(t)
\notag\\
&
\quad{}+
\mathbf 1_{\{|m|=0\}}(C\eta+C_l \alpha)\kappa^{|n|}
\sum_{|n'|\le |n|}
\Big(
\mathfrak H_1^{0,n'}(t)
+
\mathfrak H_2^{0,n'}(t)
\Big)
\notag\\
&
\quad{}+
\mathbf 1_{\{|m|>0\}}(C\eta+C_l \alpha)\kappa^{|n|}
\sum_{0<m'\le m}\sum_{|n'|\le |n|}
\Big(
\mathfrak H_1^{m',n'}(t)
+
\mathfrak H_2^{m',n'}(t)
\Big).
\label{eh1-final}
\end{align}

We now use \eqref{eh2-final}--\eqref{eh1-final} and sum over all multi-indices
$m,n\in\mathbb N^3$ satisfying $|m|>0$ and $|m|+|n|\le N$. This gives
\allowdisplaybreaks\begin{align}
\sum_{\substack{|m|+|n|\le N\\ |m|>0}}
\kappa^{|n|}\mathfrak H_2^{m,n}(t)
&\le
C_{l,M_0,\varepsilon_1}
\sum_{\substack{|m|+|n|\le N\\ |m|>0}}
\kappa^{|n|}
\sup_{0\le s\le t}
e^{\lambda_n^m s}
\big\|\partial_n^m g_2(s)\big\|_{L^2_{x,v}}
+
C\kappa
\sum_{\substack{|m|+|n|\le N\\ |m|>0}}
\kappa^{|n|}
\mathfrak H_2^{m,n}(t)
\notag\\
&\quad
{}+
C_l
\sum_{\substack{|m|+|n|\le N\\ |m|>0}}
\kappa^{|n|}
\mathfrak H_1^{m,n}(t)
+
C_l
\Big(\frac{1}{M_0}+\varepsilon_1+\eta+\alpha\Big)
\sum_{\substack{|m|+|n|\le N\\ |m|>0}}
\kappa^{|n|}
\mathfrak H_2^{m,n}(t),
\label{eh2-positive-sum}
\end{align}
and
\allowdisplaybreaks\begin{align}
\sum_{\substack{|m|+|n|\le N\\ |m|>0}}
\kappa^{|n|}\mathfrak H_1^{m,n}(t)
&\le{}
\sum_{\substack{|m|+|n|\le N\\ |m|>0}}
\kappa^{|n|}
\big\|w_l\partial_n^m \tilde f_0\big\|_{L^\infty_{x,v}}
+
\Big(\frac{C}{l}+C\kappa+C\eta+C_l\alpha\Big)
\sum_{\substack{|m|+|n|\le N\\ |m|>0}}
\kappa^{|n|}
\mathfrak H_1^{m,n}(t)
\notag\\
&\quad
+
(C\eta+C_l\alpha)
\sum_{\substack{|m|+|n|\le N\\ |m|>0}}
\kappa^{|n|}
\mathfrak H_2^{m,n}(t).
\label{eh1-positive-sum}
\end{align}
For any fixed sufficiently large $l>0$, we first choose $M_0>0$ sufficiently large and then
$\varepsilon_1>0$ sufficiently small. Next, taking
$\kappa,\eta,\alpha>0$ sufficiently small and absorbing the small terms, we
obtain \eqref{est-positive-sumh1}--\eqref{est-positive-sumh2}. Taking $n=0$ in \eqref{eh1-final} and \eqref{eh2-final}, summing over
$0<|m|\le N$, and using $\lambda_0^m=\lambda$, we immediately obtain
\eqref{est-positive-n0-sumh1}--\eqref{est-positive-n0-sumh2}.

For $m=n=0$, taking $m=n=0$ in
\eqref{eh2-final}--\eqref{eh1-final} and using the same choice of
parameters as above, we absorb the small terms and obtain
\eqref{est-zero-sumh1}--\eqref{est-zero-sumh2}.

We next consider $m=0$ and $0<|n|\leq N$. Summing
\eqref{eh2-final}--\eqref{eh1-final} over $0<|n|\leq N$, combining
the resulting estimates with
\eqref{est-zero-sumh1}--\eqref{est-zero-sumh2}, and using the same
choice of parameters as above, we obtain
\eqref{est-v-sumh1}--\eqref{est-v-sumh2}. This completes the proof of
Lemma~\ref{lem-Linfty-sum-h1h2}.
\end{proof}

\section{\texorpdfstring{$L^2$}{L2} Estimates}\label{sec4}

In this section, we establish the $L^2$ estimates needed to control the
remaining $L^2_{x,v}$-terms in Lemma~\ref{lem-Linfty-sum-h1h2}. Recall Caflisch's decomposition $\mu^{\frac12}\tilde g=g_1+\mu^{\frac12}g_2$. For nonzero-order spatial derivatives, we combine the macroscopic dissipation
estimates for the full perturbation $\tilde g$ with the microscopic estimates
for $g_2$. The zero-order estimates are treated separately through the
corresponding zero-frequency system
\eqref{zero-ODE-matrix}--\eqref{def-Rt}.

We begin by introducing the macroscopic variables associated with the full
perturbation $\tilde g$. Recall that $\mathbf P_0$ denotes the projection
onto $\ker L$ and $\mathbf P_1=I-\mathbf P_0$. We decompose
\begin{equation}\label{def-abc}
\left\{
\begin{aligned}
\tilde g&=\mathbf P_0\tilde g+\mathbf P_1\tilde g,\\
\mathbf P_0\tilde g
&=\Big[a+\mathbf b\cdot v+\frac{1}{\sqrt{6}}c(|v|^2-3)\Big]\mu^{\frac{1}{2}},
\quad \mathbf b=(b_1,b_2,b_3),\\
a&=\langle \tilde g,\chi_0\rangle_v,\quad
b_i=\langle \tilde g,\chi_i\rangle_v\quad (1\le i\le3),\quad
c=\langle \tilde g,\chi_4\rangle_v.
\end{aligned}
\right.
\end{equation}
We also introduce
\begin{equation}\label{def-Aij-Bi}
\left\{
\begin{aligned}
A_{ij}(v)
&:=\mathbf P_1(v_i v_j\chi_0)
=\Big(v_i v_j-\frac13\delta_{ij}|v|^2\Big)\mu^{\frac12},
\\
B_i(v)
&:=\mathbf P_1(v_i\chi_4)
=\frac{|v|^2-5}{\sqrt6}v_i\mu^{\frac12},
\end{aligned}
\right.
\quad 1\le i,j\le3.
\end{equation}

\subsection{Nonzero-Order Spatial Derivative Estimates}

We begin with the macroscopic dissipation estimates. With the above notation, set
\begin{equation}\label{def-dij-qi}
d_{ij}(t,x)=\langle \mathbf P_1\tilde g,A_{ij}\rangle_v,\quad
q_i(t,x)=\langle \mathbf P_1\tilde g,B_i\rangle_v, \quad i,j=1,2,3.
\end{equation}
Taking the $L^2(\mathbb R_v^3)$ inner products of \eqref{tildeg-eq} with $\chi_i$, $0\le i\le4$, we obtain the macroscopic system
\begin{equation}\label{eq-macro-abc}
\left\{
\begin{aligned}
&\partial_t a+e^{\beta t}{\rm div}_x\mathbf b=0,\\
&\partial_t b_i
+e^{\beta t}\partial_{x_i}\Big(a+\sqrt{\frac{2}{3}}c\Big)
+\beta b_i+\alpha\delta_{i1}b_2
-e^{-\beta t}\partial_{x_i}\phi
=
e^{-\beta t}a\partial_{x_i}\phi
-e^{\beta t}\sum_{j=1}^3\partial_{x_j}d_{ij},
\quad i=1,2,3,\\
&\partial_t c
+\sqrt{\frac{2}{3}}e^{\beta t}{\rm div}_x\mathbf b
+\sqrt{6}\beta a+2\beta c
=
\sqrt{\frac{2}{3}}e^{-\beta t}\nabla_x\phi\cdot\mathbf b
-\sqrt{\frac{2}{3}}\alpha d_{12}
-e^{\beta t}\sum_{i=1}^3\partial_{x_i}q_i.
\end{aligned}
\right.
\end{equation}
Here we used the identity
\begin{equation}\label{G1-chi}
\langle G_1,\chi_i\rangle_v=0,\quad 0\le i\le 4.
\end{equation}
Noting that $\Delta_x\phi(t,x)=a(t,x)$, applying $P_0^x$ to \eqref{eq-macro-abc}, we obtain
\[
\left\{
\begin{aligned}
&\partial_t P_0^x a=0,\\
&\partial_t P_0^x b_1+\beta P_0^x b_1+\alpha P_0^x b_2=0,\\
&\partial_t P_0^x b_i+\beta P_0^x b_i=0,\quad i=2,3,
\end{aligned}
\right.
\]
which, together with the initial condition \eqref{initial-con}, yields
\begin{equation}\label{int-ab}
P_0^x a(t)=0,\quad P_0^x \mathbf b(t)=0.
\end{equation}

Applying $ \mathbf P_1 $ to \eqref{tildeg-eq}, we obtain
\begin{equation}\label{micro-eq}
\partial_t\mathbf P_1\tilde g+L\mathbf P_1\tilde g
=-e^{\beta t}\mathbf P_1(v\cdot\nabla_x\mathbf P_0\tilde g)
+\mathbf P_1\mathcal R,
\end{equation}
where
\allowdisplaybreaks\begin{align}
\mathcal R
=&{}
-e^{\beta t}v\cdot\nabla_x\mathbf P_1\tilde g
+\beta\nabla_v\cdot(v\tilde g)
-\frac{\beta}{2}|v|^2\tilde g
+\alpha v_2\partial_{v_1}\tilde g
-\frac{\alpha}{2}v_1v_2\tilde g
-e^{-\beta t}\nabla_x\phi\cdot\nabla_v\tilde g
\notag\\
&{}+\frac{1}{2}e^{-\beta t}(v\cdot\nabla_x\phi)\tilde g
+\alpha\Gamma(G_1,\tilde g)
+\alpha\Gamma(\tilde g,G_1)
+\Gamma(\tilde g,\tilde g)
-\alpha e^{-\beta t}\nabla_x\phi\cdot\Big(\nabla_vG_1-\frac{v}{2}G_1\Big).
\label{micro-R}
\end{align}
Moreover, by the definitions of $A_{ij}$ and $B_i$, we have
\begin{equation}\notag
\mathbf P_1(v\cdot\nabla_x\mathbf P_0\tilde g)
=
\sum_{i,j=1}^3(\partial_{x_i}b_j)A_{ij}
+
\sum_{i=1}^3(\partial_{x_i}c)B_i.
\end{equation}
For later use, set
\begin{equation}\label{def-R-A-B}
\mathcal R_{ij}^{A}
=
\langle \mathbf P_1\mathcal R,A_{ij}\rangle_v
=
\langle \mathcal R,A_{ij}\rangle_v,
\quad
\mathcal R_i^{B}
=
\langle \mathbf P_1\mathcal R,B_i\rangle_v
=
\langle \mathcal R,B_i\rangle_v.
\end{equation}
Taking the $L^2(\mathbb R_v^3)$ inner products of \eqref{micro-eq}
with $A_{ij}$ and $B_i$, respectively, and using the identities
\begin{equation}\label{LAij}
LA_{ij}=2b_0A_{ij},
\quad
LB_i=\frac{4}{3}b_0B_i,
\end{equation}
which follow from \eqref{L-moments} in Lemma~\ref{lem-Lmoments}, we obtain
\begin{equation}\label{dij-qi-rep}
\left\{
\begin{aligned}
d_{ij}
&=
-\frac{e^{\beta t}}{2b_0}
\Big(
\partial_{x_i}b_j+\partial_{x_j}b_i
-\frac{2}{3}\delta_{ij}{\rm div}_x\mathbf b
\Big)
-\frac{1}{2b_0}\partial_t d_{ij}
+\frac{1}{2b_0}\mathcal R_{ij}^{A},
\\
q_i
&=
-\frac{5}{4b_0}e^{\beta t}\partial_{x_i}c
-\frac{3}{4b_0}\partial_t q_i
+\frac{3}{4b_0}\mathcal R_i^{B}.
\end{aligned}
\right.
\end{equation}
Consequently, substituting \eqref{dij-qi-rep} into \eqref{eq-macro-abc}, the macroscopic system takes the form
\begin{equation}\label{eq-macro-abc-ns}
\left\{
\begin{aligned}
&\partial_t a+e^{\beta t}{\rm div}_x\mathbf b=0,\\
&\partial_t b_i
+e^{\beta t}\partial_{x_i}\Big(a+\sqrt{\frac{2}{3}}c\Big)
-\frac{e^{2\beta t}}{2b_0}
\Big(
\Delta_x b_i+\frac13\partial_{x_i}{\rm div}_x\mathbf b
\Big)
+\beta b_i+\alpha\delta_{i1}b_2
-e^{-\beta t}\partial_{x_i}\phi
\\
&\quad =
e^{-\beta t}a\partial_{x_i}\phi
+\frac{e^{\beta t}}{2b_0}\sum_{j=1}^3\partial_{x_j}\partial_t d_{ij}
-\frac{e^{\beta t}}{2b_0}\sum_{j=1}^3\partial_{x_j}\mathcal R_{ij}^{A},
\quad i=1,2,3,\\
&\partial_t c
+\sqrt{\frac{2}{3}}e^{\beta t}{\rm div}_x\mathbf b
-\frac{5}{4b_0}e^{2\beta t}\Delta_x c
+\sqrt{6}\beta a+2\beta c
\\
&\quad =
\sqrt{\frac{2}{3}}e^{-\beta t}\nabla_x\phi\cdot\mathbf b
-\sqrt{\frac{2}{3}}\alpha d_{12}
+\frac{3e^{\beta t}}{4b_0}\sum_{i=1}^3\partial_{x_i}\partial_t q_i
-\frac{3e^{\beta t}}{4b_0}\sum_{i=1}^3\partial_{x_i}\mathcal R_i^{B}.
\end{aligned}
\right.
\end{equation}

The corresponding  macroscopic dissipation estimate is stated as follows.

\begin{lemma}\label{lem-abc-dis}
Let $[g_1,g_2]$ be the solution to \eqref{g1-eq}--\eqref{g2-eq}
satisfying the a priori assumption \eqref{assa}. Let $N\geq1$ and $l>3$.
Then there exist a sufficiently large constant $M_1>0$ and a constant
$\lambda_0>0$ such that
\allowdisplaybreaks\begin{align}
&e^{-2\beta t}\frac{d}{dt}\mathcal E_N^{(abc)}(t)
+\lambda_0\sum_{|m|= N-1}
\|\nabla_x\partial_x^m[a,\mathbf b,c]\|_{L_x^2}^2
+e^{-2\beta t}\sum_{|m|=N-1}
\|\partial_x^m a\|_{L_x^2}^2
\notag\\
&\le
C\sum_{|m|=N-1}
\|\nabla_x\partial_x^m\mathbf P_1\tilde g\|_{L^2_{x,v}}^2
+C(\eta^2+\alpha^2)\sum_{0<|m|\le N}
\Big(
\|h_1^{m,0}\|_{L^\infty_{x,v}}^2
+\|h_2^{m,0}\|_{L^\infty_{x,v}}^2
\Big),
\label{abc-dis}
\end{align}
where
\allowdisplaybreaks\begin{align}
\mathcal E_N^{(abc)}(t)
=&
\sum_{|m|=N-1}
\Bigg\{
M_1\bigg[
\frac{1}{2}\Big(
\|\partial_x^m[a,\mathbf b]\|_{L_x^2}^2
+\|P_{\neq0}^x\partial_x^m c\|_{L_x^2}^2
+e^{-2\beta t}\|\partial_x^m\nabla_x\phi\|_{L_x^2}^2
\Big)
\notag\\
&-\frac{3e^{\beta t}}{4b_0}\sum_{i=1}^3
\big\langle
\partial_{x_i}\partial_x^m q_i,
P_{\neq0}^x\partial_x^m c
\big\rangle_x
-\frac{e^{\beta t}}{2b_0}\sum_{i,j=1}^3
\big\langle
\partial_{x_j}\partial_x^m d_{ij},
\partial_x^m b_i
\big\rangle_x
\bigg]
+e^{\beta t}
\big\langle
\partial_x^m\mathbf b,
\nabla_x\partial_x^m a
\big\rangle_x
\Bigg\}.
\label{Eabc-def}
\end{align}
Here $h_1^{m,0}$ and $h_2^{m,0}$ are defined in
\eqref{eq-def-h12mn}.
\end{lemma}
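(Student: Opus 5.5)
The proof will be a standard macroscopic energy estimate built on the Navier--Stokes-type reformulation \eqref{eq-macro-abc-ns}. Fix $|m|=N-1$, apply $\partial_x^m$, and test the three equations in turn.

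\textbf{Step 1 (energy identities for $a$, $\mathbf b$, $c$).} Test the $a$-equation with $\partial_x^m a$ and the $b_i$-equation with $\partial_x^m b_i$. The convective terms $e^{\beta t}\langle\partial_{x_i}\partial_x^m a,\partial_x^m b_i\rangle_x$ and $e^{\beta t}\langle{\rm div}_x\partial_x^m\mathbf b,\partial_x^m a\rangle_x$ cancel. The term $\sqrt{2/3}\,e^{\beta t}\langle\nabla_x\partial_x^m c,\partial_x^m\mathbf b\rangle_x$ cancels against the corresponding term of the $c$-equation, which is tested with $P_{\neq0}^x\partial_x^m c$. This cancellation still holds after the projection, because $\partial_x^m\mathbf b$ has zero mean by \eqref{int-ab}.

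The field term uses $\Delta_x\phi=a$ and $\partial_ta=-e^{\beta t}{\rm div}_x\mathbf b$:
\[
-e^{-\beta t}\langle\partial_{x_i}\partial_x^m\phi,\partial_x^m b_i\rangle_x=\tfrac12 e^{-2\beta t}\tfrac{d}{dt}\|\partial_x^m\nabla_x\phi\|^2_{L^2_x}.
\]
Multiplying by $e^{-2\beta t}$ then produces the factor in front of $\frac{d}{dt}\mathcal E_N^{(abc)}$, with a harmless sign term $\beta e^{-2\beta t}\|\cdot\|^2\ge0$.

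The viscous terms give $\frac{e^{2\beta t}}{2b_0}\|\nabla_x\partial_x^m\mathbf b\|^2$ and $\frac{5e^{2\beta t}}{4b_0}\|\nabla_x\partial_x^m c\|^2$. When $m=0$, the projection identity from the introduction is used here. The damping terms $\beta\|b\|^2$, $2\beta\|P_{\neq0}c\|^2$ and $\sqrt6\beta\langle a,P_{\neq0}c\rangle$ are $O(\beta)$ and are absorbed by Poincar\'e.

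The terms containing $\partial_t d_{ij}$ and $\partial_t q_i$ are moved into the time derivative by integrating by parts in $t$. This produces exactly the correctors $-\frac{e^{\beta t}}{2b_0}\langle\partial_{x_j}\partial_x^m d_{ij},\partial_x^m b_i\rangle_x$ and $-\frac{3e^{\beta t}}{4b_0}\langle\partial_{x_i}\partial_x^m q_i,P_{\neq0}^x\partial_x^m c\rangle_x$ in \eqref{Eabc-def}. The remainders involve $\partial_t b$ and $\partial_t c$, which are substituted back from \eqref{eq-macro-abc}. This gives terms bounded by $\varepsilon\|\nabla_x\partial_x^m[\mathbf b,c]\|^2+C\|\nabla_x\partial_x^m\mathbf P_1\tilde g\|^2$ plus field/nonlinear terms; the growth factors $e^{\beta t}$ are compensated by dividing everything by $e^{2\beta t}$. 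In the case $m=0$, the term $\alpha d_{12}$ paired with $P_{\neq0}c$ only sees $P_{\neq0}d_{12}$, and this is controlled by Poincar\'e.

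\textbf{Step 2 (dissipation of $a$).} The $a$-dissipation comes from the interaction functional $e^{\beta t}\langle\partial_x^m\mathbf b,\nabla_x\partial_x^m a\rangle_x$. Differentiate it in $t$ and use the $b$-equation in the form \eqref{eq-macro-abc}, with the $d_{ij}$ divergence kept in place and not substituted. This yields
\[
e^{2\beta t}\|\nabla_x\partial_x^m a\|^2+\langle -\partial_x^m\nabla\phi,\nabla\partial_x^m a\rangle=e^{2\beta t}\|\nabla_x\partial_x^m a\|^2+\|\partial_x^m a\|^2,
\]
after multiplying out the exponentials. After the $e^{-2\beta t}$ rescaling, this is precisely the term $e^{-2\beta t}\|\partial_x^m a\|^2$ on the left of \eqref{abc-dis}. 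The remaining terms are bounded by $\|{\rm div}\,\partial_x^m\mathbf b\|^2$, $\varepsilon\|\nabla\partial_x^m a\|^2+C\|\nabla\partial_x^m c\|^2$, $\|\nabla\partial_x^m d\|^2$, and $\beta,\alpha$-small terms. Choosing $M_1$ large lets the Step 1 dissipation of $\mathbf b$ and $c$ dominate these. For $M_1$ large, $\mathcal E_N^{(abc)}$ also stays equivalent to its quadratic part, although that equivalence is not needed for the differential inequality itself.

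\textbf{Step 3 (nonlinear and source terms).} The remaining pieces are $e^{-\beta t}a\nabla\phi$, $\nabla\phi\cdot\mathbf b$, and $\mathcal R^A_{ij}$, $\mathcal R^B_i$ from \eqref{micro-R}. They are handled by integrating by parts in $x$ and using the moments against the fixed Gaussians $A_{ij}$ and $B_i$. The $v$-derivatives land on these Gaussian test functions, so each moment is bounded by $\|w_l\partial_x^{m'}\tilde f\|_{L^\infty}$ with $\tilde f=g_1+\mu^{1/2}g_2$. Quadratic terms carry a factor $\eta$ from \eqref{assa}, and the $\alpha$, $\beta$ terms carry $\alpha$. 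All of these are bounded by $(\eta^2+\alpha^2)\sum_{0<|m|\le N}(\|h_1^{m,0}\|^2+\|h_2^{m,0}\|^2)$ plus absorbable dissipation; Lemma~\ref{lemma-poisson-3D} controls $\phi$. Lemma~\ref{lem-Gamma} handles $\Gamma$, and the transport piece $e^{\beta t}v\cdot\nabla_x\mathbf P_1\tilde g$ gives $\|\nabla_x\partial_x^m\mathbf P_1\tilde g\|^2$.

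\textbf{Main obstacle.} I expect the hardest part to be the time-weight bookkeeping. The terms carry factors $e^{\beta t}$ and $e^{2\beta t}$, and these must all match after multiplying by $e^{-2\beta t}$. The $\partial_t$ of the $e^{\beta t}$-weighted correctors generates extra $\beta$-terms, which must be absorbed using the smallness of $\beta=O(\alpha^2)$. The $m=0$ zero-frequency issue must also be kept out of the estimate via $P_{\neq0}^x$.
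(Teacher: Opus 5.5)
Your architecture matches the paper's. You test the $c$-equation of \eqref{eq-macro-abc-ns} against $P_{\neq0}^x\partial_x^m c$ and the $\mathbf b$-equation against $\partial_x^m\mathbf b$, with the $a$-equation and field energy folded in. You turn the $\partial_t q_i$, $\partial_t d_{ij}$ terms into the two correctors and eliminate $\partial_t\mathbf b,\partial_t c$ via \eqref{eq-macro-abc}. You use $e^{\beta t}\langle\partial_x^m\mathbf b,\nabla_x\partial_x^m a\rangle_x$ for the $a$-dissipation and take $M_1$ large.

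The step that fails is your Step 3 in the borderline case $N=1$, $m=0$, which the lemma includes. If you integrate $e^{\beta t}\langle\partial_{x_j}\partial_x^m\mathcal R^A_{ij},\partial_x^m b_i\rangle_x$ and $e^{\beta t}\langle\partial_{x_i}\partial_x^m\mathcal R^B_i,P_{\neq0}^x\partial_x^m c\rangle_x$ by parts in $x$, the only spatial derivative moves onto $\mathbf b$ or $c$. You are then left with zero-order moments of $\tilde g$. For the linear shear terms this is fatal. One computes $\langle\alpha v_2\partial_{v_1}\tilde g-\frac{\alpha}{2}v_1v_2\tilde g,A_{12}\rangle_v=-\alpha\big(a+\sqrt{2/3}\,c+d_{22}\big)$, so you get a term of the form $\alpha e^{\beta t}\langle a+\sqrt{2/3}\,c+d_{22},\partial_{x_2}b_1\rangle_x$. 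Bounding this moment as you propose, by $\alpha\|w_l\tilde f\|_{L^\infty_{x,v}}$ (or by $\|c\|_{L^2_x}+\|d_{22}\|_{L^2_x}$), brings in $P_0^x c$ and $P_0^x d_{22}$. These are exactly the non-decaying zero-frequency modes governed by \eqref{zero-ODE-matrix}. They are zero-order quantities of the type $h_1^{0,0}$, $h_2^{0,0}$, and they are absent from the right-hand side of \eqref{abc-dis}, whose sum runs over $0<|m|\le N$. Since they do not decay, they would also break the $e^{\lambda t}$-weighted closure in Proposition~\ref{prop-EN1g2}. So your claim that all source terms are bounded by $(\eta^2+\alpha^2)\sum_{0<|m|\le N}(\cdots)$ plus absorbable dissipation is false for $N=1$.

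The paper integrates by parts only in the transport piece $\langle v\cdot\nabla_x\mathbf P_1\tilde g,B_i\rangle_v$, which already carries $\nabla_x$. Otherwise it keeps $\partial_{x_i}\partial_x^m$ on the moment. The moment is then bounded by the dissipation, by field terms, or by $\sum_{0<|m'|\le|m|+1}\big(\|h_1^{m',0}\|_{L^\infty_{x,v}}+\|h_2^{m',0}\|_{L^\infty_{x,v}}\big)$. Poincar\'e is applied to the test function instead: $\|P_{\neq0}^x\partial_x^m c\|_{L^2_x}\le C\|\nabla_x\partial_x^m c\|_{L^2_x}$, and, since $P_0^x\mathbf b=0$ by \eqref{int-ab}, $\|\partial_x^m b_i\|_{L^2_x}\le C\|\nabla_x\partial_x^m b_i\|_{L^2_x}$. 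Equivalently, if you do integrate by parts, note that $\partial_{x_j}\partial_x^m b_i$ and $\partial_{x_i}\partial_x^m c$ have zero mean. Then only $P_{\neq0}^x$ of the moment survives, and you apply Poincar\'e to it. You already use exactly this mechanism for $\alpha d_{12}$ in Step 1, but not for the source terms.

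This is also where the zero mean of $\mathbf b$ is actually needed. The cancellation between $\langle{\rm div}_x\partial_x^m\mathbf b,P_{\neq0}^x\partial_x^m c\rangle_x$ and $\langle\nabla_x\partial_x^m c,\partial_x^m\mathbf b\rangle_x$ holds simply because a divergence has zero mean on $\mathbb T^3$. Finally, choose $\varepsilon$ after $M_1$. Eliminating $\partial_t\mathbf b$ produces terms $M_1\varepsilon\|\nabla_x\partial_x^m a\|_{L^2_x}^2$ and $M_1\varepsilon\|\partial_x^m\nabla_x\phi\|_{L^2_x}^2$, and these must be absorbed by the $a$-dissipation, which carries no factor $M_1$.
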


\begin{proof}

\noindent\underline{\textbf{Step 1. Dissipation of $c$.}}
Let $m\in\mathbb N^3$ satisfy $|m|\le N-1$. The projection
$P_{\neq0}^x$ is used in the $c$-estimate to remove the zero-frequency mode
$P_0^x c$, which cannot be controlled in the positive spatial derivative
$\nabla_x c$. Applying $P_{\neq0}^x\partial_x^m$ to the third
equation of \eqref{eq-macro-abc-ns} gives
\allowdisplaybreaks\begin{align}
&\partial_tP_{\neq0}^x\partial_x^m c
+\sqrt{\frac{2}{3}}e^{\beta t}{\rm div}_x\partial_x^m\mathbf b
-\frac{5}{4b_0}e^{2\beta t}\Delta_x\partial_x^m c
+\sqrt6\beta\partial_x^m a
+2\beta P_{\neq0}^x\partial_x^m c
\notag\\
& =
\sqrt{\frac{2}{3}}e^{-\beta t}
P_{\neq0}^x\partial_x^m(\nabla_x\phi\cdot\mathbf b)
-\sqrt{\frac{2}{3}}\alpha P_{\neq0}^x\partial_x^m d_{12}
+\frac{3e^{\beta t}}{4b_0}\sum_{i=1}^3
\partial_{x_i}\partial_x^m\partial_t q_i
-\frac{3e^{\beta t}}{4b_0}\sum_{i=1}^3
\partial_{x_i}\partial_x^m\mathcal R_i^B .
\label{eq-Pneq0-c}
\end{align}
Testing \eqref{eq-Pneq0-c} against $P_{\neq0}^x\partial_x^m c$, and taking the
$L^2(\mathbb T_x^3)$ inner product, we obtain
\allowdisplaybreaks\begin{align}
\frac{d}{dt}&\bigg\{
\frac{1}{2}\|P_{\neq0}^x\partial_x^m c\|_{L_x^2}^2
-\frac{3e^{\beta t}}{4b_0}\sum_{i=1}^3
\big\langle
\partial_{x_i}\partial_x^m q_i,
P_{\neq0}^x\partial_x^m c
\big\rangle_x
\bigg\}
+\frac{5}{4b_0}e^{2\beta t}
\|\nabla_x\partial_x^m c\|_{L_x^2}^2
\notag\\
&
+2\beta\|P_{\neq0}^x\partial_x^m c\|_{L_x^2}^2
+\sqrt{\frac{2}{3}}e^{\beta t}
\big\langle
{\rm div}_x\partial_x^m\mathbf b,
P_{\neq0}^x\partial_x^m c
\big\rangle_x
\notag\\
={}&
-\sqrt{6}\beta
\big\langle
\partial_x^m a,
P_{\neq0}^x\partial_x^m c
\big\rangle_x
+\sqrt{\frac{2}{3}}e^{-\beta t}
\big\langle
\partial_x^m(\nabla_x\phi\cdot\mathbf b),
P_{\neq0}^x\partial_x^m c
\big\rangle_x
\notag\\
&
-\sqrt{\frac{2}{3}}\alpha
\big\langle
P_{\neq0}^x\partial_x^m d_{12},
P_{\neq0}^x\partial_x^m c
\big\rangle_x
-\frac{3\beta e^{\beta t}}{4b_0}\sum_{i=1}^3
\big\langle
\partial_{x_i}\partial_x^m q_i,
P_{\neq0}^x\partial_x^m c
\big\rangle_x
\notag\\
&
-\frac{3e^{\beta t}}{4b_0}\sum_{i=1}^3
\big\langle
\partial_{x_i}\partial_x^m q_i,
\partial_tP_{\neq0}^x\partial_x^m c
\big\rangle_x
-\frac{3e^{\beta t}}{4b_0}\sum_{i=1}^3
\left\langle
\partial_{x_i}\partial_x^m\mathcal R_i^{B},
P_{\neq0}^x\partial_x^m c
\right\rangle_x
=:\sum_{i=1}^6\mathcal J_i^{c}.
\label{c-step}
\end{align}
 Here we have used
\begin{align}
\frac{3e^{\beta t}}{4b_0}\sum_{i=1}^3
\big\langle
\partial_{x_i}\partial_x^m\partial_t q_i,
P_{\neq0}^x\partial_x^m c
\big\rangle_x
={}&
\frac{d}{dt}\bigg\{
\frac{3e^{\beta t}}{4b_0}\sum_{i=1}^3
\big\langle
\partial_{x_i}\partial_x^m q_i,
P_{\neq0}^x\partial_x^m c
\big\rangle_x
\bigg\}
-\frac{3\beta e^{\beta t}}{4b_0}\sum_{i=1}^3
\big\langle
\partial_{x_i}\partial_x^m q_i,
P_{\neq0}^x\partial_x^m c
\big\rangle_x
\notag\\
&-\frac{3e^{\beta t}}{4b_0}\sum_{i=1}^3
\big\langle
\partial_{x_i}\partial_x^m q_i,
\partial_tP_{\neq0}^x\partial_x^m c
\big\rangle_x .\notag
\end{align}

For $\mathcal J_1^c,\mathcal J_3^c$, and $\mathcal J_4^c$, using
\eqref{def-dij-qi}, the Poincar\'e inequality for the nonzero-frequency modes
$P_{\neq0}^x\partial_x^m c$ and $P_{\neq0}^x\partial_x^m d_{12}$, and the fact
$\beta=O(\alpha^2)$, we have
\allowdisplaybreaks\begin{align}
|\mathcal J_1^c|+|\mathcal J_3^c|+|\mathcal J_4^c|
&\le C\beta\|\partial_x^m a\|_{L_x^2}\|P_{\neq0}^x\partial_x^m c\|_{L_x^2}
+C\alpha\|P_{\neq0}^x\partial_x^m d_{12}\|_{L_x^2}\|P_{\neq0}^x\partial_x^m c\|_{L_x^2}
\notag\\
&\quad+C\beta e^{\beta t}\sum_{i=1}^3\|\partial_{x_i}\partial_x^m q_i\|_{L_x^2}
\|P_{\neq0}^x\partial_x^m c\|_{L_x^2}
\notag\\
&\le
C\beta\|\nabla_x\partial_x^m a\|_{L_x^2}\|\nabla_x\partial_x^m c\|_{L_x^2}
+C\alpha\|\nabla_x\partial_x^m\mathbf P_1\tilde g\|_{L^2_{x,v}}
\|\nabla_x\partial_x^m c\|_{L_x^2}
\notag\\
&\quad
+C\beta e^{\beta t}\|\nabla_x\partial_x^m\mathbf P_1\tilde g\|_{L^2_{x,v}}
\|\nabla_x\partial_x^m c\|_{L_x^2}
\notag\\
&\le
C\alpha e^{2\beta t}\Big(\|\nabla_x\partial_x^m[a,\mathbf b,c]\|_{L_x^2}^2
+\|\nabla_x\partial_x^m\mathbf P_1\tilde g\|_{L^2_{x,v}}^2\Big).
\label{Jc134}
\end{align}

For $\mathcal J_2^c$, by the definition of $\mathbf b$, Caflisch's decomposition $\mu^{\frac12}\tilde g=g_1+\mu^{\frac12}g_2$, and the a priori
assumption \eqref{assa}, for any $|m'|\le |m|$, we have
\[
\begin{aligned}
\|\partial_x^{m'}\mathbf b\|_{L_x^\infty}
&= \left\|\int_{\mathbb R^3}v\big(
\partial_x^{m'}g_1+\mu^{\frac12}\partial_x^{m'}g_2
\big)\,dv\right\|_{L_x^\infty}
\\
&\le
C
\bigg(\int_{\mathbb R^3}|v|\,w_l(v)^{-1}\,dv\bigg)
\Big(
\|w_l\partial_x^{m'}g_1\|_{L^\infty_{x,v}}
+
\|w_l\partial_x^{m'}g_2\|_{L^\infty_{x,v}}
\Big)
\\
&\le
C\eta,\quad l>3.
\end{aligned}
\]
Hence
\allowdisplaybreaks\begin{align}
|\mathcal J_2^c|
&\le
C e^{-\beta t}
\|\partial_x^m(\nabla_x\phi\cdot\mathbf b)\|_{L_x^2}
\|P_{\neq0}^x\partial_x^m c\|_{L_x^2}
\notag\\
&\le
C\eta e^{-\beta t}
\sum_{|m'|\le |m|}
\|\partial_x^{m'}\nabla_x\phi\|_{L_x^2}
\|\nabla_x\partial_x^m c\|_{L_x^2}
\notag\\
&\le
C\eta
\Big(\sum_{|m'|\le |m|}\|\partial_x^{m'}\nabla_x\phi\|_{L_x^2}^2
+\|\nabla_x\partial_x^m c\|_{L_x^2}^2\Big).
\label{Jc2}
\end{align}

To estimate $\mathcal J_5^c$, we substitute the third equation of
\eqref{eq-macro-abc} into $\partial_tP_{\neq0}^x\partial_x^m c$. From $\eqref{int-ab}_3$, we have
\[
\mathcal J_5^c=\sum_{\ell=1}^5\mathcal J_{5,\ell}^c,
\]
where
\begin{equation}\label{J5c-dec}
\left\{
\begin{aligned}
\mathcal J_{5,1}^c
&=
\frac{3}{4b_0}\sqrt{\frac{2}{3}}\,e^{2\beta t}
\sum_{i=1}^3
\big\langle
\partial_{x_i}\partial_x^m q_i,
{\rm div}_x\partial_x^m\mathbf b
\big\rangle_x,
\\
\mathcal J_{5,2}^c
&=
\frac{3\sqrt6\,\beta e^{\beta t}}{4b_0}
\sum_{i=1}^3
\big\langle
\partial_{x_i}\partial_x^m q_i,
\partial_x^m a
\big\rangle_x
+
\frac{3\beta e^{\beta t}}{2b_0}
\sum_{i=1}^3
\big\langle
\partial_{x_i}\partial_x^m q_i,
P_{\neq0}^x\partial_x^m c
\big\rangle_x,
\\
\mathcal J_{5,3}^c
&=
-\frac{3}{4b_0}\sqrt{\frac{2}{3}}
\sum_{i=1}^3
\big\langle
\partial_{x_i}\partial_x^m q_i,
P_{\neq0}^x\partial_x^m(\nabla_x\phi\cdot\mathbf b)
\big\rangle_x,
\\
\mathcal J_{5,4}^c
&=
\frac{3}{4b_0}\sqrt{\frac{2}{3}}\,\alpha e^{\beta t}
\sum_{i=1}^3
\big\langle
\partial_{x_i}\partial_x^m q_i,
P_{\neq0}^x\partial_x^m d_{12}
\big\rangle_x,
\\
\mathcal J_{5,5}^c
&=
\frac{3e^{2\beta t}}{4b_0}
\sum_{i,k=1}^3
\big\langle
\partial_{x_i}\partial_x^m q_i,
\partial_{x_k}\partial_x^m q_k
\big\rangle_x .
\end{aligned}
\right.
\end{equation}
Similarly to \eqref{Jc134} and \eqref{Jc2}, we obtain
\allowdisplaybreaks\begin{align}
|\mathcal J_{5,1}^c|+|\mathcal J_{5,2}^c|+|\mathcal J_{5,3}^c|
&\le
C e^{2\beta t}
\|\nabla_x\partial_x^m\mathbf P_1\tilde g\|_{L^2_{x,v}}
\|\nabla_x\partial_x^m\mathbf b\|_{L_x^2}
+C\beta e^{\beta t}
\|\nabla_x\partial_x^m\mathbf P_1\tilde g\|_{L^2_{x,v}}
\|\nabla_x\partial_x^m[a,c]\|_{L_x^2}
\notag\\
&\quad
+C\eta
\|\nabla_x\partial_x^m\mathbf P_1\tilde g\|_{L^2_{x,v}}
\sum_{|m'|\le |m|}
\|\partial_x^{m'}\nabla_x\phi\|_{L_x^2}
\notag\\
&\le
\varepsilon e^{2\beta t}
\|\nabla_x\partial_x^m\mathbf b\|_{L_x^2}^2
+
C\beta e^{2\beta t}
\|\nabla_x\partial_x^m[a,c]\|_{L_x^2}^2
\notag\\
&\quad
+
C_\varepsilon e^{2\beta t}
\|\nabla_x\partial_x^m\mathbf P_1\tilde g\|_{L^2_{x,v}}^2
+
C\eta^2
\sum_{|m'|\le |m|}
\|\partial_x^{m'}\nabla_x\phi\|_{L_x^2}^2 .
\label{Jc51-53}
\end{align}
For $\mathcal J_{5,4}^c$ and $\mathcal J_{5,5}^c$, the definition in \eqref{def-dij-qi} and the Poincar\'e inequality directly yield
\allowdisplaybreaks\begin{align}
|\mathcal J_{5,4}^c|+|\mathcal J_{5,5}^c|
&\le
C\alpha e^{\beta t}
\|\nabla_x\partial_x^m\mathbf P_1\tilde g\|_{L^2_{x,v}}
\|P_{\neq0}^x\partial_x^m d_{12}\|_{L_x^2}
+
Ce^{2\beta t}
\|\nabla_x\partial_x^m\mathbf P_1\tilde g\|_{L^2_{x,v}}^2
\notag\\
&\le
C\alpha e^{\beta t}
\|\nabla_x\partial_x^m\mathbf P_1\tilde g\|_{L^2_{x,v}}^2
+
Ce^{2\beta t}
\|\nabla_x\partial_x^m\mathbf P_1\tilde g\|_{L^2_{x,v}}^2
\notag\\
&\le
Ce^{2\beta t}
\|\nabla_x\partial_x^m\mathbf P_1\tilde g\|_{L^2_{x,v}}^2 .
\label{Jc54-55}
\end{align}
Combining \eqref{Jc51-53} and \eqref{Jc54-55}, we obtain
\begin{align}
|\mathcal J_5^c|
&\le
\varepsilon e^{2\beta t}
\|\nabla_x\partial_x^m\mathbf b\|_{L_x^2}^2
+
C\beta e^{2\beta t}
\|\nabla_x\partial_x^m[a,c]\|_{L_x^2}^2
\notag\\
&\quad
+
C_\varepsilon e^{2\beta t}
\|\nabla_x\partial_x^m\mathbf P_1\tilde g\|_{L^2_{x,v}}^2
+
C_\varepsilon\eta^2
\sum_{|m'|\le |m|}
\|\partial_x^{m'}\nabla_x\phi\|_{L_x^2}^2 .
\label{Jc5}
\end{align}

To estimate $\mathcal J_6^c$, we insert \eqref{micro-R} into
$\mathcal R_i^B$. We decompose
\[
\mathcal J_6^c=\sum_{\ell=1}^5\mathcal J_{6,\ell}^c,
\]
where
\begin{equation}\label{J6c-dec}
\left\{
\begin{aligned}
\mathcal J_{6,1}^c
&=
\frac{3e^{2\beta t}}{4b_0}
\sum_{i=1}^3
\big\langle
\partial_{x_i}\partial_x^m
\big\langle
v\cdot\nabla_x\mathbf P_1\tilde g,B_i
\big\rangle_v,
P_{\neq0}^x\partial_x^m c
\big\rangle_x,
\\
\mathcal J_{6,2}^c
&=
-\frac{3e^{\beta t}}{4b_0}
\sum_{i=1}^3
\big\langle
\partial_{x_i}\partial_x^m
\big\langle
\beta\nabla_v\cdot(v\tilde g)
-\frac{\beta}{2}|v|^2\tilde g
+\alpha v_2\partial_{v_1}\tilde g
-\frac{\alpha}{2}v_1v_2\tilde g,
B_i
\big\rangle_v,
P_{\neq0}^x\partial_x^m c
\big\rangle_x,
\\
\mathcal J_{6,3}^c
&=
-\frac{3}{4b_0}
\sum_{i=1}^3
\big\langle
\partial_{x_i}\partial_x^m
\big\langle
-\nabla_x\phi\cdot\nabla_v\tilde g
+\frac12(v\cdot\nabla_x\phi)\tilde g,
B_i
\big\rangle_v,
P_{\neq0}^x\partial_x^m c
\big\rangle_x,
\\
\mathcal J_{6,4}^c
&=
-\frac{3e^{\beta t}}{4b_0}
\sum_{i=1}^3
\big\langle
\partial_{x_i}\partial_x^m
\big\langle
\alpha\Gamma(G_1,\tilde g)
+\alpha\Gamma(\tilde g,G_1)
+\Gamma(\tilde g,\tilde g),
B_i
\big\rangle_v,
P_{\neq0}^x\partial_x^m c
\big\rangle_x,
\\
\mathcal J_{6,5}^c
&=
\frac{3\alpha}{4b_0}
\sum_{i=1}^3
\big\langle
\partial_{x_i}\partial_x^m
\big\langle
\nabla_x\phi\cdot
\big(\nabla_vG_1-\frac{v}{2}G_1\big),
B_i
\big\rangle_v,
P_{\neq0}^x\partial_x^m c
\big\rangle_x .
\end{aligned}
\right.
\end{equation}

For $\mathcal J_{6,1}^c$, integrating by parts in $x$, then applying the
Cauchy--Schwarz inequality first in $x$ and then in $v$, and using the
Maxwellian decay in $B_i$, we have
\begin{align}
|\mathcal J_{6,1}^c|
&\le
C e^{2\beta t}
\sum_{i=1}^3
\Big\|
\partial_x^m
\big\langle
v\cdot\nabla_x\mathbf P_1\tilde g,B_i
\big\rangle_v
\Big\|_{L_x^2}
\|\partial_{x_i}P_{\neq0}^x\partial_x^m c\|_{L_x^2}
\notag\\
&\le
\varepsilon e^{2\beta t}
\|\nabla_x\partial_x^m c\|_{L_x^2}^2
+
C_\varepsilon e^{2\beta t}
\|\nabla_x\partial_x^m\mathbf P_1\tilde g\|_{L^2_{x,v}}^2 .
\label{Jc61}
\end{align}
For $\mathcal J_{6,2}^c$, similarly as in \eqref{Jc61}, and additionally
integrating by parts in $v$ for the terms involving $\nabla_v$ and
$\partial_{v_1}$, we obtain
\begin{align}
|\mathcal J_{6,2}^c|
&\le
C(\alpha+\beta)e^{\beta t}
\|\nabla_x\partial_x^m\tilde g\|_{L^2_{x,v}}
\|P_{\neq0}^x\partial_x^m c\|_{L_x^2}
\notag\\
&\le
C\alpha e^{\beta t}
\Big(
\|\nabla_x\partial_x^m[a,\mathbf b,c]\|_{L_x^2}
+
\|\nabla_x\partial_x^m\mathbf P_1\tilde g\|_{L^2_{x,v}}
\Big)
\|\nabla_x\partial_x^m c\|_{L_x^2}
\notag\\
&\le
C\alpha e^{2\beta t}
\Big(
\|\nabla_x\partial_x^m[a,\mathbf b,c]\|_{L_x^2}^2
+
\|\nabla_x\partial_x^m\mathbf P_1\tilde g\|_{L^2_{x,v}}^2
\Big).
\label{Jc62}
\end{align}
For $\mathcal J_{6,3}^c$, using Lemma \ref{lemma-poisson-3D} and the a priori assumption \eqref{assa}, we obtain
\begin{align}
|\mathcal J_{6,3}^c|
&\le
C\eta
\Big(
\|\nabla_x\partial_x^m[a,\mathbf b,c]\|_{L_x^2}
+
\|\nabla_x\partial_x^m\mathbf P_1\tilde g\|_{L^2_{x,v}}
+
\sum_{|m'|\le |m|}
\|\partial_x^{m'}\nabla_x\phi\|_{L_x^2}
\Big)
\|\nabla_x\partial_x^m c\|_{L_x^2}
\notag\\
&\le
C\eta
\Big(
\|\nabla_x\partial_x^m[a,\mathbf b,c]\|_{L_x^2}^2
+
\|\nabla_x\partial_x^m\mathbf P_1\tilde g\|_{L^2_{x,v}}^2
+
\sum_{|m'|\le |m|}
\|\partial_x^{m'}\nabla_x\phi\|_{L_x^2}^2
\Big).
\label{Jc63}
\end{align}
For $\mathcal J_{6,4}^c$, one has
\[
\begin{aligned}
|\mathcal J_{6,4}^c|
&\le
C e^{\beta t}\sum_{i=1}^3\Big\|\partial_x^{m+e_i}
\big\langle\alpha\Gamma(G_1,\tilde g)+\alpha\Gamma(\tilde g,G_1)
+\Gamma(\tilde g,\tilde g),B_i\big\rangle_v\Big\|_{L_x^2}\|P_{\neq0}^x\partial_x^m c\|_{L_x^2}
\\
&\le
C e^{\beta t}
\sum_{i=1}^3
\Big\|
\partial_x^{m+e_i}
\big\langle
\alpha\Gamma(G_1,\tilde g)
+\alpha\Gamma(\tilde g,G_1)
+\Gamma(\tilde g,\tilde g),
B_i
\big\rangle_v
\Big\|_{L_x^\infty}
\|\nabla_x\partial_x^m c\|_{L_x^2}.
\end{aligned}
\]
We next estimate the collision moment. Recalling that
$\tilde g=\mu^{-\frac12}g_1+g_2$, we have
\[
\begin{aligned}
\partial_x^{m+e_i}
\big\langle \Gamma(G_1,\tilde g),B_i\big\rangle_v
&=
\int_{\mathbb R^3}
w_l Q\big(
\mu^{\frac12}G_1,
\partial_x^{m+e_i}g_1
+\mu^{\frac12}\partial_x^{m+e_i}g_2
\big)
w_l^{-1}\frac{|v|^2-5}{\sqrt6}v_i\,dv,
\\
\partial_x^{m+e_i}
\big\langle \Gamma(\tilde g,G_1),B_i\big\rangle_v
&=
\int_{\mathbb R^3}
w_l Q\big(
\partial_x^{m+e_i}g_1
+\mu^{\frac12}\partial_x^{m+e_i}g_2,
\mu^{\frac12}G_1
\big)
w_l^{-1}\frac{|v|^2-5}{\sqrt6}v_i\,dv .
\end{aligned}
\]
Thus, by \eqref{G1-bound} in Lemma \ref{lem-DL-G} and
\eqref{eq-Q-Linf} in Lemma \ref{lem-Q}, choosing $l>3$ so that
\[
w_l^{-1}\frac{|v|^2-5}{\sqrt6}v_i\in L_v^1,
\]
we obtain
\[
\begin{aligned}
&\alpha
\sum_{i=1}^3
\Big\|
\partial_x^{m+e_i}
\big\langle \Gamma(G_1,\tilde g)+\Gamma(\tilde g,G_1),B_i\big\rangle_v
\Big\|_{L_x^\infty}
\le
C\alpha
\sum_{i=1}^3
\Big(
\big\|w_l\partial_x^{m+e_i}g_1\big\|_{L^\infty_{x,v}}
+
\big\|w_l\partial_x^{m+e_i}g_2\big\|_{L^\infty_{x,v}}
\Big).
\end{aligned}
\]
For the quadratic term, we expand
\[
\begin{aligned}
&\quad\partial_x^{m+e_i}
\big\langle \Gamma(\tilde g,\tilde g),B_i\big\rangle_v
\\
&=
\sum_{m'\le m+e_i}C_{m+e_i,m'}
\int_{\mathbb R^3}
Q\big(
\partial_x^{m'}g_1+\mu^{\frac12}\partial_x^{m'}g_2,
\partial_x^{m+e_i-m'}g_1
+\mu^{\frac12}\partial_x^{m+e_i-m'}g_2
\big)
\frac{|v|^2-5}{\sqrt6}v_i\,dv .
\end{aligned}
\]
Applying \eqref{eq-Q-Linf} in Lemma \ref{lem-Q} term by term, and using the
a priori assumption \eqref{assa}, we get
\[
\sum_{i=1}^3
\Big\|
\partial_x^{m+e_i}
\big\langle \Gamma(\tilde g,\tilde g),B_i\big\rangle_v
\Big\|_{L_x^\infty}
\le
C\eta
\sum_{0<|m'|\le |m|+1}
\Big(
\big\|w_l\partial_x^{m'}g_1\big\|_{L^\infty_{x,v}}
+
\big\|w_l\partial_x^{m'}g_2\big\|_{L^\infty_{x,v}}
\Big).
\]
Consequently,
\allowdisplaybreaks\begin{align}
|\mathcal J_{6,4}^c|
&\le
C e^{\beta t}(\alpha+\eta)
\sum_{0<|m'|\le |m|+1}
\Big(
\big\|w_l\partial_x^{m'}g_1\big\|_{L^\infty_{x,v}}
+
\big\|w_l\partial_x^{m'}g_2\big\|_{L^\infty_{x,v}}
\Big)
\|\nabla_x\partial_x^m c\|_{L_x^2}
\notag\\
&\le
\varepsilon e^{2\beta t}
\|\nabla_x\partial_x^m c\|_{L_x^2}^2
+
C_\varepsilon(\alpha^2+\eta^2)
\sum_{0<|m'|\le |m|+1}
\Big(
\big\|w_l\partial_x^{m'}g_1\big\|_{L^\infty_{x,v}}^2
+
\big\|w_l\partial_x^{m'}g_2\big\|_{L^\infty_{x,v}}^2
\Big).
\label{Jc64}
\end{align}
For $\mathcal J_{6,5}^c$, noticing that
$
\mu^{\frac12}(\nabla_vG_1-\frac{v}{2}G_1)=\nabla_v(\mu^{\frac12}G_1),
$
also by \eqref{G1-bound} in Lemma \ref{lem-DL-G} and $l>3$, we obtain
\allowdisplaybreaks\begin{align}
|\mathcal J_{6,5}^c|
&\le
C\alpha
\sum_{|m'|\le |m|}
\|\partial_x^{m'}\nabla_x\phi\|_{L_x^2}
\|\nabla_x\partial_x^m c\|_{L_x^2}
\notag\\
&\le
C\alpha
\Big(
\sum_{|m'|\le |m|}
\|\partial_x^{m'}\nabla_x\phi\|_{L_x^2}^2
+
\|\nabla_x\partial_x^m c\|_{L_x^2}^2
\Big).
\label{Jc65}
\end{align}
For $\mathcal J_{6,\ell}^c$, $2\leq\ell\leq5$, we cannot integrate by
parts in $x$. Otherwise, in the borderline case $N=1$ with $m=0$, the
resulting estimate would involve zero-order quantities such as
$\mathbf P_1\tilde g$ or $c$. These quantities are not controlled by the
nonzero-order spatial derivative estimates, while the dissipation of $c$ is too
weak to control its zero-frequency mode. The projection $P_{\neq0}^x c$ is
therefore essential, since it allows us to apply the Poincar\'e inequality
while retaining the spatial derivative on the source term.

Combining the estimates for $\mathcal J_{6,\ell}^c$, $1\le \ell\le5$, we obtain
\allowdisplaybreaks\begin{align}
|\mathcal J_6^c|
&\le
(\varepsilon+C\alpha+C\eta)e^{2\beta t}
\|\nabla_x\partial_x^m[a,\mathbf b,c]\|_{L_x^2}^2
+C(\alpha+\eta)
\sum_{|m'|\le |m|}
\|\partial_x^{m'}\nabla_x\phi\|_{L_x^2}^2
\notag\\
&\quad
{}+
C_\varepsilon e^{2\beta t}
\|\nabla_x\partial_x^m\mathbf P_1\tilde g\|_{L^2_{x,v}}^2
+
C_\varepsilon(\alpha^2+\eta^2)
\sum_{0<|m'|\le |m|+1}
\Big(
\|w_l\partial_x^{m'}g_1\|_{L^\infty_{x,v}}^2
+
\|w_l\partial_x^{m'}g_2\|_{L^\infty_{x,v}}^2
\Big).
\label{Jc6}
\end{align}

Substituting \eqref{Jc134}, \eqref{Jc2}, \eqref{Jc5}, and \eqref{Jc6} into
\eqref{c-step}, we obtain
\allowdisplaybreaks\begin{align}
&\frac{d}{dt}\bigg\{
\frac{1}{2}\|P_{\neq0}^x\partial_x^m c\|_{L_x^2}^2
-\frac{3e^{\beta t}}{4b_0}\sum_{i=1}^3
\big\langle
\partial_{x_i}\partial_x^m q_i,
P_{\neq 0}^x\partial_x^m c
\big\rangle_x
\bigg\}
+\frac{5}{4b_0}e^{2\beta t}
\|\nabla_x\partial_x^m c\|_{L_x^2}^2
\notag\\
&\quad
+2\beta\|P_{\neq0}^x\partial_x^m c\|_{L_x^2}^2
+\sqrt{\frac{2}{3}}e^{\beta t}
\big\langle
{\rm div}_x\partial_x^m\mathbf b,
P_{\neq0}^x\partial_x^m c
\big\rangle_x
\notag\\
&\le
(\varepsilon+C\alpha+C\eta)e^{2\beta t}
\|\nabla_x\partial_x^m[a,\mathbf b,c]\|_{L_x^2}^2
+
C(\alpha+\eta)
\sum_{|m'|\le |m|}
\|\partial_x^{m'}\nabla_x\phi\|_{L_x^2}^2
\notag\\
&\quad
+
C_\varepsilon e^{2\beta t}
\|\nabla_x\partial_x^m\mathbf P_1\tilde g\|_{L^2_{x,v}}^2
+
C_\varepsilon(\alpha^2+\eta)
\sum_{0<|m'|\le |m|+1}
\Big(
\|w_l\partial_x^{m'}g_1\|_{L^\infty_{x,v}}^2
+
\|w_l\partial_x^{m'}g_2\|_{L^\infty_{x,v}}^2
\Big).
\label{c-dis}
\end{align}

\noindent\underline{\textbf{Step 2. Dissipation of $\mathbf b$.}}
Let $m\in\mathbb N^3$ satisfy $|m|=N-1$. Applying
$\partial_x^m$ to the second equation of \eqref{eq-macro-abc-ns}, testing
the resulting equation against $\partial_x^m b_i$, taking the $L^2(\mathbb T_x^3)$ inner product, and summing over $i=1,2,3$, we obtain
\allowdisplaybreaks\begin{align}
&\frac{d}{dt}\bigg\{\frac{1}{2}\|\partial_x^m[a,\mathbf b]\|_{L_x^2}^2
+\frac{1}{2}e^{-2\beta t}\|\partial_x^m\nabla_x\phi\|_{L_x^2}^2-\frac{e^{\beta t}}{2b_0}\sum_{i,j=1}^3
\big\langle\partial_{x_j}\partial_x^m d_{ij},\partial_x^m b_i\big\rangle_x
\bigg\}+\beta\|\partial_x^m\mathbf b\|_{L_x^2}^2
\notag\\
&
+\frac{e^{2\beta t}}{2b_0}\|\nabla_x\partial_x^m\mathbf b\|_{L_x^2}^2
+\frac{e^{2\beta t}}{6b_0}\|{\rm div}_x\partial_x^m\mathbf b\|_{L_x^2}^2
+\beta e^{-2\beta t}\|\partial_x^m\nabla_x\phi\|_{L_x^2}^2+\sqrt{\frac{2}{3}}e^{\beta t}
\big\langle\nabla_x\partial_x^m c,\partial_x^m\mathbf b\big\rangle_x
\notag\\
={}&-\alpha\big\langle\partial_x^m b_2,\partial_x^m b_1
\big\rangle_x+e^{-\beta t}\big\langle\partial_x^m(a\nabla_x\phi),
\partial_x^m\mathbf b\big\rangle_x-\frac{\beta e^{\beta t}}{2b_0}\sum_{i,j=1}^3\big\langle
\partial_{x_j}\partial_x^m d_{ij},\partial_x^m b_i\big\rangle_x
\notag\\
&-\frac{e^{\beta t}}{2b_0}\sum_{i,j=1}^3
\big\langle\partial_{x_j}\partial_x^m d_{ij},\partial_t\partial_x^m b_i
\big\rangle_x-\frac{e^{\beta t}}{2b_0}\sum_{i,j=1}^3\big\langle
\partial_{x_j}\partial_x^m\mathcal R_{ij}^{A},\partial_x^m b_i\big\rangle_x
:=\sum_{i=1}^5\mathcal J_i^b .
\label{b-step}
\end{align}
Here we used
\[
\begin{aligned}
&\quad e^{\beta t}
\big\langle
\nabla_x\partial_x^m a,
\partial_x^m\mathbf b
\big\rangle_x
-e^{-\beta t}
\big\langle
\partial_x^m\nabla_x\phi,
\partial_x^m\mathbf b
\big\rangle_x
\\
&=
\frac12\frac{d}{dt}
\Big\{
\|\partial_x^m a\|_{L_x^2}^2
+e^{-2\beta t}\|\partial_x^m\nabla_x\phi\|_{L_x^2}^2
\Big\}
+\beta e^{-2\beta t}
\|\partial_x^m\nabla_x\phi\|_{L_x^2}^2,
\end{aligned}
\]
and
\[
\begin{aligned}
\frac{e^{\beta t}}{2b_0}\sum_{i,j=1}^3
\big\langle
\partial_{x_j}\partial_t\partial_x^m d_{ij},
\partial_x^m b_i
\big\rangle_x
=&
\frac{d}{dt}\bigg\{
\frac{e^{\beta t}}{2b_0}\sum_{i,j=1}^3
\big\langle
\partial_{x_j}\partial_x^m d_{ij},
\partial_x^m b_i
\big\rangle_x
\bigg\}
-\frac{\beta e^{\beta t}}{2b_0}\sum_{i,j=1}^3
\big\langle
\partial_{x_j}\partial_x^m d_{ij},
\partial_x^m b_i
\big\rangle_x
\\
&
-\frac{e^{\beta t}}{2b_0}\sum_{i,j=1}^3
\big\langle
\partial_{x_j}\partial_x^m d_{ij},
\partial_t\partial_x^m b_i
\big\rangle_x .
\end{aligned}
\]

Similarly to \eqref{Jc134} and \eqref{Jc2}, we have
\begin{align}
&\quad |\mathcal J_1^b|+|\mathcal J_2^b|+|\mathcal J_3^b|
\notag \\
&\le
C\alpha \|\partial_x^m\mathbf b\|_{L_x^2}^2
+C e^{-\beta t}\|\partial_x^m(a\nabla_x\phi)\|_{L_x^2}
\|\partial_x^m\mathbf b\|_{L_x^2}+C\beta e^{\beta t}
\sum_{i,j=1}^3\|\partial_{x_j}\partial_x^m d_{ij}\|_{L_x^2}
\|\partial_x^m b_i\|_{L_x^2}
\notag\\
&\le
C\alpha e^{2\beta t}
\|\nabla_x\partial_x^m\mathbf b\|_{L_x^2}^2
+
C\eta e^{-\beta t}
\sum_{|m'|\le |m|}
\|\partial_x^{m'}\nabla_x\phi\|_{L_x^2}
\|\nabla_x\partial_x^m\mathbf b\|_{L_x^2}
+C\beta e^{\beta t}
\|\nabla_x\partial_x^m\mathbf P_1\tilde g\|_{L^2_{x,v}}
\|\nabla_x\partial_x^m\mathbf b\|_{L_x^2}
\notag\\
&\le
C(\alpha+\eta)e^{2\beta t}
\|\nabla_x\partial_x^m\mathbf b\|_{L_x^2}^2
+
C\alpha e^{2\beta t}
\|\nabla_x\partial_x^m\mathbf P_1\tilde g\|_{L^2_{x,v}}^2
+C\eta
\sum_{|m'|\le |m|}
\|\partial_x^{m'}\nabla_x\phi\|_{L_x^2}^2 .
\label{Jb123}
\end{align}

To estimate $\mathcal J_4^b$, we use the second equation in \eqref{eq-macro-abc} to eliminate $\partial_t\partial_x^m b_i$. Arguing as in \eqref{J5c-dec}--\eqref{Jc5}, we obtain
\begin{equation}\label{Jb4}
\begin{aligned}
|\mathcal J_4^b|
&\le
(\varepsilon+\alpha^2)e^{2\beta t}
\|\nabla_x\partial_x^m[a,\mathbf b,c]\|_{L_x^2}^2
+
C_\varepsilon e^{2\beta t}
\|\nabla_x\partial_x^m\mathbf P_1\tilde g\|_{L^2_{x,v}}^2
+
(\varepsilon+\eta^2)
\sum_{|m'|\le |m|}
\|\partial_x^{m'}\nabla_x\phi\|_{L_x^2}^2 .
\end{aligned}
\end{equation}

For $\mathcal J_5^b$, we substitute \eqref{micro-R} into $\mathcal R_{ij}^A$. Proceeding as in \eqref{J6c-dec}--\eqref{Jc6}, we obtain
\begin{align}
|\mathcal J_5^b|
&\le
(\varepsilon+C\alpha+C\eta)e^{2\beta t}
\|\nabla_x\partial_x^m[a,\mathbf b,c]\|_{L_x^2}^2
+
C(\alpha+\eta)
\sum_{|m'|\le |m|}
\|\partial_x^{m'}\nabla_x\phi\|_{L_x^2}^2
\notag\\
&\quad
+
C_\varepsilon e^{2\beta t}
\|\nabla_x\partial_x^m\mathbf P_1\tilde g\|_{L^2_{x,v}}^2
+
C_\varepsilon(\alpha^2+\eta^2)
\sum_{0<|m'|\le |m|+1}
\Big(
\|w_l\partial_x^{m'}g_1\|_{L^\infty_{x,v}}^2
+
\|w_l\partial_x^{m'}g_2\|_{L^\infty_{x,v}}^2
\Big).
\label{Jb5}
\end{align}

Substituting \eqref{Jb123}, \eqref{Jb4}, and \eqref{Jb5} into
\eqref{b-step}, we obtain
\begin{align}
&\frac{d}{dt}\bigg\{
\frac{1}{2}\|\partial_x^m[a,\mathbf b]\|_{L_x^2}^2
+\frac{1}{2}e^{-2\beta t}\|\partial_x^m\nabla_x\phi\|_{L_x^2}^2
-\frac{e^{\beta t}}{2b_0}\sum_{i,j=1}^3
\big\langle\partial_{x_j}\partial_x^m d_{ij},\partial_x^m b_i\big\rangle_x\bigg\}
+\beta\|\partial_x^m\mathbf b\|_{L_x^2}^2
\notag\\
&\quad
+\frac{e^{2\beta t}}{2b_0}\|\nabla_x\partial_x^m\mathbf b\|_{L_x^2}^2
+\frac{e^{2\beta t}}{6b_0}\|{\rm div}_x\partial_x^m\mathbf b\|_{L_x^2}^2
+\beta e^{-2\beta t}\|\partial_x^m\nabla_x\phi\|_{L_x^2}^2+\sqrt{\frac{2}{3}}e^{\beta t}
\big\langle\nabla_x\partial_x^m c,\partial_x^m\mathbf b\big\rangle_x
\notag\\
&\le
(\varepsilon+C\alpha+C\eta)e^{2\beta t}\|\nabla_x\partial_x^m[a,\mathbf b,c]\|_{L_x^2}^2
+(\varepsilon+C_\varepsilon\alpha+C_\varepsilon\eta)
\sum_{|m'|\le |m|}\|\partial_x^{m'}\nabla_x\phi\|_{L_x^2}^2
\notag\\
&\quad+C_\varepsilon e^{2\beta t}
\|\nabla_x\partial_x^m\mathbf P_1\tilde g\|_{L^2_{x,v}}^2
+C_\varepsilon(\alpha^2+\eta^2)\sum_{0<|m'|\le |m|+1}
\Big(\|w_l\partial_x^{m'}g_1\|_{L^\infty_{x,v}}^2+\|w_l\partial_x^{m'}g_2\|_{L^\infty_{x,v}}^2
\Big).
\label{b-dis}
\end{align}

\noindent\underline{\textbf{Step 3. Dissipation of $a$.}}
Let $m\in\mathbb N^3$ satisfy $|m|=N-1$. Applying $\partial_x^m$ to
the second equation of \eqref{eq-macro-abc}, testing the resulting equation
against $e^{\beta t}\nabla_x\partial_x^m a$, and using the first equation of
\eqref{eq-macro-abc}, similarly to \eqref{c-dis} and \eqref{b-dis}, we obtain
\begin{align}
\frac{d}{dt}&
\Big\{
e^{\beta t}
\big\langle
\partial_x^m\mathbf b,\nabla_x\partial_x^m a
\big\rangle_x
\Big\}
+e^{2\beta t}\|\nabla_x\partial_x^m a\|_{L_x^2}^2
+\|\partial_x^m a\|_{L_x^2}^2
\notag\\
\le{}&
\big(1+C\alpha^2\big)e^{2\beta t}
\|\nabla_x\partial_x^m\mathbf b\|_{L_x^2}^2
+
\Big[\frac{1}{4}+C(\alpha^2+\eta^2)\Big]
e^{2\beta t}
\|\nabla_x\partial_x^m a\|_{L_x^2}^2
\notag\\
&
+
C e^{2\beta t}
\|\nabla_x\partial_x^m c\|_{L_x^2}^2
+
C e^{2\beta t}
\|\nabla_x\partial_x^m\mathbf P_1\tilde g\|_{L^2_{x,v}}^2 .
\label{a-dis}
\end{align}

By taking the linear combination $M_1[\eqref{c-dis}+\eqref{b-dis}]+\eqref{a-dis}$, using
\[
\big\langle
{\rm div}_x\partial_x^m\mathbf b,
P_{\neq0}^x\partial_x^m c
\big\rangle_x
+
\big\langle
\nabla_x\partial_x^m c,
\partial_x^m\mathbf b
\big\rangle_x
=0,
\]
and choosing $M_1>0$ sufficiently large and fixed, then taking $\varepsilon>0$ sufficiently small, and finally choosing $\eta,\alpha>0$ small enough, we deduce that there exists a constant $\lambda_2>0$ such that
\allowdisplaybreaks\begin{align}
&\frac{d}{dt}\sum_{|m|=N-1}\Bigg\{
M_1\bigg[\frac{1}{2}\Big(\|\partial_x^m[a,\mathbf b]\|_{L_x^2}^2
+\|P_{\neq0}^x\partial_x^m c\|_{L_x^2}^2+e^{-2\beta t}\|\partial_x^m\nabla_x\phi\|_{L_x^2}^2\Big)
-\frac{3e^{\beta t}}{4b_0}\sum_{i=1}^3\big\langle
\partial_{x_i}\partial_x^m q_i,P_{\neq0}^x\partial_x^m c\big\rangle_x
\notag\\
&\quad
-\frac{e^{\beta t}}{2b_0}\sum_{i,j=1}^3\big\langle
\partial_{x_j}\partial_x^m d_{ij},\partial_x^m b_i\big\rangle_x
\bigg]+e^{\beta t}\big\langle\partial_x^m\mathbf b,\nabla_x\partial_x^m a
\big\rangle_x\Bigg\}+\lambda_2e^{2\beta t}\sum_{|m|=N-1}
\|\nabla_x\partial_x^m[a,\mathbf b,c]\|_{L_x^2}^2
\notag\\
&\quad
+\sum_{|m|=N-1}\|\partial_x^m a\|_{L_x^2}^2+M_1\beta
\sum_{|m|=N-1}\Big(\|\partial_x^m\mathbf b\|_{L_x^2}^2
+2\|P_{\neq0}^x\partial_x^m c\|_{L_x^2}^2
+e^{-2\beta t}\|\partial_x^m\nabla_x\phi\|_{L_x^2}^2\Big)
\notag\\
\le{}&
C e^{2\beta t}\sum_{|m|=N-1}\|\nabla_x\partial_x^m\mathbf P_1\tilde g\|_{L^2_{x,v}}^2
+C(\eta^2+\alpha^2)\sum_{0<|m'|\le N}\Big(\big\|h_1^{m',0}\big\|_{L^\infty_{x,v}}^2
+\big\|h_2^{m',0}\big\|_{L^\infty_{x,v}}^2\Big).
\label{abc-dis-1}
\end{align}
Here we have used the elliptic estimate \eqref{elliptic-Wkp} in Lemma~\ref{lem-elliptic-T3}, together with the Poincar\'e inequality. More precisely,
\[
\sum_{|m'|\le N-1}\|\partial_x^{m'}\nabla_x\phi\|_{L_x^2}^2
\le C\sum_{|m|=N-1}\|\partial_x^m\nabla_x\phi\|_{L_x^2}^2
\le C\sum_{|m|=N-1}\|\partial_x^m a\|_{L_x^2}^2
\le C\sum_{|m|=N-1}\|\partial_x^m\nabla_x a\|_{L_x^2}^2.
\]
Dividing \eqref{abc-dis-1} by $e^{2\beta t}$, and invoking the Poincar\'e inequality once again, we immediately arrive at \eqref{abc-dis}. This completes the proof.
\end{proof}

Having obtained the macroscopic dissipation from the moment system of
$\tilde g$, we now turn to the microscopic estimate for $g_2$.

\begin{lemma}\label{lem-micro-g2}
Let $[g_1,g_2]$ be the solution to \eqref{g1-eq}--\eqref{g2-eq}
satisfying the a priori assumption \eqref{assa}. Let $N\ge1$ and $l>3$.
Then the following estimates hold for all multi-indices
$m,n\in\mathbb N^3$ with $|m|+|n|\le N$.
\begin{enumerate}
\item[\rm (i)] The case $|m|>0,\ n=0:$
\allowdisplaybreaks\begin{align}
&\frac{d}{dt}
\Big\{
\|\partial_x^m g_2\|_{L^2_{x,v}}^2
+e^{-2\beta t}\|\nabla_x\partial_x^m\phi\|_{L_x^2}^2
\Big\}
+\delta_0\|\partial_x^m\mathbf P_1 g_2\|_{L^2_{x,v}}^2
\notag\\
&\le
C\varepsilon \|\partial_x^m[a,\mathbf b,c]\|_{L_x^2}^2
+C_\varepsilon\eta^2
\sum_{0<|m'|\le |m|}
\|\partial_x^{m'}a\|_{L_x^2}^2
+C_\varepsilon\|h_1^{m,0}\|_{L^\infty_{x,v}}^2 .
\label{est-micro-g2}
\end{align}

\item[\rm (ii)] The case $|m|\ge0,\ |n|>0:$
\allowdisplaybreaks\begin{align}
\frac{d}{dt}\|\partial_n^m g_2\|_{L^2_{x,v}}^2
+\delta_0\|\partial_n^m g_2\|_{L^2_{x,v}}^2
&\le
C\|\partial_x^m[a,\mathbf b,c]\|_{L_x^2}^2
+C\eta^2\sum_{0<|m'|\le |m|}
\|\partial_x^{m'}a\|_{L_x^2}^2
+C\|\partial_x^m \mathbf P_1g_2\|_{L^2_{x,v}}^2
\notag\\
&\quad
+C\sum_{n'\le n}\|h_1^{m,n'}\|_{L^\infty_{x,v}}^2
+C e^{2\beta t}\sum_{i=1}^3\mathbf 1_{\{n_i>0\}}n_i^2
\|h_2^{m+e_i,n-e_i}\|_{L^\infty_{x,v}}^2
\notag\\
&\quad
+C\alpha^2\mathbf 1_{\{n_2>0\}}n_2^2
\|h_2^{m,n-e_2+e_1}\|_{L^\infty_{x,v}}^2 .
\label{est-micro-g2mn}
\end{align}
\end{enumerate}
Here $\delta_0>0$ is the constant in Lemma~\ref{lem-est-L}, and
$h_1^{m,n}$ and $h_2^{m,n}$ are defined in
\eqref{eq-def-h12mn}.
\end{lemma}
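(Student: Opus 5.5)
We prove both parts by direct $L^2_{x,v}$ energy estimates on the $g_2$ equation \eqref{g2-eq}, after applying $\partial_n^m$.

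\emph{Part (i).} Apply $\partial_x^m$ (with $|m|>0$) to \eqref{g2-eq} and take the $L^2_{x,v}$ inner product with $\partial_x^m g_2$.

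The transport term $e^{\beta t}v\cdot\nabla_x$ integrates to zero. The drift terms $-\beta\nabla_v\cdot(v\,\cdot)$ and $-\alpha v_2\partial_{v_1}$ give, after integration by parts in $v$, $O(\beta)\|\partial_x^m g_2\|^2$ and $0$, respectively. The field term $e^{-\beta t}\nabla_x\phi\cdot\nabla_v$ is handled by integrating by parts in $v$ and using the commutator with $\partial_x^m$. Its contributions are bounded by $\eta\|\partial_x^{m'}\nabla_x\phi\|$ times $L^\infty$ norms, which by Lemma~\ref{lemma-poisson-3D} and \eqref{assa} are absorbed.

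The key linear coupling is the source $e^{-\beta t}\mu^{1/2}v\cdot\nabla_x\partial_x^m\phi$ tested against $\partial_x^m g_2$. Write $\langle \partial_x^m g_2,v\mu^{1/2}\rangle_v$ as the $g_2$-part of $\partial_x^m\mathbf b$. Using the continuity equation $\partial_t a+e^{\beta t}{\rm div}_x\mathbf b=0$ and $\Delta_x\phi=a$, the term from $\tilde g$ becomes
\[
-\tfrac12\frac{d}{dt}\big(e^{-2\beta t}\|\nabla_x\partial_x^m\phi\|^2\big)-\beta e^{-2\beta t}\|\nabla_x\partial_x^m\phi\|^2 .
\]
This produces the field energy on the left-hand side. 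The $g_1$-part of $\mathbf b$ is a remainder bounded by $\|h_1^{m,0}\|_{L^\infty}\|\nabla_x\partial_x^m\phi\|$.

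The operator $L$ gives $\delta_0\|\partial_x^m\mathbf P_1 g_2\|^2$ by Lemma~\ref{lem-est-L}. The term $\langle \partial_x^m Kg_2,\partial_x^m g_2\rangle$ is not separately small. We therefore keep $Lg_2=\nu_0 g_2-Kg_2$ together, and treat the source $\mu^{-1/2}(1-\chi_M)\mathcal Kg_1$ by Cauchy--Schwarz. Since $(1-\chi_M)$ localizes to $|v|\le M+1$, this source is bounded in $L^2_v$ by $C\|h_1^{m,0}\|_{L^\infty}$.

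It remains to bound $\|\partial_x^m\mathbf P_0g_2\|$. Using $\mu^{1/2}\tilde g=g_1+\mu^{1/2}g_2$, we have $\|\partial_x^m\mathbf P_0 g_2\|\le C\|\partial_x^m[a,\mathbf b,c]\|+C\|h_1^{m,0}\|_{L^\infty}$. This yields the $\varepsilon$/$C_\varepsilon$ split in \eqref{est-micro-g2}. The field terms $\|\nabla_x\partial_x^{m'}\phi\|$ are converted to $\|\partial_x^{m'}a\|$ via Lemma~\ref{lem-elliptic-T3}, which accounts for the $\eta^2\sum\|\partial_x^{m'}a\|^2$ term.

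\emph{Part (ii).} Apply $\partial_n^m$ with $|n|>0$, and use the weighted coercivity \eqref{wl-dv-coercivity-L} with $l=0$:
\[
\langle\partial_v^n L h,\partial_v^n h\rangle\ge\delta_0\|\partial_v^n h\|^2-C\|h\|^2 .
\]
The lower-order $\|h\|^2$ term, with $h=\partial_x^m g_2$, is split into $\mathbf P_0$ and $\mathbf P_1$ parts. These give the $C\|\partial_x^m[a,\mathbf b,c]\|^2$, $C\|\partial_x^m\mathbf P_1g_2\|^2$ and $h_1$ contributions.

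The commutators of $\partial_v^n$ with $e^{\beta t}v\cdot\nabla_x$ and $\alpha v_2\partial_{v_1}$ produce $e^{\beta t}n_i\partial_{n-e_i}^{m+e_i}g_2$ and $\alpha n_2\partial^m_{n-e_2+e_1}g_2$. By Cauchy--Schwarz with $\delta_0/4$, and using $\|\cdot\|_{L^2_{x,v}}\le C\|w_l\,\cdot\|_{L^\infty}$ for $l>3/4$, these give exactly the $e^{2\beta t}\|h_2^{m+e_i,n-e_i}\|^2$ and $\alpha^2\|h_2^{m,n-e_2+e_1}\|^2$ terms. The $\beta$-drift commutator gives $O(\beta)\|\partial_n^m g_2\|^2$, which is absorbed.

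The $\phi$-source and field commutators are bounded by $\|\partial_x^{m'}\nabla_x\phi\|$, hence by $\|\partial_x^{m'}a\|$ (times $\eta$ where nonlinear). The source $\mu^{-1/2}(1-\chi_M)\mathcal Kg_1$ is bounded via Lemma~\ref{lem-K} by $\sum_{n'\le n}\|h_1^{m,n'}\|_{L^\infty}$.

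\emph{Main obstacle.} The delicate step is part (i): the Poisson coupling must close into a time derivative of $e^{-2\beta t}\|\nabla_x\partial_x^m\phi\|^2$. This requires correctly identifying the $g_2$-part of $\mathbf b$ and absorbing the $g_1$-part into $\|h_1^{m,0}\|$. It also requires ensuring that no zero-order quantities are produced; this is guaranteed because $|m|>0$, so every term carries a spatial derivative.
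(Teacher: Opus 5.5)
Your proposal is correct and follows essentially the same route as the paper. In both, one runs a direct $L^2_{x,v}$ energy estimate for $\partial_n^m g_2$. In (i), the Poisson source is turned into $\frac{d}{dt}\big(e^{-2\beta t}\|\nabla_x\partial_x^m\phi\|_{L^2_x}^2\big)$ by writing $\int_{\mathbb R^3} v\mu^{\frac12}\partial_x^m g_2\,dv=\partial_x^m\mathbf b-\partial_x^m\mathbf b^{(1)}$ with $\mathbf b^{(1)}=\int_{\mathbb R^3} vg_1\,dv$ and using the continuity equation, and $\|\partial_x^m\mathbf P_0 g_2\|_{L^2_{x,v}}$ is controlled through $[a,\mathbf b,c]$ and $h_1^{m,0}$. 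In (ii), you supply the commutator bookkeeping that the paper compresses into ``the same argument.''

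One small slip: the bound on $\partial_v^n\big[\mu^{-\frac12}(1-\chi_M)\mathcal K\partial_x^m g_1\big]$ does not follow from Lemma~\ref{lem-K}. In $\mathcal K g_1=\mu^{\frac12}K(\mu^{-\frac12}g_1)$, its kernels $K_q$ with $q<1$ do not compensate the growth of $\mu^{-\frac12}(v_*)$. The bound should instead come from the compact support of $1-\chi_M$ together with $\mathcal K g=Q_+(\mu,g)+Q_+(g,\mu)-Q_-(\mu,g)$, a step the paper simply asserts.
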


\begin{proof}
Rewriting the equation $\eqref{g2-eq}_1$ as
\allowdisplaybreaks\begin{align}
& \partial_t g_2
+e^{\beta t}v\cdot\nabla_x g_2
-\beta\nabla_v\cdot(vg_2)
-\alpha v_2\partial_{v_1}g_2
+e^{-\beta t}\nabla_x\phi\cdot\nabla_v g_2
+Lg_2 \notag\\
& \quad
=\mu^{-\frac12}(1-\chi_M)\mathcal K g_1
+e^{-\beta t}\mu^{\frac12}v\cdot\nabla_x\phi.\label{g2-micro-rewrite}
\end{align}
For any fixed multi-index $m\in\mathbb N^3$ with $0<|m|\le N$, applying $\partial_x^m$ to \eqref{g2-micro-rewrite} and taking the $L^2(\mathbb T_x^3\times\mathbb R_v^3)$ inner product with $\partial_x^m g_2$, we have
\allowdisplaybreaks\begin{align}
&\frac12\frac{d}{dt}\|\partial_x^m g_2\|_{L^2_{x,v}}^2
+\langle L\partial_x^m g_2,\partial_x^m g_2\rangle_{x,v}
-\big\langle e^{-\beta t}\mu^{\frac12}v\cdot\nabla_x\partial_x^m\phi,\partial_x^m g_2\big\rangle_{x,v}
\notag\\
&\le \Big(\frac{3\beta}{2}+\varepsilon\Big)\|\partial_x^m g_2\|_{L^2_{x,v}}^2
+C_\varepsilon\|w_l\partial_x^m g_1\|_{L^\infty_{x,v}}^2
+e^{-\beta t}
\Big|
\big\langle
\partial_x^m\big(\nabla_x\phi\cdot\nabla_v g_2\big),
\partial_x^m g_2
\big\rangle_{x,v}
\Big|  .
\label{g2-micro-1}
\end{align}
For the last term in \eqref{g2-micro-1}, we first write
\[
\big\langle
\partial_x^m\big(\nabla_x\phi\cdot\nabla_v g_2\big),
\partial_x^m g_2
\big\rangle_{x,v} =
\sum_{0<m'\le m} C_{m,m'}
\big\langle
\partial_x^{m'}\nabla_x\phi\cdot
\nabla_v\partial_x^{m-m'}g_2,
\partial_x^m g_2
\big\rangle_{x,v} .
\]
Therefore, by the a priori assumption \eqref{assa}, the Poisson equation
$\Delta_x\phi=a$, and the Poincar\'e inequality, we obtain
\allowdisplaybreaks\begin{align}
e^{-\beta t}
\Big|
\big\langle
\partial_x^m\big(\nabla_x\phi\cdot\nabla_v g_2\big),
\partial_x^m g_2
\big\rangle_{x,v}
\Big|
&\le
Ce^{-\beta t}
\sum_{0<m'\le m}
\|\partial_x^{m'}\nabla_x\phi\|_{L_x^2}
\|w_l\nabla_v\partial_x^{m-m'}g_2\|_{L_{x,v}^\infty}
\|\partial_x^m g_2\|_{L^2_{x,v}}
\notag\\
&\le
C\eta
\sum_{0<m'\le m}
\|\partial_x^{m'}\nabla_x\phi\|_{L_x^2}
\|\partial_x^m g_2\|_{L^2_{x,v}}
\notag\\
&\le
\varepsilon\|\partial_x^m g_2\|_{L^2_{x,v}}^2
+
C_\varepsilon\eta^2
\sum_{0<m'\le m}\|\partial_x^{m'}\nabla_x\phi\|_{L_x^2}^2
\notag\\
&\le
\varepsilon\|\partial_x^m g_2\|_{L^2_{x,v}}^2
+
C_\varepsilon\eta^2
\sum_{0<|m'|\le |m|}\|\partial_x^{m'}a\|_{L_x^2}^2.
\label{g2-micro-2}
\end{align}
Next, for $\big\langle e^{-\beta t}\mu^{\frac 12}v\cdot\nabla_x\partial_x^m\phi,\partial_x^m g_2\big\rangle_{x,v}$, note that
\[
\int_{\mathbb R^3}v\mu^{\frac 12}\partial_x^m g_2\,dv
=\partial_x^m\mathbf b-\partial_x^m\mathbf b^{(1)} \quad{\rm with}\quad \mathbf b^{(1)}=\int_{\mathbb R^3}vg_1\,dv.
\]
Therefore,
\begin{equation}\label{g2-micro-3}
-\big\langle e^{-\beta t}\mu^{
\frac 12}v\cdot\nabla_x\partial_x^m\phi,\partial_x^m g_2\big\rangle_{x,v}
=
-e^{-\beta t}\big\langle \nabla_x\partial_x^m\phi,\partial_x^m\mathbf b-\partial_x^m\mathbf b^{(1)}\big\rangle_x
=: I_{\mu,1}+I_{\mu,2}.
\end{equation}
Using the first equation in \eqref{eq-macro-abc} and $a=\Delta_x\phi$, we have
\begin{align}
I_{\mu,1}
&=e^{-\beta t}\big\langle \partial_x^m\phi,{\rm div}_x\partial_x^m\mathbf b\big\rangle_x
=-e^{-2\beta t}\big\langle \partial_x^m\phi,\partial_t\Delta_x\partial_x^m\phi\big\rangle_x
\notag\\
&=\frac12\frac{d}{dt}\Big\{e^{-2\beta t}\|\partial_x^m \nabla_x\phi\|_{L_x^2}^2\Big\}
+\beta e^{-2\beta t}\|\partial_x^m \nabla_x\phi\|_{L_x^2}^2.
\label{g2-micro-4}
\end{align}
On the other hand,
\begin{equation}\label{g2-micro-5}
|I_{\mu,2}|
\le
\varepsilon\|\partial_x^m a\|_{L_x^2}^2
+C_\varepsilon e^{-2\beta t}\|\partial_x^m\mathbf b^{(1)}\|_{L_x^2}^2
\le
\varepsilon\|\partial_x^m a\|_{L_x^2}^2
+C_\varepsilon\|h_1^{m,0}\|_{L^\infty_{x,v}}^2,\quad l>3.
\end{equation}
In addition, note the truth
\allowdisplaybreaks\begin{align}
\|\partial_x^m g_2\|^2_{L^2_{x,v}}
&=\|\partial_x^m \mathbf P_0g_2\|^2_{L^2_{x,v}} +\|\partial_x^m \mathbf P_1g_2\|^2_{L^2_{x,v}}
\notag\\
&\le C\Big(\|\partial_x^m \mathbf P_0 \tilde g\|^2_{L^2_{x,v}}+ \|\partial_x^m \mathbf P_0 (\mu^{-\frac{1}{2}} g_1)\|^2_{L^2_{x,v}} +\|\partial_x^m \mathbf P_1g_2\|^2_{L^2_{x,v}}\Big)
\notag\\
&\le C\Big(\|\partial_x^m [a,\mathbf b,c]\|^2_{L^2_{x}}+\|\partial_x^m \mathbf P_1g_2\|^2_{L^2_{x,v}}+ \|h_1^{m,0}\|^2_{L^\infty_{x,v}} \Big),\quad l>3.
\label{g2-micro-6}
\end{align}

To conclude, plugging \eqref{g2-micro-2}, \eqref{g2-micro-4}, and \eqref{g2-micro-5} into \eqref{g2-micro-1}, and then applying the coercivity estimate \eqref{coercivity-L} from Lemma~\ref{lem-est-L}, we arrive at \eqref{est-micro-g2} by choosing $\varepsilon>0$ sufficiently small and subsequently taking $\alpha>0$ sufficiently small. Applying $\partial_n^m$ to \eqref{g2-micro-rewrite} and taking the $L^2(\mathbb T_x^3\times \mathbb R_v^3)$ inner product with $\partial_n^m g_2$, one obtains \eqref{est-micro-g2mn} after carrying out the same argument as above. This completes the proof.
\end{proof}

The following proposition provides the $L^2$-bounds needed in
\eqref{est-positive-sumh2} and \eqref{est-positive-n0-sumh2}.

\begin{proposition}\label{prop-EN1g2}
Let $[g_1,g_2]$ be the solution to \eqref{g1-eq}--\eqref{g2-eq}
satisfying the a priori assumption \eqref{assa}. Let $N\geq1$ and $l>3$.
Then, for any sufficiently small $\kappa>0$, we have
\allowdisplaybreaks\begin{align}
\sum_{\substack{|m|+|n|\le N\\ |m|>0}}
\kappa^{|n|}
\sup_{0\le s\le t}
e^{\lambda_n^m s}
\big\|\partial_n^m g_2(s)\big\|_{L^2_{x,v}}
&\le
C
\sum_{0<|m|\le N}
\big\|w_l\partial_x^m\widetilde f_0\big\|_{L^\infty_{x,v}}
+
C
\sum_{\substack{|m|+|n|\le N\\ |m|>0}}
\kappa^{|n|}
\mathfrak H_1^{m,n}(t)
\notag\\
&\quad
+
C(\eta+\alpha+\kappa)
\sum_{\substack{|m|+|n|\le N\\ |m|>0}}
\kappa^{|n|}
\mathfrak H_2^{m,n}(t).
\label{est-L2-g2-high}
\end{align}
In particular,
\allowdisplaybreaks\begin{align}
\sum_{0<|m|\le N}
\sup_{0\le s\le t}
e^{\lambda s}
\big\|\partial_x^m g_2(s)\big\|_{L^2_{x,v}}
&\le
C
\sum_{0<|m|\le N}
\big\|w_l\partial_x^m\widetilde f_0\big\|_{L^\infty_{x,v}}
\notag\\
&\quad
+
C
\sum_{0<|m|\le N}
\mathfrak H_1^{m,0}(t)
+
C(\eta+\alpha)
\sum_{0<|m|\le N}
\mathfrak H_2^{m,0}(t).
\label{est-L2-g2-high-n0}
\end{align}
Here $\lambda_n^m$ is defined in \eqref{def-lambda-mn}, while
$\mathfrak H_1^{m,n}$ and $\mathfrak H_2^{m,n}$ are defined in
\eqref{eq-def-mfh12mn}. In particular, $\lambda_0^m=\lambda$ for
$|m|>0$.
\end{proposition}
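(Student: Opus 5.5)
The plan is to build one time-weighted Lyapunov functional from Lemma~\ref{lem-abc-dis} and Lemma~\ref{lem-micro-g2}, and then integrate it with the appropriate exponential weights. I treat the pure spatial case $n=0$, $0<|m|\le N$, first; this gives \eqref{est-L2-g2-high-n0}. For each $1\le k\le N$, apply Lemma~\ref{lem-abc-dis} with $N$ replaced by $k$, which controls $\|\nabla_x\partial_x^{m}[a,\mathbf b,c]\|$ for $|m|=k-1$. Add a large multiple $M_2$ of the microscopic estimate \eqref{est-micro-g2} over $|m|=k$.

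Two features make this combination close. First, the defect $\|\nabla_x\partial_x^m\mathbf P_1\tilde g\|^2_{L^2_{x,v}}$ on the right of \eqref{abc-dis} is bounded by $C\|\partial_x^{m+e_i}\mathbf P_1 g_2\|^2+C\|h_1^{m+e_i,0}\|^2_{L^\infty}$, using $\mu^{1/2}\tilde g=g_1+\mu^{1/2}g_2$ and $l>3$. The $\mathbf P_1 g_2$ part is absorbed by $M_2\delta_0\|\partial_x^m\mathbf P_1g_2\|^2$. Second, the term $C\varepsilon\|\partial_x^m[a,\mathbf b,c]\|^2$ in \eqref{est-micro-g2} is absorbed by the macroscopic dissipation once $\varepsilon\ll 1/M_2$; here $|m|\ge1$, so $P^x_{\neq0}$ acts trivially.

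Set $\mathcal E(t)=\sum_k\big[\mathcal E^{(abc)}_k+M_2\sum_{|m|=k}(\|\partial_x^m g_2\|^2+e^{-2\beta t}\|\nabla_x\partial_x^m\phi\|^2)\big]$. Its mixed terms $e^{\beta t}\langle\partial_{x_j}\partial_x^m d_{ij},\partial_x^mb_i\rangle$ are bounded by $\|\partial_x^{m+e_j}\mathbf P_1\tilde g\|\,\|\partial_x^m\mathbf b\|$, hence are dominated for large $M_2$. Strictly speaking this needs care with the factor $e^{\beta t}$. I would renormalize by working with $e^{-2\beta t}$ times the functional, or simply note that $\beta=O(\alpha^2)$ and $\lambda$ is fixed, so that $e^{\beta t}$ can be traded against part of $e^{\lambda t}$. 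The outcome I aim for is
\[
\mathcal E(t)\sim\sum_{0<|m|\le N}\big(\|\partial_x^m g_2\|^2+\|\partial_x^m[a,\mathbf b,c]\|^2\big),\qquad
\frac{d}{dt}\mathcal E+\tilde\lambda_0\mathcal E\le C\sum_{0<|m|\le N}\|h_1^{m,0}\|^2_{L^\infty}+C(\eta^2+\alpha^2)\sum_{0<|m|\le N}\|h_2^{m,0}\|^2_{L^\infty}.
\]
This uses Poincar\'e for $\partial_x^m$, $|m|\ge1$, and the elliptic bound for $\phi$. The constant $\tilde\lambda_0$ is the one referenced in \eqref{choice-lambda}. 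Since $2\lambda<\tilde\lambda_0$, Gr\"onwall with the weight $e^{2\lambda t}$ gives
\[
e^{2\lambda t}\mathcal E(t)\le C\mathcal E(0)+C\sup_s e^{2\lambda s}(\cdots).
\]
Taking square roots yields \eqref{est-L2-g2-high-n0}. For $\mathcal E(0)$ I use $g_2(0)=0$, so only $\partial_x^m[a,\mathbf b,c](0)$ and $\phi(0)$ remain, and these are controlled by $\|w_l\partial_x^m\tilde f_0\|_{L^\infty}$.

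Next comes the case $|n|>0$, by induction on $|n|$ with the hierarchy weights $\kappa^{|n|}$. Estimate \eqref{est-micro-g2mn} already carries full damping $\delta_0\|\partial_n^m g_2\|^2$. Multiply it by $e^{2\lambda_n^m t}$; since $2\lambda_n^m<\delta_0$ after shrinking $\lambda$ if needed, integration gives
\[
\sup_s e^{2\lambda_n^m s}\|\partial_n^m g_2\|^2\le C\sup_s e^{2\lambda_n^m s}(\text{RHS}).
\]
The terms $\|\partial_x^m[a,\mathbf b,c]\|$ and $\|\partial_x^m\mathbf P_1 g_2\|$ are bounded by the $n=0$ result, because $\lambda_n^m\le\lambda$. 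The $h_1$ terms give $\mathfrak H_1^{m,n'}$. The term $\alpha^2\|h_2^{m,n-e_2+e_1}\|^2$ gives $\alpha\,\mathfrak H_2$.

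The commutator term $e^{2\beta t}\|h_2^{m+e_i,n-e_i}\|^2$ is the crucial one. Here the identity $\lambda_{n-e_i}^{m+e_i}=\lambda_n^m+\beta$ exactly cancels the factor $e^{2\beta t}$ and produces $\mathfrak H_2^{m+e_i,n-e_i}$. This quantity sits one level lower in $|n|$, so after multiplying by $\kappa^{|n|}$ it acquires an extra factor $\kappa$, which gives the term $C\kappa\sum\kappa^{|n|}\mathfrak H_2^{m,n}$. Summing over all $(m,n)$ then yields \eqref{est-L2-g2-high}.

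The main obstacle is Step 1: closing the combined functional while the time factors $e^{\beta t}$ and $e^{2\beta t}$ appear unevenly between the dissipation terms in \eqref{abc-dis} (divided by $e^{2\beta t}$) and in \eqref{est-micro-g2} (unweighted). My first attempt would be to multiply \eqref{est-micro-g2} by $e^{-2\beta t}$ as well, which costs only a harmless $2\beta\|\cdot\|^2$ term. If that fails, I would exploit the smallness of $\beta$ relative to $\lambda$ and accept the loss $e^{C\beta t}$ inside the choice of $\lambda$.
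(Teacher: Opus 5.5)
Your overall architecture matches the paper's. You combine the macroscopic dissipation of Lemma~\ref{lem-abc-dis} with a large multiple of \eqref{est-micro-g2} and apply Gr\"onwall with weight $e^{2\lambda t}$, using $2\lambda<\tilde\lambda_0$. You then handle the $|n|>0$ levels via the full damping in \eqref{est-micro-g2mn}, the identity $\lambda_{n-e_i}^{m+e_i}=\lambda_n^m+\beta$, and the extra factor $\kappa$. Your $|n|>0$ step is correct. The paper puts these terms into the same functional $\mathcal E^{(g_2)}_{N,n}$, but the coupling is one-directional, so treating them after $n=0$ is equivalent.

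The gap is the one you flag yourself, in the $n=0$ step. The functional you write down does not satisfy the inequality you aim for, and neither of your repairs works. Estimate \eqref{abc-dis} controls $e^{-2\beta t}\frac{d}{dt}\mathcal E^{(abc)}_N$ with unweighted dissipation $\lambda_0\|\nabla_x\partial_x^m[a,\mathbf b,c]\|^2$ and an unweighted defect $C\|\nabla_x\partial_x^m\mathbf P_1\tilde g\|^2$. Meanwhile $\mathcal E^{(abc)}_N$ itself contains cross terms carrying $e^{\beta t}$.

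\begin{itemize}
\item If $\mathcal E^{(abc)}_k$ enters unweighted, these cross terms are not dominated by $M_2\|\partial_x^m g_2\|^2$ uniformly in $t$. Moreover, $\frac{d}{dt}\mathcal E^{(abc)}_k$ comes with a defect $Ce^{2\beta t}\|\nabla_x\partial_x^m\mathbf P_1\tilde g\|^2$, which $M_2\delta_0\|\partial_x^m\mathbf P_1 g_2\|^2$ cannot absorb.
\item Multiplying \eqref{est-micro-g2} (or the whole functional) by $e^{-2\beta t}$ goes the wrong way. The micro dissipation becomes $M_2\delta_0e^{-2\beta t}\|\partial_x^m\mathbf P_1g_2\|^2$ against an unweighted defect, so absorption fails for $t\gtrsim\beta^{-1}\log M_2$.
\item Trading $e^{\beta t}$ into $\lambda$ does not help either. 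What fails is the absorption of one dissipation term by another whose ratio grows like $e^{2\beta t}$, and no fixed constant or rate shift cures that. Besides, \eqref{est-L2-g2-high} carries the same rates $\lambda_n^m$ on both sides, through $\mathfrak H_j^{m,n}$, and this is exactly what the closure with Lemma~\ref{lem-Linfty-sum-h1h2} requires.
\end{itemize}

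The missing idea is to weight only the macroscopic part. The paper uses $M_3M_2\sum_{0<|m|\le N}\big(\|\partial_x^m g_2\|^2+e^{-2\beta t}\|\nabla_x\partial_x^m\phi\|^2\big)+2M_3e^{-2\beta t}\mathcal E^{(abc)}_N$, plus the $|n|>0$ terms. Then $\frac{d}{dt}\big(e^{-2\beta t}\mathcal E^{(abc)}_N\big)=e^{-2\beta t}\frac{d}{dt}\mathcal E^{(abc)}_N-2\beta e^{-2\beta t}\mathcal E^{(abc)}_N$.

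\begin{itemize}
\item The first term is exactly what \eqref{abc-dis} controls, with weights matching the unweighted micro estimate.
\item The second term is $O(\beta M_1)$ times dissipation-type quantities, because the cross terms in $e^{-2\beta t}\mathcal E^{(abc)}_N$ now carry $e^{-\beta t}\le1$.
\end{itemize}

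Coercivity then comes from the large multiple of $\|\partial_x^m g_2\|^2$ alone, since the main macroscopic terms now carry the decaying factor $e^{-2\beta t}$. This is combined with $\|\partial_x^m[a,\mathbf b,c]\|\le C\|\partial_x^m g_2\|+C\|h_1^{m,0}\|_{L^\infty_{x,v}}$. So your equivalence $\mathcal E\sim\sum(\|\partial_x^m g_2\|^2+\|\partial_x^m[a,\mathbf b,c]\|^2)$ holds only modulo $\|h_1^{m,0}\|^2_{L^\infty_{x,v}}$, which is harmless because those terms sit on the right-hand side. In your unweighted normalization, the fix amounts to multiplying \eqref{est-micro-g2} by $e^{+2\beta t}$, not $e^{-2\beta t}$.
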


\begin{proof}
Fix $n\in\mathbb N^3$ with $0\le |n|<N$. We denote
\allowdisplaybreaks\begin{align}
\mathcal E_{N,n}^{(g_2)}(t)
={}&
\sum_{\substack{0<|m|\le N-|n|\\ |n|>0}}
\big\|\partial_n^m g_2\big\|_{L^2_{x,v}}^2
+
M_3M_2\sum_{0<|m|\le N}
\Big(
\big\|\partial_x^m g_2\big\|_{L^2_{x,v}}^2+
e^{-2\beta t}
\big\|\nabla_x\partial_x^m\phi\big\|_{L_x^2}^2\Big)
\notag\\
&\quad
+
2M_3e^{-2\beta t}\mathcal E_N^{(abc)}(t),
\label{EN1-def}
\\
\mathcal D_{N,n}^{(g_2)}(t)
={}&
\sum_{\substack{0<|m|\le N-|n|\\ |n|>0}}
\big\|\partial_n^m g_2\big\|_{L^2_{x,v}}^2
+
\sum_{0<|m|\le N}
\big\|\partial_x^m\mathbf P_1g_2\big\|_{L^2_{x,v}}^2
+
\sum_{0<|m|\le N}
\big\|\partial_x^m[a,\mathbf b,c]\big\|_{L_x^2}^2 .
\label{DN1-def}
\end{align}
By Lemma~\ref{lem-abc-dis} and Lemma~\ref{lem-micro-g2}, summing
\eqref{est-micro-g2mn} over all multi-indices $m\in\mathbb N^3$ with
$0<|m|\le N-|n|$ when $|n|>0$, summing \eqref{est-micro-g2} over all multi-indices
$m\in\mathbb N^3$ with $0<|m|\le N$, multiplying the latter by $M_2M_3$, and adding also
$2M_3$ times \eqref{abc-dis}, we obtain
\allowdisplaybreaks\begin{align}
&\quad\frac{d}{dt}\mathcal E_{N,n}^{(g_2)}(t)
+\delta_0
\sum_{\substack{0<|m|\le N-|n|\\ |n|>0}}
\big\|\partial_n^m g_2\big\|_{L^2_{x,v}}^2
+
M_3M_2\delta_0
\sum_{0<|m|\le N}
\big\|\partial_x^m\mathbf P_1g_2\big\|_{L^2_{x,v}}^2
\notag\\
&\quad
+
2M_3\lambda_0
\sum_{|m|\le N-1}
\big\|\nabla_x\partial_x^m[a,\mathbf b,c]\big\|_{L_x^2}^2
\notag\\
&\le
C
\sum_{|m|\le N-1}
\big\|\nabla_x\partial_x^m[a,\mathbf b,c]\big\|_{L_x^2}^2
+
C
\sum_{\substack{0<|m|\le N-|n|\\ |n|>0}}
\Bigg(
\big\|\partial_x^m\mathbf P_1g_2\big\|_{L^2_{x,v}}^2
+
\sum_{n'\le n}
\big\|h_1^{m,n'}\big\|_{L^\infty_{x,v}}^2
\notag\\
&\quad
+
e^{2\beta t}
\sum_{i=1}^3
\mathbf 1_{\{n_i>0\}}n_i^2
\big\|h_2^{m+e_i,n-e_i}\big\|_{L^\infty_{x,v}}^2
+
\alpha^2\mathbf 1_{\{n_2>0\}}n_2^2
\big\|h_2^{m,n-e_2+e_1}\big\|_{L^\infty_{x,v}}^2
\Bigg)
\notag\\
&\quad
+
CM_3M_2(\eta^2+\varepsilon)
\sum_{|m|\le N-1}
\big\|\nabla_x\partial_x^m[a,\mathbf b,c]\big\|_{L_x^2}^2
+
2CM_3
\sum_{|m|=N-1}
\big\|\nabla_x\partial_x^m\mathbf P_1\tilde g\big\|_{L^2_{x,v}}^2
\notag\\
&\quad
+
C_\varepsilon M_2M_3
\sum_{0<|m|\le N}
\big\|h_1^{m,0}\big\|_{L^\infty_{x,v}}^2
+
2CM_3\eta^2
\sum_{0<|m|\le N}
\Big(
\big\|h_1^{m,0}\big\|_{L^\infty_{x,v}}^2
+
\big\|h_2^{m,0}\big\|_{L^\infty_{x,v}}^2
\Big)
\notag\\
&\quad
+
4M_3\beta e^{-2\beta t}
\big|\mathcal E_N^{(abc)}(t)\big|.
\label{EN1g2-preabsorb}
\end{align}
We first choose $M_2>0$ sufficiently large, then choose $M_3>0$
sufficiently large, and finally take $\varepsilon>0$ and $\eta>0$
sufficiently small. This gives
\allowdisplaybreaks\begin{align}
\frac{d}{dt}\mathcal E_{N,n}^{(g_2)}(t)
+
2\tilde\lambda_0\mathcal D_{N,n}^{(g_2)}(t)
\le{}&
C
\sum_{\substack{0<|m|\le N-|n'|\\ |n'|\le |n|}}
\Big[
\big\|h_1^{m,n'}\big\|_{L^\infty_{x,v}}^2
+
(\eta^2+\alpha^2)
\big\|h_2^{m,n'}\big\|_{L^\infty_{x,v}}^2
\Big]
\notag\\
&+
C e^{2\beta t}
\sum_{\substack{0<|m|\le N-|n|\\ |n|>0}}
\sum_{i=1}^3
\mathbf 1_{\{n_i>0\}}n_i^2
\big\|h_2^{m+e_i,n-e_i}\big\|_{L^\infty_{x,v}}^2 ,
\label{EN1g2-absorb}
\end{align}
where $\tilde\lambda_0>0$ is some constant.

We next record the coercivity properties of
$\mathcal E_{N,n}^{(g_2)}(t)$. From \eqref{Eabc-def}, the definitions of
$a,\mathbf b,c,d_{ij},q_i$, and Poincar\'e inequality, we have
\[
\begin{aligned}
\big|e^{-2\beta t}\mathcal E_N^{(abc)}(t)\big|
&\le{}
CM_1\sum_{|m|=N-1}\Big(
\big\|\nabla_x\partial_x^m[a,\mathbf b,c]\big\|_{L_x^2}^2
+
e^{-2\beta t}\big\|\partial_x^m\nabla_x\phi\big\|_{L_x^2}^2
\\
&\quad\quad\quad\quad
+
\sum_{i=1}^3
\big\|\partial_{x_i}\partial_x^m q_i\big\|_{L_x^2}^2
+
\sum_{i,j=1}^3
\big\|\partial_{x_j}\partial_x^m d_{ij}\big\|_{L_x^2}^2
\Big)
\\
&\le
CM_1
\sum_{0<|m'|\le N}
\Big(
\big\|\partial_x^{m'}g_2\big\|_{L^2_{x,v}}^2
+
e^{-2\beta t}\big\|\nabla_x\partial_x^{m'}\phi\big\|_{L_x^2}^2
+
\big\|h_1^{m',0}\big\|_{L^\infty_{x,v}}^2
\Big),
\quad l>3.
\end{aligned}
\]
Therefore, from \eqref{EN1-def},
\[
\begin{aligned}
\mathcal E_{N,n}^{(g_2)}(t)
\ge{}&
\sum_{\substack{0<|m|\le N-|n|\\ |n|>0}}
\big\|\partial_n^m g_2\big\|_{L^2_{x,v}}^2
+
M_3(M_2-CM_1)
\sum_{0<|m|\le N}
\Big(\big\|\partial_x^m g_2\big\|_{L^2_{x,v}}^2
+
e^{-2\beta t}
\big\|\nabla_x\partial_x^m\phi\big\|_{L_x^2}^2\Big)
\\
&\quad-
CM_1M_3
\sum_{0<|m|\le N}
\big\|h_1^{m,0}\big\|_{L^\infty_{x,v}}^2 .
\end{aligned}
\]
Choosing $M_2\gg M_1$, we obtain
\begin{equation}\label{EN1g2-target}
\sum_{0<|m|\le N-|n|}
\big\|\partial_n^m g_2(t)\big\|_{L^2_{x,v}}^2
+
\sum_{0<|m|\le N}
e^{-2\beta t}
\big\|\nabla_x\partial_x^m\phi(t)\big\|_{L_x^2}^2
\le
C\mathcal E_{N,n}^{(g_2)}(t)
+
C\sum_{0<|m|\le N}
\big\|h_1^{m,0}(t)\big\|_{L^\infty_{x,v}}^2 .
\end{equation}
Moreover, using the elliptic estimate \eqref{elliptic-Wkp} for $\phi$ and $a=\Delta_x \phi$, we have
\begin{equation}\label{EN1g2-ED}
\mathcal E_{N,n}^{(g_2)}(t)
\le
C\mathcal D_{N,n}^{(g_2)}(t)
+
C\sum_{0<|m|\le N}
\big\|h_1^{m,0}(t)\big\|_{L^\infty_{x,v}}^2 .
\end{equation}

Next, combining \eqref{EN1g2-absorb} with \eqref{EN1g2-ED} and applying
Gr\"onwall inequality, we obtain
\allowdisplaybreaks\begin{align}
e^{2(\lambda-|n|\beta)t}\mathcal E_{N,n}^{(g_2)}(t)
&\le
e^{-[\tilde\lambda_0-2(\lambda-|n|\beta)]t}
\mathcal E_{N,n}^{(g_2)}(0)
+
C\int_0^t
e^{-[\tilde\lambda_0-2(\lambda-|n|\beta)](t-s)}
\sum_{\substack{0<|m|\le N-|n'|\\ |n'|\le |n|}}
e^{2(\lambda-|n|\beta)s}
\notag\\
&\quad\quad\quad\quad\quad\quad\quad
\times
\Big[
\big\|h_1^{m,n'}(s)\big\|_{L^\infty_{x,v}}^2
+
(\eta^2+\alpha^2)
\big\|h_2^{m,n'}(s)\big\|_{L^\infty_{x,v}}^2
\Big]\,ds
\notag\\
&\quad
+
C\int_0^t
e^{-[\tilde\lambda_0-2(\lambda-|n|\beta)](t-s)}
e^{2[\lambda-(|n|-1)\beta]s}
\notag\\
&\quad\quad
\times
\sum_{0<|m|\le N-|n|}
\sum_{i=1}^3
\mathbf 1_{\{n_i>0\}}n_i^2
\big\|h_2^{m+e_i,n-e_i}(s)\big\|_{L^\infty_{x,v}}^2\,ds .
\label{EN1g2-weighted}
\end{align}
By \eqref{choice-lambda}, we have
\[
0<\lambda<\frac{\widetilde\lambda_0}{2}.
\]
Combining \eqref{EN1g2-target} with \eqref{EN1g2-weighted}, we obtain
\eqref{EN1g2-weighted} yield
\allowdisplaybreaks\begin{align}
&\quad \sum_{0<|m|\le N-|n|}
\sup_{0\le s\le t} e^{(\lambda-|n|\beta)s}
\big\|\partial_n^m g_2(s)\big\|_{L^2_{x,v}}
\notag\\
&\le
C
\sum_{0<|m|\le N}
\big\|w_l\partial_x^m\tilde f_0\big\|_{L^\infty_{x,v}}
+
C
\sum_{\substack{0<|m|\le N-|n'|\\ |n'|\le |n|}}
\Big[
\mathfrak H_1^{m,n'}(t)
+
(\eta+\alpha)\mathfrak H_2^{m,n'}(t)
\Big]
\notag\\
&\quad
+
C
\sum_{0<|m|\le N-|n|}
\sum_{i=1}^3
\mathbf 1_{\{n_i>0\}}n_i
\mathfrak H_2^{m+e_i,n-e_i}(t).
\label{EN1g2-fixed-n}
\end{align}

Multiplying \eqref{EN1g2-fixed-n} by $\kappa^{|n|}$, and then summing over all $0\le |n|<N$, we obtain \eqref{est-L2-g2-high}. Taking $n=0$ in \eqref{EN1g2-fixed-n}, we obtain
\eqref{est-L2-g2-high-n0}. This completes the proof.
\end{proof}

\subsection{Zero-Order Estimates}

We now derive the zero-order estimate for $g_2$. The following lemma reduces
the problem to controlling the temperature mode $c$.

\begin{lemma}\label{lem-zero-g2-L2}
Let $[g_1,g_2]$ be the solution to \eqref{g1-eq}--\eqref{g2-eq}. Let
$l>3$. Then
\begin{equation}\label{est-g2-L2-1}
\sup_{0\le s\le t}\|g_2(s)\|_{L^2_{x,v}}
\le
C\sup_{0\le s\le t}\|c(s)\|_{L_x^2}
+
C\mathfrak H_1^{0,0}(t)
+
C\sum_{|m|=1}
\Big(
\mathfrak H_1^{m,0}(t)
+
\mathfrak H_2^{m,0}(t)
\Big).
\end{equation}
Here $\mathfrak H_1^{m,n}$ and $\mathfrak H_2^{m,n}$ are defined in
\eqref{eq-def-mfh12mn}.
\end{lemma}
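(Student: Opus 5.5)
We prove \eqref{est-g2-L2-1} pointwise in time for each fixed $s\in[0,t]$ and then take the supremum. Split $g_2=\mathbf P_0g_2+\mathbf P_1g_2$. For the macroscopic part we use Caflisch's decomposition $g_2=\tilde g-\mu^{-\frac12}g_1$, which gives
\[
\mathbf P_0g_2=\mathbf P_0\tilde g-\mathbf P_0(\mu^{-\frac12}g_1),
\qquad
\|\mathbf P_0g_2\|_{L^2_{x,v}}\le C\|[a,\mathbf b,c]\|_{L^2_x}+C\|w_lg_1\|_{L^\infty_{x,v}}.
\]
Here $\langle\mu^{-\frac12}g_1,\chi_j\rangle_v=\int g_1[1,v,|v|^2]\,dv$ up to constants, and this is bounded by $C\|w_lg_1\|_{L^\infty_v}$ once $l>3$. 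This is the same computation as in \eqref{g2-micro-6} with $m=0$. By \eqref{int-ab}, $P_0^xa=P_0^x\mathbf b=0$. Poincar\'e then gives $\|[a,\mathbf b]\|_{L^2_x}\le C\|\nabla_x[a,\mathbf b]\|_{L^2_x}$. Each $\|\partial_x^m[a,\mathbf b]\|_{L^2_x}$ with $|m|=1$ is bounded by $C(\|w_l\partial_x^mg_1\|_{L^\infty}+\|w_l\partial_x^mg_2\|_{L^\infty})$, again because $l>3$. Since $\lambda_0^m=\lambda\ge0$, the latter is at most $C(\mathfrak H_1^{m,0}+\mathfrak H_2^{m,0})$. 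The $c$-component is simply kept as $C\|c(s)\|_{L^2_x}$.

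The remaining, and main, step is the microscopic part $\mathbf P_1g_2$. We cannot time-integrate the dissipation $\delta_0\|\mathbf P_1g_2\|^2$ here, because the zero-order energy has no decay and sources like $\beta|v|^2$ and $\alpha v_1v_2$ appear. Instead we use a pointwise-in-$x$ bound. We have $|\mathbf P_1g_2|\le|g_2|+|\mathbf P_0g_2|$, and in $L^2_v$ we write $\|\mathbf P_1g_2\|_{L^2_v}\le\|\mathbf P_1\tilde g\|_{L^2_v}+\|\mathbf P_1(\mu^{-\frac12}g_1)\|_{L^2_v}$. That only moves the problem to $\mathbf P_1\tilde g$, so we go back to $g_2$ directly.

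On $\mathbb T^3$ (unit volume), $\|g_2\|_{L^2_{x,v}}\le\|P_0^xg_2\|_{L^2_v}+\|P_{\neq0}^xg_2\|_{L^2_{x,v}}$. The nonzero-frequency part is handled by Poincar\'e:
\[
\|P_{\neq0}^xg_2\|_{L^2_{x,v}}\le C\|\nabla_xg_2\|_{L^2_{x,v}}\le C\sum_{|m|=1}\|w_l\partial_x^mg_2\|_{L^\infty_{x,v}}\le C\sum_{|m|=1}\mathfrak H_2^{m,0}(t).
\]
The last inequality uses $w_l^{-1}\in L^2_v$.

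For the zero mode $P_0^xg_2(s,v)$, we need the microscopic part $\mathbf P_1P_0^xg_2$. This is where the obstacle lies. We plan to use the $L^\infty$ bound \eqref{est-zero-sumh2} in reverse only if no circularity arises. Since that is circular, we instead integrate the $P_0^x$-averaged equation \eqref{g2-eq}. Applying $P_0^x$ removes the transport term $e^{\beta t}v\cdot\nabla_x$, and the Poisson forcing $P_0^x(\mu^{\frac12}v\cdot\nabla_x\phi)=0$. The nonlinear field term $P_0^x(\nabla_x\phi\cdot\nabla_vg_2)$ is bounded by $\|\nabla_x\phi\|_{L^2}\|\nabla_v g_2\|_{L^2}$. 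That is in turn $\le C\eta\sum_{|m|=1}(\mathfrak H_1^{m,0}+\mathfrak H_2^{m,0})$ by Lemma~\ref{lemma-poisson-3D} and \eqref{assa}.

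An $L^2_v$ energy estimate for $\mathbf P_1P_0^xg_2$, using the coercivity \eqref{coercivity-L}, then gives exponential damping at rate $\delta_0-C\alpha$. The sources are $\mu^{-\frac12}(1-\chi_M)\mathcal KP_0^xg_1$, which is $\le C\|w_lg_1\|_{L^\infty}$; $\alpha$- and $\beta$-terms applied to $\mathbf P_0P_0^xg_2$, which reduce to $P_0^xc$ plus $g_1$ terms; and the field term above. The $\alpha$- and $\beta$-terms acting on $\mathbf P_1$ itself are absorbed using $\alpha\ll\delta_0$. Gronwall with zero initial data yields
\[
\|\mathbf P_1P_0^xg_2(s)\|_{L^2_v}\le C\sup_{\tau\le s}\big(|P_0^xc(\tau)|+\|w_lg_1(\tau)\|_{L^\infty}\big)+C\sum_{|m|=1}\big(\mathfrak H_1^{m,0}+\mathfrak H_2^{m,0}\big).
\]
Combining all pieces with $|P_0^xc|\le\|c\|_{L^2_x}$ and $\|w_lg_1\|_{L^\infty}\le\mathfrak H_1^{0,0}$ gives \eqref{est-g2-L2-1}.
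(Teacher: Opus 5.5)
Your proof is correct, but it is organized differently from the paper's. The paper runs one zero-order energy estimate on all of $g_2$. Testing \eqref{g2-eq} against $g_2$ in $L^2_{x,v}$, the transport term, $\alpha v_2\partial_{v_1}g_2$ and $e^{-\beta t}\nabla_x\phi\cdot\nabla_v g_2$ vanish by skew-symmetry, and $-\beta\nabla_v\cdot(vg_2)$ contributes only $\tfrac32\beta\|g_2\|_{L^2_{x,v}}^2$. Coercivity \eqref{coercivity-L} then controls $\mathbf P_1 g_2$, while $\mathbf P_0 g_2$ is moved to the right and bounded by $\|c\|_{L^2_x}+\|\nabla_x[a,\mathbf b]\|_{L^2_x}$ plus $g_1$-moments, exactly as in your first paragraph. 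Adding $\delta_0\|\mathbf P_0 g_2\|^2$ to both sides and using Gr\"onwall with $g_2(0)=0$ concludes.

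You instead split $g_2=P_{\neq0}^xg_2+\mathbf P_0P_0^xg_2+\mathbf P_1P_0^xg_2$. You dispose of the first piece by Poincar\'e and the $|m|=1$ $L^\infty$ norms, and run the same coercivity/Gr\"onwall argument only for $\mathbf P_1P_0^xg_2$ on the $x$-averaged equation. The averaging removes the transport term and the Poisson source $e^{-\beta t}\mu^{\frac12}v\cdot\nabla_x\phi$ exactly; in the paper's route the latter is a linear term bounded via $\|\nabla_x\phi\|_{L^2_x}\le C\|\nabla_x a\|_{L^2_x}$.

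The cost is that averaging destroys the skew structure of the field term, since $P_0^x(\nabla_x\phi\cdot\nabla_v g_2)\neq\nabla_x\phi\cdot\nabla_v P_0^x g_2$. You therefore have to invoke \eqref{assa} to bound $\|\nabla_v g_2\|_{L^2_{x,v}}$. The lemma does not state that assumption, and the paper's argument does not need it. This is harmless in context, since the lemma is only used under \eqref{assa}, but you should say so explicitly. Also, to bound $\|\nabla_x\phi\|_{L^2_x}$ by first-order quantities you need Lemma~\ref{lemma-poisson-3D} with $|m|=1$ together with the zero mean of $\nabla_x\phi$, not the $m=0$ case.

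Finally, your stated reason for avoiding the direct estimate is mistaken. Caflisch's decomposition places the growing terms $\frac{\beta}{2}|v|^2$ and $\frac{\alpha}{2}v_1v_2$ in the $g_1$ equation, so \eqref{g2-eq} has no such sources. Your zero-mode estimate is really the paper's estimate restricted to a single Fourier mode.
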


\begin{proof}
Taking the $L^2(\mathbb T_x^3\times\mathbb R_v^3)$ inner product of \eqref{g2-eq} with $g_2$, and using the coercivity estimate \eqref{coercivity-L} in Lemma~\ref{lem-est-L}, we obtain
\begin{equation}\label{zero-g2-energy}
\frac{d}{dt}\|g_2\|_{L^2_{x,v}}^2
+\delta_0\|\mathbf P_1 g_2\|_{L^2_{x,v}}^2
\le
C\|\mathbf P_0 g_2\|_{L^2_{x,v}}^2
+
C\|h_1^{0,0}\|_{L^\infty_{x,v}}^2.
\end{equation}

Since $\tilde g=\mu^{-\frac{1}{2}}g_1+g_2$, we have
\begin{align}
\|\mathbf P_0g_2\|_{L^2_{x,v}}^2
&\le
C\|\mathbf P_0\tilde g\|_{L^2_{x,v}}^2
+C\|\mathbf P_0(\mu^{-\frac12}g_1)\|_{L^2_{x,v}}^2
\notag\\
&\le
C\|\nabla_x[a,\mathbf b]\|_{L^2_x}^2
+C\|c\|_{L^2_x}^2
+C\|h_1^{0,0}\|_{L^\infty_{x,v}}^2
\notag\\
&\le
C\|c\|_{L^2_x}^2
+C\sum_{|m|=1}\|h_2^{m,0}\|_{L^\infty_{x,v}}^2
+C\sum_{|m|\le1}\|h_1^{m,0}\|_{L^\infty_{x,v}}^2,
\quad l>3.
\label{P0g2-est}
\end{align}

Combining \eqref{zero-g2-energy} and \eqref{P0g2-est}, for some $\lambda_3>0$, we arrive at
\[
\frac {d}{ dt}\|g_2\|_{L^2_{x,v}}^2
+\lambda_3\|g_2\|_{L^2_{x,v}}^2
\le
C\|c\|_{L_x^2}^2
+
C\sum_{|m|=1}\|h_2^{m,0}\|_{L^\infty_{x,v}}^2
+
C\sum_{|m|\le1}\|h_1^{m,0}\|_{L^\infty_{x,v}}^2.
\]
Since $g_2(0)=0$, Gr\"onwall inequality yields
\[
\|g_2(t)\|_{L^2_{x,v}}^2
\le
C\int_0^t e^{-\lambda_3(t-s)}
\Big(
\|c(s)\|_{L_x^2}^2
+
\sum_{|m|=1}\|h_2^{m,0}(s)\|_{L^\infty_{x,v}}^2
+
\sum_{|m|\le1}\|h_1^{m,0}(s)\|_{L^\infty_{x,v}}^2
\Big)\,ds.
\]
Taking square roots and the supremum gives \eqref{est-g2-L2-1}. This completes the proof.
\end{proof}

By Lemma~\ref{lem-zero-g2-L2}, it remains to control the temperature mode $c$.
We decompose
\[
c=P_0^x c+P_{\neq0}^x c.
\]
By Poincar\'e inequality, the nonzero-frequency mode $P_{\neq0}^x c$ is
controlled by its spatial derivatives, and hence by the estimates obtained
above. It remains to study the zero-frequency mode $P_0^x c$. The key step is
to derive a closed zero-frequency ODE system coupling a renormalized energy with suitable second-order moments. Before deriving this system, we recall the
following lemma, which will be used below.

\begin{lemma}[{\cite[Lemma 4.4]{DuanLiuShen25}}]\label{lem-Qsym-moment}
In the case of Maxwell molecules, the following identities hold:
\begin{equation}\label{qsym-offdiag}
\langle Q(\tilde f,G)+Q(G,\tilde f),v_iv_j\rangle_v
=
-2b_0\left[
\langle \tilde f,v_iv_j\rangle_v \langle G,1\rangle_v
+
\langle \tilde f,1\rangle_v \langle G,v_iv_j\rangle_v
\right],
\quad i\neq j,
\end{equation}
\begin{equation}\label{qsym-diag}
\langle Q(\tilde f,G)+Q(G,\tilde f),v_iv_i\rangle_v
=
-2b_0\bigg[
\big\langle \tilde f,v_iv_i-\frac{|v|^2}{3}\big\rangle_v \langle G,1\rangle_v
+
\langle \tilde f,1\rangle_v
\big\langle G,v_iv_i-\frac{|v|^2}{3}\big\rangle_v
\bigg],
\end{equation}
\begin{equation}\label{Qff-offdiag}
\langle Q(\tilde f,\tilde f),v_iv_j\rangle_v
=
-2b_0\Big[
\langle \tilde f,v_iv_j\rangle_v \langle \tilde f,1\rangle_v
-
\langle \tilde f,v_i\rangle_v \langle \tilde f,v_j\rangle_v
\Big],
\quad i\neq j,
\end{equation}
\begin{equation}\label{Qff-diag}
\langle Q(\tilde f,\tilde f),v_iv_i\rangle_v
=
-2b_0\bigg[
\big\langle \tilde f,v_iv_i-\frac{|v|^2}{3}\big\rangle_v \langle \tilde f,1\rangle_v
+
\langle \tilde f,v_i\rangle_v^2
-
\frac{1}{3}\sum_{j=1}^3 \langle \tilde f,v_j\rangle_v^2
\bigg].
\end{equation}
Moreover, under the conservation laws \eqref{int-ab}, the zero-frequency projections satisfy
\begin{equation}\label{P0-qsym-d12}
P_0^x\langle Q(\tilde f,G)+Q(G,\tilde f),v_1v_2\rangle_v
=
-2b_0 P_0^x \langle \tilde f,v_1 v_2\rangle_v,
\end{equation}
\begin{equation}\label{P0-qsym-diag}
P_0^x\langle Q(\tilde f,G)+Q(G,\tilde f),v_iv_i\rangle_v
=
-2b_0\Big[
P_0^x d_{ii}
-
\frac{1}{3}P_0^x \langle \tilde f,|v|^2\rangle_v
\Big],
\end{equation}
and the identities \eqref{Qff-offdiag} and \eqref{Qff-diag} remain valid with $\tilde f$ replaced by $G$. Here $\tilde f$ denotes the perturbation defined in \eqref{def-tildef}, and $d_{ij}$ is defined in \eqref{def-dij-qi}.
\end{lemma}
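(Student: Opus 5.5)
The identities in Lemma~\ref{lem-Qsym-moment} are exact moment computations for Maxwell molecules. I would derive them from Bobylev's formula for the second moments of $Q$. With $B=B_0(\cos\theta)$ independent of $|v-v_*|$, the weak form is
\[
\langle Q(F_1,F_2),\psi\rangle_v=\int\!\!\int\!\!\int F_1(v)F_2(v_*)B_0(\cos\theta)\big[\psi(v')-\psi(v)\big]\,d\omega\,dv\,dv_*
\]
for $\psi=v_iv_j$. Writing $u=v-v_*$ and $v'=v-(u\cdot\omega)\omega$, one gets
\[
v_i'v_j'-v_iv_j=-(u\cdot\omega)\big(v_i\omega_j+\omega_iv_j\big)+(u\cdot\omega)^2\omega_i\omega_j .
\]
I would average over $\omega\in\mathbb S^2$ with weight $B_0$, using rotational symmetry about $\hat u$. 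The first moment is $\int B_0(u\cdot\omega)\omega\,d\omega=c_1u$. The second moment is
\[
\int B_0(u\cdot\omega)^2\omega_i\omega_j\,d\omega=c_2u_iu_j+c_3|u|^2\delta_{ij}.
\]
The constants $c_1,c_2,c_3$ are one-dimensional integrals of $B_0(z)$ against polynomials in $z$. After the symmetrization $v\leftrightarrow v_*$ (or symmetrizing $Q(F_1,F_2)+Q(F_2,F_1)$), the collision average becomes a quadratic polynomial in $(v,v_*)$ of the form
\[
\kappa_1\Big(u_iu_j-\tfrac13\delta_{ij}|u|^2\Big)+\kappa_2\Big(v_iv_j+v_{*i}v_{*j}-\text{(mixed terms)}\Big).
\]
Energy conservation forces the trace part to vanish, so only the traceless part of $u_iu_j$ survives, multiplied by $-b_0$ with $b_0$ exactly as in \eqref{def-b0}. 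This is the standard computation giving $L(A_{ij})=2b_0A_{ij}$ in Lemma~\ref{lem-Lmoments}, and I would calibrate the constant by comparing with that identity instead of recomputing $c_1,c_2,c_3$.

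Concretely, the result is
\[
\langle Q(F_1,F_2)+Q(F_2,F_1),v_iv_j\rangle_v=-b_0\int\!\!\int F_1F_2\,\Big[(v-v_*)_i(v-v_*)_j-\tfrac13\delta_{ij}|v-v_*|^2\Big]\,dv\,dv_* .
\]
Expanding $(v-v_*)_i(v-v_*)_j=v_iv_j+v_{*i}v_{*j}-v_iv_{*j}-v_{*i}v_j$ gives a sum of products of moments of $F_1$ and $F_2$. Taking $(F_1,F_2)=(\tilde f,G)$ and using $\langle G,v\rangle_v=0$ from \eqref{G-normal} kills the cross terms and yields \eqref{qsym-offdiag} and \eqref{qsym-diag}. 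Taking $F_1=F_2=\tilde f$, which doubles the symmetric expression, gives \eqref{Qff-offdiag} and \eqref{Qff-diag}, where the cross terms now survive as $\langle\tilde f,v_i\rangle_v\langle\tilde f,v_j\rangle_v$. The same computation with $G$ in place of $\tilde f$ is identical.

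For the zero-frequency statements I would apply $P_0^x$ to \eqref{qsym-offdiag} and \eqref{qsym-diag}. Since $G$ is $x$-independent, $P_0^x$ acts linearly on the $\tilde f$-factor. By \eqref{int-ab}, $P_0^x a=0$, so $P_0^x\langle\tilde f,1\rangle_v=0$ and the second terms drop; also $\langle G,1\rangle_v=1$. This gives \eqref{P0-qsym-d12}. For the diagonal case I would identify $\langle\tilde f,v_iv_i-\tfrac13|v|^2\rangle_v$ with $d_{ii}$. The point is that $v_iv_i-\tfrac13|v|^2$ is traceless, so its pairing with $\mathbf P_0\tilde g$ against $\mu$ vanishes. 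Hence
\[
\langle\tilde f,v_iv_i-\tfrac13|v|^2\rangle_v=\langle \mathbf P_1\tilde g,A_{ii}\rangle_v=d_{ii}.
\]
The formula \eqref{P0-qsym-diag} as stated should then be read with that normalization. The analogous identification $\langle\tilde f,v_1v_2\rangle_v=d_{12}$ holds as well.

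The main obstacle is bookkeeping the constant: confirming that the angular integrals combine to exactly $2b_0$ with $b_0=3\pi\int B_0z^2(1-z^2)\,dz$. I would avoid recomputing it by noting that the bilinear identity, specialized to $F_1=\mu$ and $F_2=\mu^{1/2}A_{ij}$, must reproduce the first identity of \eqref{L-moments}. That fixes the constant.
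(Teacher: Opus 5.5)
Your route is the right one, and it is what Lemma~\ref{lem-Tij} in Appendix~\ref{app-A} encodes. Write $\langle Q(F_1,F_2)+Q(F_2,F_1),\psi\rangle_v$ in weak form and symmetrize, so that the angular average depends only on $u=v-v_*$. Reduce it by isotropy and energy conservation to a multiple of the traceless part of $u\otimes u$. Then expand into moments, using $\langle G,1\rangle_v=1$, $\langle G,v\rangle_v=0$ and $P_0^xa=0$.

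However, the constant in your central formula is wrong. The symmetrized kernel is $\int_{\mathbb S^2}B_0\,[\psi(v')+\psi(v_*')-\psi(v)-\psi(v_*)]\,d\omega=2T_{ij}$, with $T_{ij}$ as in \eqref{Tij-est}. Hence
\[
\langle Q(F_1,F_2)+Q(F_2,F_1),v_iv_j\rangle_v
=-2b_0\iint F_1(v)F_2(v_*)\Big[(v-v_*)_i(v-v_*)_j-\tfrac13\delta_{ij}|v-v_*|^2\Big]\,dv\,dv_*,
\]
not $-b_0$. In your notation, $c_1=c_2+3c_3$, the symmetrized average is $-6c_3[u_iu_j-\frac13\delta_{ij}|u|^2]$, and $6c_3=6\pi\int_{-1}^1B_0(z)z^2(1-z^2)\,dz=2b_0$.

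With your $-b_0$, the expansion you describe yields \eqref{qsym-offdiag}--\eqref{Qff-offdiag} with $-b_0$ in place of $-2b_0$, so the derivation fails as written. The calibration you propose would have exposed this. Your formula gives $\langle L(v_1v_2\mu^{\frac12}),v_1v_2\mu^{\frac12}\rangle_v=b_0$, whereas \eqref{L-moments} gives $2b_0$.

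Second, even with the correct constant, the expansion does not give \eqref{Qff-diag} or \eqref{P0-qsym-diag} as printed, and your proof should say so rather than claim them. With $F_1=F_2=\tilde f$ the correct result is
\[
\langle Q(\tilde f,\tilde f),v_iv_i\rangle_v
=-2b_0\Big[\big\langle\tilde f,v_i^2-\tfrac{|v|^2}{3}\big\rangle_v\langle\tilde f,1\rangle_v-\langle\tilde f,v_i\rangle_v^2+\tfrac13\sum_{j=1}^3\langle\tilde f,v_j\rangle_v^2\Big].
\]
The momentum terms here have the opposite sign to \eqref{Qff-diag}, consistent with \eqref{Qff-offdiag}. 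The printed version fails for $\tilde f=\mu(\cdot-e_1)$ and $i=1$: the left side vanishes because a Maxwellian is an equilibrium, but the right side equals $-\frac83 b_0$.

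Likewise, with the paper's traceless $d_{ii}=\langle\tilde f,v_i^2-\frac13|v|^2\rangle_v$, your argument correctly gives $P_0^x\langle Q(\tilde f,G)+Q(G,\tilde f),v_i^2\rangle_v=-2b_0P_0^xd_{ii}$. The stated \eqref{P0-qsym-diag} contradicts energy conservation after summing over $i$, since its right side would sum to $2\sqrt6\,b_0P_0^xc$. It is correct only if $d_{ii}$ there denotes $\langle\tilde f,v_i^2\rangle_v$. Your remark that it ``should be read with that normalization'' does not resolve this: with that normalization, the term $-\frac13P_0^x\langle\tilde f,|v|^2\rangle_v$ must be dropped.

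Neither discrepancy affects the paper. \eqref{Qff-diag} is used only for $G$, which has zero momentum, in \eqref{steady-second-moment}, and only in absolute value in \eqref{A22-source}. \eqref{est-P0A22} is derived from \eqref{qsym-diag}.
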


We now derive the zero-frequency ODE system. For convenience, we recall that
\allowdisplaybreaks\begin{align}\label{tildeg-eq-rewrite}
& \partial_t\tilde g
+ e^{\beta t}v\cdot\nabla_x\tilde g
- \beta\nabla_v\cdot(v\tilde g)
+ \frac{\beta}{2}|v|^2\tilde g
- \alpha v_2\partial_{v_1}\tilde g
+ \frac{\alpha}{2}v_1v_2\tilde g
+ e^{-\beta t}\nabla_x\phi\cdot\nabla_v\tilde g\notag\\
&
- \frac{1}{2} e^{-\beta t}(v\cdot\nabla_x\phi)\tilde g
+ L\tilde g
- e^{-\beta t}\mu^{\frac{1}{2}}v\cdot\nabla_x\phi
+ \alpha e^{-\beta t}\nabla_x\phi\cdot\Big(\nabla_v G_1-\frac{v}{2}G_1\Big)\notag\\
&
=\alpha\Gamma(G_1,\tilde g)
+ \alpha\Gamma(\tilde g,G_1)
+ \Gamma(\tilde g,\tilde g)=:\widetilde{\mathcal F}.
\end{align}
We define the renormalized total energy by
\begin{equation}
\mathcal E_\alpha(t)=\sqrt6\,P_0^x c(t)+e^{-2\beta t}\|\nabla_x\phi(t)\|_{L_x^2}^2,
\end{equation}

Applying $P_0^x$ to the third
equation of \eqref{eq-macro-abc}, we obtain
\begin{equation}\label{P0c-eq}
\partial_t P_0^x c
+\sqrt6\beta P_0^x a
+2\beta P_0^x c
+\sqrt{\frac23}\alpha P_0^x d_{12}
-\sqrt{\frac23}e^{-\beta t}P_0^x(\nabla_x\phi\cdot\mathbf b)
=0.
\end{equation}
On the other hand, combining the first
equation of \eqref{eq-macro-abc} with $\Delta_x\phi=a$, we get
\begin{equation}\label{field-energy-eq}
\frac{d}{dt}\Big(e^{-2\beta t}\|\nabla_x\phi\|_{L_x^2}^2\Big)
+2\beta e^{-2\beta t}\|\nabla_x\phi\|_{L_x^2}^2
+2e^{-\beta t}P_0^x(\nabla_x\phi\cdot\mathbf b)=0.
\end{equation}
Using \eqref{P0c-eq}, \eqref{field-energy-eq}, and $P_0^x a=0$, we obtain the evolution equation for $\mathcal E_\alpha$:
\begin{equation}\label{E-eq-zero}
\partial_t\mathcal E_\alpha+2\beta\mathcal E_\alpha+2\alpha P_0^x d_{12}=0.
\end{equation}

It remains to close \eqref{E-eq-zero}. We derive the zero-frequency equations for the second-order moments $P_0^x d_{12}$ and $P_0^x d_{22}$. Taking the $L^2(\mathbb R_v^3)$ inner products of \eqref{tildeg-eq-rewrite} with $A_{12}$ and $A_{22}$, respectively, and then applying $P_0^x$, we obtain
\begin{align}
\partial_t P_0^x d_{12}
&+(2\beta+2b_0)P_0^x d_{12}
+\alpha\Big(\frac13\mathcal E_\alpha-\frac13 e^{-2\beta t}\|\nabla_x\phi\|_{L_x^2}^2+P_0^x d_{22}\Big)
\notag\\
&-e^{-\beta t}P_0^x(b_1\partial_{x_2}\phi+b_2\partial_{x_1}\phi)
=
P_0^x\langle \widetilde{\mathcal F},A_{12}\rangle_v,
\label{P0d12}
\end{align}
and
\begin{equation}\label{P0d22}
\partial_t P_0^x d_{22}
+(2\beta+2b_0)P_0^x d_{22}
-\frac{2\alpha}{3}P_0^x d_{12}
-\frac23 e^{-\beta t}P_0^x(-b_1\partial_{x_1}\phi+2b_2\partial_{x_2}\phi-b_3\partial_{x_3}\phi)
=
P_0^x\langle \widetilde {\mathcal F},A_{22}\rangle_v.
\end{equation}

Recalling that
$\tilde f=\mu^{\frac{1}{2}}\tilde g,$ $G=\mu+\alpha\mu^{\frac{1}{2}}G_1,$
we have, by the definition of $\Gamma$,
\[
\big\langle \alpha\Gamma(G_1,\tilde g)+\alpha\Gamma(\tilde g,G_1),A_{12}\big\rangle_v
=
\langle Q(\tilde f,G)+Q(G,\tilde f),v_1v_2\rangle_v
-
\langle Q(\tilde f,\mu)+Q(\mu,\tilde f),v_1v_2\rangle_v.
\]
By \eqref{qsym-offdiag}, the normalization
$\langle G,1\rangle_v=1$ in \eqref{G-normal}, and \eqref{LAij}, we have
\[
\langle Q(\tilde f,G)+Q(G,\tilde f),v_1v_2\rangle_v
=
-2b_0\big(\langle \tilde f,v_1v_2\rangle_v+\langle \tilde f,1\rangle_v\langle G,v_1v_2\rangle_v\big),
\]
whereas
\[
\langle Q(\tilde f,\mu)+Q(\mu,\tilde f),v_1v_2\rangle_v
=
-\langle L\tilde g,A_{12}\rangle_v
=
-2b_0d_{12}.
\]
Since $\langle \tilde f,v_1v_2\rangle_v=d_{12}$ and
$\langle \tilde f,1\rangle_v=a$, we obtain
\begin{equation}\label{est-P0A12}
\begin{aligned}
P_0^x\big\langle
\alpha\Gamma(G_1,\tilde g)+\alpha\Gamma(\tilde g,G_1),
A_{12}
\big\rangle_v
&=
-2b_0\langle G,v_1v_2\rangle_vP_0^x a
=0.
\end{aligned}
\end{equation}
The same argument, using \eqref{qsym-diag}, gives
\begin{equation}\label{est-P0A22}
P_0^x\big\langle \alpha\Gamma(G_1,\tilde g)+\alpha\Gamma(\tilde g,G_1),A_{22}\big\rangle_v=0.
\end{equation}

Therefore, combining \eqref{E-eq-zero}--\eqref{P0d22} with
\eqref{est-P0A12}--\eqref{est-P0A22}, we can write the resulting
zero-frequency equations in the matrix form
\begin{equation}\label{zero-ODE-matrix}
\frac{d}{dt}U(t)+A_\alpha U(t)=R(t),
\end{equation}
where
\begin{equation}\label{def-U-Aa}
U(t)=\begin{pmatrix}
\mathcal E_\alpha(t)\\
P_0^x d_{12}(t)\\
P_0^x d_{22}(t)
\end{pmatrix},
\quad
A_\alpha=\begin{pmatrix}
2\beta & 2\alpha & 0\\
\frac{\alpha}{3} & 2\beta+2b_0 & \alpha\\
0 & -\frac{2\alpha}{3} & 2\beta+2b_0
\end{pmatrix},
\end{equation}
and
\begin{equation}\label{def-Rt}
R(t)=\begin{pmatrix}
0\\[0.25cm]
P_0^x\langle \Gamma(\tilde g,\tilde g),A_{12}\rangle_v
+e^{-\beta t}P_0^x(b_1\partial_{x_2}\phi+b_2\partial_{x_1}\phi)
+\frac{\alpha}{3}e^{-2\beta t}\|\nabla_x\phi\|_{L_x^2}^2\\[0.25cm]
P_0^x\langle \Gamma(\tilde g,\tilde g),A_{22}\rangle_v
+\frac{2}{3} e^{-\beta t}P_0^x(-b_1\partial_{x_1}\phi+2b_2\partial_{x_2}\phi-b_3\partial_{x_3}\phi)
\end{pmatrix}.
\end{equation}

This system is the core of the zero-frequency analysis and provides the control
of $P_0^x c$, hence of the temperature mode $c$. The following lemma gives
the corresponding uniform estimates.

\begin{lemma}\label{lem-Uc-L2}
Let $[g_1,g_2]$ be the solution to \eqref{g1-eq}--\eqref{g2-eq}
satisfying the a priori assumption \eqref{assa}. Let $l>3$, and let $U(t)$
be defined by \eqref{def-U-Aa} and satisfy \eqref{zero-ODE-matrix}. Then the
following estimates hold:
\begin{equation}\label{est-U-L2-sharp}
\begin{aligned}
\sup_{0\le s\le t}|U(s)|
\le
C|U(0)|
+
C\eta
\sum_{|m|=1}
\Big(
\mathfrak H_1^{m,0}(t)
+
\mathfrak H_2^{m,0}(t)
\Big),
\end{aligned}
\end{equation}
and
\begin{equation}\label{est-c-L2-sharp}
\begin{aligned}
\sup_{0\le s\le t}\|c(s)\|_{L_x^2}
\le
C\|w_l\tilde f_0\|_{L^\infty_{x,v}}
+
C
\sum_{|m|=1}
\Big(
\mathfrak H_1^{m,0}(t)
+
\mathfrak H_2^{m,0}(t)
\Big).
\end{aligned}
\end{equation}
Here $\mathfrak H_1^{m,0}$ and $\mathfrak H_2^{m,0}$ are defined in
\eqref{eq-def-mfh12mn}.
\end{lemma}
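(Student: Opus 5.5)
The plan is to study the ODE system \eqref{zero-ODE-matrix} through its semigroup and then bound the forcing $R(t)$ using quadratic structure together with the exponential decay of spatial derivatives supplied by \eqref{assa}.

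\textbf{Spectrum of $A_\alpha$.} At $\alpha=0$ the matrix $A_\alpha$ is diagonal with eigenvalues $0, 2b_0, 2b_0$. For small $\alpha$ the eigenvalues move continuously. Expanding $\det(A_\alpha-\zeta I)$ near $\zeta=0$ shows the smallest eigenvalue equals
\[
\zeta_0=2\beta-\frac{2\alpha^2}{3\cdot 2b_0}+O(\alpha^3).
\]
The other two eigenvalues have real parts close to $2b_0$. I will not try to prove $\zeta_0\ge0$ directly. Instead I expect that the self-similar normalization forces $\zeta_0=0$ exactly. Indeed the homogeneous profile equation \eqref{self-similar-G}, together with the moment identities of Lemma~\ref{lem-Qsym-moment}, gives the linear system for $(3,\langle G,v_1v_2\rangle,\langle G,v_2^2\rangle)$. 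Its stationarity with $\beta$ from \eqref{beta-def} means that a suitable vector lies in $\ker A_\alpha$; equivalently $\det A_\alpha=0$. This can be checked directly using $\beta=-\frac{\alpha}{3}\langle G,v_1v_2\rangle$ and the $G$-versions of \eqref{P0-qsym-d12}--\eqref{P0-qsym-diag}.

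Given this, $A_\alpha$ is diagonalizable for small $\alpha$, with one zero eigenvalue and two eigenvalues of real part at least $b_0$. Hence
\[
\sup_{t\ge0}\|e^{-tA_\alpha}\|\le C,
\]
and, on the spectral complement of the kernel, the semigroup decays like $e^{-b_0 t}$.

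\textbf{Duhamel and the forcing.} I will write
\[
U(t)=e^{-tA_\alpha}U(0)+\int_0^t e^{-(t-s)A_\alpha}R(s)\,ds .
\]
The first component of $R$ vanishes. Even so, the uniform bound on $e^{-tA_\alpha}$ only gives
\[
\sup|U|\le C|U(0)|+C\int_0^t|R(s)|\,ds,
\]
so I need $R\in L^1(0,\infty)$, with the right dependence on the $\mathfrak H$ quantities. The terms in $R$ are handled as follows.

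The field terms $e^{-\beta t}P_0^x(b_i\partial_{x_j}\phi)$ and $\frac{\alpha}{3}e^{-2\beta t}\|\nabla_x\phi\|^2_{L^2_x}$ are quadratic, and each factor has mean zero. So by Poincaré and Lemma~\ref{lemma-poisson-3D} they are bounded by $C\big(\sum_{|m|=1}\|h_1^{m,0}\|+\|h_2^{m,0}\|\big)^2$. By \eqref{assa} this is at most
\[
C\eta e^{-\lambda s}\sum_{|m|=1}\big(\mathfrak H_1^{m,0}+\mathfrak H_2^{m,0}\big),
\]
which is integrable in $s$.

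For $P_0^x\langle\Gamma(\tilde g,\tilde g),A_{ij}\rangle_v$ I use \eqref{Qff-offdiag}--\eqref{Qff-diag}. These express the term through products of moments of $\tilde f$, namely
\[
P_0^x\big(d_{ij}\,a\big)\quad\text{and}\quad P_0^x\big(b_ib_j\big).
\]
Since $P_0^xa=0$ and $P_0^x\mathbf b=0$, the factors $a$ and $b_i$ are controlled by their gradients, and the moment $d_{ij}$ is bounded by $C\eta$ through \eqref{assa}. The same bound $C\eta e^{-\lambda s}\sum_{|m|=1}(\mathfrak H_1^{m,0}+\mathfrak H_2^{m,0})$ follows. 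Integrating yields \eqref{est-U-L2-sharp}.

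\textbf{Bound on $c$.} For \eqref{est-c-L2-sharp} I split $c=P_0^xc+P_{\neq0}^xc$. The zero mode satisfies
\[
\sqrt6\,P_0^xc=\mathcal E_\alpha-e^{-2\beta t}\|\nabla_x\phi\|_{L^2_x}^2,
\]
where the field energy is quadratic and small. Moreover $|U(0)|\le C\|w_l\tilde f_0\|_{L^\infty_{x,v}}$, because each component of $U(0)$ is a weighted moment of $\tilde f_0$ (plus a quadratic field term). For the nonzero modes, Poincaré gives $\|P_{\neq0}^xc\|\le C\|\nabla_xc\|$, and this is at most $C\sum_{|m|=1}(\mathfrak H_1^{m,0}+\mathfrak H_2^{m,0})$ by $l>3$.

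\textbf{Main obstacle.} The hardest step is establishing the exact zero eigenvalue and the absence of growing modes. If the eigenvalue cancellation with $\beta$ turns out not to be exact, I would instead obtain a bound of the form $e^{-\zeta_0 t}$ with $|\zeta_0|=O(\alpha^3)$. That would fail uniformity in time, so the algebraic identity $\det A_\alpha=0$, derived from the profile equation, has to be verified carefully first.
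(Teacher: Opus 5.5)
Your proposal is correct and follows essentially the paper's route: you obtain the exact zero eigenvalue of $A_\alpha$ from the second-moment system of the profile equation, derive a time-uniform bound on $e^{-tA_\alpha}$, apply Duhamel, control $R$ through its quadratic mean-zero structure and Poincar\'e, and then split $c=P_0^xc+P_{\neq0}^xc$. The one point to make precise is that $\|e^{-tA_\alpha}\|$ must be bounded uniformly as $\alpha\to0$, because the eigenvalues $2b_0+3\beta\pm i\omega_\alpha$ coalesce there and diagonalizability alone does not give this; the paper checks that the eigenvector matrix $Q_\alpha$ tends to an invertible $Q_0$, and a Riesz-projection perturbation argument would work equally well.
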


\begin{proof}
Applying Duhamel's formula to \eqref{zero-ODE-matrix}, we obtain
\[
U(t)=e^{-tA_\alpha}U(0)+\int_0^t e^{-(t-s)A_\alpha}R(s)\,ds.
\]

\noindent{\textbf{\underline{Step 1. Spectral analysis of $A_\alpha$.}}}
A direct computation from \eqref{def-U-Aa} shows that
\begin{equation}\label{charpoly-Aa}
\det(\tilde\lambda I-A_\alpha)=
\tilde\lambda^3-(4b_0+6\beta)\tilde\lambda^2+(4b_0^2+16b_0\beta+12\beta^2)\tilde\lambda
+\frac{4}{3}\alpha^2 b_0-8\beta(b_0+\beta)^2 =:p(\tilde\lambda).
\end{equation}
To verify that $0$ is an eigenvalue of $A_\alpha$, we return to the profile equation for $G$,
\[
-\beta\nabla_v\cdot(vG)-\alpha v_2\partial_{v_1}G=Q(G,G).
\]
Testing this equation against $|v|^2$, $v_1v_2$, and
$\displaystyle v_2^2-\frac{|v|^2}{3}$, and then applying
\eqref{Qff-offdiag} and \eqref{Qff-diag} in Lemma~\ref{lem-Qsym-moment} with $\tilde f$ replaced by $G$, together with \eqref{G-normal}, we obtain
\begin{equation}\label{steady-second-moment}
\left\{
\begin{aligned}
2\beta\langle G,|v|^2\rangle_v+2\alpha\langle G,v_1v_2\rangle_v&=0,\\
\frac{\alpha}{3}\langle G,|v|^2\rangle_v
+(2\beta+2b_0)\langle G,v_1v_2\rangle_v
+\alpha\big\langle G,v_2^2-\frac{|v|^2}{3}\big\rangle_v&=0,\\
-\frac{2\alpha}{3}\langle G,v_1v_2\rangle_v
+(2\beta+2b_0)\big\langle G,v_2^2-\frac{|v|^2}{3}\big\rangle_v&=0.
\end{aligned}
\right.
\end{equation}
Hence
\[
A_\alpha
\begin{pmatrix}
\langle G,|v|^2\rangle_v\\
\langle G,v_1v_2\rangle_v\\
\displaystyle\big\langle G,v_2^2-\frac{|v|^2}{3}\big\rangle_v
\end{pmatrix}
=0.
\]
Solving the linear relations \eqref{steady-second-moment} yields a corresponding eigenvector
\[
r_0=
\begin{pmatrix}
1\\[1mm]
-\frac{\beta}{\alpha}\\[2mm]
-\frac{\beta}{3(b_0+\beta)}
\end{pmatrix},
\quad
A_\alpha r_0=0.
\]
Moreover, the same relations imply
\begin{equation}\label{alpha-beta-relation}
\alpha^2 b_0=6\beta(b_0+\beta)^2,\quad\beta=\frac{\alpha^2}{6b_0}+O(\alpha^4)
\end{equation}

Since $0$ is an eigenvalue, the characteristic polynomial of $A_\alpha$ factors as
\[
\det(\tilde\lambda I-A_\alpha)
=
\tilde\lambda\Big[\tilde\lambda^2-(4b_0+6\beta)\tilde\lambda+(4b_0^2+16b_0\beta+12\beta^2)\Big].
\]
Thus the remaining two eigenvalues are
\[
\lambda_\pm
=
2b_0+3\beta\pm i \omega_\alpha,
\quad
\omega_\alpha=\sqrt{\beta(4b_0+3\beta)}.
\]
By \eqref{alpha-beta-relation}, for $\alpha>0$ sufficiently small, we have
\begin{equation}\label{lambda-alpha-relation}
\lambda_\pm
=
2b_0\pm i\frac{\sqrt6}{3}\alpha+O(\alpha^2).
\end{equation}
Hence $0,\lambda_+,\lambda_-$ are three distinct eigenvalues, and $A_\alpha$ is diagonalizable over $\mathbb C$.

For $\lambda_\pm$, we choose the corresponding eigenvectors
\[
r_\pm=
\begin{pmatrix}
\frac{2\alpha}{2b_0+\beta\pm i\omega_\alpha}\\[3mm]
1\\[3mm]
-\frac{2\alpha}{3(\beta\pm i\omega_\alpha)}
\end{pmatrix},
\quad
A_\alpha r_\pm=\lambda_\pm r_\pm.
\]
We then define
\begin{equation}\label{def-Qa}
Q_\alpha=(r_0,r_+,r_-).
\end{equation}
By construction,
\[
Q_\alpha^{-1}A_\alpha Q_\alpha
=
\operatorname{Diag}(0,\lambda_+,\lambda_-).
\]

By \eqref{lambda-alpha-relation},
\[
\Big|\frac{\beta}{\alpha}\Big|\le C,
\quad
\Big|\frac{\beta}{b_0+\beta}\Big|\le C,
\quad
\left|\frac{2\alpha}{2b_0+\beta\pm i\omega_\alpha}\right|\le C,
\quad
\left|\frac{2\alpha}{3(\beta\pm i\omega_\alpha)}\right|\le C.
\]
Hence $\|Q_\alpha\|_{\max}\le C$. Moreover, as $\alpha\to0$, one has $\beta\to0$, and therefore
\[
Q_\alpha\to
Q_0=
\begin{pmatrix}
1&0&0\\
0&1&1\\
0&i\frac{\sqrt{6}}{3}&-i \frac{\sqrt{6}}{3}
\end{pmatrix},
\quad
\det Q_0\neq0.
\]
By continuity, $Q_\alpha$ remains invertible for $\alpha>0$ sufficiently small, and
\[
\|Q_\alpha^{-1}\|_{\max}\le C.
\]
Therefore,
\[
\|e^{-tA_\alpha}\|_{\max}
\le
C\|Q_\alpha\|_{\max}
\|\operatorname{Diag}(1,e^{-\lambda_+ t},e^{-\lambda_- t})\|_{\max}
\|Q_\alpha^{-1}\|_{\max}\le C,
\quad t\ge0.
\]
Hence
\begin{equation}\label{est-UU0R}
|U(t)|
\le
C|U(0)|
+
C\int_0^t |R(s)|\,ds.
\end{equation}
\noindent{\textbf{\underline{Step 2. Estimate of the nonlinear term $R(s)$.}}} For $l>3$, we first estimate the macroscopic components. Since
$P_0^x a=P_0^x\mathbf b=P_0^xP_{\neq0}^x c=0$, the Poincar\'e inequality gives
\begin{equation}\label{abc-nonzero}
\|a(s)\|_{L_x^2}
+
\|\mathbf b(s)\|_{L_x^2}
+
\|P_{\neq0}^x c(s)\|_{L_x^2}
\le
C e^{-\lambda s}
\sum_{|m|=1}
\Big(
\mathfrak H_1^{m,0}(t)
+
\mathfrak H_2^{m,0}(t)
\Big).
\end{equation}
Similarly, by the Poincar\'e inequality and the definitions of $d_{12}$ and
$d_{22}$ in \eqref{def-dij-qi}, we have
\allowdisplaybreaks\begin{align}
\|P_{\neq0}^x d_{12}(s)\|_{L_x^2}
+\|P_{\neq0}^x d_{22}(s)\|_{L_x^2}
&\le
C\sum_{|m|=1}
\Big[
\Big\|
\big\langle
\mu^{-\frac12}\partial_x^m g_1(s)+\partial_x^m g_2(s),
v_1v_2\mu^{\frac12}
\big\rangle_v
\Big\|_{L_x^2}
\notag\\
&\quad\quad
+
\Big\|
\big\langle
\mu^{-\frac12}\partial_x^m g_1(s)+\partial_x^m g_2(s),
\Big(v_2^2-\frac{|v|^2}{3}\Big)\mu^{\frac12}
\big\rangle_v
\Big\|_{L_x^2}
\Big]
\notag\\
&\le
C\sum_{|m|=1}
\Big(
\|h_1^{m,0}(s)\|_{L^\infty_{x,v}}
+
\|h_2^{m,0}(s)\|_{L^\infty_{x,v}}
\Big)
\notag\\
&\le
Ce^{-\lambda s}
\sum_{|m|=1}
\Big(
\mathfrak H_1^{m,0}(t)
+
\mathfrak H_2^{m,0}(t)
\Big).
\label{dij-nonzero}
\end{align}
Since $\Delta_x\phi=a$, the elliptic estimate
\eqref{elliptic-Wkp} in Lemma~\ref{lem-elliptic-T3} yields
\begin{equation}\label{phi-nonzero}
\|\nabla_x\phi(s)\|_{L_x^2}
\le
C\|a(s)\|_{L_x^2}
\le
C e^{-\lambda s}
\sum_{|m|=1}
\Big(
\mathfrak H_1^{m,0}(t)
+
\mathfrak H_2^{m,0}(t)
\Big).
\end{equation}

We now estimate the three components of $R(s)$. By the definition of $\Gamma$, and applying \eqref{Qff-offdiag} in Lemma~\ref{lem-Qsym-moment}, we obtain
\[
\langle \Gamma(\tilde g,\tilde g),A_{12}\rangle_v
=\langle Q(\tilde f,\tilde f),v_1v_2\rangle_v
=
-2b_0(d_{12}a-b_1b_2).
\]
Applying $P_0^x$ and using $P_0^x a=P_0^x \mathbf b=0$, we obtain
\[
P_0^x\langle \Gamma(\tilde g,\tilde g),A_{12}\rangle_v
=
-2b_0 P_0^x\Big((P_{\neq0}^x d_{12})a\Big)
+
2b_0 P_0^x\Big((P_{\neq0}^x b_1)b_2\Big).
\]
Hence, by \eqref{abc-nonzero} and \eqref{dij-nonzero}, together with the a priori assumption \eqref{assa}, we obtain
\allowdisplaybreaks\begin{align}
\left|P_0^x\langle \Gamma(\tilde g,\tilde g),A_{12}\rangle_v(s)\right|
&\le
C\|P_{\neq0}^x d_{12}(s)\|_{L_x^2}\|a(s)\|_{L_x^2}
+
C\|P_{\neq0}^x b_1(s)\|_{L_x^2}\|b_2(s)\|_{L_x^2}
\notag\\
&\le
C\|P_{\neq0}^x d_{12}(s)\|_{L_x^2}\|a(s)\|_{L_x^2}
+
C\|\mathbf b(s)\|_{L_x^2}\|b_2(s)\|_{L_x^2}
\notag\\
&\le
C\eta e^{-\lambda s}
\sum_{|m|=1}
\Big(
\mathfrak H_1^{m,0}(t)
+
\mathfrak H_2^{m,0}(t)
\Big).
\label{A12-source}
\end{align}
Similarly, by \eqref{Qff-diag} in Lemma~\ref{lem-Qsym-moment},
\[
\langle \Gamma(\tilde g,\tilde g),A_{22}\rangle_v
=-2b_0\Big(d_{22}a+b_2^2-\frac{1}{3}\sum_{j=1}^3 b_j^2\Big),
\]
and hence,
\begin{align}
\left|P_0^x\langle \Gamma(\tilde g,\tilde g),A_{22}\rangle_v(s)\right|
&\le
C\|P_{\neq0}^x d_{22}(s)\|_{L_x^2}\|a(s)\|_{L_x^2}
+
C\sum_{j=1}^3 \|b_j(s)\|_{L_x^2}^2
\notag\\
&\le
C\eta e^{-\lambda s}
\sum_{|m|=1}
\Big(
\mathfrak H_1^{m,0}(t)
+
\mathfrak H_2^{m,0}(t)
\Big).
\label{A22-source}
\end{align}
For the Poisson terms, by \eqref{phi-nonzero} and the a priori assumption \eqref{assa} again,
\begin{align}
&\left|e^{-\beta s}P_0^x\Big(b_1\partial_{x_2}\phi+b_2\partial_{x_1}\phi\Big)(s)\right|
+
\left|\frac{\alpha}{3}e^{-2\beta s}\|\nabla_x\phi(s)\|^2_{L_x^2}\right|
\notag\\
&\quad
+
\left|\frac{2}{3}e^{-\beta s}P_0^x\Big(-b_1\partial_{x_1}\phi+2b_2\partial_{x_2}\phi-b_3\partial_{x_3}\phi\Big)(s)\right|
\le
C\eta e^{-\lambda s}
\sum_{|m|=1}
\Big(
\mathfrak H_1^{m,0}(t)
+
\mathfrak H_2^{m,0}(t)
\Big).
\label{poisson-source}
\end{align}

Combining \eqref{est-UU0R}--\eqref{poisson-source}, we obtain
\eqref{est-U-L2-sharp}.

It remains to estimate $c$. Since
\[
\begin{aligned}
\|c(s)\|_{L_x^2}
&\le
\|P_0^x c(s)\|_{L_x^2}
+
\|P_{\neq0}^x c(s)\|_{L_x^2} \\
&=
|P_0^x c(s)|
+
\|P_{\neq0}^x c(s)\|_{L_x^2} \\
&\le
C|U(s)|
+
\|P_{\neq0}^x c(s)\|_{L_x^2},
\end{aligned}
\]
it follows from \eqref{est-U-L2-sharp}, \eqref{abc-nonzero}, and
\[
|U(0)|
\le
C\|w_l\tilde f_0\|_{L^\infty_{x,v}}
\]
that \eqref{est-c-L2-sharp} holds. This completes the proof.
\end{proof}

The following proposition provides the $L^2$-bounds needed in
\eqref{est-zero-sumh2} and \eqref{est-v-sumh2}.

\begin{proposition}\label{prop-zero-g2-L2}
Let $[g_1,g_2]$ be the solution to \eqref{g1-eq}--\eqref{g2-eq}
satisfying the a priori assumption \eqref{assa}. Let $N\ge1$ and $l>3$. Then, for any sufficiently small $\kappa>0$, we have
\begin{equation}\label{est-L2-zeroth-order-g2}
\sup_{0\le s\le t}\|g_2(s)\|_{L^2_{x,v}}
\le
C\|w_l \tilde f_0\|_{L^\infty_{x,v}}
+
C\mathfrak H_1^{0,0}(t)
+
C\sum_{|m|=1}
\Big(
\mathfrak H_1^{m,0}(t)
+
\mathfrak H_2^{m,0}(t)
\Big),
\end{equation}
and
\begin{align}
\sum_{|n|\le N}
\kappa^{|n|}
\sup_{0\le s\le t}
\|\partial_v^n g_2(s)\|_{L^2_{x,v}}
&\le
C\|w_l \tilde f_0\|_{L^\infty_{x,v}}
+
C
\sum_{|n|\le N}
\kappa^{|n|}
\mathfrak H_1^{0,n}(t)
+
C
\sum_{|m|=1}
\Big(
\mathfrak H_1^{m,0}(t)
+
\mathfrak H_2^{m,0}(t)
\Big)
\notag\\
&\quad+
C(\alpha+\kappa)
\sum_{\substack{0<|m|+|n|\le N\\ |m|\le1}}
\kappa^{|n|}
\mathfrak H_2^{m,n}(t).
\label{est-L2-g2-mzero}
\end{align}
Here $\mathfrak H_1^{m,n}$ and $\mathfrak H_2^{m,n}$ are defined in
\eqref{eq-def-mfh12mn}.
\end{proposition}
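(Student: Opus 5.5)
The first estimate \eqref{est-L2-zeroth-order-g2} follows by combining the two zero-order results already established. We insert \eqref{est-c-L2-sharp} of Lemma~\ref{lem-Uc-L2}, which bounds $\sup_{s\le t}\|c(s)\|_{L^2_x}$ by $C\|w_l\tilde f_0\|_{L^\infty_{x,v}}$ plus the first-order $\mathfrak H$-terms, into the right-hand side of \eqref{est-g2-L2-1} in Lemma~\ref{lem-zero-g2-L2}. Nothing else is needed, since both lemmas hold under \eqref{assa} with $l>3$.

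For \eqref{est-L2-g2-mzero} we handle the pure velocity derivatives $\partial_v^n g_2$, $0<|n|\le N$, by induction on $|n|$. We use the case $m=0$ of \eqref{est-micro-g2mn} in Lemma~\ref{lem-micro-g2}(ii). For $m=0$ the $\eta^2$-sum over $0<|m'|\le|m|$ is empty, so the estimate reads
\[
\frac{d}{dt}\|\partial_v^n g_2\|^2_{L^2_{x,v}}+\delta_0\|\partial_v^n g_2\|^2_{L^2_{x,v}}
\le C\|[a,\mathbf b,c]\|^2_{L^2_x}+C\|\mathbf P_1 g_2\|^2_{L^2_{x,v}}+C\sum_{n'\le n}\|h_1^{0,n'}\|^2_{L^\infty_{x,v}}
+Ce^{2\beta t}\sum_i \mathbf 1_{\{n_i>0\}}n_i^2\|h_2^{e_i,n-e_i}\|^2_{L^\infty_{x,v}}+C\alpha^2\mathbf 1_{\{n_2>0\}}\|h_2^{0,n-e_2+e_1}\|^2_{L^\infty_{x,v}}.
\]
Since $g_2(0)=0$, Gr\"onwall's inequality with rate $\delta_0$ turns each source term into its supremum in time. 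The terms are controlled as follows.
\begin{itemize}
\item The macroscopic term and $\|\mathbf P_1g_2\|$ are bounded by $C\sup\|c\|_{L^2_x}$, the Poincar\'e bounds \eqref{abc-nonzero} for $a,\mathbf b$, and \eqref{est-L2-zeroth-order-g2}. After \eqref{est-c-L2-sharp}, this produces exactly $C\|w_l\tilde f_0\|_{L^\infty_{x,v}}+C\mathfrak H_1^{0,0}+C\sum_{|m|=1}(\mathfrak H_1^{m,0}+\mathfrak H_2^{m,0})$.
\item The $h_1$ terms give $\sum_{n'\le n}\mathfrak H_1^{0,n'}$, because $\lambda_{n'}^0=0$.
\item The $\alpha$-term gives $\alpha\,\mathfrak H_2^{0,n-e_2+e_1}$, which has the same $|n|$.
\end{itemize}

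The main obstacle is the commutator term, because it carries the growing factor $e^{2\beta t}$. Here we use the layered weight: for $|m|=1$ we have $\lambda_{n-e_i}^{e_i}=\lambda-(|n|-1)\beta$. Hence $e^{\beta s}\|h_2^{e_i,n-e_i}(s)\|\le e^{-(\lambda-|n|\beta)s}\mathfrak H_2^{e_i,n-e_i}(t)$, and this decays once $\alpha$ is small enough that $\lambda>N\beta$, as arranged after \eqref{def-lambda-mn}. After the square root the commutator contributes $C\sum_i\mathfrak H_2^{e_i,n-e_i}(t)$. This is one power of $\kappa$ lower in the hierarchy: multiplying by $\kappa^{|n|}$ gives $\kappa\cdot\kappa^{|n-e_i|}\mathfrak H_2^{e_i,n-e_i}$, which is the origin of the factor $C\kappa$ in front of the mixed sum over $|m|\le1$.

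To finish, we multiply each fixed-$n$ bound by $\kappa^{|n|}$, sum over $0<|n|\le N$, and add the $n=0$ bound \eqref{est-L2-zeroth-order-g2}. The $\alpha$- and $\kappa$-weighted $\mathfrak H_2$ terms are collected into $C(\alpha+\kappa)\sum_{0<|m|+|n|\le N,\,|m|\le1}\kappa^{|n|}\mathfrak H_2^{m,n}$. No absorption is required, because the left-hand side contains only $L^2$ norms of $g_2$ and never $\mathfrak H_2$.
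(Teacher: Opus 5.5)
Your proposal is correct and follows the paper's proof essentially step by step. You get \eqref{est-L2-zeroth-order-g2} from Lemmas~\ref{lem-zero-g2-L2} and~\ref{lem-Uc-L2}, and \eqref{est-L2-g2-mzero} from the $m=0$ case of \eqref{est-micro-g2mn} plus Gr\"onwall, with the $e^{2\beta t}$ commutator handled by the layered weight $\lambda^{e_i}_{n-e_i}=\lambda-(|n|-1)\beta$, exactly as the paper does implicitly in \eqref{est-L2-g2-fixed-mzero}. Two small remarks: the announced induction on $|n|$ is never used and is not needed, and the condition $\lambda\ge N\beta$ you require is slightly stronger than the $\lambda\ge(N-1)\beta$ arranged after \eqref{def-lambda-mn}, though it holds after shrinking $\alpha$ since $\beta=O(\alpha^2)$.
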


\begin{proof}
Estimate \eqref{est-L2-zeroth-order-g2} follows directly from
Lemmas~\ref{lem-zero-g2-L2} and \ref{lem-Uc-L2}. We next prove
\eqref{est-L2-g2-mzero}. Let $0<|n|\le N$. Taking $m=0$ in
\eqref{est-micro-g2mn} of Lemma~\ref{lem-micro-g2}, we have
\begin{align}
\frac{d}{dt}\|\partial_v^n g_2\|_{L^2_{x,v}}^2
+\delta_0\|\partial_v^n g_2\|_{L^2_{x,v}}^2
&\le
C\|c\|_{L_x^2}^2
+
C\|\mathbf P_1g_2\|_{L^2_{x,v}}^2
+
C\|\nabla_x [a,\mathbf b]\|_{L_x^2}^2
+
C\sum_{n'\le n}\|h_1^{0,n'}\|_{L^\infty_{x,v}}^2
\notag\\
&\quad
+
C e^{2\beta t}
\sum_{i=1}^3
\mathbf 1_{\{n_i>0\}}n_i^2
\|h_2^{e_i,n-e_i}\|_{L^\infty_{x,v}}^2
\notag\\
&\quad
+
C\alpha^2\mathbf 1_{\{n_2>0\}}n_2^2
\|h_2^{0,n-e_2+e_1}\|_{L^\infty_{x,v}}^2 .
\label{est-micro-g2-0n}
\end{align}
Applying Gr\"onwall inequality to \eqref{est-micro-g2-0n}, and then using \eqref{est-c-L2-sharp} in Lemma~\ref{lem-Uc-L2} together with \eqref{est-L2-zeroth-order-g2}, we obtain
\begin{align}
\sup_{0\le s\le t}\|\partial_v^n g_2(s)\|_{L^2_{x,v}}
\le{}&
C\|w_l \tilde f_0\|_{L^\infty_{x,v}}
+
C\sum_{n'\le n}
\mathfrak H_1^{0,n'}(t)
+
C\sum_{|m|=1}
\Big(
\mathfrak H_1^{m,0}(t)
+
\mathfrak H_2^{m,0}(t)
\Big)
\notag\\
&+
C
\sum_{i=1}^3
\mathbf 1_{\{n_i>0\}}n_i
\mathfrak H_2^{e_i,n-e_i}(t)
+
C\alpha\mathbf 1_{\{n_2>0\}}
\mathfrak H_2^{0,n-e_2+e_1}(t).
\label{est-L2-g2-fixed-mzero}
\end{align}

Multiplying \eqref{est-L2-g2-fixed-mzero} by $\kappa^{|n|}$, summing over
all $0<|n|\le N$, and then adding \eqref{est-L2-zeroth-order-g2}, we obtain
\eqref{est-L2-g2-mzero}. This completes the proof.
\end{proof}

\section{Global Existence and Large-Time Behavior}\label{Sec5}

In this section, we conclude the proofs of the main theorems by combining the local well-posedness theory established in Appendix~\ref{app-B} with the a priori estimates obtained in the previous sections. In particular, the weighted $L^\infty_{x,v}$ bounds and the energy-type $L^2$ estimates yield the global-in-time existence and uniqueness of solutions, together with the exponential decay of the perturbation in the weighted $L^\infty_{x,v}$ norm, and further imply the nonnegativity of the global solution. Finally, we establish the large-time asymptotic behavior of the renormalized total energy, which provides a precise characterization of the shear-induced energy growth.

\begin{proof}[\itshape\bfseries Proof of Theorem~\ref{thm1.1}]
\underline{\textbf{Step 1. Existence and Uniqueness}}. We prove existence and uniqueness by dividing the argument into three cases. The proof relies on Lemma~\ref{lem-Linfty-sum-h1h2}, Proposition~\ref{prop-EN1g2}, and Proposition~\ref{prop-zero-g2-L2}.

\noindent{\textbf{Case 1.} $|m|>0.$}
Taking $M_4$ times \eqref{est-positive-sumh1} plus
\eqref{est-positive-sumh2}, and then using \eqref{est-L2-g2-high} to control
the $L^2$-term, we obtain
\[
\begin{aligned}
&
M_4
\sum_{\substack{|m|+|n|\le N\\ |m|>0}}
\kappa^{|n|}
\mathfrak H_1^{m,n}(t)
+
\sum_{\substack{|m|+|n|\le N\\ |m|>0}}
\kappa^{|n|}
\mathfrak H_2^{m,n}(t)
\\
&\le
CM_4
\sum_{\substack{|m|+|n|\le N\\ |m|>0}}
\kappa^{|n|}
\big\|w_l\partial_n^m \tilde f_0\big\|_{L^\infty_{x,v}}
+
C
\sum_{0<|m|\le N}
\big\|w_l\partial_x^m\tilde f_0\big\|_{L^\infty_{x,v}}
\\
&\quad
+
C
\sum_{\substack{|m|+|n|\le N\\ |m|>0}}
\kappa^{|n|}
\mathfrak H_1^{m,n}(t)
+
\Big[
CM_4(\eta+\alpha)
+
C(\eta+\alpha+\kappa)
\Big]
\sum_{\substack{|m|+|n|\le N\\ |m|>0}}
\kappa^{|n|}
\mathfrak H_2^{m,n}(t).
\end{aligned}
\]
Choosing $M_4>0$ sufficiently large, and then taking $\kappa,\eta,\alpha>0$ sufficiently small. Hence
\begin{equation}\label{est-case1}
\sum_{\substack{|m|+|n|\le N\\ |m|>0}}
\Big(
\mathfrak H_1^{m,n}(t)
+
\mathfrak H_2^{m,n}(t)
\Big)
\le
C
\sum_{\substack{|m|+|n|\le N\\ |m|>0}}
\big\|w_l\partial_n^m \tilde f_0\big\|_{L^\infty_{x,v}}.
\end{equation}

\noindent{\textbf{Case 2.} $m=n=0.$}
Taking $M_5$ times \eqref{est-zero-sumh1} plus
\eqref{est-zero-sumh2}, and using \eqref{est-L2-zeroth-order-g2}, we get
\[
\begin{aligned}
M_5\mathfrak H_1^{0,0}(t)+\mathfrak H_2^{0,0}(t)
\le{}&
C\|w_l\tilde f_0\|_{L^\infty_{x,v}}
+
C\mathfrak H_1^{0,0}(t)
+
CM_5(\eta+\alpha)\mathfrak H_2^{0,0}(t)
\\
&+
C
\sum_{|m|=1}
\Big(
\mathfrak H_1^{m,0}(t)
+
\mathfrak H_2^{m,0}(t)
\Big).
\end{aligned}
\]
Choosing $M_5>0$ sufficiently large and then $\eta,\alpha>0$ sufficiently small, together
with \eqref{est-case1}, yields
\begin{equation}\label{est-case2}
\mathfrak H_1^{0,0}(t)
+
\mathfrak H_2^{0,0}(t)
\le
C
\sum_{0\le |m|\le1}
\big\|w_l\partial_x^m\tilde f_0\big\|_{L^\infty_{x,v}}.
\end{equation}

\noindent{\textbf{Case 3.} $m=0,\ 0<|n|\le N.$}
Taking $M_6$ times \eqref{est-v-sumh1} plus
\eqref{est-v-sumh2}, and using \eqref{est-L2-g2-mzero}, we obtain
\[
\begin{aligned}
&
M_6
\sum_{0<|n|\le N}
\kappa^{|n|}
\mathfrak H_1^{0,n}(t)
+
\sum_{0<|n|\le N}
\kappa^{|n|}
\mathfrak H_2^{0,n}(t)
\\
&\le
C
\sum_{|n|\le N}
\kappa^{|n|}
\big\|w_l\partial_v^n\tilde f_0\big\|_{L^\infty_{x,v}}
+
C
\sum_{|n|\le N}
\kappa^{|n|}
\mathfrak H_1^{0,n}(t)
\\
&\quad
+
\Big[
CM_6(\eta+\alpha)
+
C(\eta+\alpha+\kappa)
\Big]
\sum_{|n|\le N}
\kappa^{|n|}
\mathfrak H_2^{0,n}(t)
\\
&\quad
+
C\kappa
\sum_{\substack{|m|+|n|\le N\\ |m|=1}}
\kappa^{|n|}
\mathfrak H_1^{m,n}(t)
+
C(\alpha+\kappa)
\sum_{\substack{|m|+|n|\le N\\ |m|=1}}
\kappa^{|n|}
\mathfrak H_2^{m,n}(t)
\\
&\quad
+
C
\sum_{|m|=1}
\Big(
\mathfrak H_1^{m,0}(t)
+
\mathfrak H_2^{m,0}(t)
\Big).
\end{aligned}
\]
Choosing $M_6>0$ sufficiently large and then taking $\kappa,\eta,\alpha>0$ sufficiently
small, together with \eqref{est-case1} and \eqref{est-case2}, yields
\begin{equation}\label{est-case3}
\sum_{0<|n|\le N}
\Big(
\mathfrak H_1^{0,n}(t)
+
\mathfrak H_2^{0,n}(t)
\Big)
\le
C
\sum_{\substack{0\le |m|+|n|\le N\\ |m|\le1}}
\big\|w_l\partial_n^m\tilde f_0\big\|_{L^\infty_{x,v}}.
\end{equation}
By $\tilde f=g_1+\mu^{\frac{1}{2}}g_2$, for any $|m|+|n|\le N$, we have
\[
\|w_l\partial_n^m\tilde f(t)\|_{L^\infty_{x,v}}
\le
C\|h_1^{m,n}(t)\|_{L^\infty_{x,v}}
+
C\sum_{n'\le n}
\|h_2^{m,n'}(t)\|_{L^\infty_{x,v}}.
\]
Combining \eqref{est-case1}--\eqref{est-case3}, we obtain \eqref{thm1.1-est-1}--\eqref{thm1.1-est-3}.

Finally, we estimate the decay of the electric field. Taking $M_7$ times \eqref{est-positive-n0-sumh1} plus
\eqref{est-positive-n0-sumh2}, and using \eqref{est-L2-g2-high-n0}, we get
\[
\begin{aligned}
&M_7\sum_{0<|m|\le N}\mathfrak H_1^{m,0}(t)
+
\sum_{0<|m|\le N}\mathfrak H_2^{m,0}(t)
\\
&\le
C\sum_{0<|m|\le N}
\big\|w_l\partial_x^m\tilde f_0\big\|_{L^\infty_{x,v}}
+
C\sum_{0<|m|\le N}\mathfrak H_1^{m,0}(t)
+
C(M_7+1)(\eta+\alpha)
\sum_{0<|m|\le N}\mathfrak H_2^{m,0}(t).
\end{aligned}
\]
Choosing $M_7>0$ sufficiently large and then $\eta,\alpha>0$ sufficiently small gives
\begin{equation}\label{est-case0}
\sum_{0<|m|\le N}
\Big(
\mathfrak H_1^{m,0}(t)
+
\mathfrak H_2^{m,0}(t)
\Big)
\le
C
\sum_{0<|m|\le N}
\big\|w_l\partial_x^m\tilde f_0\big\|_{L^\infty_{x,v}}.
\end{equation}
Since $P_0^x\partial_{x_i}\phi=0$, for any $x\in\mathbb T^3$,
\[
|\partial_{x_i}\phi(t,x)|
\leq
\int_{\mathbb T^3}
\big|
\partial_{x_i}\phi(t,x)-\partial_{y_i}\phi(t,y)
\big|\,dy
\leq
C\|\nabla_x\partial_{x_i}\phi(t)\|_{L_x^\infty}.
\]
Taking the supremum over $x\in\mathbb T^3$ and $1\leq i\leq3$, we obtain
\[
\|\nabla_x\phi(t)\|_{L_x^\infty}
\leq
C\|\nabla_x^2\phi(t)\|_{L_x^\infty}.
\]
Combining this with Lemma~\ref{lemma-poisson-3D} and \eqref{est-case0} yields \eqref{thm1.1-est-4}.

\noindent{\underline{\textbf{Step 2. Nonnegativity}}}.
Following the standard positivity-preserving frozen iteration for the distribution function as in \cite[Section~5]{Guo10}, we define the closed subset
\[
\mathcal X_T
=
\Big\{
[\mathcal G_1,\mathcal G_2]\in\widetilde{\mathcal Y}_T
\;\Big|\;
\mathcal G_1(0,x,v)=\tilde f_0(x,v),\quad
\mathcal G_2(0,x,v)=0,\quad
G+\mathcal G_1+\mu^{\frac12}\mathcal G_2\ge0
\Big\}.
\]
For any $[g_1^n,g_2^n]\in\mathcal X_T$, set
\[
f^n=G+g_1^n+\mu^{\frac12}g_2^n\ge0,
\]
and let $\phi^n$ be the unique zero-mean solution of
\[
\Delta_x\phi^n(t,x)
=\int_{\mathbb R^3}f^n(t,x,v)\,dv-1,
\quad
\int_{\mathbb T^3}\phi^n(t,x)\,dx=0.
\]
We consider the following frozen problem for $f^{n+1}$:
\begin{equation}
\label{eq-pos-frozen}
\left\{
\begin{aligned}
&\partial_t f^{n+1}
+e^{\beta t}v\cdot\nabla_x f^{n+1}
-\beta\nabla_v\cdot(vf^{n+1})
-\alpha v_2\partial_{v_1}f^{n+1}\\
&\quad
+e^{-\beta t}\nabla_x\phi^n\cdot\nabla_v f^{n+1}
+Q_-(f^{n+1},f^n)
=
Q_+(f^n,f^n),\\
&f^{n+1}(0,x,v)=f_0(x,v)\ge0.
\end{aligned}
\right.
\end{equation}
The above frozen iteration can be carried out locally by the same argument as in the proof of the local existence theorem in Appendix~\ref{app-B}, and we omit the repeated details. The standard positivity argument for \eqref{eq-pos-frozen} yields $f^{n+1}(t,x,v)\ge0$. Hence, the frozen iteration preserves nonnegativity. Combining this local nonnegativity with the global continuation argument established above, we conclude that the global solution satisfies
\[
f(t,x,v)
=
G(v)+g_1(t,x,v)+\mu^{\frac12}g_2(t,x,v)
\ge0,
\quad t\ge0.
\]
This completes the proof.
\end{proof}

\begin{proof}[\itshape\bfseries Proof of Theorem~\ref{thm1.2}]
By \eqref{zero-ODE-matrix}, the zero-frequency dynamics satisfies
\begin{equation}\label{zero-ODE-matrix-2}
\frac{d}{dt}U(t)+A_\alpha U(t)=R(t),
\end{equation}
where $U(t), A_\alpha$ are defined in \eqref{def-U-Aa}. By the spectral analysis of $A_\alpha$ in the proof of
Lemma~\ref{lem-Uc-L2}, its eigenvalues are
\[
0,\quad \lambda_\pm=2b_0+3\beta\pm i\,\omega_\alpha,
\]
with
\[
\Re\lambda_\pm=2b_0+3\beta>0.
\]
Recalling that $Q_\alpha$, defined in \eqref{def-Qa}, diagonalizes
$A_\alpha$ as
\[
Q_\alpha^{-1}A_\alpha Q_\alpha
=
\operatorname{Diag}(0,\lambda_+,\lambda_-),
\]
we may write
\[
S_\alpha(t)
:=
e^{-tA_\alpha}
=
Q_\alpha
\begin{pmatrix}
1&0&0\\
0&e^{-\lambda_+ t}&0\\
0&0&e^{-\lambda_- t}
\end{pmatrix}
Q_\alpha^{-1}.
\]
Taking $t\to\infty$ in the above representation, we obtain
\begin{equation}\label{def-Pi0}
\Pi_0
:=\lim_{t\to\infty}S_\alpha(t)
=
Q_\alpha
\begin{pmatrix}
1&0&0\\
0&0&0\\
0&0&0
\end{pmatrix}
Q_\alpha^{-1}.
\end{equation}
Consequently,
\begin{align}
\|S_\alpha(t)-\Pi_0\|_{\max}
&\le
C\|Q_\alpha\|_{\max}
\left\|
\begin{pmatrix}
0&0&0\\
0&e^{-\lambda_+ t}&0\\
0&0&e^{-\lambda_- t}
\end{pmatrix}
\right\|_{\max}
\|Q_\alpha^{-1}\|_{\max}\notag
\\
&\le
C\max\{|e^{-\lambda_+ t}|,|e^{-\lambda_- t}|\}\notag
\\
&\le
Ce^{-(2b_0+3\beta)t}.
\label{S-decay-thm12}
\end{align}
On the other hand, applying the same estimates as those used for
\eqref{A12-source}, \eqref{A22-source}, and \eqref{poisson-source}, together
with the global estimates in Theorem~\ref{thm1.1}, we obtain
\begin{equation}\label{R-decay-thm12}
|R(t)|\le C\mathfrak I_0^2 e^{-\lambda t}\le C e^{-\lambda t},
\end{equation}
where
\[
\mathfrak I_0=
\sum_{|m|+|n|\le N}
\Big\|w_l\partial_n^m\big[F_0(x,v)-G(v)\big]\Big\|_{L^\infty_{x,v}}
\le\varepsilon_0.
\]

Applying Duhamel's formula to \eqref{zero-ODE-matrix-2}, we write
\[
U(t)=S_\alpha(t)U(0)+\int_0^t S_\alpha(t-s)R(s)\,ds.
\]
Define
\begin{equation}\label{def-U-infty}
U(\infty):=\Pi_0U(0)+\int_0^\infty \Pi_0R(s)\,ds.
\end{equation}
Then
\[
U(t)-U(\infty)
=
(S_\alpha(t)-\Pi_0)U(0)
+\int_0^t\big(S_\alpha(t-s)-\Pi_0\big)R(s)\,ds
-\int_t^\infty \Pi_0R(s)\,ds.
\]
Let
\[
0<\tilde\lambda<\min\{2b_0+3\beta,\lambda\}.
\]
Using \eqref{S-decay-thm12} and \eqref{R-decay-thm12}, we obtain
\[
\big|(S_\alpha(t)-\Pi_0)U(0)\big|\le Ce^{-\tilde\lambda t},
\]
\[
\left|\int_0^t\big(S_\alpha(t-s)-\Pi_0\big)R(s)\,ds\right|
\le
C\int_0^t e^{-(2b_0+3\beta)(t-s)}e^{-\lambda s}\,ds
\le
Ce^{-\tilde\lambda t},
\]
and
\[
\left|\int_t^\infty \Pi_0R(s)\,ds\right|
\le
C\int_t^\infty e^{-\lambda s}\,ds
\le
Ce^{-\tilde\lambda t}.
\]
Consequently,
\begin{equation}\label{U-limit-thm12}
|U(t)-U(\infty)|\le Ce^{-\tilde\lambda t},
\quad t\ge0.
\end{equation}
Setting
\[
\mathcal E_\alpha(\infty)=(U(\infty))_1,
\]
and recalling that $\mathcal E_\alpha(t)$ is the first component of $U(t)$, we infer from \eqref{U-limit-thm12} that
\[
|\mathcal E_\alpha(t)-\mathcal E_\alpha(\infty)|
\le
|U(t)-U(\infty)|
\le
Ce^{-\tilde\lambda t},
\quad t\ge0.
\]

We next give a more explicit representation of $\mathcal E_\alpha(\infty)$. By \eqref{alpha-beta-relation} and \eqref{def-Pi0}, a direct computation gives
\begin{equation}\label{Pi0-first-row}
(\Pi_0)_1
=
\bigg(
\frac{(b_0+\beta)^2}{b_0(b_0+3\beta)},\,
-\frac{\alpha}{b_0+3\beta},\,
\frac{\alpha^2}{2(b_0+\beta)(b_0+3\beta)}
\bigg).
\end{equation}
Since $0<\alpha\ll1$ and $\beta=O(\alpha^2)$, the coefficients in \eqref{Pi0-first-row} are uniformly bounded. Hence, by \eqref{R-decay-thm12},
\[
\int_0^\infty |(\Pi_0)_1R(s)|\,ds
\le
C\int_0^\infty |R(s)|\,ds
\le
C\mathfrak I_0^2.
\]
Moreover, the initial field energy satisfies
\[
\|\nabla_x\phi(0)\|_{L_x^2}^2\le C\mathfrak I_0^2.
\]
Taking the first component in \eqref{def-U-infty}, we obtain
\begin{equation}\label{Einf-exact}
\mathcal E_\alpha(\infty)
=
\frac{\sqrt6(b_0+\beta)^2}{b_0(b_0+3\beta)}
P_0^xc(0)
-\frac{\alpha}{b_0+3\beta}P_0^xd_{12}(0)
+
\frac{\alpha^2}{2(b_0+\beta)(b_0+3\beta)}
P_0^xd_{22}(0)
+
O(\mathfrak I_0^2).
\end{equation}

Finally, by the definitions of $c$ and $d_{ij}$ in \eqref{def-abc} and \eqref{def-dij-qi}, respectively, together with the rescaling of variables, we have
\[
\left\{
\begin{aligned}
\sqrt6\,P_0^x c(t)
&=
e^{-2\beta t}\int_{\mathbb T^3}\int_{\mathbb R^3}|v|^2F(t,x,v)\,dvdx
-\int_{\mathbb R^3}|v|^2G(v)\,dv,\\
P_0^xd_{12}(0)
&=
\int_{\mathbb T^3}\int_{\mathbb R^3}
\big[F_0(x,v)-G(v)\big]v_1v_2\,dvdx,\\
P_0^xd_{22}(0)
&=
\int_{\mathbb T^3}\int_{\mathbb R^3}
\big[F_0(x,v)-G(v)\big]\Big(v_2^2-\frac{1}{3}|v|^2\Big)\,dvdx.
\end{aligned}
\right.
\]
Combining the convergence estimate above with these identities yields the assertions of Theorem~\ref{thm1.2}. This completes the proof.
\end{proof}

\appendix

\section{Local Existence}\label{app-B}

\begin{lemma}[Local existence]\label{lem-local-existence}
Assume that
\[
\sum_{|m|+|n|\leq N}
\big\|w_l\partial_n^m\tilde f_0\big\|_{L^\infty_{x,v}}
<\infty.
\]
Then there exists $T^*>0$ such that the coupled system
\eqref{g1-eq}--\eqref{g2-eq} admits a unique solution
$[g_1,g_2]$ on $[0,T^*]$ satisfying
\[
\|[g_1,g_2]\|_{\widetilde{\mathcal Y}_{T^*}}
\leq
2\sum_{|m|+|n|\leq N}
\big\|w_l\partial_n^m\tilde f_0\big\|_{L^\infty_{x,v}}.
\]
\end{lemma}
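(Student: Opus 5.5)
The plan is a contraction-mapping argument for the Caflisch-decomposed system \eqref{g1-eq}--\eqref{g2-eq} in the space $\widetilde{\mathcal Y}_{T^*}$, with the transport parts treated along characteristics.

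\textbf{Linearized iteration.} Given an iterate $[g_1^k,g_2^k]$ with $\|[g_1^k,g_2^k]\|_{\widetilde{\mathcal Y}_{T^*}}\le 2\mathfrak I$, where $\mathfrak I:=\sum_{|m|+|n|\le N}\|w_l\partial_n^m\tilde f_0\|_{L^\infty_{x,v}}$, I define:
\begin{itemize}
\item $\phi^k$ through \eqref{Poisson-g1g2} with $[g_1^k,g_2^k]$;
\item $[g_1^{k+1},g_2^{k+1}]$ as the solution of the \emph{linear} transport equations obtained from \eqref{g1-eq}--\eqref{g2-eq}.
\end{itemize}
In these linear equations the field in $e^{-\beta t}\nabla_x\phi^k\cdot\nabla_v$ is frozen. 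The damping terms $\nu_0 g_1^{k+1}$ and $\nu_0 g_2^{k+1}$ stay on the left, while $Kg_2$, $\chi_M\mathcal K g_1$, $(1-\chi_M)\mathcal K g_1$, $\widetilde H$ and all $\phi$-source terms are evaluated at step $k$.

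By Lemma~\ref{lemma-poisson-3D}, $\nabla_x\phi^k$ and $\nabla_x^2\phi^k$ are bounded on $[0,T^*]$. Lemma~\ref{lem-char-3D} then provides well-defined characteristics $(X^k,V^k)$, and $g_j^{k+1}$ is given explicitly by a Duhamel formula of the form \eqref{duhamel-h12}. The momentum constraint in $\widetilde{\mathcal Y}_T$ is propagated by the moment identities, as in \eqref{int-ab}.

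\textbf{Uniform bound on a short interval.} Differentiating by $w_l\partial_n^m$ produces the system \eqref{hmn-g1}--\eqref{hmn-g2} with right-hand sides of the type \eqref{def-Rmn12}. On $[0,T^*]$ with $T^*\le1$, I bound every source term by $C_l(1+\mathfrak I)\,\mathfrak I$, using:
\begin{itemize}
\item Lemmas~\ref{lem-Gamma}, \ref{lem-Q}, \ref{lem-K} and \ref{lem-DL-G};
\item $\|w_l\partial_v^n(Kh)\|_{L^\infty}\le C\sum\|w_l\partial_v^{n'}h\|_{L^\infty}$, which follows from \eqref{est-Kq-C1} and \eqref{est-Kq-C2};
\item Lemma~\ref{lemma-poisson-3D} for the field terms.
\end{itemize}
The commutator terms $e^{\beta s}w_l\partial_{n-e_i}^{m+e_i}g_j$ need no special treatment here, since they stay inside the total order $|m|+|n|\le N$ and are of size $2\mathfrak I$. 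The exponent $\mathcal A_{l,\alpha}$ is bounded below by $-C_l(1+\mathfrak I)$. I therefore obtain
\[
\|[g_1^{k+1},g_2^{k+1}]\|_{\widetilde{\mathcal Y}_{T^*}}\le e^{C_lT^*}\mathfrak I+C_lT^*(1+\mathfrak I)\,\mathfrak I\le 2\mathfrak I
\]
for $T^*$ small depending on $\mathfrak I$ and $l$. This is the source of the factor $2$ in the statement.

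\textbf{Contraction.} I estimate the differences $[g_1^{k+1}-g_1^k,\,g_2^{k+1}-g_2^k]$ in the weaker norm $\sup_t\|w_l\,\cdot\,\|_{L^\infty_{x,v}}$ with no derivatives. The main obstacle is the frozen-field transport term. The difference
\[
e^{-\beta t}\big(\nabla_x\phi^k-\nabla_x\phi^{k-1}\big)\cdot\nabla_v g_j^k
\]
costs one $v$-derivative. Because this derivative falls on the previous iterate, which is bounded in $\widetilde{\mathcal Y}_{T^*}$ with $N\ge1$, the term is controlled by $C\,\mathfrak I\,\|w_l(g^k-g^{k-1})\|_{L^\infty}$, again via Lemma~\ref{lemma-poisson-3D}. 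The remaining nonlinear differences are bilinear and are handled by Lemma~\ref{lem-Q} in the same way.

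This yields a contraction factor $C_lT^*(1+\mathfrak I)<\tfrac12$. The limit exists in the weak norm, and since the sequence is bounded in $\widetilde{\mathcal Y}_{T^*}$, weak-$*$ lower semicontinuity shows the limit lies in $\widetilde{\mathcal Y}_{T^*}$ with the same bound $2\mathfrak I$. Uniqueness follows from the same difference estimate applied to two solutions, combined with Gronwall's inequality.
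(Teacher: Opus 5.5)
Your proposal is correct in substance, but it takes a genuinely different route from the paper.

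\textbf{Comparison with the paper.} The paper defines the map $\mathcal N$ directly through the Duhamel formulas \eqref{duhamel-h12}. It asserts that $\mathcal N$ is a contraction on the ball $\mathcal Y_T$ in the \emph{full} norm $\|\cdot\|_{\widetilde{\mathcal Y}_T}$, so existence and uniqueness within that ball follow from one application of Banach's theorem, with no compactness step. You instead use the $\widetilde{\mathcal Y}_{T^*}$-norm only for a uniform bound on the iterates. You contract in the derivative-free norm $\sup_t\|w_l\,\cdot\,\|_{L^\infty_{x,v}}$ and recover the bound $2\mathfrak I$ for the limit by weak-$*$ lower semicontinuity.

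This buys real robustness. In a full-norm contraction, the top-order difference ($|m|+|n|=N$) of two images contains $(\nabla_x\phi-\nabla_x\widehat\phi)\cdot\nabla_v\partial_n^m$ applied to one of the images. Equivalently, it compares Duhamel formulas along two different characteristic flows. This needs $N+1$ derivatives, and the paper does not address it. In your zeroth-order difference, the extra $v$-derivative falls on the previous iterate, so $N\ge1$ suffices. Your Gronwall argument also gives uniqueness in all of $\widetilde{\mathcal Y}_{T^*}$, not only in the ball. The cost is a routine limiting step: lower semicontinuity and passage to the limit in the frozen transport term.

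\textbf{Two details to correct.} First, after differentiating the linear equation for $g^{k+1}$, the commutator terms involve $g_j^{k+1}$, not $g_j^k$. These are $e^{\beta s}w_l\partial_{n-e_i}^{m+e_i}g_j$, $\alpha n_2w_l\partial^m_{n-e_2+e_1}g_j$ and $\partial_x^{m'}\nabla_x\phi^k\cdot w_l\nabla_v\partial_n^{m-m'}g_j$. So they cannot simply be bounded by $2\mathfrak I$. They must be absorbed by a Gronwall argument over the whole hierarchy of total order at most $N$, which is what your factor $e^{C_lT^*}$ should encode.

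Second, your linearized scheme does not propagate the constraints built into $\widetilde{\mathcal Y}_T$. With the field frozen, the momentum of the $(k+1)$-st iterate is forced by $e^{-\beta t}\int_{\mathbb T^3}\partial_{x_i}\phi^k\rho^{k+1}\,dx$, where $\rho^{k+1}=\int(g_1^{k+1}+\mu^{\frac12}g_2^{k+1})\,dv\neq\Delta_x\phi^k$. Also, unless the $g_2$-couplings in the $g_1$-equation are taken at step $k+1$, the total mass drifts by moments of $g_2^{k+1}-g_2^k$. Then \eqref{Poisson-g1g2} is not even solvable on $\mathbb T^3$.

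The standard repair is to iterate without the constraint, using $\Delta_x\phi^k=P_{\neq0}^x\int(g_1^k+\mu^{\frac12}g_2^k)\,dv$. You then check the constraints only for the limit, where the true conservation laws hold. This requires $\tilde f_0$ to have zero mass and momentum as in \eqref{initial-con}, which the lemma tacitly assumes. The paper's map $\mathcal N$ has the same defect, so this is a repair rather than an objection to your route.
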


\begin{proof}
We  define the closed subset of $\widetilde{\mathcal Y}_{T}$ as
\[
\mathcal Y_T
:=
\bigg\{
[\mathcal G_1,\mathcal G_2]\in \widetilde{\mathcal Y}_T\ \Big|\
\big\|[\mathcal G_1,\mathcal G_2]\big\|_{\widetilde{\mathcal Y}_T}
\le
2\sum_{|m|+|n|\le N}\big\|w_l\partial_n^m\tilde f_0\big\|_{L^\infty_{x,v}}
\bigg\}.
\]

For any $[g_1,g_2]\in\mathcal Y_T$, we define the nonlinear mapping
$\mathcal N$ through the Duhamel formulas
\eqref{hmn-g1}--\eqref{hmn-g2}, so that the coupled system is
reformulated as
\[
[g_1,g_2]=\mathcal N[g_1,g_2].
\]

\noindent
\underline{\textbf{Step 1. Self-mapping.}}
By the large-velocity estimate in Lemma~\ref{lem-DL-K}, Lemma~\ref{lemma-poisson-3D}, the bilinear estimate for $\widetilde H(g_1,g_2)$, and
the weighted bounds for $\mu^{\frac 12}G_1$ in
Lemma~\ref{lem-DL-G}, we obtain
\[
\begin{aligned}
&\sum_{|m|+|n|\leq N}
\sup_{0\leq t\leq T}
\big\|w_l\partial_n^m g_1(t)\big\|_{L^\infty_{x,v}}
\\
&\le
\sum_{|m|+|n|\leq N}
\big\|w_l\partial_n^m\tilde f_0\big\|_{L^\infty_{x,v}}
+
CT\Big(1+\alpha+\frac1l\Big)
\|[g_1,g_2]\|_{\widetilde{\mathcal Y}_T}
+
CT\|[g_1,g_2]\|_{\widetilde{\mathcal Y}_T}^2,
\end{aligned}
\]
and
\[
\sum_{|m|+|n|\leq N}
\sup_{0\leq t\leq T}
\big\|w_l\partial_n^m g_2(t)\big\|_{L^\infty_{x,v}}
\le
CT(1+\alpha)
\|[g_1,g_2]\|_{\widetilde{\mathcal Y}_T}
+
Cl^{-\frac 32}T
\|[g_1,g_2]\|_{\widetilde{\mathcal Y}_T}^2.
\]
Consequently,
\[
\|\mathcal N[g_1,g_2]\|_{\widetilde{\mathcal Y}_T}
\le
\sum_{|m|+|n|\leq N}
\big\|w_l\partial_n^m\tilde f_0\big\|_{L^\infty_{x,v}}
+
CT\Big(1+\alpha+\frac1l\Big)
\|[g_1,g_2]\|_{\widetilde{\mathcal Y}_T}
+
CT\|[g_1,g_2]\|_{\widetilde{\mathcal Y}_T}^2.
\]
Since $[g_1,g_2]\in\mathcal Y_T$, it follows that
\[
\begin{aligned}
\|\mathcal N[g_1,g_2]\|_{\widetilde{\mathcal Y}_T}
&\le
\sum_{|m|+|n|\leq N}
\big\|w_l\partial_n^m\tilde f_0\big\|_{L^\infty_{x,v}}
+
2CT\Big(1+\alpha+\frac1l\Big)
\sum_{|m|+|n|\leq N}
\big\|w_l\partial_n^m\tilde f_0\big\|_{L^\infty_{x,v}}
\\
&\quad+
4CT
\Big(
\sum_{|m|+|n|\leq N}
\big\|w_l\partial_n^m\tilde f_0\big\|_{L^\infty_{x,v}}
\Big)^2.
\end{aligned}
\]
Taking $l$ sufficiently large, then $\alpha>0$ sufficiently small,
and finally $T>0$ sufficiently small, we obtain
\begin{equation}
\|\mathcal N[g_1,g_2]\|_{\widetilde{\mathcal Y}_T}
\le
2\sum_{|m|+|n|\leq N}
\big\|w_l\partial_n^m\tilde f_0\big\|_{L^\infty_{x,v}}.
\end{equation}
The initial conditions follow directly from the Duhamel formulas, and
the moment constraints are preserved by the corresponding conservation
laws. Hence $\mathcal N$ maps $\mathcal Y_T$ into itself.

\noindent
\underline{\textbf{Step 2. Contraction.}}
Let
\[
[g_1,g_2],\ [\widehat g_1,\widehat g_2]\in\mathcal Y_T.
\]
Applying the preceding estimates to the difference system and using
the bilinear structure of $\widetilde H$, together with
\eqref{Poisson-g1g2} and Lemma~\ref{lemma-poisson-3D}, we obtain
\[
\begin{aligned}
\|\mathcal N[g_1,g_2]
-\mathcal N[\widehat g_1,\widehat g_2]\|_{\widetilde{\mathcal Y}_T}
&\le
CT\Big(
1+\alpha+\frac1l
+
\|[g_1,g_2]\|_{\widetilde{\mathcal Y}_T}
+
\|[\widehat g_1,\widehat g_2]\|_{\widetilde{\mathcal Y}_T}
\Big)
\|[g_1,g_2]
-[\widehat g_1,\widehat g_2]\|_{\widetilde{\mathcal Y}_T}
\\
&\le
CT\Big(
1+\alpha+\frac1l
+
\sum_{|m|+|n|\leq N}
\big\|w_l\partial_n^m\tilde f_0\big\|_{L^\infty_{x,v}}
\Big)
\|[g_1,g_2]
-[\widehat g_1,\widehat g_2]\|_{\widetilde{\mathcal Y}_T}.
\end{aligned}
\]
By decreasing $T$ further if necessary, we may assume that
\[
CT\Big(
1+\alpha+\frac1l
+
\sum_{|m|+|n|\leq N}
\big\|w_l\partial_n^m\tilde f_0\big\|_{L^\infty_{x,v}}
\Big)
\le\frac12.
\]
It follows that
\begin{equation}
\|\mathcal N[g_1,g_2]
-\mathcal N[\widehat g_1,\widehat g_2]\|_{\widetilde{\mathcal Y}_T}
\le
\frac12
\|[g_1,g_2]
-[\widehat g_1,\widehat g_2]\|_{\widetilde{\mathcal Y}_T}.
\end{equation}
Thus $\mathcal N$ is a contraction on $\mathcal Y_T$. By the Banach
fixed-point theorem, $\mathcal N$ admits a unique fixed point in
$\mathcal Y_T$, which is the desired solution. This completes the
proof.
\end{proof}

\section{Proof of a Moment Identity for \texorpdfstring{$L$}{L}}\label{app-A}

We recall the following auxiliary identity for Maxwell molecules.

\begin{lemma}[{\cite[Proposition 4.10]{JamesNotaVelazquez19a}}]\label{lem-Tij}
Let
\[
W_{ij}(v)=v_iv_j,\quad 1\le i,j\le3,
\]
and define
\begin{equation}\label{Tij-def}
T_{ij}
=
\frac12\int_{\mathbb S^2}B_0(\cos\theta)
\Big[
W_{ij}(v')+W_{ij}(v_*')-W_{ij}(v)-W_{ij}(v_*)
\Big]\,d\omega.
\end{equation}
Then
\begin{equation}\label{Tij-est}
T_{ij}
=
-b_0\Big[
(v-v_*)_i(v-v_*)_j-\frac{\delta_{ij}}{3}|v-v_*|^2
\Big].
\end{equation}
\end{lemma}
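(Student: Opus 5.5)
The plan is a direct computation: reduce the bracket in \eqref{Tij-def} to an expression in the relative velocity $u:=v-v_*$ and the direction $\omega$, then integrate over $\mathbb S^2$ using rotational symmetry about $\hat u:=u/|u|$. Only the Maxwell structure $B=B_0(\cos\theta)$ and the integrability from Grad's cutoff \eqref{Grad-cutoff} are needed. When $u=0$ both sides vanish, so I assume $u\neq0$ throughout.

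\textbf{Step 1 (algebraic reduction).} Set $k:=(u\cdot\omega)\omega$, so that $v'=v-k$ and $v_*'=v_*+k$ by \eqref{post-pre-v}. Expanding the products $v'_iv'_j$ and $(v_*')_i(v_*')_j$ and cancelling $v_iv_j+(v_*)_i(v_*)_j$ gives
\[
W_{ij}(v')+W_{ij}(v_*')-W_{ij}(v)-W_{ij}(v_*)
=2(u\cdot\omega)^2\omega_i\omega_j-(u\cdot\omega)\big(u_i\omega_j+\omega_iu_j\big).
\]
The right-hand side depends on $(v,v_*)$ only through $u$.

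\textbf{Step 2 (angular moments).} Parametrize $\omega=z\hat u+\sqrt{1-z^2}\,e$, where $z=\cos\theta\in[-1,1]$ and $e$ is a unit vector orthogonal to $\hat u$, so that $d\omega=dz\,d\varphi$. Averaging over $\varphi$ kills terms linear in $e$ and replaces $e\otimes e$ by $\frac12(I-\hat u\otimes\hat u)$. This yields
\[
\int_{\mathbb S^2}B_0\,(\hat u\cdot\omega)\,\omega\,d\omega=2\pi\Big(\int_{-1}^1B_0(z)z^2dz\Big)\hat u,
\]
\[
\int_{\mathbb S^2}B_0\,(\hat u\cdot\omega)^2\,\omega\otimes\omega\,d\omega
=2\pi\int_{-1}^1B_0(z)z^2\Big[z^2\,\hat u\otimes\hat u+\tfrac{1-z^2}{2}(I-\hat u\otimes\hat u)\Big]dz .
\]
All integrals are finite, since $B_0\le C|z|$ by \eqref{Grad-cutoff}.

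\textbf{Step 3 (assembly).} Insert the moments from Step 2 into $\frac12\int B_0[\cdots]\,d\omega$, with $u=|u|\hat u$. The contribution of $(u\cdot\omega)(u_i\omega_j+\omega_iu_j)$ equals $2\cdot2\pi|u|^2\int B_0z^2dz\,\hat u_i\hat u_j$. It combines with the $z^2\hat u\otimes\hat u$ part of the first moment into $2\pi|u|^2\int B_0z^2(z^2-1)dz\,\hat u\otimes\hat u$. Collecting terms gives
\[
T=2\pi|u|^2\int_{-1}^1B_0z^2(1-z^2)\,dz\Big[\tfrac12 I-\tfrac32\hat u\otimes\hat u\Big]
=-3\pi\int_{-1}^1B_0z^2(1-z^2)\,dz\Big(u\otimes u-\tfrac{|u|^2}{3}I\Big).
\]
By \eqref{def-b0}, the prefactor $3\pi\int_{-1}^1B_0z^2(1-z^2)\,dz$ is exactly $b_0$, which yields \eqref{Tij-est}.

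The only delicate point is bookkeeping: the normalization of $d\omega$ (full sphere, $z\in[-1,1]$) together with the factor $\frac12$ in \eqref{Tij-def} must reproduce exactly the constant $3\pi$ in \eqref{def-b0}. I would double-check this by taking the trace of $T$, which must be $0$ by energy conservation, and by evaluating the case $i=j$ with $u\parallel e_i$.
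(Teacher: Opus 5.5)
Your proof is correct, and the constant comes out right. The collision increment reduces to $2(u\cdot\omega)^2\omega_i\omega_j-(u\cdot\omega)(u_i\omega_j+\omega_iu_j)$. Your azimuthal averages are correct, and assembling them gives
\[
T=2\pi|u|^2\Big(\int_{-1}^1B_0(z)z^2(1-z^2)\,dz\Big)\Big[\tfrac12 I-\tfrac32\hat u\otimes\hat u\Big]=-b_0\Big(u\otimes u-\tfrac{|u|^2}{3}I\Big),
\]
with exactly the $b_0$ of \eqref{def-b0}. One wording slip: the $z^2\,\hat u\otimes\hat u$ term you combine with in Step 3 comes from the $\omega\otimes\omega$ moment, i.e.\ the second display of Step 2, not from the first moment.

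The paper gives no proof of this lemma. It imports the identity from James--Nota--Vel\'azquez, so your argument is a genuinely different, self-contained route. You reduce to the relative velocity $u$, compute the first and second angular moments of $B_0$ by averaging over the azimuth about $\hat u$, and assemble. Citing is shorter, but it leaves the reader to check that the source's conventions match the present ones. Those conventions are the full-sphere $\omega$-representation \eqref{post-pre-v}, the factor $\tfrac12$ in \eqref{Tij-def}, and the $3\pi$ in \eqref{def-b0}. Your computation settles that directly in the paper's notation.

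This check matters downstream. The same $b_0$ feeds into \eqref{L-moments}, \eqref{LAij}, the matrix $A_\alpha$, and hence the relation $\alpha^2b_0=6\beta(b_0+\beta)^2$. Any normalization mismatch would silently rescale all of these.

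Besides your trace test, here is a further sanity check. Integrate your formula against $\mu(v_*)$, using $\int\mu_*(v-v_*)_i(v-v_*)_j\,dv_*=v_iv_j+\delta_{ij}$ together with $L(\mu^{\frac12}W)=-2\mu^{\frac12}\int\mu_*T\,dv_*$. This reproduces the first identity in \eqref{L-moments} with the same $b_0$. That confirms your normalization is consistent with the one the paper takes from Duan--Liu.
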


\begin{proof}[\itshape\bfseries Proof of Lemma~\ref{lem-Lmoments}]
The first identity in \eqref{L-moments} is precisely
\cite[Lemma 6.6]{DuanLiu21}. It remains to prove
\begin{equation}\label{L-viv2}
L\big(|v|^2v_i\mu^{\frac12}\big)
=
\frac43b_0(|v|^2-5)v_i\mu^{\frac12}.
\end{equation}
For $f=\mu^{\frac12}W$, the linearized operator can be written as
\[
Lf
=
-\mu^{\frac12}\int_{\mathbb R^3}\mu_*\,dv_*
\int_{\mathbb S^2}B_0(\cos\theta)
\big(W'+W_*'-W-W_*\big)\,d\omega.
\]
Taking $W(v)=|v|^2v_i$, define
\[
T_i
=
\frac12\int_{\mathbb S^2}B_0(\cos\theta)
\big(
|v'|^2v_i'+|v_*'|^2v_{*i}'-|v|^2v_i-|v_*|^2v_{*i}
\big)\,d\omega.
\]
Then
\begin{equation}\label{eq-L-Ti}
L\big(|v|^2v_i\mu^{\frac12}\big)
=
-2\mu^{\frac12}\int_{\mathbb R^3}\mu_*T_i\,dv_*.
\end{equation}

Let
\[
V=\frac{v+v_*}{2},
\quad
u=v-v_*.
\]
A direct computation gives
\begin{equation}\label{eq-Vu-cubic}
|v|^2v_i+|v_*|^2v_{*i}
=
2\Big(
|V|^2+\frac{|u|^2}{4}
\Big)V_i
+
(V\cdot u)u_i
\end{equation}
and
\begin{equation}\label{eq-Vu-quadratic}
v_iv_j+v_{*i}v_{*j}
=
2V_iV_j+\frac12u_iu_j.
\end{equation}
In view of \eqref{eq-Vu-cubic}--\eqref{eq-Vu-quadratic}, together with
$V'=V$ and $|u'|=|u|$, we obtain
\[
|v'|^2v_i'+|v_*'|^2v_{*i}'-|v|^2v_i-|v_*|^2v_{*i}=
2\sum_{j=1}^3V_j
\Big[
\big(v_i'v_j'+v_{*i}'v_{*j}'\big)
-
\big(v_iv_j+v_{*i}v_{*j}\big)
\Big].
\]
Hence
\begin{equation}\label{eq-Ti-Tij}
T_i
=
2\sum_{j=1}^3V_jT_{ij}.
\end{equation}
Combining \eqref{eq-Ti-Tij} with Lemma~\ref{lem-Tij}, we obtain
\begin{equation}\label{eq-Ti-explicit}
\begin{aligned}
T_i
&=
-2b_0\sum_{j=1}^3V_j
\Big(
u_iu_j-\frac{\delta_{ij}}{3}|u|^2
\Big)
\\
&=
-2b_0\Big(
(V\cdot u)u_i-\frac13|u|^2V_i
\Big).
\end{aligned}
\end{equation}
Finally, combining \eqref{eq-L-Ti} and \eqref{eq-Ti-explicit} and using
the standard Gaussian moments, we obtain
\[
\begin{aligned}
L\big(|v|^2v_i\mu^{\frac12}\big)
&=
4b_0\mu^{\frac12}
\int_{\mathbb R^3}\mu_*
\Big(
(V\cdot u)u_i-\frac13|u|^2V_i
\Big)\,dv_*
\\
&=
\frac43b_0(|v|^2-5)v_i\mu^{\frac12}.
\end{aligned}
\]
This completes the proof of Lemma~\ref{lem-Lmoments}.
\end{proof}

\noindent\textbf{Acknowledgements.}
The authors are supported by the special foundation for Guangxi Ba Gui Scholars and the National Natural Science Foundation of China (No.~12171104).

\medskip
\noindent\textbf{Data availability:}
This article does not have any external supporting data.

\section*{Declarations}

\noindent\textbf{Conflict of interest:}
The authors do not have any other competing interests to declare.

\end{document}